\newtheorem{lemma}{Lemma}[section]
\newtheorem{theorem}[lemma]{Theorem}
\newtheorem{corollary}[lemma]{Corollary}
\newtheorem*{definition}{Definition}
\newtheorem*{remark}{Remark}
\renewcommand{\phi}{\varphi}
\renewcommand{\theta}{\vartheta}
\renewcommand{\epsilon}{\varepsilon}
\title[Multicorns and Unicorns II]{On Multicorns and Unicorns II: Bifurcations in Spaces of Antiholomorphic Polynomials}
\author[S. Mukherjee]{Sabyasachi Mukherjee}
\address{Jacobs University Bremen, Campus Ring 1,  Bremen 28759, Germany}
\email{s.mukherjee@jacobs-university.de, sabya@math.stonybrook.edu}
\author[S. Nakane]{Shizuo Nakane}
\address{Tokyo Polytechnic University, 1583, Iiyama, Atsugi, Kanagawa 243-0297, Japan}
\email{nakane@gen.t-kougei.ac.jp}
\author[D. Schleicher]{Dierk Schleicher}
\address{Jacobs University Bremen, Campus Ring 1, Bremen, 28759, Germany}
\email{d.schleicher@jacobs-university.de}
\date{\today}
\begin{document}

\begin{abstract}
The \emph{multicorns} are the connectedness loci of unicritical antiholomorphic polynomials $\bar{z}^d + c$. We investigate the structure of boundaries of hyperbolic components: we prove that the structure of bifurcations from hyperbolic components of even period is as one would expect for maps that depend holomorphically on a complex parameter (for instance, as for the Mandelbrot set; in this setting, this is a non-obvious fact), while the bifurcation structure at hyperbolic components of odd period is very different. In particular, the boundaries of odd period hyperbolic components consist only of parabolic parameters, and there are bifurcations between hyperbolic components along entire arcs, but only of bifurcation ratio $2$. We also count the number of hyperbolic components of any period of the multicorns. Since antiholomorphic polynomials depend only real-analytically on the parameters, most of the techniques used in this paper are quite different from the ones used to prove the corresponding results in a holomorphic setting.
\end{abstract}

\maketitle
\tableofcontents

\section{Introduction}
We consider the iteration of unicritical antiholomorphic polynomials $f_c(z) = \bar{z}^d + c$ for any degree $d \geq 2 $ and $c \in \mathbb{C}$ (any unicritical antiholomorphic polynomial can be affinely conjugated to an antiholomorphic polynomial of the form $f_c$). In analogy to the holomorphic case, the set of all points which remain bounded under all iterations of $f_c$ is called the \emph{filled-in Julia set} $K(f_c)$. The boundary of the filled-in Julia set is defined to be the \emph{Julia set} $J(f_c)$ and the complement of the Julia set is defined to be its \emph{Fatou set} $F(f_c)$. This leads, as in the holomorphic case, to the notion of \emph{Connectedness Locus} of degree $d$ unicritical antiholomorphic polynomials:

\begin{definition}
The \emph{multicorn} of degree $d$ is defined as $\mathcal{M}^{\ast}_d = \lbrace c \in \mathbb{C}: K(f_c)$ is connected$\rbrace$, where $f_c(z)=\bar{z}^d+c$. The multicorn of degree $2$ is called the \emph{tricorn}.
\end{definition}

The dynamics of quadratic antiholomorphic polynomials and their connectedness locus $\mathcal{M}_2^*$ were first studied in \cite{CHRS}. Milnor found small tricorn-like sets in the parameter space of real cubic polynomials \cite{Mi1}. Nakane \cite{Na1} proved that $\mathcal{M}^*_2$ is connected, in analogy to Douady and Hubbard's classical proof on the Mandelbrot set. This result can be generalized to multicorns of any degree. Later,  the structure of hyperbolic components of $\mathcal{M}_d^*$ was studied via the multiplier map (even period case) or via the critical value map (odd period case) \cite{NS}. These maps are branched coverings over the unit disk of degree $d-1$ and $d+1$ respectively, branched only over the origin. Quite recently, Hubbard and Schleicher \cite{HS} proved that the multicorns are not path connected, confirming a conjecture of Milnor.

\begin{figure}[h]
\centering{\includegraphics[width=6.8cm]{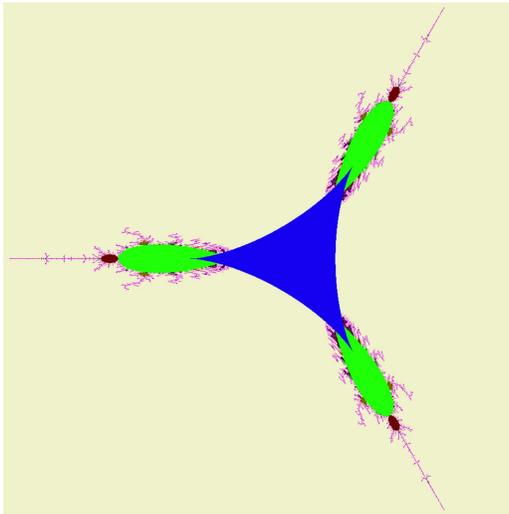}}
\caption{The tricorn, where the central deltoid region is its period $1$ hyperbolic component.} 
\label{FigConnLocus}
\end{figure}

The main purpose of this paper is to reveal the structure of the boundaries of the hyperbolic components and bifurcation phenomena. When the period is even, we show that the branched covering property of the multiplier map (which is real-analytic but not holomorphic) extends to the boundary as if it were holomorphic. This implies that bifurcations from the boundary of an even period component always occur at single parabolic parameters. This is the same as the unicritical polynomial family $p_c(z) = z^d + c : c \in \mathbb{C}$, see Figure \ref{evenandodd} (left). The following theorem, which is proved in Section \ref{SecIndiffDyn}, confirms this statement.

\begin{theorem}[Bifurcations From Even Periods]\label{ThmEvenBif} 
If a unicritical antiholomorphic polynomial $f_c$ has a $2k$-periodic cycle with multiplier 
$e^{2\pi ip/q}$ with $\mathrm{gcd}(p,q)=1$, then $c$ sits on the boundary of a hyperbolic component of 
period $2kq$ (and is the root thereof). 
\end{theorem}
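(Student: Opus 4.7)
The key observation is that although $f_c$ depends only real-analytically on $c$, any even iterate $f_c^{2k}$ is a genuine polynomial in $z$. The plan is to reduce the theorem to a local study of parabolic bifurcations for the family $L_c := f_c^{2k}$, whose coefficients are real-analytic in the one complex parameter $c$ but whose $z$-dependence is a holomorphic polynomial of degree $d^{2k}$. In this way, classical holomorphic tools for parabolic fixed points become available in the $z$-variable, while the real-analytic parameter dependence is handled separately.

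By hypothesis, a cycle point $z_0$ satisfies $L_{c_0}(z_0)=z_0$ and $L_{c_0}'(z_0)=e^{2\pi ip/q}$, so $z_0$ is a parabolic fixed point of $L_{c_0}^q$ with multiplier $1$ and combinatorial rotation number $p/q$. By the Leau--Fatou flower theorem the number of attracting petals at $z_0$ is $mq$ for some $m\geq 1$, and $m$ equals the number of distinct grand orbits of critical points of $L_{c_0}$ needed to populate the immediate parabolic basin. Unicriticality of $f_c$ ensures that every critical point of $L_{c_0}$ is an iterated $f_{c_0}$-preimage of $0$, so all critical points share a single grand orbit; hence $m=1$, and $L_{c_0}^q(z)-z$ has a zero of multiplicity exactly $q+1$ at $z_0$.

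For $c$ near $c_0$ this multiple zero splits into $q+1$ nearby fixed points of $L_c^q$. One is the real-analytic continuation $z_0(c)$ of $z_0$ as a fixed point of $L_c$, which exists because $L_{c_0}'(z_0)=e^{2\pi ip/q}\neq 1$ allows the implicit function theorem to be applied (holomorphically in $z$, real-analytically in $c$). The remaining $q$ fixed points are permuted cyclically by $L_c$: at $c_0$ itself $L_{c_0}$ permutes the $q$ petals cyclically with rotation number $p/q$, and $\gcd(p,q)=1$ makes this permutation a single $q$-cycle, which persists under small perturbation. Hence they form a cycle of exact period $q$ for $L_c$, and tracking the $f_c$-orbit of one such point against the original $2k$-cycle pins the $f_c$-period to exactly $2kq$. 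Write $\mu(c):=(L_c^q)'(w(c))$ for the multiplier of the new cycle at a cycle point $w(c)$; it is real-analytic in $c$ with $\mu(c_0)=1$.

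It remains to show that $\mu$ is locally surjective onto a full neighborhood of $1\in\mathbb{C}$, so that $\mu^{-1}(\mathbb{D})$ contains an open set with $c_0$ on its boundary; that open set is then the desired hyperbolic component of period $2kq$, and $\mu(c_0)=1$ identifies $c_0$ as its root. Proving this local surjectivity is the main obstacle, since $\mu$ is not holomorphic in $c$ and the standard Mandelbrot-set argument for the multiplier map is not directly available. My plan is to complexify by working in the holomorphic two-parameter family $h_{c,\tilde c}(z):=(z^d+\tilde c)^d+c$ on $\mathbb{C}^2$, which restricts to $f_c^2$ on the totally real slice $\tilde c=\bar c$. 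Classical holomorphic bifurcation theory applied to $h_{c,\tilde c}$ yields a complex two-dimensional hyperbolic component of period $kq$ whose multiplier is a holomorphic submersion at $(c_0,\overline{c_0})$; writing its holomorphic differential there as $\alpha\,dc+\beta\,d\tilde c$, the restriction to the real slice has real Jacobian $|\alpha|^2-|\beta|^2$, so local surjectivity of $\mu$ reduces to the inequality $|\alpha|\neq|\beta|$. I expect to establish this inequality by an explicit computation of $\alpha,\beta$ via implicit differentiation of the cycle equation, invoking unicriticality to rule out the degenerate coincidence $|\alpha|=|\beta|$.
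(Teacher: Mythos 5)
Your plan stalls exactly at the point where the real content of the theorem lies, and that step is not proved but only announced. The whole difficulty here is that the multiplier of the bifurcating cycle is not holomorphic in $c$, so its openness near $1$ is not free; the paper's own cautionary example $P_\mu(z)=\mu\bar z+\bar z^2$ shows that in a bad (still real-analytic) parametrization the corresponding openness statement genuinely fails along a whole circle of parabolic parameters. Hence the nondegeneracy you need, $|\alpha|\neq|\beta|$ for the differential $\alpha\,dc+\beta\,d\tilde c$ of the complexified multiplier, is precisely the theorem in disguise, and writing ``I expect to establish this inequality'' leaves the proof with a hole at its center. Moreover, even the preliminary claims in that step need justification: at $(c_0,\overline{c_0})$ the period-$kq$ cycle of $h_{c,\tilde c}$ is degenerate (the $q$ new points collide with the old cycle, and individually they are only Puiseux in the parameters), so the assertions that the multiplier extends holomorphically across the degeneracy locus and is a \emph{submersion} there are themselves nontrivial; likewise your $\mu(c)$ on the real slice is not obviously real-analytic at $c_0$. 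The paper is structured to avoid all of this: it perturbs the \emph{map} rather than the parameter, taking $F_\varepsilon=f_{c_0}+\varepsilon g$ with $g$ chosen so that the $2k$-cycle persists and its multiplier becomes $(1-\varepsilon)\rho_0$, computes the multiplier of the bifurcating $2kq$-cycle explicitly as $1+q^2\varepsilon+o(\varepsilon)$, and then returns to the unicritical family by restricting to an anti-polynomial-like map and straightening, with dilatation tending to $0$ so that the straightened parameters $c(\varepsilon)$ converge to $c_0$. If you want to keep your route, you must actually carry out the computation of $\alpha,\beta$ (or find some other argument for openness of the restricted multiplier), and that is where all the work is.

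There is also a smaller but genuine error in your petal count. The principle you invoke --- that the number of petal cycles equals the number of \emph{grand orbits} of critical points of $L_{c_0}$ --- is false: petal cycles are counted against forward critical orbits, and critical points of $L_{c_0}=f_{c_0}^{\circ 2k}$ lying in a single $f_{c_0}$-grand orbit can perfectly well populate different petal cycles. Already for $p(z)=z^2-3/4$ the two fixed petals of $p^{\circ 2}$ at $z=-1/2$ each contain a critical point of $p^{\circ 2}$, although all critical points of $p^{\circ 2}$ lie in one grand orbit of $p$. The correct argument, which is the one in the paper, is that $f_{c_0}^{\circ 2}$ has exactly two infinite critical orbits, and the parabolic $2k$-cycle splits into two parabolic $k$-cycles of $f_{c_0}^{\circ 2}$ each of whose petal cycles must absorb one of these two orbits; since this exhausts the critical orbits, each parabolic point carries exactly $q$ petals and the coefficient $b$ is nonzero. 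Your conclusion ($m=1$) is correct, but the justification you give for it would not survive scrutiny and needs to be replaced by this critical-orbit count.
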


On the other hand, the boundary of a hyperbolic component of odd period $k$ consists only of parabolic parameters of period $k$ and multiplier $+1$. Hence the bifurcation from an odd period component is quite restricted. Moreover, bifurcation occurs not at a point but along (part of) an arc. In fact, this phenomenon has already been observed in \cite{CHRS} in case $d = 2$ for the period one component. We will conceptualize, extend and refine their result in Section \ref{SecBifArcs}. See Figure \ref{evenandodd} (right), which is an enlargement of Figure \ref{FigConnLocus}. 

\begin{figure}[ht!] 
\begin{minipage}{0.49\linewidth}
\centering{\includegraphics[scale=0.26]{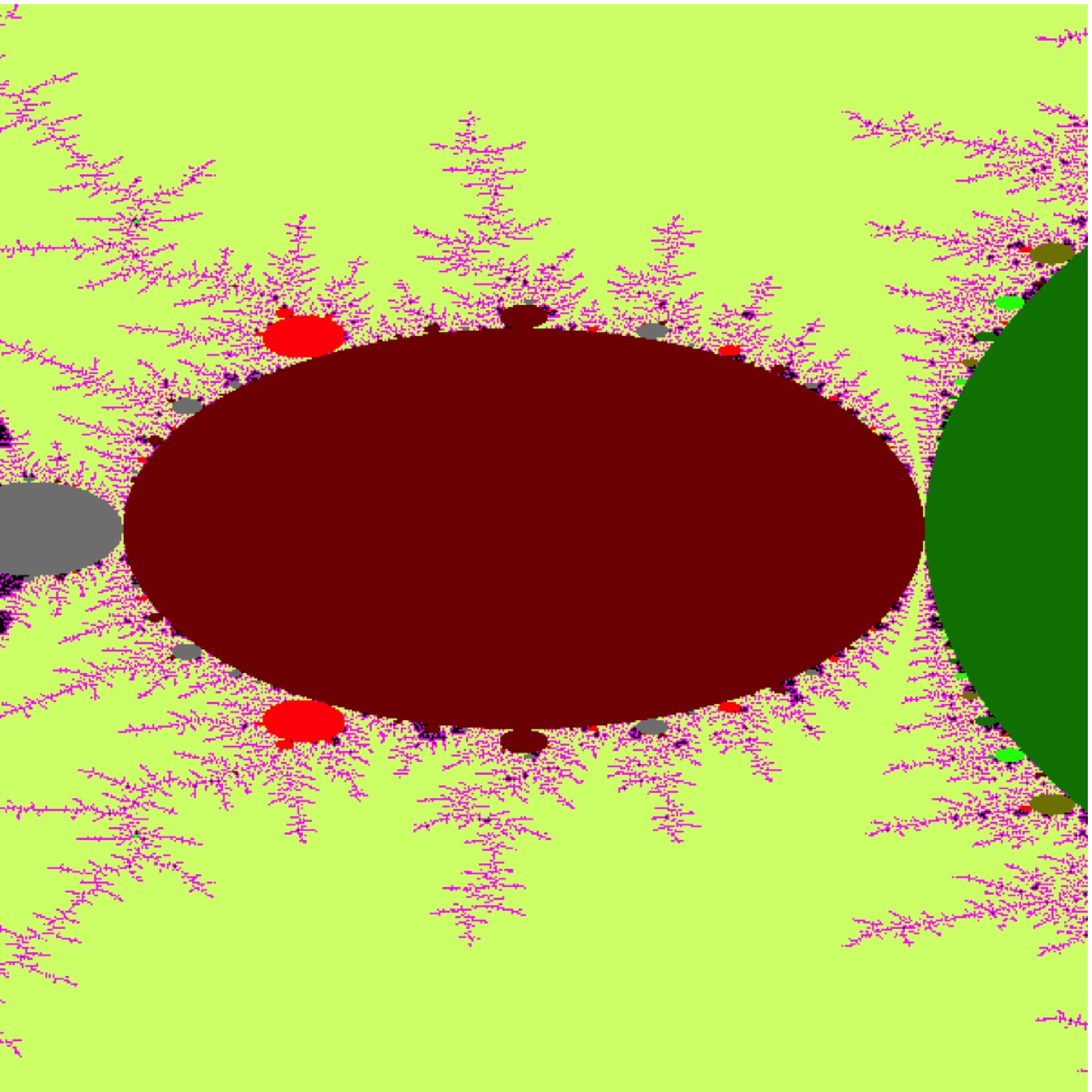}}
\end{minipage}
\begin{minipage}{0.49\linewidth} 
\centering{\includegraphics[scale=0.33]{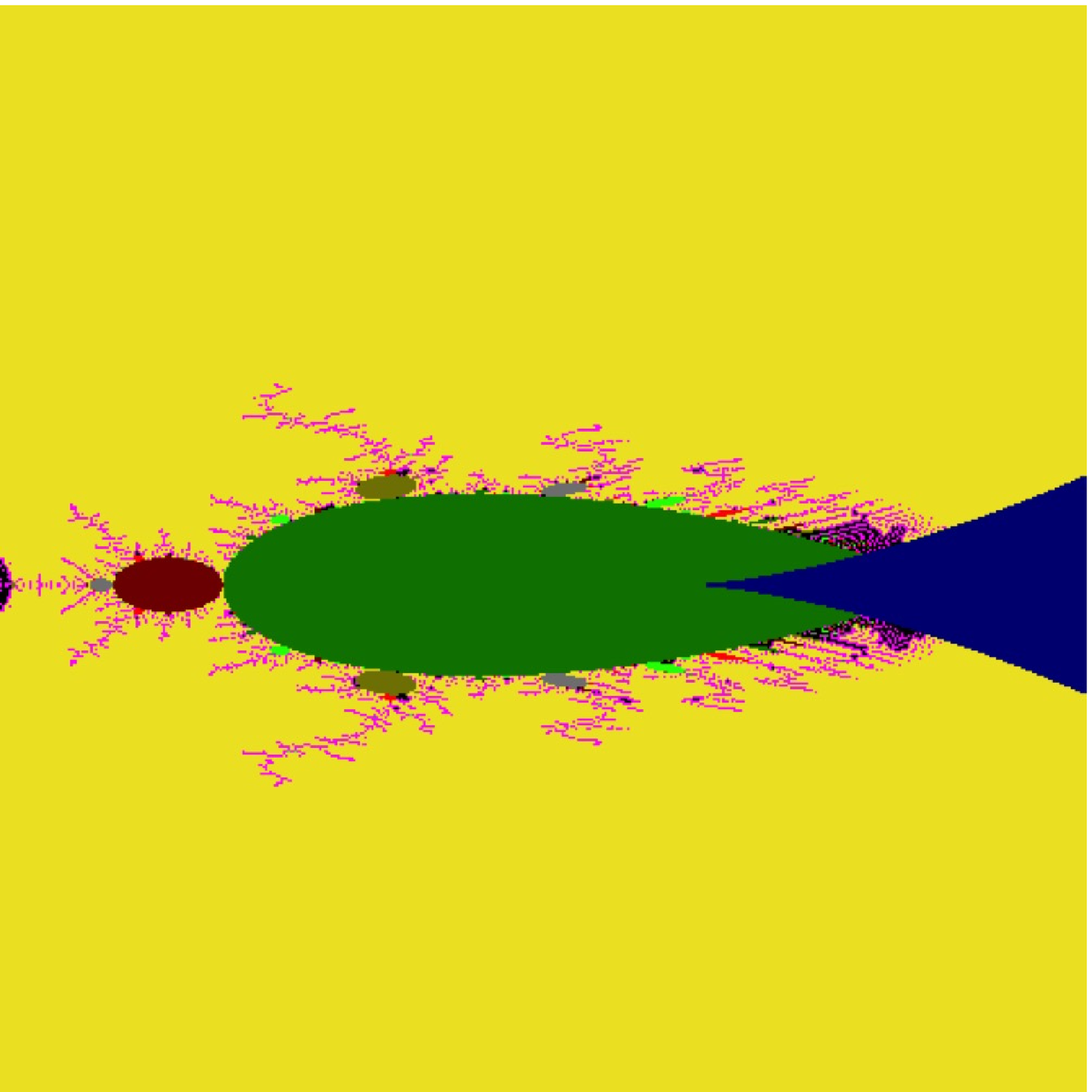}}
\end{minipage} 
\caption{Left: A period $4$ hyperbolic component of the tricorn; right: bifurcation along arcs from the period $1$ component to period $2$.} 
\label{evenandodd} 
\end{figure} 

It had been observed numerically long ago that the boundary of an odd period component consists of finitely many arcs and as many cusp points. In Section \ref{SecBifArcs} and Section \ref{SecOrbitPortraitsArcs}, we develop the techniques and notions required to give a rigorous description of these. The following theorem, which is proved in Section \ref{SecOddBdy}, can be viewed as a culmination point of these lines of ideas.

\begin{theorem}[Boundary Of Odd Period Components]\label{Exactly d+1}
The boundary of every hyperbolic component of odd period is a simple
closed curve consisting of exactly $d + 1$ parabolic cusp points as well as $d+1$ parabolic arcs, each connecting two parabolic cusps.
\end{theorem}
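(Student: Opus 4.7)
The plan is to combine the $(d+1)$-fold branched covering $\Phi_H\colon H \to \mathbb{D}$ given by the critical value map (from \cite{NS}) with the local analysis of parabolic arcs and cusps developed in Sections \ref{SecBifArcs} and \ref{SecOrbitPortraitsArcs}. Throughout, $H$ denotes a hyperbolic component of odd period $k$.

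\emph{Step 1 (the boundary is parabolic).} For $c \in \partial H$, the multiplier of the holomorphic iterate $f_c^{\circ 2k}$ at the period-$k$ cycle equals $|A|^2$, where $A$ is the antiholomorphic derivative of $f_c^{\circ k}$ at the cycle. This is non-negative real, so on $\partial H$ we have $|A|=1$; writing $A = e^{i\theta}$, the cycle is parabolic with holomorphic multiplier $+1$. Already this rules out any boundary point other than parabolic parameters.

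\emph{Step 2 (local structure of the parabolic locus).} Using the parabolic implicit-function/perturbation machinery of Section \ref{SecBifArcs}, at a boundary parameter with $A \neq 1$ the parabolic locus is locally a smooth real-analytic arc on which $\theta = \arg A$ is a coordinate, while parameters with $A = 1$ exhibit higher-order degeneracy producing the cusp geometry. This dichotomy is the local version of the arcs-and-cusps picture.

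\emph{Step 3 (uniformization and boundary extension).} Since $\Phi_H$ is a proper degree-$(d+1)$ branched covering branched only over $0$, there is a homeomorphism $\psi\colon \mathbb{D} \to H$ with $\Phi_H(\psi(w)) = w^{d+1}$. The crux is to prove that $\psi$ extends to a homeomorphism $\overline{\mathbb{D}} \to \overline{H}$; equivalently, that $\Phi_H$ extends continuously to $\overline{H}\to\overline{\mathbb{D}}$ and restricts to a $(d+1)$-fold covering $\partial H \to \partial \mathbb{D}$. This forces $\partial H$ to be a Jordan curve.

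\emph{Step 4 (counting cusps and arcs).} One then checks that on $\partial H$ the cusp condition $A(c) = 1$ corresponds under $\psi$ exactly to $w^{d+1} = 1$, equivalently to $\Phi_H(c) = 1$. Under the $(d+1)$-fold cover this equation has precisely $d+1$ solutions, giving $d+1$ cusps on $\partial H$. The complement of the point $1$ in $\partial \mathbb{D}$ is a single open arc; its preimage in $\partial H$ consists of $d+1$ pairwise disjoint open arcs, each mapped homeomorphically by $\Phi_H$ and each joining two consecutive, hence distinct, cusps. This yields exactly $d+1$ parabolic arcs connecting the $d+1$ cusps in cyclic order.

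\emph{Main obstacle.} The decisive difficulty is Step 3: establishing the continuous boundary extension of $\Phi_H$ together with the Jordan curve property of $\partial H$. This requires parabolic-implosion and perturbation estimates controlling the dynamics as $c \to \partial H$, ruling out pinching and non-locally-connected boundary behaviour, and showing that distinct boundary parameters map to distinct points of $\partial \mathbb{D}$ (outside the finite cusp fibre). Once the extension is in hand, the combinatorial count in Step 4 is forced purely by the degree of the cover, and the qualitative dichotomy of Step 2 ensures that only arcs and cusps arise.
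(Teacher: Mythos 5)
There is a genuine gap, and it sits exactly where you place your ``main obstacle.'' Step 3 --- the continuous boundary extension of the critical value map $\Phi_H$ and the claim that it restricts to a $(d+1)$-fold covering $\partial H \to \partial\mathbb{D}$ --- is not an auxiliary technicality but essentially the whole content of the theorem, and you do not prove it; you only name the estimates one would need. This is precisely the step that the antiholomorphic setting obstructs: the paper stresses that, because the parameter dependence is only real-analytic, the uniformizing/multiplier map does \emph{not} extend continuously to $\partial H$, and for that reason the analytic route is abandoned in favour of combinatorial arguments. The paper's actual proof runs through the Ecalle-height parametrization of parabolic arcs (Theorem \ref{ThmParaArcs}), the rigidity statement that a supporting angle plus critical Ecalle height determines the parameter (Lemma \ref{LemSingleRay}, via Poirier's critical portraits), the resulting upper bound of $d$ co-root arcs and one root arc (Lemma \ref{upper_bound}), the landing/accumulation of the $d$ parameter rays of period $k$ and the two rays of period $2k$ on $\partial H$ (Lemma \ref{LemArcOnBoundary} and its root-arc analogue), the Jordan-curve property obtained from orbit-portrait constraints (Corollary \ref{CorSimpleClosed}), and the fact that cusps lie in the interior of $\mathcal{M}_d^*$ (Lemma \ref{PropCuspsInterior}) so that the rays must accumulate on $d+1$ distinct arcs. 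None of this is replaced by your covering-degree argument, which would only become available \emph{after} the boundary structure is already understood.

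Two further points are incorrect as stated. In Steps 2 and 4 you characterize cusps by $A=1$, where $A$ is the antiholomorphic derivative along the cycle, and you use $\arg A$ as a local coordinate on the parabolic locus. But $\arg A$ is not a conformal invariant: conjugating by $z\mapsto\lambda z$ replaces $A$ by $A\bar{\lambda}/\lambda$, so only $|A|$ has intrinsic meaning, and ``$A=1$'' is not a well-defined condition on $c$. The correct dichotomy is the number of petals $q\in\{1,2\}$ of the parabolic point (Lemma \ref{LemOddIndiffDyn}), i.e.\ whether the fixed point of $f_c^{\circ 2k}$ is double or triple, and the invariant parametrizing each arc is the critical Ecalle height, not $\arg A$. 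Finally, even granting a homeomorphic boundary extension with $\Phi_H(\psi(w))=w^{d+1}$, the identification of the cusp set with the fibre $\Phi_H^{-1}(1)$ over the single point $1\in\partial\mathbb{D}$ is asserted without any argument; in the paper the count $d+1$ comes from the $d$ dynamical co-roots and the one dynamical root on the boundary of the characteristic Fatou component, transferred to parameter space by parameter rays and rigidity, not from a distinguished boundary value of $\Phi_H$.
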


The proof of this fact uses combinatorial tools like orbit portraits (compare \cite{Mu}) and certain combinatorial rigidity results. In Section \ref{landingdiscont}, we utilize our work on orbit portraits and wake structures to prove a discontinuity of landing points of external dynamical rays, in contrast to the situation for the Mandelbrot set. This phenomenon is reminiscent of the parameter spaces of cubic (or higher degree) polynomials.

In Section \ref{No.ofHypComps}, we relate the number of hyperbolic components of a given period of the multicorns to the corresponding number for the multibrot sets (the connectedness loci of $p_c(z)=z^d+c$, denoted by $\mathcal{M}_d$). These numbers coincide for most periods, with exceptions only when the period $k$ is twice an odd number.

\begin{theorem}[Number Of Hyperbolic Components]\label{Numberhypcomp}
Denote the number of hyperbolic components of period $k$ in $\mathcal{M}_d^{\ast}$ (respectively $\mathcal{M}_d$) by $s^{\ast}_{d,k}$ (respectively $s_{d,k}$). Then, $s^{\ast}_{d,k} = s_{d,k}$ unless $k$ is twice an odd integer, in which case we have $s^{\ast}_{d,k} = s_{d,k} + 2 s_{d,k/2}$.
\end{theorem}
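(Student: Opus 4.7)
The plan is to reduce the theorem to a counting identity for the zeros of the critical-orbit polynomial, prove that identity by a topological-degree argument on a real-analytic map, and extract the stated equality by Möbius inversion. Set
\[ T_k^{\ast}:=\#\bigl\{c\in\mathbb{C}:f_c^{\circ k}(0)=0\bigr\}=\sum_{j\mid k}s^{\ast}_{d,j}, \qquad T_k:=\#\bigl\{c\in\mathbb{C}:p_c^{\circ k}(0)=0\bigr\}=\sum_{j\mid k}s_{d,j}=d^{k-1}, \]
the last equality being the classical degree count for $p_c^{\circ k}(0)$ in $c$. Writing $k=2^{a}k'$ with $k'$ odd, a short Möbius calculation shows that the theorem is equivalent to the explicit formula
\[ T_k^{\ast}=d^{k-1}+2d^{k'-1}\cdot\mathbf{1}_{a\ge 1}: \]
only the summands with $a-b\in\{0,1\}$ survive in the Möbius inversion, and they contribute $+2s_{d,k'}$ and $-2s_{d,k'}$ respectively, leaving the nonzero difference $2s_{d,k/2}$ precisely when $a=1$.

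First, I would compute the topological degree of the real-analytic map $F_k\colon c\mapsto f_c^{\circ k}(0)$ at infinity. The iterate $F_k$ satisfies the recursion $F_k=(F_{k-2}^{d}+\bar c)^{d}+c$ with $F_0=0$ and $F_1=c$; a straightforward induction identifies its unique monomial of maximal total degree as $c^{d^{k-1}}$ when $k$ is odd and $\bar c^{d^{k-1}}$ when $k$ is even. Hence the winding number of $F_k$ along a sufficiently large circle $|c|=R$ equals $+d^{k-1}$ or $-d^{k-1}$ according to the parity of $k$, and this number is exactly the signed sum of local degrees of $F_k$ over its zeros.

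Next, I would analyze the local degree at each zero. Writing $F_k(c)=P_k(c,\bar c)$ for a polynomial $P_k$ in two complex variables, Wirtinger calculus gives that the local degree at $c_0$ is the sign of $J_k(c_0):=|\partial_c P_k|^{2}-|\partial_w P_k|^{2}$ evaluated at $(c_0,\bar c_0)$. The chain rule along the critical orbit $b_k:=f_c^{\circ k}(0)$ yields the recursion
\[ J_k=-d^{2}|b_{k-1}|^{2(d-1)}J_{k-1}+R_k, \]
where $R_k$ is an explicit lower-order real term. At a center of exact period $\ell$, the moduli $|b_1|,\ldots,|b_{\ell-1}|$ are all nonzero and an inductive estimate on the recursion shows that the sign of $J_\ell$ coincides with that of the leading factor $(-1)^{\ell-1}d^{2(\ell-1)}\prod_{j=1}^{\ell-1}|b_j|^{2(d-1)}$; hence the local degree equals $+1$ at odd-period centers and $-1$ at even-period centers.

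Assembling the counts is then immediate. For odd $k$, every divisor $\ell\mid k$ is odd, so every zero of $F_k$ contributes $+1$ and $T_k^{\ast}=d^{k-1}$. For even $k$, the $+1$-zeros are exactly those at which $0$ has odd $f_{c_0}$-period $\ell\mid k'$; by the odd case applied recursively to $\ell<k$ their total is $\sum_{\ell\mid k'}s_{d,\ell}=d^{k'-1}$. Combining with the signed-count identity $d^{k'-1}-(T_k^{\ast}-d^{k'-1})=-d^{k-1}$ gives $T_k^{\ast}=d^{k-1}+2d^{k'-1}$, completing the required formula and hence, by Möbius inversion, the theorem. The main obstacle in this strategy lies in the third step: rigorously controlling the correction $R_k$ so as to verify that $J_\ell$ has sign $(-1)^{\ell-1}$ at every center. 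The useful input here is that the defining equation $b_\ell=0$ forces a factorization of $J_\ell$ along the critical cycle, allowing one to compare its leading term to the correction directly.
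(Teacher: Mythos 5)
Your overall architecture is genuinely different from the paper's (the paper counts parameter rays at periodic angles landing or accumulating on component boundaries, using the structure of odd-period boundaries from Theorem \ref{Exactly d+1} and Lemmas \ref{oddcase}--\ref{evennotfromodd}), and several of your steps are fine: the reduction of the theorem to the formula $T_k^{\ast}=d^{k-1}+2d^{k'-1}\mathbf{1}_{a\ge 1}$ via M\"obius inversion is correct, the recursion $F_k=(F_{k-2}^{d}+\bar c)^{d}+c$ does give a unique top-degree monomial $c^{d^{k-1}}$ (odd $k$) or $\bar c^{d^{k-1}}$ (even $k$), hence winding number $(-1)^{k-1}d^{k-1}$ at infinity, and the final bookkeeping (odd-period zeros counted by the odd case, signed sum forced by the degree) is consistent.

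The genuine gap is your third step, and it is precisely where the real content of the theorem would have to live. You need that at \emph{every} parameter with critical orbit of exact period $\ell$ the local degree of $c\mapsto f_c^{\circ\ell}(0)$ is exactly $(-1)^{\ell-1}$. This requires two things, neither of which your sketch establishes. First, that $J_\ell=|\partial_c F_\ell|^2-|\partial_{\bar c}F_\ell|^2$ is nonzero at every center (otherwise the local degree need not be $\pm1$ and the signed count breaks down); this is a Gleason-type transversality statement for the antiholomorphic family, nontrivial and nowhere proved in your argument. Second, the sign. Your proposed justification --- that the unrolled recursion $J_k=-d^2|b_{k-1}|^{2(d-1)}J_{k-1}+R_k$ is dominated by its leading term $(-1)^{\ell-1}d^{2(\ell-1)}\prod_{j=1}^{\ell-1}|b_j|^{2(d-1)}$ --- cannot work as stated: the remainders $R_m=1+2d\,\mathrm{Re}\bigl(\overline{b_{m-1}}^{\,d-1}\overline{v_{m-1}}\bigr)$ contain the constant $1$, while the leading product is as small as you like whenever some intermediate $|b_j|$ is small, which happens at centers whose critical orbit passes close to $0$ (e.g.\ centers deep inside satellite/renormalizable structure); there is no uniform lower bound on $\prod_j|b_j|$ over centers of a given period. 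Even in the simplest nontrivial example (the real period-$3$ center of the tricorn, $c\approx-1.755$) one computes $J_3\approx 62$ while the leading term is $\approx 86$ and the corrections sum to $\approx -25$: the sign happens to agree, but the corrections are of the same order as the leading term, so no domination estimate, inductive or otherwise, can be extracted from this recursion alone. The closing remark that ``$b_\ell=0$ forces a factorization of $J_\ell$ along the critical cycle'' is not substantiated: $b_\ell=0$ only gives $\overline{b_{\ell-1}}^{\,d}=-c$ and does not visibly factor $J_\ell$. Until you prove the nonvanishing and the sign of $J_\ell$ at all centers (which would amount to a new orientation/transversality theorem of roughly the same depth as the combinatorial results the paper uses), the count $T_k^{\ast}$ does not follow, so the proposal as written is incomplete at its crucial point.
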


To illustrate the possible problems with antiholomorphic parameter spaces, consider the family of antiholomorphic quadratic polynomials $P_\mu(z)= \mu\bar{z}+\bar{z}^2$ which was discussed in the introduction of \cite{NS}; each $P_\mu$ is conformally conjugate to some $f_c$. As shown in Figure \ref{TricornCover}, the circle $|\mu|=1$ consists of maps with parabolic fixed points, and most parameters on this circle have neighborhoods in which every parameter has an attracting or indifferent fixed point: the open mapping principle for the multiplier map fails in this parametrization! This is related to a different problem of the parametrization: within the family $\lbrace f_c \rbrace_{c \in \mathbb{C}}$, each map $f_c$ is conformally conjugate to two other maps (with parameters $\zeta c$ and $\zeta^2c$, where $\zeta$ is any third root of
unity). However, in the family $\lbrace P_{\mu} \rbrace_{\mu \in \mathbb{C}}$, each map is conjugate to three or two further maps in the same family, depending on whether or not there is an attracting fixed point (in both cases, the exceptional case $c=0$ respectively\ $\mu=0$ behaves differently). 

Antiholomorphic polynomials of the form $q_\lambda(z)= \lambda(1+\bar{z}/d)^d$ form the ``true'' parameter space of our maps: every $f_c$ is conformally conjugate to one and only one antiholomorphic polynomial $q_\lambda$; it satisfies $\lambda = {d \bar{c}^d}/{c}$. The map $c\mapsto \lambda = {d \bar{c}^d}/{c}$ is a real-analytic branched cover of degree $d+1$, ramified only over $c = \lambda = 0$. We will use this parametrization in some of our proofs. The multicorn $\mathcal{M}_d^*$ has a $d+1$-fold rotational symmetry, and form a $d+1$-fold branched cover over the true parameter space. The quotient of $\mathcal M^*_d$ by this symmetry is thus naturally called the ``unicorn''. Pictorial illustrations of these fractals and their symmetries can be found in \cite{LS} and \cite{NS}.

We would like to thank John Hubbard, Hiroyuki Inou, Adam Epstein and Mitsuhiro Shishikura for fruitful discussions and useful advice. This work was partially supported by a grant from the Deutsche Forschungsgemeinschaft DFG, which we gratefully acknowledge. 

\section{Indifferent Dynamics}\label{SecIndiffDyn}
We start by observing that the polynomial-like maps developed by Douady and 
Hubbard~\cite{DH} make sense also in the antiholomorphic setting. 

\begin{definition}[Anti-polynomial-like Maps]\label{DefAntiPolyLike}
Let $U,V$ be simply connected domains in $\mathbb{C}$ such that 
$\overline{U} \subset V$. We call an antiholomorphic map $f :U \to V$ \emph{ 
anti-polynomial-like of degree $d$} if it is proper and has degree $d$. 
The \emph{filled-in Julia set} of $f$ is the set of all points in 
$U$ which never leave $U$ under iteration. 
\end{definition}

\begin{remark}
We define the degree of $f$ as the number of pre-images of any point, so it is always positive; equivalently, $d$ is the degree of the proper holomorphic map $f^{\ast}  \colon U \to V^{\ast}$ which is the complex conjugate of $f$.
\end{remark}

There is also an antiholomorphic analog to the Straightening Theorem 
\cite[Theorem~1]{DH} which can be proved in the same way as in the 
holomorphic case: every anti-polynomial-like map of degree $d$ is hybrid 
equivalent to an antiholomorphic polynomial of equal degree. 

The following theorem can be considered as a weak replacement, in certain 
cases,  for the open mapping principle of the multiplier; recall that this
is false in the parametrization $z \mapsto \mu \bar{z} + \bar{z}^2$!

\begin{theorem}[Indifferent Parameters on Boundary]\label{ThmIndiffBdyHyp} 
If $f_{c_0}(z) = \bar{z}^d+c_0$ has an indifferent periodic point of period $k$, then every neighborhood of $c_0$ contains parameters with attracting periodic points of period $k$, so the parameter $c_0$ is on the  boundary of a hyperbolic component of period $k$ of the multicorn $\mathcal{M}_d^*$. 

Moreover, every neighborhood of $c_0$ contains parameters for which all
period $k$ orbits are repelling. 
\end{theorem}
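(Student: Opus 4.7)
The plan is to pass to the second iterate $f_c^{\circ 2}(z) = (z^d + \bar c)^d + c$, which is holomorphic in $z$, and to treat $c$ and $\bar c$ as independent complex variables. This yields the jointly holomorphic family $G(z, c, \tilde c) := (z^d + \tilde c)^d + c$ on $\mathbb{C}^3$, whose restriction to the totally real slice $R = \{(c, \bar c) : c \in \mathbb{C}\} \subset \mathbb{C}^2$ recovers $f_c^{\circ 2}$. The indifferent period-$k$ cycle of $f_{c_0}$ becomes a period-$k'$ cycle of $G_{c_0, \overline{c_0}}$, where $k' = k/2$ for $k$ even and $k' = k$ for $k$ odd, with holomorphic multiplier $\mu_0 \in S^1$; in particular $\mu_0 = |\lambda|^2 = 1$ whenever $k$ is odd (with $\lambda$ the antiholomorphic multiplier of $f_{c_0}^{\circ k}$), so the odd case is always parabolic.

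By the holomorphic implicit function theorem---or, when $\mu_0 = 1$, a standard square-root resolution of the double fixed point---the cycle extends holomorphically (possibly on a double cover) to a neighborhood $U \subset \mathbb{C}^2$ of $(c_0, \overline{c_0})$, yielding a holomorphic multiplier $\mu : U \to \mathbb{C}$. I would then establish non-constancy of $\mu$ by restricting to the complex line $\{(c, \overline{c_0})\}$, a one-parameter holomorphic family of polynomials: $\mu(c, \overline{c_0})$ is a non-constant holomorphic function of $c$, either because analytic continuation to large $|c|$ eventually leaves the connectedness locus and forces $|\mu|>1$, or because any intermediate coalescence of cycles would give $\mu = 1$, contradicting $\mu \equiv \mu_0$; in the parabolic case, the two branches $\mu_\pm = 1 \pm 2\sqrt{-a\delta} + O(\delta)$ are non-constant because the discriminant $\delta(c, \tilde c)$ is non-constant. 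Hence by the holomorphic open mapping theorem on $\mathbb{C}^2$, there are points of $U$ arbitrarily close to $(c_0, \overline{c_0})$ with $|\mu|<1$ and with $|\mu|>1$.

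The key remaining step is to realize such parameters on the totally real slice $R$, since the multiplier viewed as a function of the genuine parameter $c$ is real-analytic rather than holomorphic. In the generic case where the real differential of $\mu|_R$ at $(c_0, \overline{c_0})$ is non-degenerate---equivalently $|\partial_c \mu| \neq |\partial_{\tilde c}\mu|$---the map $\mu|_R$ is a local diffeomorphism onto an open neighborhood of $\mu_0$ in $\mathbb{C}$, immediately producing parameters near $c_0$ with attracting and with repelling period-$k$ cycles of $f_c$. The degenerate case is the main technical obstacle and must be handled by a higher-order Taylor analysis exploiting the Schwarz-type symmetry $\overline{\mu(c, \bar c)} = \mu(\bar c, c)$ inherited from the conjugation $f_{\bar c} = C f_c C$ with $C(z) = \bar z$, or alternatively by a quasi-conformal surgery in an invariant domain near the indifferent cycle (its Siegel disk, parabolic petal, or antiholomorphic analog), whose straightening via the antiholomorphic Douady--Hubbard theorem stated just before returns to the family $\{f_c\}$ with multiplier of our choosing. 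The second assertion---parameters arbitrarily close to $c_0$ with all period-$k$ orbits repelling---then follows because the indifferent period-$k$ locus is a codimension-one real-analytic subvariety (hence nowhere dense), so the complement of this locus and of the closures of period-$k$ hyperbolic components is a non-empty open set near $c_0$ of the required type. The delicacy of the transfer step is essential: in a poorly-chosen parametrization, as the example $P_\mu(z) = \mu\bar z + \bar z^2$ from the introduction shows, the analogous step fails outright, so the argument must genuinely use the structure of the family $\{f_c\}$.
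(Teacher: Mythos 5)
Your strategy founders at exactly the step you label ``the key remaining step.'' Complexifying to the family $(z^d+\tilde c)^d+c$ and applying the open mapping theorem does produce parameters in $\mathbb{C}^2$ near $(c_0,\overline{c_0})$ with $|\mu|<1$ and $|\mu|>1$, but the theorem is about the totally real slice $\tilde c=\bar c$, and the restriction of a non-constant holomorphic function to a totally real surface need not be open; the family $P_\mu(z)=\mu\bar z+\bar z^2$ discussed in the introduction is precisely a demonstration that this kind of transfer can fail, since there the multiplier omits values of modulus $>1$ on whole neighborhoods of the parabolic circle. Your ``generic case'' hypothesis $|\partial_c\mu|\neq|\partial_{\tilde c}\mu|$ is not verified and is actually false in relevant situations: for odd $k$ the indifferent parameters lie on real-analytic arcs along which the multiplier is identically $+1$, so $\mu|_R$ is never a local diffeomorphism there. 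The two escape routes you offer for the degenerate case are placeholders rather than arguments --- no higher-order Taylor computation is given (and nothing in the sketch distinguishes $\{f_c\}$ from $\{P_\mu\}$ at the level of Taylor data), while the alternative ``quasiconformal surgery on an invariant domain plus antiholomorphic straightening'' is, in outline, the entire content of the paper's actual proof (restrict to an anti-polynomial-like map, perturb the cycle to become attracting or repelling, straighten with dilatation tending to $0$, and work in the family $q_\lambda(z)=\lambda(1+\bar z/d)^d$ to kill the $(d+1)$-fold symmetry ambiguity), so citing it does not complete your independent route.

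The second assertion is also not established by your argument. Nowhere-density of the indifferent period-$k$ locus does not imply that the complement of the closures of the period-$k$ hyperbolic components meets every neighborhood of $c_0$: a priori a full neighborhood of $c_0$ could consist of attracting and indifferent parameters, and again the $P_\mu$ family shows this genuinely happens in a badly chosen parametrization. Ruling this out for $\{f_c\}$ is exactly the content of the ``moreover'' clause, and in the paper it is obtained constructively, by perturbing the anti-polynomial-like restriction so that the distinguished cycle becomes repelling while all other period-$k$ cycles stay repelling, then straightening. As written, your proposal assumes the conclusion at both critical junctures.
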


\begin{proof}
The idea of the proof is the same as the one for holomorphic polynomials by Douady~\cite[III.1]{Do}, so we give only a sketch. It is easier to prove this result in the family $q_{\lambda}(z)= \lambda (1 + \bar{z}/d)^d$ as described in the introduction. The map $f_{c_0}$ is conformally conjugate to $q_{\lambda_0}$ with $\lambda_0 = {d \overline{c_0}^d}/{c_0}$.

First we restrict $q_{\lambda_0}$ to an anti-polynomial-like map of degree $d$ and perturb it slightly so that the $k$-cycle becomes attracting (or repelling, so that all the other $k$-periodic orbits remain repelling); this requires a slight adjustment of the domain of the perturbed anti-polynomial-like map. The straightening theorem supplies an antiholomorphic polynomial of equal degree with a single critical point in $\mathbb{C}$ of maximal multiplicity, so it is conformally conjugate to a map $q_{\lambda^{\prime}}$ for a unique $\lambda^{\prime}\in\mathbb{C}$. The Beltrami differential in the straightening theorem can be chosen to be arbitrarily close to zero when the perturbation is small enough, so $\lambda^{\prime}$ can be chosen arbitrarily close to $\lambda$. This shows the result in the family $q_\lambda$, and it follows in the original family because the mapping $\lambda = d \bar{c}^d/c$ is a covering map of degree $d+1$.
\end{proof}

\begin{figure}[h]
\centering{\includegraphics[width=68mm]{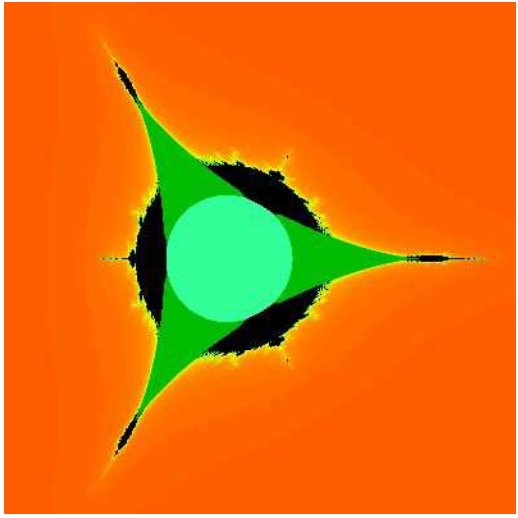}}
\caption{The connectedness locus of polynomials $\mu \bar{z}+\bar{z}^2$. The
circle in the center consists of parameters with parabolic fixed
points, and it intersects the boundaries of four distinct hyperbolic components of period
$1$: the disk inside, and three symmetric components outside of the
circle. In this space, Theorem \ref{ThmIndiffBdyHyp} is false.} 
\label{TricornCover}
\end{figure}

Now we prove Theorem \ref{ThmEvenBif} which shows the existence of bifurcations at parabolic parameters with even periodic parabolic cycles. This is subtler to prove than in the holomorphic case: if $f_{c_0}$ has an indifferent cycle of period $k$ and multiplier $\mu=e^{2\pi ip/q}$ with $p/q$ in lowest terms and $q\geq 2$, then in the holomorphic case it simply follows from the open mapping principle that $c_0$ is on the boundary of a hyperbolic component of period $kq$. In the antiholomorphic case, there is no open mapping principle, and the statement is not obvious. Our argument is based on similar ideas as for the proof of Theorem~\ref{ThmIndiffBdyHyp}. 

\begin{proof}[Proof of Theorem \ref{ThmEvenBif}]
Let $f_{c_0}(z)=\bar{z}^d+c_0$ be the given antiholomorphic polynomial and let $z_0$ be a 
parabolic $2k$-periodic point of $f_{c_0}(z)$. Since $f_{c_0}^{\circ 2k}$
is holomorphic, it makes sense to say that $z_0$ is the merger of a 
periodic point of period $2k$ and of points of period $2kq$. We will
estimate the  multiplier of the $2kq$-periodic orbit after perturbation of
$c_0$ to $c$. 

Let $\rho_0:=e^{2\pi ip/q}$. By assumption, we have for $z$ near $z_0$ 
\[ 
f_{c_0}^{\circ 2k}(z)  = z_0 + \rho_0(z-z_0) + O\left((z-z_0)^2\right) 
\,\,. 
\] 
Hence it follows from the Flower Theorem \cite[Section~10]{Mi3}: 
\[ 
f_{c_0}^{\circ 2kq}(z) = z + b(z-z_0)^{q+1} + O\left((z-z_0)^{q+2}\right) 
\,\,. 
\] 
The parabolic $2k$-cycle of $f_{c_0}$ is divided into two $k$-cycles of
$f_{c_0}^{\circ 2}$  and each $k$-cycle  contains one of the two critical
orbits of $f_{c_0}^{\circ  2}$ in its immediate basin.  Since this exhausts
all the critical orbits of $f_{c_0}^{\circ 2}$, there are exactly $q$
petals around $z_0$ and $b \neq 0$. 

Put $z_j = f_{c_0}^{\circ j}(z_0)$ for $0 \leq j \leq 2k-1$  and choose a 
fixed antiholomorphic polynomial $g$ satisfying $g(z_j) = 0$ for all $j$, $g'(z_j) = 0$ 
for $j \neq 1$, $g'(z_1)=-f'_{c_0}(z_1)$ and $g(z) = O(\bar{z}^d)$ as $z \to 
0$. Let $F_{\varepsilon} :=f_{c_0}+\varepsilon g$ for $\varepsilon \in \mathbb{C}$.  Then $(z_j)$ is still a 
$2k$-periodic cycle of $F_{\varepsilon}$. Let $\rho_{\varepsilon}$ be its multiplier. Then,
\begin{eqnarray*}  
F_{\varepsilon}^{\circ 2k}(z) 
&=& z_0 + \rho_{\varepsilon}(z-z_0) + O((z-z_0)^2) \,\,\mbox{ and } \\
F_{\varepsilon}^{\circ 2kq}(z) &=& z_0 + \rho_{\varepsilon}^q (z-z_0)  + b(z-z_0)^{q+1} \\  
&& + O\left(\varepsilon (z-z_0)^2\right) + O\left((z-z_0)^{q+2} \right) \,\,. 
\end{eqnarray*} 
We have
\begin{eqnarray*}
\rho_{\varepsilon} &=& \frac{\partial}{\partial z} F_{\varepsilon}^{\circ 2k}(z_0)  = 
\prod_{j=0}^{k-1}F'_{\varepsilon}(z_{2j+1})\overline{F'_{\varepsilon}(z_{2j})}\\  &=& 
\left(1+\varepsilon\frac{g'(z_1)}{f'_{c_0}(z_1)}\right) 
\prod_{j=0}^{k-1}f'_{c_0}(z_{2j+1})\overline{f'_{c_0}(z_{2j})}  = (1-\varepsilon)\rho_0 
\end{eqnarray*} 
and $\rho_\varepsilon^q = 1-q\varepsilon+O(\varepsilon^2)$; here $F'_\varepsilon$ denotes the 
antiholomorphic derivative $\frac{\partial}{\partial \bar{z}}F_\varepsilon$, 
and similarly for $f'_{c_0}$.

Any $2kq$-periodic point $z$ of $F_{\varepsilon}$ bifurcating from $z_0$ 
satisfies 
\begin{align} 
0 &= F_{\varepsilon}^{\circ 2kq}(z) - z 
\label{EqPeriod2qk} \\ 
&= z_0-z + \rho_\varepsilon^q(z-z_0) + b(z-z_0)^{q+1} 
+O\left(\varepsilon(z-z_0)^2\right) + O\left((z-z_0)^{q+2}\right) 
\nonumber \\
&= (z-z_0)\left(-1+1-q\varepsilon+O(\varepsilon^2)+b(z-z_0)^q \rule{0pt}{11pt}\right. 
\nonumber \\ 
&  +
\left.O(\varepsilon(z-z_0)) + O\left((z-z_0)^{q+1}\right) 
\rule{0pt}{11pt}\right) \,\,. 
\nonumber
\end{align} 
There are clearly $q$ such bifurcating points near $z_0$. 
To stress dependence on $\varepsilon$, we write $z_\varepsilon$ for $z$ and get 
$b(z_\varepsilon-z_0)^q=q\varepsilon(1+o(1))$ and hence 
\[ z_\varepsilon = z_0 + (q\varepsilon/b)^{1/q}\, 
\left(1 + o(1)\right) \,\,\mbox{ as }\,\, \varepsilon \to 0 \,\,. 
\] 
The multiplier is
\begin{eqnarray*}
\tilde\rho_{\varepsilon} &=& 
\left( \frac{\partial}{\partial z} F_{\varepsilon}^{\circ 2kq}\right) (z_{\varepsilon}) = 
\rho_\varepsilon^q+b(q+1)(z_\varepsilon-z_0)^q+o(\varepsilon) \\  
&=& 1-q\varepsilon+(q+1)q\varepsilon+o(\varepsilon)  = 1 + q^2\varepsilon + o(\varepsilon) \,\,. 
\end{eqnarray*}

For sufficiently small $\varepsilon<0$ we have $|\tilde\rho_{\varepsilon}| < 1$, so 
$z_{\varepsilon}$ is  an attracting periodic point of $F_{\varepsilon}$. 

The rest of the proof is as usual: restrict $f_{c_0}$ to an anti-polynomial-like map of degree $d$ and make $\varepsilon$ small enough so that 
after perturbation we still get an anti-polynomial-like map of degree $d$; 
straightening yields a unicritical antiholomorphic polynomial $\bar{z}^d+c(\varepsilon)$, and 
$c(\varepsilon)$ depends continuously on $\varepsilon$ with $c(0)=c_0$ (continuity is 
not in general true for straightening, but our construction assures that the 
quasiconformal gluing map has dilatation tending to zero as $\varepsilon\to 0$). 

Note that we have $q$ choices of the $q$-th root in $z_{\varepsilon}$, but the 
multiplier is independent of this choice (at least to leading
order). All $q$ choices of $z_\varepsilon$ will thus be attracting
simultaneously. Recall that $z_0$ had period $2k$. Since our polynomials
$f_{c(\varepsilon)}$ can only have a single non-repelling cycle, all these
attracting periodic points must be on the same cycle, which therefore has
period at least $2qk$; the period cannot  exceed $2qk$ because of
(\ref{EqPeriod2qk}). This completes the proof. 
\end{proof}

For odd $k$, bifurcations have an entirely different form; see Section~\ref{SecOddBdy}.

We will need the following result, which was proved in \cite{Na1}. 

\begin{theorem}[Nakane]\label{RealAnalUniformization}
The map $\Phi : \mathbb{C} \setminus \mathcal{M}^{\ast}_d \rightarrow \mathbb{C} \setminus \overline{\mathbb{D}}$, defined by $c \mapsto \phi_c(c)$ (where $\phi_c$ is the B\"{o}ttcher coordinate of $f_c$ near $\infty$) is a real-analytic diffeomorphism. In particular, the multicorns are connected.
\end{theorem}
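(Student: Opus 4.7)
The plan is to adapt Douady and Hubbard's proof of the connectedness of the Mandelbrot set to the antiholomorphic setting, exploiting the fact that $f_c^{\circ 2}(z) = (z^d + \bar c)^d + c$ is a genuine holomorphic polynomial of degree $d^2$ whose coefficients are polynomials in the pair $(c, \bar c)$. First I would construct $\phi_c$ as the Böttcher coordinate of $f_c^{\circ 2}$ at $\infty$, uniquely normalized by $\phi_c(z) = z + O(1/z)$. Comparing leading terms and using uniqueness of such expansions yields the twisted functional equation $\phi_c(f_c(z)) = \overline{\phi_c(z)}^d$, so the Green function $G_c := \log|\phi_c|$ satisfies $G_c \circ f_c = d\, G_c$. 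The critical points of $f_c^{\circ 2}$ are $0$ together with the $f_c$-preimages of $0$ (the solutions of $z^d = -\bar c$), whose Green values are $G_c(0)$ and $G_c(0)/d$ respectively. When $c \notin \mathcal{M}_d^*$ we have $G_c(0) > 0$, so $\phi_c$ extends biholomorphically from $\infty$ to the component of $\{G_c > G_c(0)\}$ containing $\infty$; this component contains $c$ since $G_c(c) = d\, G_c(0) > G_c(0)$. Set $\Phi(c) := \phi_c(c)$; then $|\Phi(c)| = e^{d\, G_c(0)} > 1$.

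Real-analyticity is then handled by complexifying $\bar c$ into an independent variable. Viewing $g_{c_1, c_2}(z) := (z^d + c_2)^d + c_1$ as a holomorphic family of degree-$d^2$ polynomials parametrized over $\mathbb{C}^2$, standard parametric Böttcher theory supplies $\tilde\Phi(c_1, c_2) := \tilde\phi_{c_1, c_2}(c_1)$ depending jointly holomorphically on $(c_1, c_2)$ on the open set where both critical orbits of $g_{c_1, c_2}$ escape to $\infty$, a set that contains the totally real diagonal $\{(c, \bar c) : c \notin \mathcal{M}_d^*\}$. The identity $\Phi(c) = \tilde\Phi(c, \bar c)$ then exhibits $\Phi$ as the composition of a real-analytic embedding with a holomorphic map, hence real-analytic.

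To conclude that $\Phi$ is a diffeomorphism I would show it is a proper local diffeomorphism of degree $1$. Properness follows from two boundary estimates: $\Phi(c) \sim c$ as $c \to \infty$ gives $|\Phi(c)| \to \infty$, while $|\Phi(c)| = e^{d G_c(0)} \to 1$ as $c \to \partial \mathcal{M}_d^*$ since $G_c(0) \to 0$ there. The degree-$1$ claim comes from the leading behavior $\Phi(c) \sim c$ at $\infty$. The main obstacle is verifying that the real Jacobian of $\Phi$ never vanishes: writing $\Phi(c) = \tilde\Phi(c, \bar c)$, this Jacobian equals $|\partial_{c_1} \tilde\Phi|^2 - |\partial_{c_2} \tilde\Phi|^2$ at $(c, \bar c)$, and I would differentiate the parametric Böttcher equation $\tilde\phi \circ g = \tilde\phi^{d^2}$ in each variable to obtain explicit formulas showing that the $c_1$-derivative strictly dominates (because $c_1$ perturbs the critical value directly while $c_2$ enters only through iteration of the dynamics and is damped by the escape rate). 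Once the Jacobian is controlled, $\Phi$ is a proper real-analytic local diffeomorphism of degree $1$, hence a diffeomorphism; then $\mathbb{C} \setminus \mathcal{M}_d^*$ is diffeomorphic to the connected set $\mathbb{C} \setminus \overline{\mathbb{D}}$, giving connectedness of $\mathcal{M}_d^*$. An alternative to the Jacobian computation would be a direct construction of $\Phi^{-1}$ by pulling back external rays and performing a quasiconformal surgery, but the proper-local-diffeomorphism route keeps the real-analytic structure manifest.
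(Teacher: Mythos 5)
The paper does not actually prove Theorem~\ref{RealAnalUniformization}: it is quoted from Nakane \cite{Na1}, so there is no in-paper argument to match yours against. Your sketch follows the natural Douady--Hubbard-style strategy (presumably also the skeleton of \cite{Na1}), and the routine parts are fine: the B\"ottcher coordinate of the holomorphic second iterate $f_c^{\circ 2}(z)=(z^d+\bar c)^d+c$, the twisted functional equation $\phi_c\circ f_c=\overline{\phi_c}^{\,d}$, the Green values $G_c(0)$ and $G_c(0)/d$ of the critical points of $f_c^{\circ 2}$ versus $G_c(c)=d\,G_c(0)$ (which puts $c$ in the domain of univalence), real-analyticity by complexifying $\bar c$ in the family $(z^d+c_2)^d+c_1$, and properness from $G_c(0)\to 0$ at $\partial\mathcal{M}_d^*$ together with $\Phi(c)\sim c$ at infinity.

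The genuine gap is precisely the step you yourself call the main obstacle: the nonvanishing of the real Jacobian $|\partial_{c_1}\tilde\Phi|^2-|\partial_{c_2}\tilde\Phi|^2$ along the antidiagonal $\{c_2=\bar c_1\}$. This is the heart of the theorem, and you offer only a heuristic for it. The heuristic does not hold up as stated: differentiating the parametric B\"ottcher relation, both partial derivatives of $\tilde\phi_{c_1,c_2}(c_1)$ are limits of orbit sums weighted by comparable derivative products along the escaping orbit ($c_2$ enters already in the first iterate $(c_1^d+c_2)^d+c_1$, just as directly as $c_1$), and near $\partial\mathcal{M}_d^*$ the escape rate tends to $0$, so the claimed ``damping by the escape rate'' degenerates exactly where the estimate is needed; no straightforward domination argument is visible. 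Nor can this point be bypassed: for a merely real-analytic map, properness plus degree $1$ at infinity does not imply injectivity (fold singularities can occur and cancel in pairs), which is exactly why the antiholomorphic case does not reduce to the proper-holomorphic-map argument for the Mandelbrot set and why the paper cites \cite{Na1} rather than giving the one-line argument. To close the gap you would need a genuine proof of local injectivity --- for instance, working in the complexified family and showing that the pair of B\"ottcher positions of the two critical values (which on the antidiagonal are $\Phi(c)$ and $\overline{\Phi(c)}^{\,d}$) defines a local biholomorphism of $\mathbb{C}^2$ near the antidiagonal, so that $\Phi(c)=\Phi(c')$ forces $c=c'$ locally --- or some comparable substitute; as written, the decisive step is asserted, not proved.
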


The previous theorem also allows us to define parameter rays of the multicorns. 

\begin{definition}[Parameter Ray]
The \emph{parameter ray} at angle $\theta$ of the multicorn $\mathcal{M}^{\ast}_d$, denoted by $\mathcal{R}_{\theta}$, is defined as $\lbrace \Phi^{-1}(r e^{2 \pi i \theta}) : r > 1 \rbrace$, where $\Phi$ is the diffeomorphism given in Theorem \ref{RealAnalUniformization}.
\end{definition}

\begin{definition}[Accumulation Set of a Ray]\label{accumulation_set}
The accumulation set of a parameter ray $\mathcal{R}_{\theta}$ of $\mathcal{M}_d^*$ is defined as $L_{\mathcal{M}_d^*}(\theta) := \overline{\mathcal{R}_{\theta}} \bigcap \mathcal{M}_d^*$.
\end{definition}

\begin{definition}[External Angle of a Parameter]
Let $c \in \mathbb{C} \setminus \mathcal{M}_d^*$. Then the unique angle $t(c)$ such that $c \in \mathcal{R}_{t(c)}$ is called the \emph{external angle} of the parameter $c$. 
\end{definition}

\begin{lemma}[Landing of Dynamical Rays]\label{Lem:Landing of Dyn Rays}
For every map $f_c$ and every periodic angle $t$ (under the map $t\mapsto -dt$ $($mod $1)$) of some period $n$, the
dynamical ray $\mathcal{R}_{t}^c$ lands at a repelling periodic point of period dividing
$n$, except in the following circumstances:
\begin{itemize}
\item
$c \in \mathcal{M}_d^*$ and $\mathcal{R}_{t}^c$ lands at a parabolic cycle;
\item
$c \notin  \mathcal{M}_d^*$ is on the parameter ray at some angle $(-d)^kt$, for $k \in \mathbb{N}$.
\end{itemize}
Conversely, every repelling or parabolic periodic point is the landing point of at
least one periodic dynamical ray for every $c \in \mathcal{M}_d^*$.
\end{lemma}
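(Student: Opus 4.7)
The plan is to reduce to the classical Douady--Hubbard landing theorem for holomorphic polynomials by passing to the second iterate $f_c^{\circ 2}$, which is a holomorphic polynomial of degree $d^2$. The B\"ottcher coordinate $\phi_c$ conjugates $f_c$ to $z\mapsto\bar z^d$ near $\infty$, so it conjugates $f_c^{\circ 2}$ to $z\mapsto z^{d^2}$; in particular, the dynamical rays $\mathcal{R}_t^c$ coincide whether viewed under $f_c$ or under $f_c^{\circ 2}$, and by Theorem~\ref{RealAnalUniformization} the parameter rays of $\mathcal{M}_d^{\ast}$ fit into the same picture. If $t$ is $n$-periodic under $t\mapsto -dt$, then $d^{2n}t\equiv t\pmod{1}$, so $\mathcal{R}_t^c$ is a periodic dynamical ray for $f_c^{\circ 2}$ whose period divides $n$.

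Next I would pin down the critical data of $f_c^{\circ 2}$. A direct calculation gives $f_c^{\circ 2}(z)=(z^d+\bar c)^d+c$, whose derivative $d^2 z^{d-1}(z^d+\bar c)^{d-1}$ vanishes exactly on $\{0\}\cup f_c^{-1}(0)$, with critical values $f_c(c)$ and $c$ respectively; hence the full critical orbit of $f_c^{\circ 2}$ coincides with the forward $f_c$-orbit of $c$. By the classical Douady--Hubbard landing theorem for polynomials (see e.g.\ \cite[\S18]{Mi3}), the periodic ray $\mathcal{R}_t^c$ of $f_c^{\circ 2}$ lands at a repelling or parabolic periodic point of $f_c^{\circ 2}$ unless some forward $f_c^{\circ 2}$-iterate of the ray contains a critical value. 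Pulling back along $f_c$, one checks that this exceptional condition is equivalent to $c\in\mathcal{R}_{(-d)^k t}^c$ for some $k\ge 0$; Theorem~\ref{RealAnalUniformization} identifies this with $c$ lying on the parameter ray at angle $(-d)^k t$, giving precisely the second stated exception. When $c\in\mathcal{M}_d^{\ast}$ the critical orbit is bounded, no such crashing is possible, and the landing is at a repelling or parabolic cycle, the parabolic alternative being the first exception; finally, since $f_c^{\circ n}$ maps $\mathcal{R}_t^c$ setwise to itself it fixes the landing point, forcing its $f_c$-period to divide $n$.

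For the converse, any repelling or parabolic $f_c$-periodic point is also repelling or parabolic for $f_c^{\circ 2}$, so I would apply the classical theorem (via local linearization and hyperbolic expansion for repelling cycles, and via the attracting petals in the parabolic case) that every such point of a polynomial is the landing point of at least one periodic dynamical ray; because the rays of $f_c$ and $f_c^{\circ 2}$ agree, the resulting ray is automatically periodic under $f_c$ as well. The main obstacle I anticipate is the bookkeeping around the two exceptional cases: verifying cleanly that the analytic condition ``the forward $f_c^{\circ 2}$-orbit of $\mathcal{R}_t^c$ meets a critical value'' matches the combinatorial condition ``$c$ is on the parameter ray at angle $(-d)^k t$'', which is exactly where Nakane's real-analytic uniformization (Theorem~\ref{RealAnalUniformization}) does the essential work of replacing the holomorphic Douady--Hubbard parametrization.
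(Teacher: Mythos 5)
Your proposal is correct, but it reaches the statement by a different route than the paper. The paper's proof is essentially a citation: it asserts that the classical holomorphic arguments go through \emph{analogously} for antiholomorphic maps, pointing to \cite[Theorems 18.10 and 18.11]{Mi3} for $c\in\mathcal{M}_d^*$ and for the converse, and to \cite{DH1}, \cite[Appendix A]{GM} for $c\notin\mathcal{M}_d^*$; no reduction to a holomorphic map is performed. You instead pass to the second iterate $f_c^{\circ 2}=P_{\bar c,c}(z)=(z^d+\bar c)^d+c$, which is genuinely holomorphic of degree $d^2$, verify that its critical orbits together form the forward $f_c$-orbit of $c$ and that its external rays are the same curves as those of $f_c$, and then apply the classical landing theorems verbatim, translating periodicity between $t\mapsto -dt$ and $t\mapsto d^2t$ and using Theorem~\ref{RealAnalUniformization} to convert ``some forward image of the ray contains the critical value'' into ``$c$ lies on the parameter ray at angle $(-d)^kt$''. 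This is a legitimate and arguably cleaner way of making ``analogously'' precise, and it is the same device the paper uses elsewhere (e.g.\ in Lemma~\ref{LemIsolated} and in the alternative proof of Lemma~\ref{stable in hyp comp}); what the paper's route buys is brevity, what yours buys is that no antiholomorphic re-examination of the Douady--Hubbard arguments is needed at all.

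Two minor points deserve patching, though neither is a genuine gap. First, for $c\notin\mathcal{M}_d^*$ the correct reference is not \cite[\S 18]{Mi3}, which assumes a connected Julia set, but the disconnected-case results of \cite{DH1} or \cite[Appendix A]{GM}: a periodic ray that never crashes into an escaping precritical point lands at a repelling or parabolic periodic point, and a crash forces some forward image of the ray to pass through the critical value $c$; also, the identification of the rays of $f_c$ and $f_c^{\circ 2}$ beyond the domain of the B\"ottcher map is cleanest via their common Green's function, whose critical points (the escaping precritical points of $f_c$) are the same for both maps. Second, in the disconnected, non-exceptional case your conclusion reads ``repelling or parabolic'', while the lemma asserts ``repelling'': add the one-line remark that $f_c$ with $c\notin\mathcal{M}_d^*$ has no parabolic cycle, since a parabolic basin would have to absorb the critical orbit, which escapes; this is why the parabolic alternative is listed only under the circumstance $c\in\mathcal{M}_d^*$.
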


\begin{proof}
This is well known in the holomorphic case, and can be proved analogously in the antiholomorphic setting. For the case $c \in \mathcal{M}_d^*$ (i.e. when the Julia set is connected), see \cite[Theorem 18.10]{Mi3}; if
$c \notin \mathcal{M}_d^*$ (i.e. when the Julia set is disconnected), see \cite[Expos{\'e} no.\ VIII, \S II, Proposition 1]{DH1}, \cite[Appendix A]{GM}. For the converse, see
\cite[Theorem 18.11]{Mi3}.
\end{proof}

We should note that for every periodic point $z_0$ of period $p$ of $f_{c_0}$ with multiplier $\lambda(c_0,z_0) \neq 1$, the orbit of $z_0$ remains locally stable under perturbation of the parameter. With some restrictions, this is even true for the dynamical rays of $f_{c_0}$ that land at $z_0$.

\begin{lemma}[Stability of Landing Rays]\label{l:preserving-portrait1}
Let $c_0$ be a parameter such that $f_{c_0} = \bar{z}^d + c$ has a repelling periodic point $z_0$ of period $p$ so that the dynamical rays $\mathcal{R}_t^{c_0}$ with $t \in A := \lbrace t_1,\cdots,t_v \rbrace$ $(v \geq 2)$ land at $z_0$. 

1) Then there exists a neighborhood $U$ of $c_0$, and a unique real-analytic function $z \colon U \to \mathbb{C}$
with $z(c_0)=z_0$ so that for every $c \in U$ the point $z(c)$ is a repelling
periodic point of period $p$ for $f_{c}$, and all the dynamical rays $\mathcal{R}_t^{c}$ with $t \in A$ land at $z(c)$.

2) Let $\lbrace n_1, n_2, \cdots, n_v\rbrace$ be the periods of the rays in $A$. Define $F$ to be the set of all parameters $c$ such that $f_c$ has a parabolic cycle, and some dynamical ray of period $n_i$ $(i \in \lbrace 1, 2, \cdots, v\rbrace)$ lands on the parabolic cycle of $f_c$. Let $U$ be an open, connected neighborhood of $c_0$ which is disjoint from $F$ and from all parameter rays at angles in $\displaystyle \tilde A:=\bigcup_{j\geq 0}(-d)^{j}A$. Then for every parameter $c \in U$, the dynamical rays $\mathcal{R}_t^c$ with $t \in A$ land at a common point.   
\end{lemma}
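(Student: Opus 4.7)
My plan is to establish part 1 by an implicit-function-theorem and linearization argument, and then to bootstrap part 2 via an open-closed argument in $U$, using the two exclusions built into the hypotheses precisely to avoid the two standard obstructions to continuation.

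For part 1, the real-analytic map $(c,z) \mapsto f_c^{\circ p}(z) - z$ vanishes at $(c_0,z_0)$, with nondegenerate derivative in $z$ because $z_0$ is repelling and its multiplier therefore differs from $1$. The implicit function theorem (applied to $f_c$ as a real-analytic map of $\mathbb{R}^2$, or equivalently to the holomorphic iterate $f_c^{\circ 2p}$) produces the real-analytic continuation $z(c)$, and continuity of the multiplier keeps it repelling. To propagate the landing of each ray $\mathcal{R}_{t_i}^{c_0}$, I would work in a small linearizing neighborhood of $z_0$ for $f_{c_0}^{\circ 2p}$, in which each such ray enters along a well-defined asymptotic direction. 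Both the linearization and the local ray pieces deform real-analytically with $c$, while the high-potential tails of $\mathcal{R}_{t_i}^c$ depend continuously on $c$ through the B\"{o}ttcher coordinate given by Theorem~\ref{RealAnalUniformization}. Pulling back the tails finitely many times under $f_c$ and matching with the local pieces at $z(c)$ yields the desired landing on a small neighborhood $U$ of $c_0$.

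For part 2, define $V \subseteq U$ to be the set of parameters $c$ for which all rays $\mathcal{R}_{t_1}^c, \ldots, \mathcal{R}_{t_v}^c$ land at a common periodic point; the disjointness of $U$ from parameter rays at angles in $\tilde{A}$ ensures that these dynamical rays are unobstructed by the critical orbit and are well-defined throughout $U$. By part 1, $V$ is open, and $c_0 \in V$. For closedness, suppose $c_n \in V$ with $c_n \to c^* \in U$. By Lemma~\ref{Lem:Landing of Dyn Rays}, each periodic ray $\mathcal{R}_{t_i}^{c^*}$ lands at a repelling or parabolic periodic point; the parabolic possibility is excluded by $c^* \notin F$, so each $\mathcal{R}_{t_i}^{c^*}$ lands at a repelling periodic point $\zeta_i(c^*)$. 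Applying the single-ray version of part 1 at $c^*$ yields a neighborhood $W$ of $c^*$ on which each $\zeta_i$ depends real-analytically. For $c_n \in W \cap V$ we have $\zeta_1(c_n) = \cdots = \zeta_v(c_n)$, so passing to the limit gives $\zeta_1(c^*) = \cdots = \zeta_v(c^*)$, whence $c^* \in V$. Since $U$ is connected, $V = U$.

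The main obstacle will be the closedness step: a priori, as $c_n \to c^*$ the common landing pattern could degenerate either because a ray meets the critical orbit (ruled out by $U$ avoiding parameter rays at angles in $\tilde{A}$), or because the repelling periodic orbit collides with a parabolic cycle on which some ray of period $n_i$ lands (ruled out by $U \cap F = \emptyset$). The two hypotheses on $U$ are tailored to exclude exactly these two bifurcations, and the main technical task is to make rigorous the real-analytic dependence of each individual landing point $\zeta_i(c)$ on $c$ in the absence of these obstructions, which is essentially the single-ray content of part 1.
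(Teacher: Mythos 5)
Your proposal is correct and follows essentially the same route as the paper: part 1 via the implicit function theorem plus the standard ray-landing stability argument in linearizing coordinates (which the paper simply cites from Goldberg--Milnor and Schleicher rather than writing out), and part 2 via an open--closed argument on the connected set $U$, using $U\cap F=\emptyset$ and the avoidance of parameter rays at angles in $\tilde A$ exactly as the paper does. The only point to make explicit is that openness of your set $V$ already requires observing that for $c\in V$ the common landing point is repelling (the parabolic alternative being excluded by $c\notin F$) before the continuation argument of part 1 can be applied; the paper records this, and your closedness step contains the same observation, so the fix is one sentence.
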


\begin{proof}
We work out the case when $p$ is even. When $p$ is odd, we simply need to work with the holomorphic first resturn map $f_{c_0}^{\circ 2p}$ of $z_0$.

1) Since $z_0$ is a repelling periodic point of $f_{c_0}$, the multiplier $\lambda(c_0,z_0) := \left(f_{c_0}^{\circ p}\right)'(z_0)$ is greater in $1$ in absolute value. By the Implicit Function Theorem, we can solve the equation $F(c,z) := f_c^{\circ p}(z) - z=0$ for the periodic point $z(c)$ (observe that $\frac{\partial F}{\partial z}\vert_{\left(c_0,z_0\right)}=\left(f_{c_0}^{\circ p}\right)'(z_0)-1=\lambda(c_0,z_0) -1 \neq 0$) as a real-analytic function of $c$ in a neighborhood $U$ of $c$. The multiplier $\left(f_{c}^{\circ p}\right)'(z(c))$ of $f_c$ at $z(c)$ is a real-analytic function as well, and hence is greater than $1$ in absolute value for $c$ close to $c_0$. This proves that the point $z_0$ can be continued real-analytically as a repelling periodic point $z(c)$ of $f_{c}$ in a neighborhood $U$ of $c_0$. Arguing as in \cite[Lemma B.1]{GM}, \cite[Lemma 2.2]{Sch}, one can show that the dynamical rays $\mathcal{R}_t^{c}$ with $t \in A$ land at the real-analytically continued periodic point $z(c)$ (possibly after shrinking $U$). 

2) Since $U$ does not intersect any parameter ray at angles in $\tilde{A}$, Lemma \ref{Lem:Landing of Dyn Rays} tells that for all $c \in U$, the dynamical rays $\mathcal{R}_t^c$ with $t \in A$ indeed land. Let $U^{\prime} := \lbrace c \in U:$ the dynamical rays $\mathcal{R}_t^c$ with $t \in A$ land at a common point$\rbrace$. For any $c \in U^{\prime}$, the common landing point of the dynamical rays $\mathcal{R}_t^c$ with $t \in A$ must be repelling as $U \cap F = \emptyset$. By part (1), we conclude that $U^{\prime}$ is open in $U$.

Let $\tilde{c}$ be a limit point of $U^{\prime}$ in the subspace topology of $U$. The dynamical rays $\mathcal{R}_t^{\tilde{c}}$ with $t \in A$ do land in the dynamical plane of $f_{\tilde{c}}$; we claim that they all land at a common point. Otherwise, at least two rays $\mathcal{R}_{t_1}^{\tilde{c}}$ and $\mathcal{R}_{t_2}^{\tilde{c}}$ (say) would land at two different repelling periodic points $z_1$ and $z_2$ (respectively) in the dynamical plane of $f_{\tilde{c}}$. Choose neighborhoods $V_1$ and $V_2$ of $z_1$ and $z_2$ respectively such that $V_1 \cap V_2 = \emptyset$. Applying part (1) to this situation, we see that for $c$ close to $\tilde{c}$, the landing points of $\mathcal{R}_{t_1}^{c}$ and $\mathcal{R}_{t_2}^{c}$ would lie in $V_1$ and $V_2$ respectively; i.e. the dynamical rays $\mathcal{R}_{t_1}^{c}$ and $\mathcal{R}_{t_2}^{c}$ would land at different points. This contradicts the fact that $\tilde{c}$ is a limit point of $U^{\prime}$. Hence, $\tilde{c} \in U'$, and $U^{\prime}$ is closed in $U$. Since $U$ is connected, $U^{\prime} = U$. This completes the proof of the lemma.
\end{proof}

\begin{corollary}[Stability of Rays at Bifurcations]\label{CorRayStability} 
Let, $c_0$ has an indifferent cycle of even period, and $H$ be a hyperbolic component with $c_0 \in \partial H$. Let, the dynamical rays $\mathcal{R}_{\theta}^{c_0}$ with $\theta \in \lbrace \theta_1, \cdots, \theta_r \rbrace$ land at a common repelling periodic point of $f_{c_0}$. Then for all $c \in H$, the dynamical rays $\mathcal{R}_{\theta}^{c}$ with $\theta \in \lbrace \theta_1, \cdots, \theta_r \rbrace$ land at a common repelling periodic point of $f_{c}$.

If further, the non-repelling cycle of $c_0$ is parabolic, and if the dynamical rays $\mathcal{R}_t^{c_0}$ with $t \in A := \lbrace t_1,\cdots,t_v \rbrace$ land at a parabolic periodic point $z_0$ of $f_{c_0}$, then for all $c \in H$, the dynamical rays $\mathcal{R}_t^{c}$ with $t \in A$ land at a common repelling cycle of $f_c$. The period of this repelling cycle may be equal or greater than the period of the parabolic cycle. If the periods are equal, then for all $c \in H$, the dynamical rays $\mathcal{R}_t^{c}$ with $t \in A$ land at a common repelling periodic point of $f_c$. 
\end{corollary}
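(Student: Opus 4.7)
My plan is to apply Part 2 of Lemma \ref{l:preserving-portrait1} to the hyperbolic component $H$ itself. The set $H$ is open and connected; it contains no parabolic parameters, so $H \cap F = \emptyset$; and it is disjoint from all parameter rays since $H \subset \mathcal{M}_d^*$. Thus the hypotheses of the lemma are met with $U = H$, and it only remains to furnish a base point $c_1 \in H$ at which the rays in question land at a common point (for the first assertion) or at a common cycle (for the second).

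For the first assertion, Part 1 of Lemma \ref{l:preserving-portrait1} applied at $c_0$ produces a neighborhood $U_0$ of $c_0$ in which the repelling periodic point persists and each ray $\mathcal{R}_{\theta_i}^c$ continues to land at this continuation. Choose any $c_1 \in U_0 \cap H$, which is non-empty since $c_0 \in \partial H$, and apply Part 2 of the lemma to $H$ with $c_1$ as base point. For every $c \in H$ the resulting common landing point must be repelling, since attracting cycles in a hyperbolic component lie in the Fatou set whereas periodic dynamical rays land on the Julia set.

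For the second assertion, Part 1 of Lemma \ref{l:preserving-portrait1} does not apply directly at $c_0$, because the landing is at a parabolic cycle. I would instead invoke parabolic perturbation theory for the holomorphic second iterate $f_c^{\circ 2}$, whose relevant cycle is parabolic. Standard Fatou-coordinate/parabolic-implosion machinery shows that as $c$ is perturbed from $c_0$ into $H$, the rays $\mathcal{R}_t^{c_0}$ landing at the parabolic cycle of $f_{c_0}$ continue to land at a nearby cycle of $f_c$, which is repelling because $H$ is hyperbolic; this repelling cycle has period equal to that of the parabolic cycle (in which case the rays land at a common point) or a multiple thereof (in which case they distribute themselves among distinct points of a single cycle, as predicted by the bifurcation structure of Theorem \ref{ThmEvenBif}). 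With such a base point $c_1 \in H$ near $c_0$ in hand, the equal-period case concludes by Part 2 of Lemma \ref{l:preserving-portrait1} exactly as in the first assertion; the higher-period case concludes by applying Part 1 of the lemma to each ray individually, real-analytically continuing the landing points throughout $H$, and using that the bifurcated cycle persists as a single repelling cycle throughout the hyperbolic component so that a standard open-closed argument in the connected set $H$ upgrades this to the common-cycle statement.

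The main obstacle is the parabolic stability of ray-landing across $c_0$: in the holomorphic setting this is classical, and in our setting it reduces to the holomorphic second iterate $f_c^{\circ 2}$, but one must verify that the periodic dynamical rays of the antiholomorphic map $f_c$ (defined via the antiholomorphic B\"ottcher coordinate) are preserved under the parabolic perturbation of $f_c^{\circ 2}$, and that the combinatorial assignment of rays to cycle elements on each side of $c_0$ is the one dictated by the Fatou-coordinate data at the parabolic cycle.
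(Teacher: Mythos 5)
Your proposal follows essentially the same route as the paper: for the first assertion the paper likewise applies part (1) of Lemma \ref{l:preserving-portrait1} at $c_0$ to obtain a base parameter in $B(c_0,\varepsilon)\cap H$ and then part (2) with $U=H$, noting that $H$ meets no parabolic parameters and no parameter rays. For the second assertion the paper simply observes that the even period makes the first-return map holomorphic and cites \cite[Theorem 4.1, Lemma 4.4, Lemma 6.1]{Mi2}, which is precisely the holomorphic parabolic-perturbation input you sketch via Fatou coordinates before propagating through $H$ with the same lemma, so your proof is a slightly more explicit rendering of the paper's argument rather than a different one.
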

\begin{proof}
Any arbitrarily small perturbation of $c_0$ intersect $H$, and hence by part (1) of Lemma \ref{l:preserving-portrait1}, the dynamical rays $\mathcal{R}_{\theta}^{c}$ with $\theta \in \lbrace \theta_1, \cdots, \theta_r \rbrace$ land at a common repelling periodic point of $f_{c}$ for any $c \in B(c_0, \varepsilon) \cap H$ (for $\varepsilon>0$ small enough). But the hyperbolic component $H$ does not intersect any parabolic parameter or any parameter ray. Therefore, by part (2) of Lemma \ref{l:preserving-portrait1}, the required property continues to hold throughout $H$.

Since the period of the indifferent cycle is even, the second fact can be proved as in the holomorphic setting. We refer the readers to \cite[Theorem 4.1, Lemma 4.4, Lemma 6.1]{Mi2} for the details.
\end{proof}

\begin{remark}
If the period of the parabolic cycle is odd, more care is needed. We will show later that the rays landing at an odd periodic parabolic point can be shared by two \emph{distinct} repelling cycles after perturbation.
\end{remark}

Before we state our next lemma, we need a brief digression to algebra. It is well-known that the \emph{resultant} of two univariate polynomials $P$ and $Q$ is a polynomial in the coefficients of $P$ and $Q$ which vanishes if and only if $P$ and $Q$ have a common root; i.e. if and only if $\mathrm{deg(gcd}(P,Q)) \geq 1$. It is sometimes desirable to generalize the notion of \emph{resultants} to be able to predict the exact degree of the gcd of $P$ and $Q$ in terms of their coefficients. This can be done by the so-called \emph{subresultants}, which are polynomials in the coefficients of $P$ and $Q$, and whose order of vanishing tells us the degree of the gcd of the two polynomials $P$ and $Q$. We refer the readers to \cite[\S 4.2.2]{BPC} for the precise definition of subresultants.

The main result on \emph{subresultants} that will be used in the proof of the Lemma \ref{LemIsolated} is the following:
 
\begin{lemma}\label{subresultants}
For two univariate polynomials $P$ and $Q$ over a domain (of degree $p$ and $q$ respectively, such that $p > q$), $\mathrm{deg(gcd}(P,Q)) \geq j$ for some $0 \leq j \leq q$ if and only if $sRes_0(P,Q)=\cdots =sRes_{j-1}(P,Q)=0$, where each $sRes_i(P,Q)$ is a polynomial expression in the coefficients of $P$ and $Q$.
\end{lemma}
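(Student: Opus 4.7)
The plan is to derive the lemma from the stronger characterization established in the standard theory of subresultants (see \cite[Proposition~4.25]{BPC}): for $0 \leq j \leq q$, one has $\deg \gcd(P, Q) = j$ if and only if $sRes_0(P, Q) = \cdots = sRes_{j-1}(P, Q) = 0$ and $sRes_j(P, Q) \neq 0$. The lemma follows by decomposing the condition ``$\deg \gcd \geq j$'' into the disjoint cases $\deg \gcd \in \{j, j+1, \ldots, q\}$ and observing that in each of these cases, $sRes_0 = \cdots = sRes_{j-1} = 0$ holds; conversely, if this common condition holds, then the smallest-non-vanishing-index form of the stronger statement forces $\deg \gcd$ to take some value in $\{j, j+1, \ldots, q\}$.

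The core of the stronger characterization rests on the following elementary gcd-theoretic bridge: $\deg \gcd(P, Q) \geq j + 1$ if and only if there exist polynomials $U, V$, not both zero, with $\deg U < q - j$ and $\deg V < p - j$, satisfying $U P + V Q = 0$. The forward direction is immediate: writing $P = G P_1$ and $Q = G Q_1$ with $\gcd(P_1, Q_1) = 1$, one takes $(U, V) = (Q_1, -P_1)$, which has $\deg U = q - \deg G \leq q - j - 1 < q - j$ and similarly for $V$. For the converse, the relation $U P_1 = -V Q_1$ combined with $\gcd(P_1, Q_1) = 1$ forces $Q_1 \mid U$, whence $\deg U \geq q - \deg G$; since $\deg U < q - j$, this yields $\deg G > j$.

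The existence of such $(U, V)$ amounts to the non-injectivity of the restricted Sylvester map $(U, V) \mapsto U P + V Q$ on the space of pairs of polynomials of the prescribed degrees; this in turn is a rank drop in a specific $(p + q - 2j) \times (p + q - 2j)$ submatrix of the Sylvester matrix $\operatorname{Syl}(P, Q)$. The subresultants $sRes_0, \ldots, sRes_j$ are defined as canonical minors tied to this submatrix, which makes them manifestly polynomial expressions in the coefficients of $P$ and $Q$. The content of \cite[\S 4.2.2]{BPC} is that the rank drop is captured exactly by the simultaneous vanishing of these $j+1$ particular minors; the main obstacle is the combinatorial bookkeeping of matching each $sRes_i$ to the correct minor and checking that this specific family (rather than some other collection of $(p+q-2i)$-minors) encodes the rank condition. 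Since this is standard, we invoke it rather than reproducing the proof.
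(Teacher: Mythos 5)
Your proposal is correct and follows essentially the same route as the paper, which simply cites \cite[Proposition 4.25]{BPC} for this statement; your reduction of the ``$\geq j$'' form to the exact-degree characterization and your sketch of the Sylvester-matrix/B\'ezout-relation argument behind it are accurate, but in the end you invoke the same standard result. No gap to report.
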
 

\begin{proof}
See \cite[Proposition 4.25]{BPC}.
\end{proof}

We denote the unit disc in the complex plane by $\mathbb{D}$ and the set of all roots of unity by $U$.

\begin{lemma}[Multipliers Isolated]\label{LemIsolated} 
Let $k \in \mathbb{Z}^+$ and $\mu \in \mathbb{D} \cup U$. Then the set of parameters $c\in\mathbb{C}$ for which $\bar{z}^d+c$ has a periodic orbit with period $2k$ and multiplier $\mu$ is finite. 
\end{lemma}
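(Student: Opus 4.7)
My plan is to combine an algebraic setup using subresultants with a local dynamical isolation argument. Treating $\bar c$ as a formal independent complex variable $c'$, the iterate $f_c^{\circ 2k}(z)$ becomes a polynomial $P(z, c, c') \in \mathbb{Z}[c, c'][z]$ of degree $d^{2k}$ in $z$, and $Q(z, c, c') := \partial_z f_c^{\circ 2k}(z) - \mu$ is likewise a polynomial of degree $d^{2k} - 1$ in $z$. A parameter $c$ lies in $S_\mu := \{c : f_c \text{ has a } 2k\text{-cycle with multiplier } \mu\}$ exactly when, upon substituting $c' = \bar c$, the polynomials $P$ and $Q$ share a common root in $z$; by Lemma~\ref{subresultants}, this is equivalent to the vanishing of $R(c, c') := \mathrm{sRes}_0(P, Q) \in \mathbb{Z}[c, c']$. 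To verify that $R \not\equiv 0$, it suffices to exhibit one $(c_0, c_0') \in \mathbb{C}^2$ where the auxiliary map $g_{c_0, c_0'}(z) := (z^d + c_0')^d + c_0$ has no period-$2k$ orbit with multiplier $\mu$; taking $(c_0, c_0') = (c_0, c_0)$ with $|c_0|$ sufficiently large (so that all periodic orbits of $f_{c_0}$ are repelling with multiplier of large modulus) does the job, since $\mu \in \mathbb{D} \cup U$ has $|\mu| \leq 1$. Consequently $S_\mu$ is contained in the real-algebraic set $Z := \{c \in \mathbb{C} : R(c, \bar c) = 0\}$, which has only finitely many semialgebraically-connected components.

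The key second stage is to show that each $c^* \in S_\mu$ is isolated in $S_\mu$. If $|\mu| < 1$, then $c^*$ lies in the interior of some hyperbolic component $H$ of period $2k$; by the Nakane--Schleicher theorem \cite{NS} the real-analytic multiplier map $\rho_H \colon H \to \mathbb{D}$ is a proper branched cover of degree $d - 1$ with finite fibers, so $c^*$ is isolated in $\rho_H^{-1}(\mu) = S_\mu \cap H$, and openness of $H$ then yields isolation in all of $S_\mu$. For $\mu = e^{2\pi i p/q} \in U$ with $\gcd(p, q) = 1$ and $q \geq 2$ (so $\mu \neq 1$), the condition $\mu - 1 \neq 0$ lets the implicit function theorem solve $f_c^{\circ 2k}(z) = z$ for a real-analytic continuation $z(c)$ of the parabolic point, and the leading-order formula $\rho_\varepsilon = (1 - \varepsilon)\mu$ derived in the proof of Theorem~\ref{ThmEvenBif} applied in the auxiliary one-parameter perturbation $F_\varepsilon$ shows that the multiplier varies nontrivially along the straightened family; combined with the semialgebraic rigidity from the first stage, this forces $c^*$ to be isolated. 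The case $\mu = 1$ is handled by a local quadratic expansion of $f_{c^*}^{\circ 2k}(z) - z$ at the parabolic fixed point, which splits the double fixed point into two simple ones whose multipliers deviate from $1$ by $O(\sqrt{c - c^*})$, again giving isolation.

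Finiteness then follows: $S_\mu$ is a discrete subset of the bounded semialgebraic set $Z \cap \mathcal{M}_d^*$, and any accumulation point of an infinite sequence in $S_\mu$ would, by continuity of periodic points and their multipliers, be a parameter with a periodic cycle of strictly smaller even period whose multiplier is an appropriate root of $\mu$; by induction on $k$ those sets are themselves finite, ruling out infinite accumulation. The main obstacle is the parabolic case $\mu \in U$: because the multiplier map is only real-analytic (not holomorphic) in the parameter, there is no identity principle to prevent $\rho(c) \equiv \mu$ along a real-analytic arc through $c^*$. Here the precise leading-order perturbation formula from Theorem~\ref{ThmEvenBif} must be combined with the semialgebraic rigidity coming from the subresultant framework to exclude such arcs, whereas for $|\mu| < 1$ the openness of the multiplier map on hyperbolic components already suffices.
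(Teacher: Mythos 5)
Your first stage and your treatment of $|\mu|<1$ are fine: containment of $S_\mu$ in a proper real-algebraic set, plus the fact (from \cite{NS}) that the multiplier map on an even-period hyperbolic component is a degree-$(d-1)$ branched cover with finite fibers, does give isolation of attracting parameters, and this is a legitimately different (and simpler) route than the paper takes for that case. But the heart of the lemma is $\mu\in U$, in particular real $\mu$ and $\mu=1$, and there your argument has a genuine gap. Isolation is exactly the issue, and you do not prove it: the formula $\rho_\varepsilon=(1-\varepsilon)\rho_0$ from the proof of Theorem~\ref{ThmEvenBif} concerns the specially constructed perturbation $F_\varepsilon=f_{c_0}+\varepsilon g$ in the space of all antiholomorphic polynomials (with $g$ chosen to keep the cycle fixed), followed by a straightening that is merely continuous in $\varepsilon$; it says nothing about how the multiplier of the continued $2k$-cycle varies as $c$ moves inside the unicritical family, and in particular it cannot exclude a real-analytic arc of parameters along which the multiplier is identically $\mu$. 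Nor does the ``semialgebraic rigidity'' of the first stage: the set $\{c: R(c,\bar c)=0\}$ is cut out by one complex (two real) polynomial equations in two real variables and can perfectly well contain arcs --- this is precisely what happens for odd periods, where parameters with a parabolic cycle of multiplier $+1$ form the parabolic arcs of Theorem~\ref{ThmParaArcs} (see also the remark following the lemma in question). Your $\mu=1$ sub-argument (``multipliers deviate from $1$ by $O(\sqrt{c-c^*})$'') tacitly assumes a nondegenerate, holomorphic-like parameter dependence that is not available, and an $O(\sqrt{c-c^*})$ bound would in any case not prevent the deviation from vanishing along a curve. Finally, your closing induction is incorrect as stated: if the period drops at an accumulation point the limiting multiplier is $1$, and the limiting cycle may have \emph{odd} period with multiplier $+1$; the set of such parameters is not finite, so the inductive appeal to smaller periods collapses.

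What is missing is precisely the mechanism the paper uses to rule out such arcs for even periods: pass to the holomorphic two-parameter family $P_{a,b}(z)=(z^d+a)^d+b$ (so $f_c^{\circ 2}=P_{\bar c,c}$), observe that a $2k$-cycle of $f_c$ splits into \emph{two distinct} $k$-cycles of $P_{\bar c,c}$ with conjugate multipliers $\mu$ and $\overline{\mu}$, encode this two-cycle condition algebraically in $\mathbb{C}^2$ (two distinct resultant varieties when $\mu\notin\mathbb{R}$; the vanishing of $sRes_0,\dots,sRes_{2k-1}$, i.e.\ a gcd of degree $2k$, when $\mu\in[-1,1]$, with extra care at $\mu=1$ where each parabolic point is a double root), and then kill the unbounded-component alternative in B\'ezout's theorem by noting that any parameter in these varieties has \emph{both} critical orbits bounded, hence lies in the compact connectedness locus of degree-$d^2$ polynomials \cite{BH}. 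Without some substitute for this global compactness-plus-algebraicity step in the complexified family, the possibility that $S_\mu$ contains an arc for $\mu$ a root of unity is not excluded, so the proposal does not prove the lemma in the cases where it is actually needed (e.g.\ for the parabolic parameters counted in Section~\ref{No.ofHypComps}).
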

\begin{proof}
We embed the family $\{f_c\}$ in a two-parameter family of polynomials $\{P_{a,b}(z) = (z^d+a)^d+b;\,\,(a,b) \in \mathbb{C}^2\}$ which depends analytically on the parameters $a$ and $b$. The critical points of these polynomials are $0$ and the $d$ roots of the equation $z^d+a = 0$ such that each critical point has multiplicity $d-1$; however, these maps have only two critical values $a^d+b$ and $b$. Since $f_c^{\circ 2} = P_{\bar{c},c}$, our family can be regarded as a real 
$2$-dimensional slice (namely, $a = \bar{b}$) of the family $\{P_{a,b}\}_{a,b \in \mathbb{C}}$. A $2k$-cycle $\left( z_j\right)$ (where $z_j= f_c^{\circ j}(z_0), 0 \leq j\leq 2k-1$) of $f_c$ gives two distinct $k$-cycles $(z_{2i})$ and $(z_{2i+1})$ (for $i=0,\cdots,k-1$) of $f_c^{\circ 2}$; the multipliers of these two cycles are easily seen to be complex conjugates of each other; denote them by $\mu$ and $\overline{\mu}$. 

We define the following sets: 
\begin{itemize}
\item
$\Lambda := \lbrace c \in \mathbb{C} : f_c$ has a periodic orbit of period $2k$ with multiplier $\mu \rbrace$,
\item
$\Lambda(k,\mu)$ := $\lbrace (a,b) \in \mathbb{C}^2$ : $P_{a,b}$ has two distinct cycles of period dividing $k$ with multipliers $\mu$ and $\overline{\mu} \rbrace,$ 
\item
$\Lambda^{\prime}(k,\mu)$ = $\lbrace (a,b) \in \mathbb{C}^2$ :  $P_{a,b}$ has two distinct cycles of period $k$ with multipliers $\mu$ and $\overline{\mu} \rbrace$. 
\end{itemize}
 
Clearly, $\Lambda \subset \lbrace c \in \mathbb{C} : (\bar{c},c) \in \Lambda^{\prime}(k,\mu) \rbrace$,  $\vert \lbrace c \in \mathbb{C} : (\bar{c},c) \in \Lambda^{\prime}(k,\mu) \rbrace \vert = \vert \Lambda^{\prime}(k,\mu) \bigcap \lbrace (a,b) \in \mathbb{C}^2 : a = \bar{b} \rbrace \vert$ and $\Lambda^{\prime}(k,\mu) \subset \Lambda(k,\mu)$.
 
\emph{Case 1.} First assume that $\mu \in \left( \mathbb{D} \cup U \right) \setminus \mathbb{R}$. We will show that the set $\Lambda(k,\mu) $ is finite. Let us consider the complex numbers $a,b,z_0,z_1$ satisfying the four algebraic equations 
\begin{eqnarray} 
P_{a,b}^{\circ k}(z_0) - z_0 &=& 0, \quad
(P_{a,b}^{\circ k})^{\prime}(z_0) = \mu,
\label{EqBezout1}
\\ 
P_{a,b}^{\circ k}(z_1) - z_1 &=& 0, \quad 
(P_{a,b}^{\circ k})^{\prime}(z_1) = \overline{\mu}. 
\label{EqBezout2}
\end{eqnarray}
 
Let $R_{\mu}(a,b)$ (respectively $R_{\overline{\mu}}(a,b)$) be the resultant of the two polynomials $P_{a,b}^{\circ k}(z)-z$ and $(P_{a,b}^{\circ k})^{\prime}(z)-\mu$ (respectively $P_{a,b}^{\circ k}(z)-z$ and $(P_{a,b}^{\circ k})^{\prime}(z)-\overline{\mu}$). Then a solution $z_0$ of (\ref{EqBezout1}) (respectively $z_1$ of (\ref{EqBezout2})) exists if and 
only if $R_{\mu}(a,b) = 0$ (respectively $R_{\overline\mu}(a,b) = 0$). It follows that $\Lambda(k,\mu) = \{R_{\mu}(a,b) = R_{\overline{\mu}}(a,b) = 0\}$. Since $\mu \notin \mathbb{R}$, the two algebraic varieties $R_{\mu}(a,b) = 0$ and $R_{\overline\mu}(a,b) = 0$ are distinct.  By B\'{e}zout's theorem, the intersection of two algebraic varieties $R_{\mu}(a,b) = 0$ and $R_{\overline\mu}(a,b) = 0$ is either a finite set or a common irreducible component with unbounded projection over each variable. If they have a common irreducible component, say $S$, then $S \subset\Lambda(k,\mu)= \{R_{\mu}(a,b) = R_{\overline{\mu}}(a,b) = 0\}.$ This would force $\Lambda(k,\mu)$ to be unbounded. But since $\mu \in \mathbb{D} \cup U$, for any $(a,b) \in \Lambda(k,\mu)$, $P_{a,b}$ has two distinct attracting/parabolic cycles with multipliers $\mu$ and $\overline{\mu}$. As a result, these periodic orbits would attract at least one infinite critical orbit each. This forces all the critical orbits of $P_{a,b}$ to stay bounded. This implies that $\Lambda(k,\mu)$ is contained in the connectedness locus of the family of monic centered polynomials of degree $d^2$, which is compact by \cite{BH}: a contradiction!

\emph{Case 2.} Now we consider $\mu \in [-1 , 1)$. We no longer have two distinct algebraic varieties given by (\ref{EqBezout1}) and (\ref{EqBezout2}) and the arguments of the previous case do not work. To circumvent this problem, we use the theory of \emph{subresultants}.  

Note that for any $\left( a,b \right) \in \Lambda^{\prime}(k,\mu)$, the two polynomial equations $P_{a,b}^{\circ k}(z)-z=0$ and $(P_{a,b}^{\circ k})^{\prime}(z)-\mu=0$ (viewed as equations in $\mathbb{C}\left[ a,b \right] \left[ z \right]$) have exactly $2k$ distinct common solutions, all of which are simple roots of $P_{a,b}^{\circ k}(z)-z=0$ . Thus their gcd is a polynomial in $\mathbb{C}\left[ a,b \right] \left[ z \right]$ of degree $2k$. Note that in this case, the polynomial $P_{a,b}$ can have at most two cycles of multiplier $\mu$. It follows from Lemma \ref{subresultants} that:
\begin{align}
\label{subres1}
 \displaystyle \Lambda^{\prime}(k,\mu) = \lbrace \left( a,b \right) \in \mathbb{C}^2 : sRes_0\left( P_{a,b}^{\circ k}(z)-z , (P_{a,b}^{\circ k})^{\prime}(z)-\mu\right)=\cdots\\ 
 =sRes_{2k-1} \left(P_{a,b}^{\circ k}(z)-z , (P_{a,b}^{\circ k})^{\prime}(z)-\mu \right)=0 \rbrace. 
\nonumber
\end{align} 
It again follows from B\'{e}zout's theorem that the intersection of these algebraic varieties is a finite set: if there was a common irreducible component, then this component would be contained in $\Lambda^{\prime}(k,\mu)$ forcing it to be unbounded. But the two critical orbits of $P_{a,b}$ must be attracted by the two (attracting or parabolic) cycles of period $k$, implying that $\Lambda^{\prime}(k,\mu)$ is contained in the connectedness locus of the family of polynomials of degree $d^2$, which is compact by \cite{BH}: a contradiction!
 
\emph{Case 3.}  Finally we consider $ \mu = 1$. For any $\left( a,b \right) \in \Lambda^{\prime}(k,1)$, $P_{a,b}$ has two distinct parabolic cycles of period $k$ and multiplier $1$. The first return map $P_{a,b}^{\circ k}$ of each of these cycles fixes the petals. Since $P_{a,b}$ has only two infinite critical orbits and each cycle of petals absorbs at least one infinite critical orbit, there is exactly one petal associated with each of these parabolic periodic points. In other words, each of these parabolic periodic points is a double root of $P_{a,b}^{\circ k}(z)-z$. Hence, each of them contributes a linear factor to the gcd of $ P_{a,b}^{\circ k}(z)-z$ and $(P_{a,b}^{\circ k})^{\prime}(z)-1$. Since $P_{a,b}$ can not have any more parabolic cycles, the g.c.d of $ P_{a,b}^{\circ k}(z)-z$ and $(P_{a,b}^{\circ k})^{\prime}(z)-1$ is a polynomial of degree $2k$. Thus,
\begin{align}
\label{subres2}
\displaystyle \Lambda^{\prime}(k,1) \subset \lbrace \left( a,b \right) \in \mathbb{C}^2 : sRes_0\left( P_{a,b}^{\circ k}(z)-z , (P_{a,b}^{\circ k})^{\prime}(z)-1\right)= \cdots\\
=sRes_{2k-1} \left(P_{a,b}^{\circ k}(z)-z , (P_{a,b}^{\circ k})^{\prime}(z)-1 \right)=0 \rbrace.
\nonumber
\end{align}
By B\'{e}zout's theorem, this intersection of algebraic curves is either a finite set of points or contains an unbounded irreducible component. In addition to $\Lambda^{\prime}(k,1)$, the right hand side of (\ref{subres2}) can possibly contain some additional points of the form:

\emph{$S_1$} : Parameters $\left( a,b \right)$ such that $P_{a,b}$ has a single parabolic cycle of period $k^{\prime}$ (for some $k^{\prime}$ dividing $k$) and multiplier a $k/k^{\prime}$-th root of unity such that there are two cycles of petals for the parabolic cycle.

\emph{$S_2$}: Parameters $(a,b)$ such that $P_{a,b}$ has two distinct parabolic cycles of period $k^{\prime}$ and $k^{\prime\prime}$ (for some $k^{\prime}$ and $k^{\prime\prime}$ less than $k$ and dividing $k$) with multipliers some $k/k^{\prime}$-th and $k/k^{\prime\prime}$-th roots of unity respectively, such that each parabolic cycle has a single cycle of petals associated with it.

Observe that for any $(a,b) \in \Lambda^{\prime}(k,1) \cup S_1 \cup S_2$, the polynomial $P_{a,b}$ has both its critical orbits bounded. In other words, the right hand side of (\ref{subres2}) is contained in the connectedness locus of monic centered polynomials of degree $d^2$. Hence the intersection of the algebraic curves in (\ref{subres2}) can not contain an unbounded irreducible component and hence is finite. This finishes the proof of the lemma.
\end{proof}

\begin{remark} The restriction to even periods $k$ is essential: for odd $k$, the period 
$k$ orbit of the antiholomorphic polynomial $f_c$ does not split into two separate period $k$ 
orbits of $P_{\bar{c},c}$, so the given proof does not work. In fact, the set of 
parameters for which there is an orbit of odd period $k$ with multiplier 
$\mu=+1$ contains finitely many arcs: this set is equal to the union of the
boundaries of the hyperbolic components of period $k$; see  
Theorem~\ref{ThmParaArcs} below. 
\end{remark}

\begin{lemma}[Indifferent Dynamics of Odd Period]\label{LemOddIndiffDyn}  
The boundary of a hyperbolic component of odd period $k$ consists 
entirely of parameters having a parabolic cycle of period $k$. In 
local conformal coordinates, the $2k$-th iterate of such a map has the form 
$z\mapsto z+b z^{q+1}+\ldots$ with $q\in\{1,2\}$. 
\end{lemma}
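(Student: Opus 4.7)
My plan is to prove the two parts separately for any $c_0 \in \partial H$. Throughout I exploit the embedding $f_c^{\circ 2} = P_{\bar c, c}$ into the holomorphic two-parameter family $P_{a,b}(z) = (z^d+a)^d + b$ used in the proof of Lemma~\ref{LemIsolated}.

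\emph{Parabolicity and exact period $k$.} For $c \in H$ the attracting cycle of $f_c$ has a well-defined ``antiholomorphic multiplier'' $\tilde\mu(c)$ (the derivative $\partial_{\bar z} f_c^{\circ k}$ at a cycle point); only the modulus $|\tilde\mu(c)|$ is conformally invariant, and a short chain-rule computation shows that the holomorphic multiplier of $f_c^{\circ 2k}$ at the cycle equals $|\tilde\mu(c)|^2 \in [0,1)$. Letting $c \to c_0$ and invoking continuity, this holomorphic multiplier must tend to $+1$, because anything smaller would keep the cycle attracting and contradict $c_0 \in \partial H$. Hence $f_{c_0}$ has a parabolic cycle whose holomorphic first-return multiplier is $+1$. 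To see that its period is still $k$ and not some proper odd divisor $k' < k$, I argue by contradiction: the same $|\tilde\mu|^2$-argument applied to the period-$k'$ cycle of $f_{c_0}$ pins its holomorphic first-return multiplier to $+1$ rather than to any other $(k/k')$-th root of unity (the only root of unity on the non-negative real axis). But in the holomorphic $P_{a,b}$-family, the only way the attracting period-$k$ cycle of $P_{\bar c, c}$ (for $c \in H$) can limit, as $c \to c_0$, to a period-$k'$ cycle of $P_{\bar c_0, c_0}$ is via a satellite bifurcation, and classical satellite theory requires the parent's holomorphic first-return multiplier to be a primitive $(k/k')$-th root of unity---forcing $k/k'=1$.

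\emph{Bounding the parabolic index.} In suitable local coordinates $f_{c_0}^{\circ 2k}(z) = z + b(z-z_j)^{q+1} + \ldots$ at each of the $k$ parabolic periodic points $z_j$, yielding $kq$ attracting petals in total. Under $f_{c_0}^{\circ 2}$ these petals are cyclically permuted among the periodic points, and since $\gcd(2,k)=1$ they fall into exactly $q$ cycles of length $k$. By Fatou's theorem each cycle of immediate parabolic basin components must absorb the forward orbit of a critical point of $f_{c_0}^{\circ 2}$; distinct cycles require distinct critical grand orbits. But $f_{c_0}^{\circ 2} = P_{\bar c_0, c_0}$ has just two distinct critical values, namely $c_0$ and $f_{c_0}(c_0)$, so its post-critical set consists of at most two grand orbits (corresponding to the even- and odd-indexed iterates of $0$ under $f_{c_0}$). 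Therefore $q \leq 2$; parabolicity gives $q \geq 1$.

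The main obstacle is the period-preservation step. In holomorphic parameter spaces one routinely reaches the boundary of a hyperbolic component via satellite roots with non-trivial root-of-unity parent multipliers, and this genuinely occurs for the even-period components of the multicorns by Theorem~\ref{ThmEvenBif}. The peculiarity of odd periods is that the conformal invariant at an odd-period cycle is $|\tilde\mu|^2$, a non-negative real, so the only root of unity that can be attained at the boundary is $+1$ itself; this rigidity is what collapses all potential satellite attachments and forces the parabolic cycle to retain the full period $k$.
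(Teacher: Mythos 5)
Your proof is correct, and two of its three ingredients coincide with the paper's: parabolicity with multiplier $+1$ comes from the fact that the holomorphic return multiplier of an odd-period cycle is $|\tilde\mu|^2\geq 0$, and the bound $q\leq 2$ comes from distributing the two critical orbits of $f_{c_0}^{\circ 2}=P_{\bar c_0,c_0}$ among the $q$ cycles of petals, exactly as in the paper. The genuinely different step is the one ruling out a drop of the parabolic period: the paper argues that if the period were $k'<k$ then $k/k'$ attracting periodic points would coalesce at each parabolic point, giving $q=k/k'$ petals, and then uses that $k/k'$ is odd together with $q\in\{1,2\}$; you instead combine the multiplier rigidity ($+1$ is the only root of unity attainable at an odd-period cycle) with the merger lemma that orbits of exact period $k$ can only accumulate on a cycle of exact period $k'$ whose first-return multiplier is a \emph{primitive} $(k/k')$-th root of unity. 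This decouples the period question from the critical-orbit count, which is a nice structural simplification; its cost is that you quote ``classical satellite theory'' without proof, and in a setting (a real-analytic slice of $\{P_{a,b}\}$) where the usual one-complex-parameter formulations do not literally apply. Fortunately the statement you need is purely local in the dynamical variable and holds for any convergent sequence of holomorphic maps: if the multiplier $\lambda$ of $P_{\bar c_0,c_0}^{\circ k'}$ at the limit point satisfied $\lambda^{k/k'}\neq 1$, or were a primitive $r$-th root of unity for a proper divisor $r$ of $k/k'$, then counting fixed points with multiplicity (Rouch\'e) of $P_{\bar c,c}^{\circ k}$ and of $P_{\bar c,c}^{\circ rk'}$ near the limit point shows these fixed points coincide, so every nearby periodic point has period dividing $rk'<k$ and no exact period-$k$ orbits can converge there; adding these few lines would make your step self-contained, at a level of detail comparable to the paper's own assertion that ``exactly $k/k'$ points coalesce, giving $k/k'$ petals.'' Your remaining glosses --- that the limiting cycle cannot be attracting (otherwise $c_0$ would lie in an open hyperbolic component, which either equals $H$ or forces a second non-repelling cycle for nearby parameters of $H$), and that it is the multiplier $+1$ which makes the first return map fix each petal, so the petals fall into exactly $q$ cycles under $f_{c_0}^{\circ 2}$ --- match the level of detail of the paper's proof.
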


\begin{proof} 
Let $H$ be a hyperbolic component of odd period, and $c \in \partial H$. Then, $f_c$ has an indifferent cycle of period $k'$ dividing $k$ (note that $k'$ is odd). The second iterate $f_c^{\circ 2k'}$ of the first return map $f_c^{\circ k'}$ of a periodic point of odd period $k'$ has a non-negative real multiplier. Since the cycle is indifferent, the absolute value of its multiplier is $+1$, and hence, the multiplier is always $+1$. This shows that every $c \in \partial H$ has a parabolic cycle of period $k'$ (dividing $k$) and of multiplier $1$.

The holomorphic first return map $f_c^{\circ 2k'}$ of such a parabolic periodic point has the local form: $z \mapsto z+b z^{q+1}+O(z^{q+2})$, where $q$ is the number of attracting petals attached to the parabolic periodic point \cite[\S 10]{Mi3}. Since the multiplier of the parabolic point is $+1$, $f_c^{2k'}$ fixes each petal (compare \cite[Corollary~4.2]{NS}), and hence there are precisely $q$ cycles of petals under $f_c^{\circ 2}$. Each of these cycles absorbs an infinite critical orbit of $f_c^{\circ 2}$. But $f_c^{\circ 2}$ has only two infinite critical orbits, and hence $q \in \lbrace 1,2\rbrace$.

If the period $k'$ of the parabolic cycle is strictly smaller than $k$ (i.e. if $H$ is a `satellite' component), then $c$ lies on the boundary of a hyperbolic component $H'$ of period $k'$ (by Theorem \ref{ThmIndiffBdyHyp}). So, exactly $k/k'$ points of the $k$-periodic attracting cycle of $H$ would coalesce (along with a repelling periodic point) at the parabolic parameter $c$, giving rise to $k/k'$ petals at each parabolic periodic point of $f_c$. In other words, the multiplicity of any parabolic point of $f_c$ would be $(k/k' + 1)$; i.e. $q=k/k'$. As $k/k'$ is necessarily odd, and $q \in \lbrace 1,2\rbrace$, we have $k/k'=1$, i.e. $k=k'$. This completes the proof of the lemma.
\end{proof}

\begin{definition}[Parabolic Cusps]\label{DefCusp}
A parameter $c$ will be called a {\em cusp point} (of odd period $k$) if $f_c$ has a parabolic 
periodic point of odd period $k$ such that $q=2$ in the previous lemma. 
\end{definition}

A cycle is parabolic if it is a merger of at least two periodic orbits, 
which generally is a co-dimension one condition. It turns out that for odd 
periods, this gives only a real co-dimension, so there are entire arcs of 
parabolic parameters (Theorem~\ref{ThmParaArcs}). Cusps are a merger of 
three periodic orbits, and this has real co-dimension two: there are only 
finitely many cusps. 

\begin{lemma}[Finitely Many Cusp Points] \label{LemCuspFin} 
The number of cusp points of any given (odd) period is finite. 
\end{lemma}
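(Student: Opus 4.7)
The plan is to mirror the subresultant strategy used in Case~3 of the proof of Lemma \ref{LemIsolated}. First I would embed the family in the two-parameter polynomial family $\{P_{a,b}(z)=(z^d+a)^d+b\}$ via $f_c^{\circ 2}=P_{\bar c, c}$. Since $k$ is odd, a $k$-cycle of $f_c$ descends to a single $k$-cycle of $P_{\bar c, c}$ of multiplier $+1$. At a cusp (i.e.\ $q=2$ in Definition \ref{DefCusp}), each of the $k$ parabolic periodic points $z_0$ of $P_{\bar c, c}^{\circ k}$ has the local form
\[
P_{\bar c, c}^{\circ k}(z) \;=\; z + b(z-z_0)^3 + O((z-z_0)^4), \qquad b\neq 0,
\]
so $z_0$ is a triple root of $P^{\circ k}(z)-z$ and a double root of $(P^{\circ k})'(z)-1$, contributing multiplicity $2$ to their gcd. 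Summing over the cycle forces
$\deg\gcd(P^{\circ k}(z)-z,\,(P^{\circ k})'(z)-1) \geq 2k$.

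By Lemma \ref{subresultants}, this gcd-degree condition is cut out by the vanishing of the first $2k$ subresultants in $(a,b)$. Setting
\[
V \;:=\; \{(a,b)\in\mathbb{C}^2 :\, sRes_0 = \dots = sRes_{2k-1} = 0\}
\]
(with $sRes_j = sRes_j(P_{a,b}^{\circ k}(z)-z,\,(P_{a,b}^{\circ k})'(z)-1)$), every cusp parameter $c$ of period $k$ satisfies $(\bar c,c)\in V$. It therefore suffices to prove that $V$ is finite.

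For the key boundedness step, I would show that $V$ is contained in the connectedness locus of the family $\{P_{a,b}\}$, which is compact by \cite{BH}. By the flower theorem, a parabolic cycle of $P_{a,b}$ of period $k'\mid k$, multiplier a primitive $s$-th root of unity, and $\nu$ cycles of petals contributes exactly $k'\nu s$ to $\deg\gcd(P^{\circ k}(z)-z,(P^{\circ k})'(z)-1)$ when $k's\mid k$ and $0$ otherwise; in particular the contribution is at most $k\nu$. Summing over all parabolic cycles, $\deg\gcd\geq 2k$ forces $\sum_i \nu_i \geq 2$. Since each petal cycle absorbs an infinite critical orbit by Fatou's theorem, and $P_{a,b}$ has exactly two such orbits (with critical values $a^d+b$ and $b$), both critical orbits must be trapped. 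Hence $V$ is bounded, and a bounded algebraic subvariety of $\mathbb{C}^2$ is automatically zero-dimensional and therefore finite. The cusp set of period $k$, being contained in $V\cap\{a=\bar b\}$, is finite as well.

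The main obstacle I expect is the bookkeeping in the penultimate step: $V$ contains not only true cusps but also satellite strata (parabolic cycles of period $k'\mid k$ with a nontrivial root-of-unity multiplier) and the locus $\Lambda'(k,1)$ of pairs of distinct period-$k$ parabolic cycles from Lemma \ref{LemIsolated}. The flower-theoretic petal count applies uniformly to all these strata, and the critical-orbit absorption argument then handles them simultaneously; nevertheless, writing out the multiplicity tally $k'\nu s$ and verifying that every stratum of $V$ indeed reaches the quota $\sum_i\nu_i\geq 2$ is the step that demands the most care.
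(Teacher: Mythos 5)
Your proposal is correct, and it follows the same overall strategy as the paper (algebraic equations in the auxiliary family $P_{a,b}(z)=(z^d+a)^d+b$, then boundedness via petals absorbing both critical orbits, then compactness of the connectedness locus from \cite{BH}), but the algebraic encoding is different. The paper does not use subresultants here: it simply adds the equation $(P_{a,b}^{\circ k})''(z)=0$ to the fixed-point and multiplier-one equations and applies B\'ezout to the resulting variety in the three variables $(a,b,z)$, whereas you eliminate $z$ by encoding the cusp condition as $\deg\gcd\bigl(P_{a,b}^{\circ k}(z)-z,(P_{a,b}^{\circ k})'(z)-1\bigr)\geq 2k$ via Lemma \ref{subresultants}, exactly as in Case 3 of Lemma \ref{LemIsolated}, and then use that a bounded affine variety in $\mathbb{C}^2$ is finite. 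The paper's version is shorter, since a triple fixed point of $P^{\circ k}$ is written down directly with no gcd bookkeeping; your version buys a cleaner treatment of the extraneous strata. Indeed, the variety cut out by the paper's three equations also contains satellite parabolic parameters (period $k'$ properly dividing $k$, multiplier a primitive $s$-th root of unity with $s\geq 3$, possibly only one cycle of petals), for which the paper's parenthetical ``recall that the multiplier of the parabolic cycle is $+1$'' and the ensuing claim of two petal cycles are not automatic; your uniform tally $k'\nu s\leq k\nu$ together with $\deg\gcd\geq 2k\Rightarrow\sum_i\nu_i\geq 2$ handles all strata at once, and the standard argument that two disjoint cycles of immediate basins must capture critical points with distinct critical values ($b$ and $a^d+b$) completes the boundedness step. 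The one point worth writing out explicitly in your version is exactly this last disjointness argument (two petal cycles cannot share the critical orbit of $b$), but that is routine; otherwise your multiplicity computation (triple root of $P^{\circ k}(z)-z$, double root of $(P^{\circ k})'(z)-1$ at each of the $k$ cusp points, since the odd period keeps the cycle a single $k$-cycle of $f_c^{\circ 2}$) matches the paper's local normal form and is correct.
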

\begin{proof}
We use a similar idea as in the proof of Lemma~\ref{LemIsolated}. 
Let $k$ be odd and consider again the family $P_{a,b}(z)= (z^d+a)^d+b$ for 
complex parameters $a$ and $b$. Observe that if $c$ is a cusp parameter (with the period of the parabolic cycle being $k$), then the second iterate $f_c^{\circ 2}$ is a polynomial with a parabolic periodic point of period $k$ and multiplicity $2$ (in other words, $P_{\bar{c},c}$ has a double parabolic periodic point of period $k$). Thus, it suffices to prove the finiteness assertion in the family $P_{a,b}$. Suppose $a$, $b$, and $z$ be such that $z$ is a parabolic periodic point of period $k$ and multiplicity $2$ for $P_{a,b}$; then we have the following three equations: 
\begin{eqnarray*}
P^{\circ k}_{a,b}(z)-z=0 && \mbox {to have a fixed point} 
\\
(P^{\circ k}_{a,b})^{\prime}(z)=1 && \mbox{to make it parabolic} 
\\
(P^{\circ k}_{a,b})^{\prime \prime}(z)=0 &&\mbox{to make it a cusp.} 
\end{eqnarray*}
The set of simultaneous solutions to these three equations is, again using 
B\'{e}zout's Theorem, either finite or an algebraic variety with unbounded 
projection over every variable. 
 
The polynomial $P_{a,b}$ has two infinite critical orbits. If the map $P_{a,b}$ satisfies the previous three conditions, then $P_{a,b}$ has two distinct cycles of petals (recall that the multiplier of the parabolic cycle is $+1$), and each cycle of petals would attract an infinite critical orbit. Thus, both critical orbits would converge to the parabolic cycle; so they both would be 
bounded. Therefore, all the cusps are contained in the connectedness locus of monic centered polynomials of degree $d^2$, which is compact. Therefore, the set of solutions to the above three equations is finite. This proves the lemma.
\end{proof}

\begin{corollary}[Restricted Bifurcations from Odd Periods]\label{CorRestrictedOddBifurcations} 
If $f_c$ has an indifferent cycle of odd period $k$, and $c$ is on the 
boundary of a hyperbolic component (contained in $\mathcal{M}_d^*$) of period $n$, then $n\in\{k,2k\}$. 
\end{corollary}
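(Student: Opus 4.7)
The plan is to pick a sequence $c^\epsilon \in H$ converging to $c$ and to analyze how the attracting $n$-cycle of $f_{c^\epsilon}$ must collapse onto the parabolic $k$-cycle $\lbrace z_0, \ldots, z_{k-1}\rbrace$ of $f_c$ supplied by Lemma \ref{LemOddIndiffDyn}. Because $f_c$ has a single critical orbit, it has at most one non-repelling cycle, so by continuity of periodic points and multipliers every subsequential limit of the attracting cycle is the parabolic cycle itself. I would show that $k \mid n$ and $n \mid 2k$; the conclusion $n \in \lbrace k, 2k\rbrace$ follows immediately.

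For $k \mid n$, I would fix a small disk $D$ centered at $z_0$. For $\epsilon$ small enough, the orbit under $f_{c^\epsilon}$ of any cycle point lying in $D$ shadows the orbit of $z_0$ under $f_c$, returning to $D$ exactly once every $k$ iterates. After $n$ iterates the orbit closes up, placing exactly $n/k$ cycle points in $D$, so $n/k$ is a positive integer.

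For $n \mid 2k$, Lemma \ref{LemOddIndiffDyn} gives the local form $f_c^{\circ 2k}(z) = z + b(z-z_0)^{q+1} + O((z-z_0)^{q+2})$ with $q \in \lbrace 1,2\rbrace$. An elementary induction yields
\[
f_c^{\circ 2k\ell}(z) - z \;=\; \ell\, b\, (z-z_0)^{q+1} + O\!\left((z-z_0)^{q+2}\right) \quad \text{for every } \ell \geq 1,
\]
so this holomorphic function vanishes to order exactly $q+1$ at $z_0$. Let $w \in D$ be an attracting periodic point of $f_{c^\epsilon}$ and let $\ell := n/\gcd(n,2k)$ be its period under the holomorphic iterate $f_{c^\epsilon}^{\circ 2k}$, so $w$ satisfies $f_{c^\epsilon}^{\circ 2k\ell}(z) = z$. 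Since $\ell$ is fixed once the component $H$ is fixed, Rouch\'e's theorem applied in a sufficiently small disk around $z_0$, with $\epsilon$ chosen small enough, shows that $f_{c^\epsilon}^{\circ 2k\ell}(z) - z$ has exactly $q+1$ zeros in $D$ counted with multiplicity. These zeros already contain the $q+1$ fixed points of $f_{c^\epsilon}^{\circ 2k}$ in $D$ (which are fixed by every iterate), hence exhaust the list. Consequently $w$ is itself a fixed point of $f_{c^\epsilon}^{\circ 2k}$, so $\ell = 1$ and $n \mid 2k$.

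Combining $k \mid n$ and $n \mid 2k$ gives $n \in \lbrace k, 2k\rbrace$. The main obstacle is the perturbative zero-count in the third step: one must ensure that the $(q+1)$-fold local zero of $f_c^{\circ 2k\ell}(z) - z$ at $z_0$ persists under the real-analytic perturbation $c \mapsto c^\epsilon$. This is handled by choosing $D$ and $\epsilon$ small enough for the fixed value of $\ell$ attached to $H$, and is in fact where the restriction $q \leq 2$ from Lemma \ref{LemOddIndiffDyn} enters in a decisive way, bounding $n/k$ by $2$.
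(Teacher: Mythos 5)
Your proposal is correct, but it reaches the bound by a different mechanism than the paper. The paper's proof is a two-line petal count: writing $q=n/k$, it asserts that the $q$ coalescing points of the attracting $n$-cycle at each parabolic point make the holomorphic return map $f_c^{\circ 2k}$ of the local form $z\mapsto z+z^{q+1}+\cdots$, and then invokes Lemma~\ref{LemOddIndiffDyn} (at most two petals, since $f_c^{\circ 2}$ has only two infinite critical orbits) to get $n/k\le 2$. You instead prove $n\mid 2k$ by a local fixed-point count: odd period forces multiplier $+1$, so $f_c^{\circ 2k\ell}(z)-z$ vanishes to order exactly $q+1$ at $z_0$ for every $\ell$, and Rouch\'e together with the fact that a fixed point of $g=f_{c^\epsilon}^{\circ 2k}$ of multiplicity $m$ is a zero of $g^{\circ \ell}-\mathrm{id}$ of multiplicity at least $m$ shows the fixed points of $g$ exhaust the $q+1$ zeros of $g^{\circ\ell}-\mathrm{id}$ in $D$, so the attracting point in $D$ is already fixed by $g$. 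Each approach buys something: the paper's is shorter and dynamically transparent, but it takes for granted the identification of $n/k$ with the petal number (your Rouch\'e count is essentially the rigorous half of that assertion that is actually needed); your route never uses the petal bound at all, since the factor $2$ comes solely from passing to the holomorphic second iterate, so it rules out any $n$ with $n\nmid 2k$ coalescing onto a multiplier-one cycle. For that reason your closing sentence misdescribes your own argument: the restriction $q\le 2$ from Lemma~\ref{LemOddIndiffDyn} is not what bounds $n/k$ for you; all you need from that lemma is that the multiplier is $+1$ and the local form has $b\ne 0$ with $q$ finite. Two small points to make explicit: that the attracting cycle actually has points in $D$ (the set of subsequential limits is nonempty, $f_c$-invariant, and contained in the parabolic orbit, hence equals it), and the multiplicity monotonicity in the exhaustion step, so that the two counts of $q+1$ really force equality.
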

\begin{proof}
If $c$ is on the boundary of a hyperbolic component $W$ of period $n$, then 
$n$ must be a multiple of $k$; setting $q:=n/k$, then $q$ points each of the $n$-periodic
attracting cycle in $W$ coalesce at $c$. The holomorphic second 
iterate $f_c^{\circ 2k}$ of the first return map $f_c^{\circ k}$ then has the form $z\mapsto z+z^{q+1}+\cdots$ 
in local coordinates, and $q\in\{1,2\}$ by Lemma~\ref{LemOddIndiffDyn}. 
\end{proof}

\begin{lemma}[Finitely Many Hyperbolic Components]\label{LemFinitelyHypComps} 
For every (even or odd) period $k$, the number of hyperbolic components 
of $\mathcal{M}_d^*$ with period $k$ is finite.
\end{lemma}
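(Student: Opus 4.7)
The plan is to reduce the assertion to the finiteness of \emph{centers} of period-$k$ hyperbolic components and then mimic the B\'{e}zout-and-compactness argument of Lemma~\ref{LemIsolated}. From \cite{NS}, on each period-$k$ hyperbolic component $H$ either the multiplier map (even $k$) or the critical value map (odd $k$) is a proper branched cover of $\mathbb{D}$ branched only over $0$, so $H$ contains a unique \emph{center} $c_H$ at which $0$ is periodic of exact period $k$ for $f_{c_H}$. It therefore suffices to show that
\[
C_k := \left\{c \in \mathbb{C} : f_c^{\circ k}(0)=0 \text{ and } 0 \text{ has exact period } k\text{ under }f_c\right\}
\]
is finite for every $k$.

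For even $k$, every $c_H \in C_k$ has a superattracting period-$k$ cycle, so Lemma~\ref{LemIsolated} applied with $\mu = 0$ gives the conclusion at once. For odd $k$ the argument of Lemma~\ref{LemIsolated} does not transfer (see the remark following that lemma), because a period-$k$ orbit of $f_c$ does not split into two period-$k$ orbits of the holomorphic second iterate $P_{\bar c, c}(z) = (z^d+\bar c)^d+c$. I would instead use the following observation: if $c_0 \in C_k$ with $k$ odd, then since $\gcd(2,k)=1$ the orbit of $0$ under $f_{c_0}$ (consisting of $k$ distinct points) equals the orbit of $0$ under $P_{\bar{c_0},c_0}$; consequently \emph{both} critical values $\bar{c_0}^d + c_0 = P_{\bar{c_0},c_0}(0)$ and $c_0 = f_{c_0}(0) = P_{\bar{c_0},c_0}^{\circ (k+1)/2}(0)$ of $P_{\bar{c_0},c_0}$ lie on this single period-$k$ cycle (the exponent $(k+1)/2$ being the unique $i\in\{0,\ldots,k-1\}$ with $2i\equiv 1\pmod k$).

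Passing to the two-parameter family $P_{a,b}(z)=(z^d+a)^d+b$, I would then consider the algebraic set
\[
\Lambda := \left\{(a,b)\in\mathbb{C}^2 : P_{a,b}^{\circ k}(0)=0 \text{ and } P_{a,b}^{\circ (k+1)/2}(0)=b\right\},
\]
cut out by two polynomial equations in $(a,b)$. For any $(a,b)\in\Lambda$, both critical values $a^d+b=P_{a,b}(0)$ and $b=P_{a,b}^{\circ (k+1)/2}(0)$ lie on the periodic orbit of $0$, so both critical orbits of $P_{a,b}$ are bounded. Since $P_{a,b}$ is monic and centered of degree $d^2$ (the coefficient of $z^{d^2-1}$ in $(z^d+a)^d+b$ vanishes), compactness of the connectedness locus \cite{BH} forces $\Lambda$ to be bounded, and a bounded affine algebraic variety in $\mathbb{C}^2$ is necessarily finite. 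Since $c\mapsto(\bar c,c)$ is injective and sends $C_k$ into $\Lambda$, the set $C_k$ is finite as well. The main obstacle is the odd case: one must find two polynomial equations in $(a,b)$ that simultaneously accommodate every center $c_0$ via the point $(\bar{c_0},c_0)$ and trap the solution set inside the compact connectedness locus; the pair above works precisely because the single period-$k$ cycle of $P_{\bar{c_0},c_0}$ contains both critical values at the prescribed positions $1$ and $(k+1)/2$.
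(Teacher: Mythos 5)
Your proposal is correct and follows essentially the same route as the paper: reduce to the finitely many centers via the structure theorem of \cite{NS}, treat even periods by Lemma~\ref{LemIsolated} with $\mu=0$, and treat odd periods by passing to the family $P_{a,b}$ and combining algebraicity (B\'{e}zout/finiteness of bounded algebraic sets) with compactness of the connectedness locus \cite{BH}. Your explicit pair of equations $P_{a,b}^{\circ k}(0)=0$ and $P_{a,b}^{\circ (k+1)/2}(0)=b$ is just a concrete, correct way of encoding the paper's condition that both critical orbits of $(z^d+a)^d+b$ are (eventually) periodic.
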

\begin{proof}
The structure theorem on hyperbolic components in \cite{NS} 
says that each even or odd period component has a unique center at which 
the critical orbit is periodic. Then, for the even period case, the claim
follows from Lemma~\ref{LemIsolated}. For the odd period case, 
we take the second iterate and it suffices to show that there are only 
finitely many parameters $(a,b)$ for which both critical orbits 
of $(z^d+a)^d+b$ are periodic with period $k$. The corresponding Julia sets
are connected,  so the claim follows once again from B\'{e}zout's theorem. 
\end{proof}

\begin{remark}
a) The finiteness can also be shown, possibly more dynamically, using Hubbard
trees. 

b) In Section \ref{No.ofHypComps}, we will give a recursive formula for the number of hyperbolic components of a given period.
\end{remark}

\begin{definition}[Characteristic Components and Points]
The \emph{characteristic Fatou component} of $f_c$ is defined as the unique Fatou component of $f_c$ containing the critical value $c$. If $f_c$ has a parabolic periodic orbit, then its \emph{characteristic parabolic point} is defined as the unique parabolic point lying on the boundary of the characteristic Fatou component.
\end{definition}

\begin{definition}[Roots and Co-Roots of Fatou Components]
Let $z$ be a boundary point of a periodic Fatou component $U$ corresponding to a (super-)attracting or parabolic unicritical antiholomorphic polynomial $f_c$ so that the first return map of $U$ fixes $z$. Then we call $z$ a \emph{root} of $U$ if it disconnects the filled-in Julia set; if it does not, we call it a \emph{co-root}.
\end{definition}

\begin{lemma}[Orbit Separation Lemma]\label{LemOrbitSeparation} 
For every antiholomorphic polynomial $f_c(z)=\bar{z}^d+c$ with parabolic dynamics, there are two periodic or pre-periodic dynamical rays which land at a
common point and which together separate the characteristic parabolic point from the rest of the parabolic cycle.
\end{lemma}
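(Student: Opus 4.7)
The plan is to produce the separating pair of rays at a (pre-)periodic cut point of the filled Julia set, using local connectivity. First, I would reduce to the case that the parabolic cycle has period $n \geq 2$, since otherwise the conclusion is vacuous. Because the unique infinite critical orbit of $f_c$ is absorbed by the parabolic basin, the Julia set $J(f_c)$ is locally connected (this follows from the Douady-Yoccoz local-connectivity criterion applied to the holomorphic second iterate $f_c^{\circ 2}$). In particular, every dynamical ray lands, and the closure of each bounded Fatou component is a closed topological disk.

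Let $\{z_0, \ldots, z_{n-1}\}$ be the parabolic cycle with $z_0$ the characteristic parabolic point, and let $U_j$ be the periodic Fatou component with $z_j \in \partial U_j$. The closures $\overline{U_j}$ are pairwise disjoint since each contains exactly one parabolic point from the cycle. I would then seek a cut point $w \in J(f_c)$ such that one component of $K(f_c) \setminus \{w\}$ contains $\overline{U_0} \setminus \{w\}$ and no other $\overline{U_j}$. Such a $w$ can be found by taking a path in $K(f_c)$ from a point of $U_0$ to a point of $U_j$ for some $j \neq 0$ (paths exist by local connectivity) and selecting the last point of the path in $\overline{U_0}$, or a common branch point just outside $\overline{U_0}$ if different $U_j$'s exit $\overline{U_0}$ at different boundary points. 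At such a $w$, at least two external rays land by the standard cut-point criterion for locally connected Julia sets; choosing the two that bound the region containing $\overline{U_0}$ gives the required pair, which separates $z_0$ (which lies in $\overline{U_0}$) from every $z_j$ with $j \neq 0$ (which lies in $\overline{U_j}$). Note that by Lemma \ref{Lem:Landing of Dyn Rays} these rays automatically land as soon as their angles are (pre-)periodic.

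Finally, one needs to check that the selected rays are (pre-)periodic, which reduces to showing that the point $w$ is (pre-)periodic: by the cyclic $f_c$-equivariance of the construction, the forward orbit of $w$ consists of analogous separating cut points for the images $\overline{U_1}, \ldots, \overline{U_{n-1}}$ of $\overline{U_0}$ under the cycle, which is a finite set. The main obstacle is proving that a single cut point $w$ suffices to separate $\overline{U_0}$ from \emph{every} $\overline{U_j}$, rather than needing several cut points each handling a different $\overline{U_j}$. This requires careful analysis of the dendrite structure of $K(f_c)/\sim$ (where each bounded Fatou component is collapsed to a point), arguing either that $[U_0]$ is a leaf of this dendrite (so all other components hang off a single cut point of $\partial U_0$) or that a common branching point exists outside $\overline{U_0}$ through which every path from $U_0$ to another $U_j$ must pass, using the cyclic action of $f_c$ on the dendrite to control the configuration.
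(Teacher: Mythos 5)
Your proposal does not close the key step, and you say so yourself: the assertion that a \emph{single} cut point $w$ separates the characteristic parabolic point from all other points of the cycle is exactly the content of the lemma, and you leave it as ``the main obstacle'' to be handled by an unspecified ``careful analysis of the dendrite structure''. Neither branch of your proposed dichotomy (that $[U_0]$ is a leaf of the quotient dendrite, or that a common branch point lies outside $\overline{U_0}$) is proved, and nothing in your argument rules out the bad configuration in which the relevant vertex is a branch point with different orbit points hanging off in different directions, so that no single ray pair does the job. Tellingly, your construction never uses the defining property of the characteristic point, namely that the critical \emph{value} lies in the Fatou component $U_0$; but this is precisely the ingredient that makes the statement true and that the actual proof (the antiholomorphic transcription of the holomorphic Orbit Separation Lemma, \cite[Lemma 3.7]{Sch}, which is all the paper invokes) exploits: one works with the orbit portrait, uses that the characteristic sector is the minimal sector and contains the critical value, and pulls the characteristic ray configuration back through the critical point to produce a (pre)periodic ray pair landing at a common point that cuts off $z_0$ from the rest of the orbit. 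A topological argument that treats $z_0$ as an arbitrary point of the cycle cannot be complete.

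There are also supporting claims that are false or unjustified as stated. The closures $\overline{U_j}$ need not be pairwise disjoint, and ``the periodic Fatou component with $z_j\in\partial U_j$'' is not well defined: for satellite-type parabolics (multiplier a primitive $q$-th root of unity, $q\ge 2$) and at cusps, several periodic Fatou components touch each parabolic point, so ``each closure contains exactly one point of the cycle'' does not give disjointness. The (pre)periodicity of $w$ is not established either: knowing that each point of the forward orbit of $w$ is ``an analogous separating cut point'' does not confine that orbit to a finite set, since there may be infinitely many such cut points; without (pre)periodicity of $w$ you cannot invoke Lemma \ref{Lem:Landing of Dyn Rays} for the landing rays, and the rays you select need not have rational angles at all. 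Finally, local connectivity of the parabolic Julia set is true but not by ``the Douady--Yoccoz criterion'' (which requires the absence of indifferent cycles); it holds because these maps are geometrically finite. In short, the route you sketch is genuinely different from the paper's (which simply carries over the holomorphic orbit separation argument of \cite{Sch}), but as written it has an unfilled gap at its central step and several incorrect auxiliary statements.
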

\begin{proof}
The proof is similar to the holomorphic case, see \cite[Lemma 3.7]{Sch}.
\end{proof}

\section{Bifurcations Along Arcs}\label{SecBifArcs}
In holomorphic dynamics, the local dynamics in attracting petals of parabolic periodic points is well-understood: there is a local coordinate $\zeta$ which conjugates the first-return dynamics to the form $\zeta\mapsto\zeta+1$ in a right half place (see Milnor~\cite[Section~10]{Mi3} or  Carleson-Gamelin~\cite[Section~II.5]{CG}). Such a coordinate $\zeta$ is called a \emph{Fatou coordinate}. Thus the quotient of the petal by the dynamics is isomorphic to a bi-infinite cylinder, called an \emph{Ecalle cylinder}. Note that Fatou coordinates are uniquely determined up to addition of a complex constant. 

In antiholomorphic dynamics, the situation is at the same time restricted and richer. Indifferent dynamics of odd period is always parabolic because for an indifferent periodic point of odd period $k$, the $2k$-th iterate is holomorphic with positive real multiplier, hence parabolic as described above. On the other hand, additional structure is given by the antiholomorphic intermediate iterate. 

\begin{lemma}[Antiholomorphic Fatou Coordinates]\label{PropFatouCoord} 
Suppose $z_0$ is a parabolic periodic point of odd period $k$ of $f_c$ 
with only one petal (i.e. $c$ is not a cusp) and $U$ is a periodic 
Fatou component with $z_0 \in \partial U$. Then there is an open subset 
$V \subset U$ with $z_0 \in \partial V$ and $f_c^{\circ k} (V) \subset V$ so 
that for every $z \in U$, there is an $n \in \mathbb{N}$ with $f_c^{\circ nk}(z)\in 
V$. Moreover, there is a univalent map $\Phi \colon V \to \mathbb{C}$ with 
$\Phi(f_c^{\circ k}(z)) = \overline{\Phi(z)}+1/2$, and $\Phi(V)$ contains a right 
half plane. This map $\Phi$ is unique up to horizontal translation. 
\end{lemma}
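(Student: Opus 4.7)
The plan is to build $\Phi$ from a classical (holomorphic) Fatou coordinate for $f_c^{\circ 2k}$ and then exploit the antiholomorphic half-step $f_c^{\circ k}$, together with the uniqueness of Fatou coordinates up to an additive constant, to enforce the desired functional equation.

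Since $c$ is not a cusp, Lemma~\ref{LemOddIndiffDyn} gives $q = 1$, so the holomorphic second iterate $f_c^{\circ 2k}$ has at $z_0$ a simple parabolic fixed point of the form $z \mapsto z + b(z-z_0)^2 + O((z-z_0)^3)$ with $b \neq 0$ and exactly one attracting petal in $U$. The classical parabolic flower theorem (Milnor~\cite[\S 10]{Mi3}) furnishes an open attracting petal $V_0 \subset U$ with $z_0 \in \partial V_0$, $f_c^{\circ 2k}(V_0) \subset V_0$, and a univalent holomorphic map $\Psi \colon V_0 \to \mathbb{C}$ with $\Psi \circ f_c^{\circ 2k} = \Psi + 1$, image containing a right half plane, the property that every $z \in U$ eventually enters $V_0$ under iteration of $f_c^{\circ 2k}$ (and hence of $f_c^{\circ k}$), and with $\Psi$ unique up to an additive complex constant.

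Next, I would consider $\tilde\Psi(z) := \overline{\Psi(f_c^{\circ k}(z))}$. Since $f_c^{\circ k}$ is antiholomorphic and $\Psi$ is holomorphic, $\tilde\Psi$ is holomorphic; a one-line computation using $\Psi \circ f_c^{\circ 2k} = \Psi + 1$ gives $\tilde\Psi \circ f_c^{\circ 2k} = \tilde\Psi + 1$, so $\tilde\Psi$ is again a Fatou coordinate. By uniqueness there exists $\gamma \in \mathbb{C}$ with $\tilde\Psi = \Psi + \gamma$, i.e.\ $\Psi \circ f_c^{\circ k} = \overline{\Psi} + \bar\gamma$. Iterating this identity once and comparing with $\Psi \circ f_c^{\circ 2k} = \Psi + 1$ forces $\gamma + \bar\gamma = 1$, so $\mathrm{Re}(\gamma) = 1/2$.

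Finally, I would normalize by replacing $\Psi$ with $\Phi := \Psi + a$. A direct expansion shows that the desired equation $\Phi \circ f_c^{\circ k} = \overline{\Phi} + 1/2$ reduces to a single linear condition pinning down $\mathrm{Im}(a)$ in terms of $\mathrm{Im}(\gamma)$, while $\mathrm{Re}(a)$ remains completely free; this residual one-real-parameter family is precisely the horizontal (real) translation in Fatou coordinates asserted in the uniqueness clause. For the domain $V$, the functional equation implies that in $\Phi$-coordinates $f_c^{\circ k}$ acts as the antiholomorphic translation $w \mapsto \bar w + 1/2$, so taking $V := \Phi^{-1}(\{\mathrm{Re}(w) > R\})$ for $R$ sufficiently large yields $f_c^{\circ k}(V) \subset V$ and $z_0 \in \partial V$; the attracting-petal property of $\Psi$ then ensures that every $z \in U$ eventually enters $V$. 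The main technical nuance is keeping track of the domains when invoking uniqueness of Fatou coordinates: the identity $\tilde\Psi = \Psi + \gamma$ holds a priori only on $V_0 \cap (f_c^{\circ k})^{-1}(V_0)$, so one must verify that this intersection is nonempty and connected, which is handled by pre-shrinking $V_0$ to a far right half plane in the Fatou coordinate where $f_c^{\circ k}$ is manifestly an injection back into $V_0$.
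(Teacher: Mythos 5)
Your proof is correct and is essentially the argument the paper relies on: the paper gives no proof of its own but simply cites \cite[Lemma 2.3]{HS}, and the construction there is exactly yours --- take a holomorphic Fatou coordinate $\Psi$ for $f_c^{\circ 2k}$ on an attracting petal, observe that $\overline{\Psi\circ f_c^{\circ k}}$ is again a Fatou coordinate, invoke uniqueness up to an additive constant to get $\Psi\circ f_c^{\circ k}=\overline{\Psi}+\bar\gamma$, compose twice to force $\gamma+\bar\gamma=1$, and then absorb $\mathrm{Im}(\gamma)$ into the normalization so that $\Phi\circ f_c^{\circ k}=\overline{\Phi}+1/2$, the leftover real translation being the asserted uniqueness. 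The one step you should write out rather than call ``manifest'' is that $f_c^{\circ k}$ maps a far-right sub-petal injectively into $V_0$ (so that $\overline{\Psi\circ f_c^{\circ k}}$ is a genuine, univalent Fatou coordinate to which uniqueness applies): this is precisely where $q=1$ enters, since $f_c^{\circ k}$ fixes $z_0$ and permutes the attracting directions of $f_c^{\circ 2k}$, hence with a single petal it preserves that petal and is injective on a small sub-petal because the parabolic orbit avoids the critical point --- whereas at a cusp ($q=2$) it can interchange the two petals and the statement fails, consistent with the remark following the lemma.
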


The map $\Phi$ will be called an  \emph{antiholomorphic Fatou coordinate} for 
the petal $V$; it satisfies $\Phi(f_c^{\circ 2k}(z))=\Phi(z)+1$ for $z \in V$ 
in accordance with the standard theory of holomorphic Fatou coordinates. 
This lemma applies more generally to antiholomorphic indifferent
periodic points such that the attracting petal has odd period. 

If $c$ is a cusp, the period of $U$ is even. Indeed, if $c$ is a cusp, then the parabolic periodic point $z_0$ has two petals, and hence is a dynamical root of $U$. By \cite[Corollary 4.2]{NS}, if the period of $U$ under $f_c$ were odd, then $z_0$ would lie on the boundary of only one Fatou component, which contradicts the fact that $z_0$ has two petals. It also follows from \cite[Corollary 4.2]{NS} that the period of $U$ under $f_c$ is $2k$. Therefore, the first return map of $U$ is $f_c^{\circ 2k}$, which is holomorphic, and the previous lemma does not apply.

\begin{proof} 
See \cite[Lemma 2.3]{HS}.
\end{proof}
 
The antiholomorphic iterate interchanges both ends of the Ecalle cylinder, so it must fix one horizontal line around this cylinder (the \emph{equator}). The change of coordinate has been so chosen that the equator is the projection of the real axis. We will call the vertical Fatou coordinate the \emph{Ecalle height}. Its origin is the equator. In particular, the Ecalle height of the critical value will be called the \emph{critical Ecalle height}. The existence of this distinguished real line, or equivalently an intrinsic meaning to Ecalle height, is specific to antiholomorphic maps and contributes, surprisingly enough, to some simplification of matters, as compared to the holomorphic case (it automatically relates the heights of the attracting  and repelling cylinders without a need for the \emph{horn map} from the repelling back into the attracting cylinder). One place where this has been exploited is in the proof of non-local connectivity and non-path connectivity  of the multicorns \cite{HS}.

\begin{theorem}[Parabolic Arcs]\label{ThmParaArcs} 
Let $c_0$ be a parameter such that $f_{c_0}$ has a parabolic cycle of odd period, and suppose that $c_0$ is not a cusp. Then $c_0$ is on a parabolic arc
in the  following sense: there  exists a simple real-analytic arc of parabolic
parameters 
$c(t)$ (for $t\in\mathbb{R}$) with quasiconformally equivalent but conformally 
distinct dynamics of which $c_0$ is an interior point.
\end{theorem}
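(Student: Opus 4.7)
The plan is to realize the arc as a one-parameter quasiconformal surgery family in which only the critical Ecalle height varies. Let $k$ be the odd period of the parabolic cycle of $f_{c_0}$. By Lemma \ref{PropFatouCoord}, the characteristic petal $V$ carries an antiholomorphic Fatou coordinate $\Phi$ conjugating $f_{c_0}^{\circ k}|_V$ to $\zeta\mapsto\bar\zeta+1/2$. Since $c_0$ is not a cusp, the characteristic parabolic point has exactly one petal, the critical value eventually lands in $V$, and its image under $\Phi$ has a well-defined imaginary part $h_0$: the critical Ecalle height.

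For each $t\in\mathbb{R}$ I would construct a smooth quasiconformal homeomorphism $\psi_t$ of the Ecalle cylinder $\mathbb{C}/\mathbb{Z}$ that is the identity near both ends, depends real-analytically on $t$ with $\psi_0=\mathrm{id}$, translates a small neighborhood of the projected critical value by $it$, and commutes with the antiholomorphic involution induced by $\zeta\mapsto\bar\zeta+1/2$. The symmetry requirement is satisfiable because the vertical translation $\zeta\mapsto\zeta+it$ itself commutes with $\zeta\mapsto\bar\zeta+1/2$, so a smooth interpolation between this translation and the identity can be symmetrized against the involution. Pulling the Beltrami coefficient of $\psi_t$ back through $\Phi$ to $V$, spreading it over the entire parabolic basin by iteration of $f_{c_0}^{\circ k}$, propagating over $\mathbb{C}$ via all $f_{c_0}$-preimages, and extending by $0$ elsewhere produces an $f_{c_0}$-invariant Beltrami form $\mu_t$ with $\|\mu_t\|_\infty<1$.

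The Measurable Riemann Mapping Theorem then yields a quasiconformal map $\Psi_t:\mathbb{C}\to\mathbb{C}$ solving $\bar\partial\Psi_t=\mu_t\,\partial\Psi_t$, normalized to be tangent to the identity at $\infty$. Invariance of $\mu_t$ under $f_{c_0}$ forces $\Psi_t\circ f_{c_0}\circ\Psi_t^{-1}$ to be antiholomorphic; being also unicritical, proper of degree $d$, monic and centered at infinity, it equals $f_{c(t)}$ for a unique $c(t)\in\mathbb{C}$ with $c(0)=c_0$. By construction $f_{c(t)}$ is quasiconformally conjugate to $f_{c_0}$, still has a parabolic cycle of period $k$ with one petal per periodic point, and has critical Ecalle height $h_0+t$. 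Real-analytic dependence of $\mu_t$ on $t$ combined with real-analytic dependence of the Ahlfors--Bers solution on the data yields a real-analytic map $t\mapsto c(t)$.

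The main obstacle is establishing injectivity — that $t\mapsto c(t)$ is non-constant, so that one obtains a genuine arc with $c_0$ in its interior rather than a degenerate family. This is precisely where the conformal invariance of the critical Ecalle height enters: if $c(t_1)=c(t_2)$ then $f_{c(t_1)}=f_{c(t_2)}$ identically, their critical Ecalle heights coincide, and $h_0+t_1=h_0+t_2$, forcing $t_1=t_2$. Distinct values of $t$ therefore give conformally distinct maps, while any two are quasiconformally equivalent by construction. The remaining delicate points are compatibility of $\psi_t$ with the antiholomorphic involution (so that the surgery stays inside the antiholomorphic family rather than producing a general holomorphic polynomial after straightening) and the verification that $\mu_t$ is genuinely $f_{c_0}$-invariant on the grand orbit of the parabolic basin, both of which are routine once the symmetric interpolation in the cylinder has been set up.
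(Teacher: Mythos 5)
Your overall strategy is the paper's: deform the complex structure in the Ecalle cylinder so that only the critical Ecalle height changes, integrate by the Measurable Riemann Mapping Theorem, and get injectivity from conformal invariance of the Ecalle height. However, the key symmetry step contains a genuine error. The vertical translation $\zeta\mapsto\zeta+it$ does \emph{not} commute with the involution $I:\zeta\mapsto\bar{\zeta}+1/2$: one computes $I\circ T_{it}\circ I^{-1}=T_{-it}$, since $I$ reverses the vertical (Ecalle height) direction while exchanging the two ends of the cylinder. So the stated justification for the existence of an $I$-equivariant $\psi_t$ collapses, and a naive ``symmetrization'' of an interpolation between $T_{it}$ and the identity does not produce a map commuting with $I$ that still shifts the critical value by $+it$. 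What saves the construction is a different observation: the projection $p$ of the critical value and $I(p)$ (which is the projection of $f_{c_0}^{\circ k}$ of the critical value) are always distinct points of the cylinder, so one may prescribe translation by $+it$ near $p$ and, forced by equivariance, translation by $-it$ near $I(p)$ -- consistently with the fact that the antiholomorphic return map must reverse Ecalle height. The paper implements exactly this with the explicit shear $\ell_t$, whose vertical displacement depends only on the horizontal coordinate, is $+t/2$ on the vertical line through the critical value (normalized to $x=1/4$) and $-t/2$ on its $I$-image line $x=3/4$, and which commutes with $I$ by construction.

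A second, related defect: you require $\psi_t$ to be the identity near both ends of the cylinder while translating a small neighborhood of $p$ by $it$. For a fixed neighborhood and fixed region of non-identity this is incompatible with injectivity as soon as $|t|$ exceeds the vertical extent of that region, so your family is only defined for $t$ in a bounded range (or else the data must vary with $t$, endangering the claimed real-analytic dependence). The paper's $\ell_t$, being a horizontal-coordinate-dependent shear defined by one formula for all $t\in\mathbb{R}$, is a homeomorphism for every $t$, has Beltrami coefficient real-analytic in $t$, and realizes every critical Ecalle height $h+t/2$, $t\in\mathbb{R}$. This surjectivity onto all Ecalle heights is not cosmetic: it is what later forces the arcs to limit at cusps at both ends (Lemma \ref{PropArcEnds}) and underlies the bifurcation statements along arcs. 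So you should replace the ``translation plus identity near the ends'' recipe by an $I$-equivariant shear of the whole cylinder; with that change the rest of your argument (spreading the Beltrami form, straightening within the family $\bar{z}^d+c$, and injectivity via the Ecalle height) matches the paper's proof.
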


This result has first been shown by Nakane \cite{Na2} using different methods.

\begin{proof}
We will use quasiconformal (qc-) deformations. The critical orbit is
contained in the parabolic basin. We parametrize the horizontal
coordinate within the Ecalle cylinder by $\mathbb{R}/\mathbb{Z}$.  Choose the horizontal
Fatou coordinate (the last degree of freedom of the  Fatou coordinate) so
that the critical value has real part $1/4$ within the Ecalle cylinder. Let, the critical Ecalle height of $f_{c_0}$ be $h$.

It is easy to change the complex structure within the Ecalle cylinder 
so that the critical Ecalle height becomes any assigned real value, 
for example via the quasiconformal homeomorphism 
$$
\ell_t : (x,y)\mapsto \left\{\begin{array}{ll}
                    (x,y+2tx) & \mbox{if}\ 0\leq x\leq 1/4 \\
                    (x,y+t(1-2x)) & \mbox{if}\ 1/4\leq x\leq 1/2 \\ 
                    (x,y-t(2x-1)) & \mbox{if}\ 1/2\leq x\leq 3/4 \\ 
                    (x,y-2t(1-x)) & \mbox{if}\ 3/4\leq x\leq 1 
                      \end{array}\right. 
$$
This homeomorphism $\ell_t$ commutes  with the map $I:z\mapsto\bar{z}+1/2$, 
hence the corresponding Beltrami form is invariant under the  map $I$. Note 
that $\ell_t(1/4,h) = (1/4,h+t/2)$. Translating the map $\ell_t$ by 
positive integers, we obtain a qc-map $\ell_t$ commuting with $I$ in a 
right half plane. 

By the coordinate change $z\mapsto \Phi_0(z)$ (where $\Phi_0$ is the attracting Fatou coordinate at the characteristic parabolic point of $f_{c_0}$), we can transport this Beltrami form from the right half plane into all the attracting petals, and it is forward invariant under $f_{c_0}$. It is easy to make it backward invariant by pulling it back along the dynamics. Extending it by the zero Beltrami form outside of the entire parabolic basin, we obtain an invariant Beltrami form, and the Measurable Riemann Mapping Theorem supplies a qc-map $\phi_t$ integrating this Beltrami form and conjugating the original map to a new antiholomorphic polynomial $f_{c(t)}$ within the same family. Its attracting Fatou coordinate at the characteristic parabolic point is given by $\Phi_t = \ell_t\circ\Phi_0\circ\phi_t^{-1}$. Thus the critical Ecalle height of $f_{c(t)}$ is $h+t/2$. 

Note that the Beltrami form depends real analytically on $t$, so the 
parameter $c(t)$ depends real analytically on $t$. We obtain a real
analytic map from $\mathbb{R}$ into the  multicorn $\mathcal{M}_d^{\ast}$. Since the critical
points of all $f_{c(t)}$ have  different Ecalle heights, which is a
conformal invariant, this map is injective. This proves the existence of an arc in the parameter plane with the required properties, and injectivity implies that the arc is simple.
\end{proof}

\begin{remark} 1. Since all the antiholomorphic polynomials $f_c$ on a parabolic arc $\mathcal{C}$ have quasi-conformally conjugate dynamics, there exists a fixed odd integer $k$ such that each $f_c$ on $\mathcal{C}$ has a $k$-periodic parabolic cycle. Such an arc will be referred to as a parabolic arc of period $k$.

2. By our construction, the critical Ecalle height of $f_{c(t)}$ is $h+t/2$. Therefore, the critical Ecalle height of $f_{c(t)}$ tends to $\pm \infty$ towards the ends of the arc; i.e. as $t \rightarrow \pm \infty$.

3. Numerical experiments suggest that the arc is a smooth curve in the parameter plane when parametrized by arclength. This would follow if we could prove that the map $c(t)$ had a nowhere vanishing derivative. One can, by passing to the biquadratic family, show that there are at most (possibly) finitely many singular points of this parametrization (see \cite{Mu1}). The question whether such finitely many exceptional points indeed exist, is related to the smoothness of certain algebraic curves and requires further investigation.
\end{remark}

\begin{lemma}[Endpoints of Parabolic Arcs]\label{PropArcEnds} 
For every parabolic arc $c(t)$, the limits $c^\pm:=\lim_{t\to\pm\infty} 
c(t)$ exist and are parabolic cusps of the same period. 
\end{lemma}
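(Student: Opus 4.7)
The plan is to show that every accumulation point of $c(t)$ as $t\to+\infty$ is a cusp of period $k$, and then combine connectedness of the accumulation set with finiteness of cusps (Lemma \ref{LemCuspFin}) to conclude that the limit exists; the argument for $t\to-\infty$ is identical. Since $\mathcal{M}_d^\ast$ is compact (Theorem \ref{RealAnalUniformization}), the accumulation set $A^+ := \bigcap_{T>0}\overline{\{c(t): t\geq T\}}$ is a non-empty, compact, connected subset of $\mathcal{M}_d^\ast$ (a nested intersection of continua in a compact Hausdorff space).

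Fix $c^+\in A^+$ and a sequence $t_n\to+\infty$ with $c(t_n)\to c^+$; after passing to a subsequence, the parabolic periodic point $z(t_n)$ of $f_{c(t_n)}$ converges to some $z^+$. By continuity $f_{c^+}^{\circ k}(z^+)=z^+$ and $(f_{c^+}^{\circ 2k})'(z^+)=1$, so $z^+$ is a parabolic periodic point of $f_{c^+}$ of odd period $k'\mid k$ with multiplier $+1$. Writing $f_{c^+}^{\circ 2k'}(z)=z+b z^{q^++1}+\cdots$ in a local coordinate at $z^+$, the two-critical-orbit absorption argument from the proof of Lemma \ref{LemOddIndiffDyn} applies verbatim at $c^+$ and yields $q^+\in\{1,2\}$. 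Since $f_{c^+}^{\circ 2k}=(f_{c^+}^{\circ 2k'})^{\circ(k/k')}$, the point $z^+$ is a zero of $g_{c^+}(z):=f_{c^+}^{\circ 2k}(z)-z$ of multiplicity exactly $q^++1\leq 3$.

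Next I would invoke Hurwitz's theorem on the holomorphic family $\{g_c\}$ (note that $f_c^{\circ 2k}$ is holomorphic in $z$ with coefficients depending continuously on $c$) to force $k'=k$. For $n$ large, exactly $k/k'$ points of the period-$k$ parabolic cycle of $f_{c(t_n)}$ lie in a small disk around $z^+$; because $c(t_n)$ lies on the arc $\mathcal{C}$, each such point is a zero of $g_{c(t_n)}$ of multiplicity $2$ (the case $q=1$). Hurwitz then gives $2(k/k')\leq q^++1\leq 3$, and since both $k$ and $k'$ are odd this forces $k/k'=1$, i.e.\ $k'=k$.

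It remains to exclude $q^+=1$, which would place $c^+$ in the interior of a parabolic arc of period $k$ by Theorem \ref{ThmParaArcs}. The critical Ecalle height is an intrinsic conformal invariant and depends real-analytically on the parameter within the stratum of parabolic-period-$k$ parameters with a single petal, so continuity would give $h(c(t_n))\to h(c^+)<\infty$; this contradicts $h(c(t_n))=h+t_n/2\to+\infty$ from the construction in Theorem \ref{ThmParaArcs}. Therefore $q^+=2$ and $c^+$ is a cusp of period $k$. Since $A^+$ is a connected subset of the finite set of period-$k$ cusps, it reduces to a single point, which is the desired limit $c^+$. The most delicate step is the Hurwitz multiplicity count: one must correctly identify how many points of the period-$k$ parabolic cycle cluster at a given $z^+$, know that each contributes multiplicity exactly $2$ along $\mathcal{C}$, and exploit the integrality constraint with $k,k'$ both odd to rule out a period drop.
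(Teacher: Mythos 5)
Your argument is correct, and its skeleton coincides with the paper's proof: the accumulation set $L^{\pm}$ is a nonempty connected subset of $\mathcal{M}_d^\ast$, every limit parameter carries a parabolic cycle of period exactly $k$ with multiplier $+1$, non-cusp limits are ruled out because the critical Ecalle height $h+t/2$ blows up along the arc while it would have to stay bounded near a non-cusp limit, and finiteness of cusps (Lemma \ref{LemCuspFin}) collapses the connected limit set to a point. The one place where you genuinely diverge is the period-preservation step. The paper gets ``period exactly $k$'' by placing the limit parameter on $\partial H$ for a period-$k$ hyperbolic component $H$ (implicitly using Theorem \ref{ThmIndiffBdyHyp}, finiteness of components and closedness of boundaries) and then quoting Lemma \ref{LemOddIndiffDyn}; you instead run a local Hurwitz/multiplicity count on $g_c(z)=f_c^{\circ 2k}(z)-z$: exactly $k/k'$ cycle points, each a double zero along the arc, cluster at $z^+$, whose multiplicity is $q^++1\le 3$ by the two-critical-orbit petal count, forcing $k/k'=1$. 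This is more self-contained (it never needs to know which hyperbolic component the limit sits on, and it re-derives $q^+\in\{1,2\}$ directly at the limit), at the cost of redoing locally what Lemma \ref{LemOddIndiffDyn} already packages; note also that $2(k/k')\le 3$ alone forces $k'=k$, so the parity of $k,k'$ is not needed there. One small overclaim: you assert that the critical Ecalle height is \emph{real-analytic} on the stratum of non-cusp period-$k$ parabolic parameters; nothing of the sort is needed or justified --- continuity at non-cusp parameters, coming from the locally uniform dependence of the Fatou coordinate construction of Lemma \ref{PropFatouCoord} on the parameter, is exactly what the paper uses and is all your contradiction requires.
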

\begin{proof}
The parabolic arc $t\mapsto c(t)$ is a continuous map $\mathbb{R} \to \mathcal{M}_d^*$, so the 
limit sets $L^{\pm}$ of all accumulation points as $t\to\pm\infty$ are 
connected and non-empty. 

Let $k$ be the odd period of the parabolic cycle for all $c(t)$. 
By continuity, every $c\in L^{\pm}$ is parabolic with period at most $k$. 
By Lemma~\ref{LemOddIndiffDyn}, the period of the parabolic cycle of $c$ 
is exactly $k$.

Note that the critical Ecalle height depends continuously on the 
parameter. In fact, the construction of Fatou coordinates in 
lemma~\ref{PropFatouCoord} depends locally uniformly on the parameter 
around any non-cusp $c$. Therefore, if $c\in L^\pm$ is not cusp, 
the critical Ecalle heights for $c(t_n)$ tending to $c$, are bounded, 
which is a contradiction. Thus $L^\pm$ consists of cusps. By finiteness of 
the number of cusps, the claim follows. 
\end{proof}

\begin{lemma}[Parabolic Arcs Disjoint]\label{LemArcsDisjoint}
Two distinct parabolic arcs do not intersect.
\end{lemma}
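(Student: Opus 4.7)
The plan is to derive a contradiction from the assumption that two distinct parabolic arcs $\mathcal{C}_1 \ne \mathcal{C}_2$ share a common parameter $c_0$, by proving that any two parabolic arcs passing through a non-cusp parameter must coincide globally.

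First I would observe that $c_0$ is automatically a non-cusp point: a parabolic arc is by definition the image of a real-analytic injection from all of $\mathbb{R}$, so $c_0$ is an interior point of both arcs. By Lemma~\ref{LemOddIndiffDyn}, $f_{c_0}$ then has a simple parabolic cycle ($q=1$) of some odd period $k$, and $k$ is the common period of the parabolic cycle along each of the two arcs.

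The central step is to show that near $c_0$ the locus $\mathcal{P}_k$ of non-cusp parabolic parameters of period $k$ is a real-analytic $1$-manifold. For this I would combine two ingredients: Theorem~\ref{ThmParaArcs} applied at $c_0$ already exhibits a real-analytic $1$-submanifold of $\mathcal{P}_k$ through $c_0$ (lower bound on the dimension), while Theorem~\ref{ThmIndiffBdyHyp} forbids parabolic parameters from filling any open set in $\mathbb{C}$ (upper bound on the dimension). Equivalently, since for odd $k$ the holomorphic iterate $f_c^{\circ 2k}$ has real nonnegative multiplier $|\mu(c)|^2$ at the $k$-cycle, the parabolic condition reads as the single real-analytic equation $|\mu(c)|^2 = 1$, and its gradient is nonzero at $c_0$ precisely because $q=1$ there.

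With $\mathcal{P}_k$ a $1$-manifold near $c_0$, both arcs $\mathcal{C}_1$ and $\mathcal{C}_2$ are connected real-analytic curves in $\mathcal{P}_k$ through $c_0$, so they coincide in a neighborhood of $c_0$. Parametrizing each arc by critical Ecalle height $h \mapsto c_i(h)$ as in Theorem~\ref{ThmParaArcs}, the two real-analytic maps $c_1, c_2 \colon \mathbb{R} \to \mathbb{C}$ then agree on an open interval around $h_0 := $ critical Ecalle height of $f_{c_0}$, and the real-analytic identity principle forces $c_1 \equiv c_2$ on all of $\mathbb{R}$, hence $\mathcal{C}_1 = \mathcal{C}_2$. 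The main obstacle is the $1$-manifold claim: in the antiholomorphic setting the codimension count that would be automatic from complex-analyticity in the holomorphic case must be established explicitly, by combining the existence statement of Theorem~\ref{ThmParaArcs} with the non-existence of open parabolic sets afforded by Theorem~\ref{ThmIndiffBdyHyp}.
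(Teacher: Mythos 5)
The crux of your argument---that near the common point $c_0$ the locus of non-cusp parabolic parameters of period $k$ is a real-analytic $1$-manifold---is precisely the point at stake, and neither of the two justifications you offer establishes it. The first is a non sequitur: a set that is locally the union of two real-analytic curves crossing at $c_0$ also contains a real-analytic $1$-submanifold through $c_0$ (lower bound) and has empty interior, consistent with Theorem~\ref{ThmIndiffBdyHyp} (upper bound), yet it is not a $1$-manifold at $c_0$. Ruling out exactly this crossing configuration is the content of the lemma, so the ``dimension count'' is circular. The second justification fails because the multiplier is not a well-defined real-analytic function of $c$ at $c_0$: at a non-cusp parabolic parameter the parabolic point is a double fixed point of the holomorphic return map $f_{c_0}^{\circ 2k}$ (multiplicity $q+1=2$), so the implicit function theorem does not continue the cycle through $c_0$; under perturbation the double point splits (e.g.\ into an attracting/repelling pair inside the period-$k$ component, or into a period-$2k$ orbit on the other side), and the would-be function $\mu(c)$ has square-root-type branching. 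Hence ``the single real-analytic equation $|\mu(c)|^2=1$'' is not available, and the assertion that its gradient is nonzero ``because $q=1$'' is unsupported; transversality statements of this kind are genuinely delicate in the real-analytic, antiholomorphic setting. (Your final step---local coincidence plus the identity principle for the Ecalle-height parametrizations---is fine, but it only kicks in after the missing local statement is proved.)

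For contrast, the paper avoids any local analysis of the parabolic locus and argues by global rigidity: all parameters on a parabolic arc have quasiconformally conjugate dynamics, so if $\mathcal{C}_1$ and $\mathcal{C}_2$ met, then for each $t$ the parameters $c_1(t)$ and $c_2(t)$ of equal critical Ecalle height would be quasiconformally conjugate; that conjugacy is conformal on the Ecalle cylinder (equal heights), hence on every bounded Fatou component, and conformal on the basin of infinity, and since the Julia set has measure zero the conjugacy is conformal everywhere. This forces $c_1(t)=c_2(t)$ for all $t$, i.e.\ the arcs coincide. If you want to salvage your route, you would need an independent proof that the period-$k$ parabolic locus is locally a single arc near a non-cusp parameter (for instance via a discriminant/resultant function whose nondegeneracy at $c_0$ is actually computed), which is substantially harder than the qc-rigidity argument the paper uses.
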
 
\begin{proof} 
Let $\mathcal{C}_1$ and $\mathcal{C}_2$ be two parabolic arcs, parametrized so that 
$c_i(t)$ has critical Ecalle height $t$ for every $t\in\mathbb{R}$. If these arcs 
have an interior point in common, then $c_1(t_0)=c_2(t_0)$ for some 
$t_0 \in \mathbb{R}$, and all $c_1(t)$ and all $c_2(t')$ are quasiconformally 
conjugate to $c_i(t_0)$ and hence to each other. For every $t \in \mathbb{R}$, 
the quasiconformal conjugation between $c_1(t)$ and $c_2(t)$ is conformal 
on the Ecalle cylinder (by identical critical Ecalle height) and hence 
on every bounded Fatou component, and it is also conformal on the basin 
of infinity. Since the Julia set has measure zero for every polynomial 
in which all critical orbits are in parabolic basins, $c_1(t)$ and $c_2(t)$ 
are conformally conjugate, and $c_1(t)=c_2(t)$ for all $t$ because
$c_1(t_0)=c_2(t_0)$. 
\end{proof}

However, the closures of two distinct parabolic arcs may intersect.

\begin{corollary}[Neighborhoods of Arcs]\label{CorArcNeighbors} 
For every parabolic arc $\mathcal{C}$ of odd period $k$, there is a unique hyperbolic component $H$ of period $k$ such that $\mathcal{C} \subset \partial H$. The arc $\mathcal{C}$ does not intersect the boundary of any other hyperbolic component of period $k$. 
\end{corollary}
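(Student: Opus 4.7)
The plan is to combine Theorem~\ref{ThmIndiffBdyHyp} with a standard clopen argument along $\mathcal{C}$. Fix any $c_0 \in \mathcal{C}$. By Lemma~\ref{LemOddIndiffDyn}, $f_{c_0}$ has a parabolic $k$-cycle with a single petal (arcs consist of non-cusp parameters, so $q=1$); hence Theorem~\ref{ThmIndiffBdyHyp} yields some hyperbolic component $H$ of period $k$ with $c_0 \in \partial H$. It remains to show that this $H$ satisfies $\mathcal{C} \subset \partial H$ and is the unique period-$k$ component meeting $\mathcal{C}$ on its boundary.

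For the first assertion, I would consider $S := \mathcal{C} \cap \partial H$. It is non-empty and relatively closed in $\mathcal{C}$ since $\partial H$ is closed. If I can show $S$ is also relatively open in $\mathcal{C}$, then $S = \mathcal{C}$ by connectedness of $\mathcal{C}$ (homeomorphic to $\mathbb{R}$ by Theorem~\ref{ThmParaArcs}), proving $\mathcal{C} \subset \partial H$. Uniqueness then follows from the same local picture: if some other period-$k$ component $H'$ had a boundary point on $\mathcal{C}$, then near that point, perturbations into both $H$ and $H'$ would lie in the same locally connected open region of parameters having an attracting nearby $k$-cycle---the ``period-preserving side'' of $\mathcal{C}$ described below---forcing $H' = H$.

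The crux is the local analysis near a fixed $c' \in S$, and this is also where I locate the main obstacle. Since $c'$ is not a cusp, the parabolic periodic point $z_0$ of $f_{c'}$ is a double root of the holomorphic equation $f_{c'}^{\circ 2k}(z) - z = 0$. Under real-analytic perturbation of $c$, an unfolding calculation analogous to the one in the proof of Theorem~\ref{ThmEvenBif} (specialized to the one-petal case $q=1$) splits this double root into two simple nearby roots of $f_c^{\circ 2k}(z) = z$, which, depending on which side of $\mathcal{C}$ one sits, either form two distinct $k$-cycles of $f_c$ (period-preserving direction, with one attracting and one repelling multiplier) or a single $2k$-cycle (period-doubling direction), in accordance with Corollary~\ref{CorRestrictedOddBifurcations}. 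Thus $\mathcal{C}$ is locally a real-analytic curve whose period-preserving side is a connected open half-neighborhood of $c'$ filled by parameters with an attracting nearby $k$-cycle; being connected and contained in the period-$k$ hyperbolic locus, it lies in a single period-$k$ hyperbolic component, necessarily $H$, and continuous dependence transports this description to nearby points of $\mathcal{C}$, giving the desired $\mathcal{C}$-neighborhood of $c'$ inside $\partial H$. The main obstacle is the rigorous justification that $\mathcal{C}$ is locally a two-sided separating smooth arc---equivalently, that $|\mu(c)|^2 - 1$, with $\mu$ the antiholomorphic multiplier of the continuing $k$-cycle, changes sign transversally across $\mathcal{C}$. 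Lemma~\ref{LemIsolated} rules out vanishing on any open set, but excluding higher-order tangential vanishing along $\mathcal{C}$ requires the antiholomorphic Fatou coordinate construction of Theorem~\ref{ThmParaArcs}: the critical Ecalle height parametrizes the tangential direction and preserves the quasiconformal class, so the transverse direction must induce a non-trivial first-order change in $|\mu|^2$.
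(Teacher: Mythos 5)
Your clopen scheme stands or falls on the local claim that $\mathcal{C}$ is, near each of its points, a smooth two-sided separating arc across which $|\mu(c)|^2-1$ changes sign to first order, with the period-$k$ hyperbolic locus filling one side; and this is exactly the step that the tools you invoke do not deliver. The quasiconformal deformation behind Theorem~\ref{ThmParaArcs} only moves the critical Ecalle height, i.e.\ it controls the tangential direction along $\mathcal{C}$; it says nothing about first-order behaviour transverse to $\mathcal{C}$ --- indeed the paper explicitly remarks (after Theorem~\ref{ThmParaArcs}) that even the non-vanishing of $c'(t)$, hence the smoothness of the arc as a parametrized curve, is not known. Worse, ``the multiplier of the continuing $k$-cycle'' is not defined on a full neighborhood of a point of $\mathcal{C}$: on the period-doubling side the double fixed point of $f_c^{\circ 2k}$ splits into a single $2k$-cycle and no $k$-cycle continues the parabolic one, so a transversal sign-change statement for $|\mu|^2-1$ is not even well posed without additional work (this delicacy is why the bifurcation on the other side of the arc is analyzed in the paper via the parabolic fixed-point index, Theorem~\ref{ThmBifArc}, rather than via a multiplier). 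Finally, Lemma~\ref{LemIsolated} concerns orbits of \emph{even} period $2k$ with multiplier in $\mathbb{D}\cup U$, so it does not literally ``rule out vanishing on any open set'' for an odd-period indifferent cycle; what plays that role is the second statement of Theorem~\ref{ThmIndiffBdyHyp}. As written, therefore, both the openness step and the uniqueness step (which leans on the same local picture) are unproved.

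For comparison, the paper's proof avoids all local bifurcation analysis. Every point of $\mathcal{C}$ lies on the boundary of one of the finitely many period-$k$ components (Theorem~\ref{ThmIndiffBdyHyp} together with Lemma~\ref{LemFinitelyHypComps}); if $\mathcal{C}$ were not contained in the boundary of a single one, connectedness of $\mathcal{C}$ (via the Ecalle-height parametrization) would produce a transition parameter lying on $\partial H_1\cap\partial H_2$ at which $\mathcal{C}\cap\partial H_1$ terminates. Since $H_1$ is homeomorphic to $\mathbb{D}$ by \cite{NS} and $\partial H_1$ consists of parabolic arcs limiting at cusps (Lemma~\ref{LemOddIndiffDyn}, Lemma~\ref{PropArcEnds}), the boundary must continue through that parameter along a \emph{second} parabolic arc, contradicting the disjointness of arcs (Lemma~\ref{LemArcsDisjoint}); uniqueness is then excluded by the second statement of Theorem~\ref{ThmIndiffBdyHyp}. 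If you want to salvage your local approach, you would essentially have to prove the transversality of the period-$k$ multiplier unfolding across the arc, which is a genuinely new (and, in this generality, open-flavored) ingredient rather than a routine verification.
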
 
\begin{proof}
Let us assume that such an $H$ does not exist, and we will obtain a contradiction. By Theorem~\ref{ThmIndiffBdyHyp}, each point of $\mathcal{C}$ lies on the boundary of a hyperbolic component of period $k$ of $\mathcal{M}_d^*$. Since there are only finitely many hyperbolic components of period $k$, our assumption implies that there are distinct hyperbolic components $H_1, H_2, \cdots, H_r$ of period $k$ such that $\mathcal{C} \subset \partial H_1 \cup \partial H_2 \cup \cdots \cup \partial H_r$, but $\mathcal{C} \not\subset \partial H_i$ for $i=1, 2, \cdots, r$. It suffices to consider the case when $r=2$.

Let, $c: \mathbb{R} \rightarrow \mathcal{C}$ be the critical Ecalle height parametrization of $\mathcal{C}$ (given in Theorem \ref{ThmParaArcs}). Then there exists $t_0 \in \mathbb{R}$ such that $c \left((-\infty,t_0]\right) \subset \partial H_1$ and $c \left([t_0,\infty)\right) \subset \partial H_2$. Since $H_1$ is homeomorphic to $\mathbb{D}$ \cite[Lemma 5.4, Theorem 5.9]{NS}, and every parabolic limits at cusp points on both ends (by Lemma \ref{PropArcEnds}), $\partial H_1$ is a closed curve consisting of (possibly parts of) parabolic arcs and cusp points. Since each parabolic arc is a simple arc (i.e. there is no self-intersection), this implies that there exists a parabolic arc $\mathcal{C}'$ (intersecting $\partial H_1$), different from $\mathcal{C}$, such that the parameter $c(t_0) \in \mathcal{C} \cap \mathcal{C}'$. This contradicts Lemma \ref{LemArcsDisjoint}, and proves the existence of a hyperbolic component $H$ of period $k$ with $\mathcal{C} \subset \partial H$. 

The above argument also shows that if some $c \in \mathcal{C}$ lies on $\partial H$ and $\partial H'$, for two distinct hyperbolic components $H$ and $H'$ of period $k$, then $\mathcal{C} \subset \partial H \cap \partial H'$. But this is impossible because, by Theorem~\ref{ThmIndiffBdyHyp}, every neighborhood of every point on a parabolic arc meets parameters where all orbits of period $k$ are repelling. Therefore, $\mathcal{C}$ does not intersect the boundary of any hyperbolic component of period $k$, other than the boundary of $H$. The uniqueness follows. 
\end{proof}

\begin{definition}[Root Arcs and Co-Root Arcs]\label{DefRootArc}
We call a parabolic arc a \emph{root arc} if for every parameter $c$ on this arc, the parabolic cycle of $f_c$ disconnects the Julia set of $f_c$ (if $(z_i)_{1\leq i\leq k}$ is the parabolic cycle of $f_c$, then we say that $(z_i)_{1\leq i\leq k}$ disconnects $J(f_c)$ if $J(f_c)\setminus (z_i)$ is disconnected). Otherwise, we call it a \emph{co-root arc}.
\end{definition}

\begin{remark} Since the dynamics of all the points on the arc are quasiconformally 
conjugate, this classification of arcs is well-defined. 
\end{remark}

For an isolated fixed point $\hat{z}= f(\hat{z})$ where $f : U \rightarrow \mathbb{C}$ is a holomorphic function on a connected open set $U \subset \mathbb{C}$, the residue fixed point index of $f$ at $\hat{z}$ is defined to be the complex number

\begin{center}
$\displaystyle \iota(f, \hat{z}) = \frac{1}{2\pi i} \oint \frac{dz}{z-f(z)}$
\end{center}

where we integrate in a small loop in the positive direction around $\hat{z}$. If the multiplier $\rho:=f'(\hat{z})$ is not equal to $+1$, then a simple computation shows that $\iota(f, \hat{z}) = 1/(1-\rho)$. If $z_0$ is a parabolic fixed point with multiplier $+1$, then in local holomorphic coordinates the map can be written as $f(w) = w + w^{q+1} + \alpha w^{2q+1} + \cdots$ (putting $\hat{z}=0$), and $\alpha$ is a conformal invariant (in fact, it is the unique formal invariant other than $q$: there is a formal, not necessarily convergent, power series that formally conjugates $f$ to the first three terms of its series expansion). A simple calculation shows that $\alpha$ equals the parabolic fixed point index. The `r{\'e}sidu it{\'e}ratif' of $f$ at the parabolic fixed point $\hat{z}$ of multiplier $1$ is defined as $\left(\frac{q+1}{2} - \alpha\right)$. It is easy to see that the fixed point index does not depend on the choice of complex coordinates, and is a conformal invariant (compare \cite[\S 12]{Mi3}). 

The typical structure of the boundaries of hyperbolic components of even periods is that there are $d-1$ isolated root or co-root points which are 
connected by curves off the root locus. For hyperbolic components of odd periods, the story is in a certain sense just the opposite: the analogues 
of roots or co-roots are now arcs, of which there are $d+1$, and the analogues of the connecting curves are the isolated cusp points between 
the arcs; see Theorem~\ref{Exactly d+1}. There is trouble, of course, where components of even and odd periods meet, and we
get bifurcations along arcs: the root of the even period  component stretches along parts of two arcs. This phenomenon was first observed  in
\cite{CHRS} for the main component of the tricorn. The precise statement is given in the following result, which were proved in \cite[Proposition 3.7, Theorem 3.8, Corollary 3.9]{HS}. The proof of this fact uses the concept of holomorphic fixed point index. The main idea is that when several simple fixed points merge into one parabolic point, each of their indices tends to $\infty$, but the sum of the indices tends to the index of the resulting parabolic fixed point, which is finite.

\begin{theorem}[Bifurcations Along Arcs]\label{ThmBifArc}
Along any parabolic arc of odd period, the fixed point index is a real valued real-analytic function that tends to $+\infty$ at both ends. Every parabolic arc of period $k$ intersects the boundary of a hyperbolic component of period $2k$ at the set of points where the fixed-point index is at least $1$, except possibly at (necessarily isolated) points where the index has an isolated local maximum with value $1$. In particular, every parabolic arc has, at both ends, an interval of positive length at which a bifurcation from a hyperbolic component of odd period $k$ to a hyperbolic component of period $2k$ occurs.
\end{theorem}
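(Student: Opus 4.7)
The plan is to establish the three assertions in sequence by analyzing the residue fixed-point index of $f_c^{\circ 2k}$ at the characteristic parabolic point $z_0(c)$ as $c$ varies along the arc $\mathcal{C}$, combining the local antiholomorphic normal form with the classical residue-preservation principle for colliding simple fixed points.

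For real-analyticity and reality of $\iota(c)$ on $\mathcal{C}$, Lemma \ref{LemOddIndiffDyn} guarantees that at every $c \in \mathcal{C}$ the holomorphic iterate $f_c^{\circ 2k}$ has a simple ($q = 1$) parabolic point, so the implicit function theorem applied to $f_c^{\circ 2k}(z) = z$ together with $(f_c^{\circ 2k})'(z) = 1$ shows that $z_0(c)$ and the local Taylor coefficients of $f_c^{\circ 2k}$ at $z_0(c)$ depend real-analytically on $c$, and hence so does $\iota(c)$. For reality, one chooses local coordinates normalizing the antiholomorphic first-return map to the form $\zeta \mapsto \bar\zeta + \bar\zeta^2 + a\bar\zeta^3 + O(\bar\zeta^4)$ (achievable since the antiholomorphic multiplier is unimodular and $a_2$ can be rescaled by a suitable conformal change); a direct composition yields $f_c^{\circ 2k}(\zeta) = \zeta + 2\zeta^2 + (2 + 2\,\mathrm{Re}\,a)\zeta^3 + O(\zeta^4)$, giving $\iota = (1 + \mathrm{Re}\,a)/2 \in \mathbb{R}$. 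Conceptually this reflects the Ecalle equator of the cylinder being preserved by the antiholomorphic intermediate iterate, a feature specific to the antiholomorphic setting.

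For divergence at both endpoints, near a cusp the $\bar\zeta^2$-coefficient of $f_c^{\circ k}$ degenerates; writing it as $\epsilon(c) \to 0$, the analogous local calculation gives $\iota(c) = (\mathrm{Re}\,a + |\epsilon|^2)/(2(\mathrm{Re}\,\epsilon)^2)$, which blows up as $\epsilon \to 0$. The sign is positive because the antiholomorphic structure forces $\mathrm{Re}\,a > 0$ at the cusp in the normalization compatible with the critical orbit of $f_c$ lying in an attracting petal of the limiting $q=2$ parabolic — equivalently, because the critical Ecalle height $h(c)$ diverges to $\pm\infty$ on $\mathcal{C}$ (remark after Theorem \ref{ThmParaArcs}) and this must be consistent with the petal absorbing the critical orbit through the degeneration — giving $\iota(c) \to +\infty$ at both ends.

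For the bifurcation statement, a transverse perturbation of $c$ off $\mathcal{C}$ splits the multiplicity-$2$ zero of $f_c^{\circ 2k}(\zeta) - \zeta$ at $z_0(c)$ into two simple zeros $\zeta_\pm(c)$ that are interchanged by the antiholomorphic germ $f_c^{\circ k}$; hence $\{\zeta_+, \zeta_-\}$ forms a period-two orbit of $f_c^{\circ k}$, i.e.\ a period-$2k$ orbit of $f_c$. By the chain rule for antiholomorphic composition, the two multipliers of $f_c^{\circ 2k}$ at $\zeta_\pm$ are complex conjugates $\rho$ and $\bar\rho$, so the $2k$-cycle is attracting iff $|\rho| < 1$. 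The residue-preservation principle yields
\[
\iota_+(c) + \iota_-(c) = \frac{2(1 - \mathrm{Re}\,\rho)}{|1-\rho|^2} \longrightarrow \iota(c_0) \quad\text{as } c \to c_0
\]
along the transverse direction pointing away from the period-$k$ component $H$ of Corollary \ref{CorArcNeighbors}. A direct algebraic check shows that $|\rho| < 1$ is equivalent to $2(1-\mathrm{Re}\,\rho)/|1-\rho|^2 > 1$; consequently the arc meets $\partial H_{2k}$ precisely on the set $\{\iota(c) \geq 1\}$, with the boundary curve $|\rho|=1$ crossing $\mathcal{C}$ transversally except at isolated tangential points, which are necessarily isolated local maxima of $\iota$ with value exactly $1$. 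Combined with the divergence of $\iota$ to $+\infty$ at both cusps, this forces intervals of positive length with $\iota > 1$ near both endpoints, yielding the stated bifurcation intervals. The main obstacle is this third step: verifying that the split pair genuinely forms a period-$2k$ cycle (rather than two persistent fixed points of $f_c^{\circ k}$) and that the residue-sum limit, intrinsically of $0/0$ type, recovers $\iota(c_0)$ along a scale-correct transverse approach, while distinguishing the generic transverse crossing from the degenerate tangential case responsible for the exceptional local maxima with value $1$.
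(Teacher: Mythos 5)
The paper itself does not prove this theorem: it quotes it from \cite{HS} (Proposition 3.7, Theorem 3.8, Corollary 3.9) and only records the guiding idea, namely that the sum of the fixed-point indices of the simple fixed points splitting off a parabolic point converges to the index of the parabolic point. Your architecture is the intended one (reality of the residue via the antiholomorphic intermediate iterate, index-sum continuity, and the equivalence $|\rho|<1\iff 2(1-\mathrm{Re}\,\rho)/|1-\rho|^2>1$ for a $2k$-cycle with conjugate multipliers $\rho,\bar\rho$), and your normal-form computation of $\iota$ is correct. But two steps that carry the real content are missing.

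First, the sign of the divergence at the cusps is not established. Your formula $\iota=(\mathrm{Re}\,a+|\epsilon|^2)/(2(\mathrm{Re}\,\epsilon)^2)$ is right (note that only $\mathrm{Re}\,\epsilon\to 0$ at a cusp, not $\epsilon\to 0$ as you write), but the appeal to the divergence of the critical Ecalle height is a non sequitur: that is a statement about the parametrization of the arc in Theorem~\ref{ThmParaArcs} and says nothing about the sign of the numerator. What forces $\mathrm{Re}\,a+|\epsilon|^2$ to have a \emph{positive} (and nonzero) limit is the dynamics at the cusp: there $q=2$ and the two attached Fatou components have period $2k$ (\cite{NS}, Corollary 4.2, as recalled after Lemma~\ref{PropFatouCoord}), so $f_c^{\circ k}$ exchanges the two attracting petals; in the normalization $\bar\zeta+is\bar\zeta^2+a\bar\zeta^3+\cdots$ the attracting directions of $\zeta\mapsto\zeta+2(\mathrm{Re}\,a+s^2)\zeta^3+\cdots$ must then be $\pm i$, i.e.\ $\mathrm{Re}\,a+s^2>0$. (Equivalently: the extra fixed point of $f_c^{\circ 2k}$ that splits off near the cusp along the arc is a period-$k$ point with real multiplier, hence repelling since the parabolic cycle is the only non-repelling cycle, and its multiplier exceeding $1$ is exactly the positivity of the numerator.) One also needs continuity of the normalized coefficients along the arc up to the cusp so that the numerator converges to the nonzero cusp coefficient; none of this is in your text, and without it ``$\to+\infty$'' is unproved.

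Second, the dichotomy you flag yourself is the crux and is left open. The blanket claim that a transverse perturbation off $\mathcal{C}$ produces a pair interchanged by $f_c^{\circ k}$ is false on the side of the odd-period component $H$ (there the double point splits into an attracting and a repelling period-$k$ point), and on the other side it is precisely what has to be proven. The workable argument runs: for $c$ near $c_0$, $f_c^{\circ k}$ permutes the at most two fixed points of $f_c^{\circ 2k}$ in a small disk around $z_0$, so either both are fixed (period-$k$ points, with real nonnegative multipliers $|\partial_{\bar z}f_c^{\circ k}|^2$) or they are swapped (a $2k$-cycle with conjugate multipliers). If $\iota(c_0)>1$, index-sum continuity excludes ``both fixed and both repelling'' (the sum would be negative), ``both fixed with one attracting or indifferent'' forces $c\in\overline{H}$, and a swapped pair is then attracting; combining this with the second statement of Theorem~\ref{ThmIndiffBdyHyp}, which supplies parameters arbitrarily close to $c_0$ with all period-$k$ orbits repelling, yields nearby parameters inside period-$2k$ components. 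For the converse half (``only where $\iota\geq 1$'') one must start from arbitrary nearby parameters with an attracting $2k$-cycle, show that cycle is the split pair (hence swapped) and pass to the limit in the index sum. Your assertion that ``the boundary curve $|\rho|=1$ crosses $\mathcal{C}$ transversally except at isolated tangential points'' presupposes that the $2k$-cycle exists on a full one-sided neighborhood and is itself unproven; the exceptional set in the statement (isolated local maxima of $\iota$ with value $1$) and the final assertion about intervals at both ends come out of this two-sided analysis together with the finiteness of the number of period-$2k$ components, not from a transversality claim.
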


\section{Parabolic Arcs and Orbit Portraits}\label{SecOrbitPortraitsArcs}
In this section, we study the combinatorial properties of the parabolic arcs in details. We begin with some definitions.

\begin{definition}[Orbit Portraits]
Let $\mathcal{O} = \left( z_1, z_2, \cdots, z_p \right)$ be a repelling or parabolic periodic orbit of some unicritical antiholomorphic polynomial $f_c$, and that a dynamical ray $\mathcal{R}_t^c$ at a rational angle $t \in \mathbb{Q}/\mathbb{Z}$ lands at some $z_i$. Let $\mathcal{A}_j$ be the set of angles of all dynamical rays landing at $z_j$, $j = 1, 2, \cdots, p$. By our assumption, each $\mathcal{A}_j$ is a non-empty finite subset of $\mathbb{Q}/\mathbb{Z}$. The collection $\lbrace \mathcal{A}_1 , \mathcal{A}_2, \cdots, \mathcal{A}_p \rbrace$ will be called the \emph{Orbit Portrait} $\mathcal{P(O)}$ of the orbit $\mathcal{O}$ corresponding to the antiholomorphic polynomial $f_c$.
\end{definition}

We refer the readers to \cite{Mu} for the basic properties of orbit portraits. The following definition follows from \cite[Lemma 2.8]{Mu}, where more information on these combinatorial objects can be found. 

\begin{definition}[Characteristic Arcs and Angles]
Let $\mathcal{P}$ be an orbit portrait associated with a repelling or parabolic periodic orbit of some unicritical antiholomorphic polynomial $f_c$. Among all the complementary arcs of the various $\mathcal{A}_j$, there is a unique one of minimum length. It is a critical value arc for some $\mathcal{A}_j$, and is strictly contained in all other critical value arcs. This shortest arc is called the \emph{characteristic arc} of $\mathcal{P}$, and the two angles at the ends of this arc are called the \emph{characteristic angles} of $\mathcal{P}$. 
\end{definition}

The characteristic angles, in some sense, are crucial to the understanding of orbit portraits.

\begin{lemma}[Number of Root and Co-roots]\label{number_root_co_root}
1) Every dynamical co-root is the landing point of exactly one dynamical ray, and this ray has the same period as the Fatou component.

2) Every periodic Fatou component (for a unicritical antiholomorphic polynomial $f_c$) of period greater than 1 corresponding to an attracting/parabolic cycle has exactly one root. If the period of the component is even; then it has exactly $d-2$ co-roots and if the period is odd; it has exactly $d$ co-roots. Every Fatou component of period 1 has exactly $d+1$ co-roots and no root.
\end{lemma}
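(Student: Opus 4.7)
The plan is to analyze the first-return map $f_c^{\circ p}$ on a periodic Fatou component $U$ of period $p$ of an attracting or parabolic cycle, count its fixed points on $\partial U$, and classify them as roots or co-roots. Since $f_c$ has a single critical point of multiplicity $d-1$ and exactly one component of the cycle contains it, the map $f_c^{\circ p}\colon U_i\to U_i$ is proper of degree $d$ on each $U_i$ of the cycle (one local-degree factor is $d$ and the others are conformal). If $p$ is odd, this first-return map is antiholomorphic; if $p$ is even, it is holomorphic. In the (super)attracting case, uniformization together with B\"ottcher/Koenigs--Blaschke coordinates conjugates the first-return map to a proper (anti)holomorphic self-map of $\overline{\mathbb{D}}$ of degree $d$. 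The induced boundary circle map is monotone of topological degree $\pm d$, and the Lefschetz fixed-point count gives $|1-d|=d-1$ boundary fixed points in the holomorphic case and $|1-(-d)|=d+1$ boundary fixed points in the antiholomorphic case. The parabolic case reduces to the attracting case by perturbing into the adjacent hyperbolic component of the same period and invoking the stability of repelling boundary fixed points and their landing rays from Lemma~\ref{l:preserving-portrait1} and Corollary~\ref{CorRayStability}.

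Next I classify these boundary fixed points as roots or co-roots. Since $f_c$ permutes the components $U_0,\ldots,U_{p-1}$ cyclically, it permutes their roots as well, so any orbit of roots has length exactly $p$, and each such orbit is the landing set of the rays in the orbit portrait of some periodic cycle $\mathcal{O}=(z_0,\ldots,z_{p-1})$ with $z_i\in\partial U_i$. For $p>1$, the ``attaching'' portrait is characterized combinatorially as the unique orbit portrait whose characteristic arc (the unique shortest complementary arc) contains the external angle of the critical value; by the unicritical structure of $f_c$ this portrait is unique, so each $\partial U_i$ contains exactly one root. Subtracting from the total gives $d-2$ co-roots when $p$ is even and $d$ co-roots when $p$ is odd. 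For $p=1$ the unique component $U$ is fixed under $f_c$, contains the critical value, and every other bounded Fatou component iterates into $U$; hence there is no second periodic cycle of Fatou components that $U$ could be attached to through a pinch point, so no boundary fixed point on $\partial U$ can disconnect $K(f_c)$, and all $d+1$ boundary fixed points are co-roots.

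For part (1), let $z$ be a co-root of a Fatou component $U$ of period $p$. Since $z$ is a repelling or parabolic periodic point, Lemma~\ref{Lem:Landing of Dyn Rays} gives that at least one periodic dynamical ray lands at $z$. If two or more distinct rays landed at $z$, they would partition $\mathbb{C}$ into sectors each meeting $K(f_c)$ non-trivially; since $K(f_c)$ is connected, $K(f_c)\setminus\{z\}$ would be separated into these sectoral pieces, contradicting that $z$ is a co-root. Hence exactly one ray $\mathcal{R}_\theta^c$ lands at $z$. Moreover $z$ has period exactly $p$ under $f_c$: if some $f_c^{\circ p'}$ with $p'<p$ fixed $z$, then $f_c^{\circ p'}$ would map $U$ to a different component $U_{p'}$ of the same cycle with $z\in\partial U\cap\partial U_{p'}$, forcing $z$ to be a pinch point and hence a root, contrary to assumption. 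As the landing ray at $z$ is unique and $z$ has period $p$, the ray $\mathcal{R}_\theta^c$ is fixed by $f_c^{\circ p}$ but not by any earlier iterate, so $\theta$ has period exactly $p$ under $t\mapsto -dt\pmod 1$.

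The main obstacle I anticipate is the uniqueness of the root orbit when $p>1$, i.e.\ showing that the cycle of Fatou components is attached to the rest of $K(f_c)$ through exactly one $f_c$-orbit of pinch points. The argument rests on orbit-portrait combinatorics (compare \cite{Mu}) together with the unicritical structure: the characteristic arc of the root portrait must contain the external angle of the critical value, and this forces uniqueness. A secondary subtlety is the parabolic case and in particular cusps, where the B\"ottcher/Blaschke conjugation degenerates at the parabolic point; this is handled either by perturbing to the adjacent hyperbolic component and invoking persistence of boundary fixed points and their landing rays, or by a direct analysis using the antiholomorphic Fatou coordinates from Lemma~\ref{PropFatouCoord} together with the global topology of the parabolic basin.
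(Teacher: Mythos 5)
The paper does not actually prove this lemma: it simply cites \cite[Lemma 3.4]{NS}, and your Blaschke-model count of boundary fixed points of the first return map (boundary circle map of degree $d$ for even period, $-d$ for odd period, hence $d-1$ resp.\ $d+1$ fixed points) is indeed the standard mechanism behind that reference. But as written your argument has genuine gaps, the most serious being the root/co-root classification. Your claim that \emph{any orbit of roots has length exactly $p$} is false: in satellite configurations the root of a period-$p$ component is a periodic point of strictly smaller period shared by several components of the cycle. This occurs in this very paper: by Lemma~\ref{odd to even combinatorics}, the dynamical root of the characteristic Fatou component of the center of a period-$2k$ component bifurcating from an odd-period-$k$ component is a point of period $k$ (in the holomorphic model, think of the rabbit, whose three period-$3$ components share the fixed point $\alpha$ as their common root). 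With that premise gone, your characterization of the ``attaching'' portrait collapses; it is also ill-posed as stated, since the critical value lies in a bounded Fatou component and has no external angle, and even the correct dynamical statement (the critical value lies in the characteristic sector of every essential portrait realized by $f_c$) yields neither existence nor uniqueness of a root on $\partial U$, because nested portraits of different ray periods can all have the critical value in their characteristic sectors. The assertion ``exactly one root, every other boundary fixed point receives exactly one ray'' is precisely the content of \cite[Lemma 3.4]{NS} and needs a genuine combinatorial argument in the spirit of \cite{Mu}, not just the count $d\mp 1$. Your period-$1$ argument has the same defect: ``there is no second periodic cycle of Fatou components, hence no boundary fixed point can disconnect $K(f_c)$'' is a non sequitur, since a cut point only requires two rays landing at it, and the piece cut off could be a limb of the Julia set without interior.

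Two further steps are asserted rather than proved. First, passing from fixed points of the model circle map to fixed points on $\partial U$ uses the Carath\'eodory extension of the Riemann map (local connectivity of attracting/parabolic unicritical Julia sets) together with the fact that the correspondence between fixed internal angles and fixed boundary points is a bijection (no two fixed internal rays land together, and every boundary fixed point receives a fixed internal ray); none of this is automatic. Second, the parabolic case cannot simply be reduced to the attracting one by perturbing into the adjacent hyperbolic component: Lemma~\ref{l:preserving-portrait1} and Corollary~\ref{CorRayStability} control only repelling points, whereas the parabolic point itself splits under perturbation, parabolic basins do not move continuously, and ruling out that boundary fixed points are gained or lost under the perturbation is essentially what has to be shown. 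The clean route is to redo the count in the parabolic Blaschke model (where the Denjoy--Wolff boundary fixed point has multiplicity at least two), or to argue directly with the Fatou coordinates of Lemma~\ref{PropFatouCoord}; either way this must be carried out, not merely invoked.
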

\begin{proof}
See \cite[Lemma 3.4]{NS}.
\end{proof}

\begin{lemma}[Disjoint Closures of Fatou Components]\label{primitive}
Let, $H$ be a hyperbolic component of odd period $k$, and $c \in H$. Then, any two bounded Fatou components of $f_{c}$ have disjoint closures. Moreover, the dynamical root of the characteristic Fatou component of $f_c$ is the landing point of exactly two dynamical rays, each of period $2k$, and these are permuted by $f_c^{\circ k}$.
\end{lemma}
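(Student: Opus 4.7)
The plan is to show that $H$ is \emph{primitive}---that is, $\partial H$ does not meet the boundary of any hyperbolic component of strictly smaller period---and to read off both assertions from this. Primitivity is immediate from Lemma~\ref{LemOddIndiffDyn}: every $c^{\ast}\in\partial H$ carries a parabolic cycle of period exactly $k$ with multiplier $+1$, whereas a satellite bifurcation from a period-$k'$ component with $k'\mid k$ and $k'<k$ would produce on the common boundary parabolic cycles of period $k'$ with multiplier a non-trivial $(k/k')$-th root of unity.

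For the disjointness statement, let $z_{0}=z_{0}(c)$ be the root of the characteristic Fatou component $U_{c}$; it is a repelling periodic point of period $m\mid k$ that depends real-analytically on $c\in H$. As $c\to c^{\ast}$ at a non-cusp point on $\partial H$, parabolic implosion forces the attracting $k$-cycle to merge with a repelling cycle of period equal to that of the parabolic cycle; since the parabolic cycle has period exactly $k$, so does the merging repelling cycle, which contains $z_{0}$ as the attachment point of $U_{c}$ to $K(f_{c})\setminus\overline{U_{c}}$. Hence $m=k$. The $k$ roots $z_{j}:=f_{c}^{\circ j}(z_{0})$ of the Fatou components in the attracting cycle are therefore pairwise distinct, and by Lemma~\ref{number_root_co_root} each such Fatou component carries a unique root, so no two of them share one. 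For hyperbolic $c$ the Julia set is locally connected and each bounded Fatou component has Jordan boundary, so two distinct components meet only at finitely many (pre-)periodic boundary points; any such shared periodic point would locally separate the two components and hence disconnect $K(f_{c})$, making it a common root---which we have just excluded. The assertion therefore holds for the periodic cycle, and extends to all bounded Fatou components by pulling back along $f_{c}$ (the only way two distinct preimages of disjoint disks can share a boundary point is through the critical point, which lies in the characteristic cycle).

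For the ray analysis, apply Lemma~\ref{LemOrbitSeparation} at $c^{\ast}\in\partial H$ to obtain two rational dynamical rays landing at the characteristic parabolic point $z_{0}^{\ast}$ that together separate it from the rest of its cycle. By Corollary~\ref{CorRayStability} these rays propagate to landing at $z_{0}$ for every $c\in H$. Near $z_{0}$, introducing Koenigs coordinates for the holomorphic iterate $f_{c}^{\circ 2k}$ together with a unitary rotation, the antiholomorphic map $f_{c}^{\circ k}$ takes the normal form $z\mapsto\sqrt{\mu}\,\bar{z}$ with $\mu=(f_{c}^{\circ 2k})'(z_{0})>1$, and on tangent directions at $z_{0}$ it acts as an orientation-reversing reflection, reversing the cyclic order of rays. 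Since $f_{c}^{\circ k}(U_{c})=U_{c}$, the sector at $z_{0}$ bounded by the two separating rays and containing $U_{c}$ is preserved; an orientation-reversing reflection preserving a sector must swap (not fix) the two bounding rays, because fixing them would align the rays with the reflection axis and swap the two complementary open sectors. Hence $f_{c}^{\circ k}$ swaps the two rays. A ray swapped by $f_{c}^{\circ k}$ and fixed by $f_{c}^{\circ 2k}$ has period dividing $2k$ but not dividing $k$; and since its landing points form an orbit of size $k$, its period is also a multiple of $k$. These constraints together force the period to be exactly $2k$.

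Finally, for the exactness of the count: any additional periodic ray landing at $z_{0}$ would, again by Corollary~\ref{CorRayStability}, propagate to an additional ray landing at $z_{0}^{\ast}$. But $f_{c^{\ast}}^{\circ 2k}$ is parabolic at $z_{0}^{\ast}$ with a single petal (the non-cusp case of Lemma~\ref{LemOddIndiffDyn}), hence admits a single repelling direction there; standard parabolic portrait theory (as in \cite[\S 18]{Mi3} with the antiholomorphic adaptations of \cite{HS}) then allows at most two rays landing at $z_{0}^{\ast}$. The main obstacle in this plan is precisely this last step: pinning down the count of rays at the parabolic point genuinely uses the antiholomorphic Fatou coordinate machinery developed in Section~\ref{SecBifArcs} to match the repelling-petal directions of the parabolic cycle with the landing directions of rational dynamical rays.
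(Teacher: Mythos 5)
Your plan has genuine gaps at the points where it departs from the paper. The paper's argument is short and combinatorial: if two distinct bounded components had intersecting closures, one reduces to two components $U_1\neq U_2$ of the periodic (period-$k$) cycle; since closures of two distinct bounded Fatou components can meet in at most one point and both components are invariant under $f_c^{\circ k}$, the common point $x$ satisfies $f_c^{\circ k}(x)=x$, disconnects $K(f_c)$, and is therefore a \emph{common root} of $U_1$ and $U_2$; \cite[Corollary 4.2]{NS} then forces its period to be exactly $k$, which contradicts the fact that each of the $k$ components of the unique cycle has exactly one root. The second statement is read off directly from \cite[Corollary 4.2]{NS}. Your substitute for this input is where things break. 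First, your claim that the root $z_0$ of the characteristic component has period exactly $k$ rests on the assertion that the repelling cycle merging with the attracting cycle at a non-cusp boundary parameter ``contains $z_0$''. That is false on a co-root arc: there the parabolic point does not disconnect $K$, is the landing point of a single ray, and is a co-root, so the merging cycle is a co-root cycle while the root cycle persists as a repelling cycle. To know that $\partial H$ even contains a root arc you would need the structure of $\partial H$ established in Theorem~\ref{Exactly d+1}, whose proof uses the present lemma -- so this route is circular as well as unjustified.

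Second, your ray analysis invokes Corollary~\ref{CorRayStability}, which is stated only for indifferent cycles of \emph{even} period; the remark immediately following it warns that for odd period the landing pattern is not stable under perturbation, and Section~\ref{landingdiscont} proves that the landing point in fact jumps discontinuously across exactly these arcs. You also misquote Lemma~\ref{LemOrbitSeparation}: it produces two rays landing at a \emph{common point that separates} the characteristic parabolic point from the rest of its orbit, not two rays landing at the characteristic parabolic point itself (on a co-root arc only one ray lands there). Consequently the sector/reflection computation, which is otherwise a reasonable way to see that the two rays are swapped and have period $2k$, has no justified starting configuration. Finally, the exactness of the count again needs a (non-existent) stable transfer of rays from the repelling root $z_0$ for $c\in H$ to the parabolic point $z_0^{\ast}$ on $\partial H$, and the bound ``one repelling direction implies at most two rays'' is not standard -- the relevant inputs here are \cite[Lemma 2.10]{Mu} and \cite[Corollary 4.2]{NS} -- a gap you yourself acknowledge. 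As written, both halves of the lemma remain unproved; the clean fix is to argue as the paper does, with \cite[Corollary 4.2]{NS} carrying the load that your implosion and ray-stability steps were meant to carry.
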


\begin{proof}
Let, $U_1$ and $U_2$ be two distinct bounded Fatou components of $f_{c}$ with $\overline{U_1}\cap \overline{U_2} \neq \emptyset$. By taking iterated forward images, we can assume that $U_1$ and $U_2$ are periodic. Then the intersection consists of a repelling periodic point $x$ of some period $n$. 

Each periodic (bounded) Fatou component of $f_c$ has period $k$. Since $x \in \partial U_1 \cap \partial U_2$, it follows that $f_c^{\circ k}(x) \in \partial U_1 \cap \partial U_2$. But the closures of two distinct bounded Fatou components of an antiholomorphic polynomial may intersect only at a single point. So, $f_c^{\circ k}(x)=x$. Therefore, the first return map of $\overline{U_1}$ (and of $\overline{U_2}$) fixes $x$, and $x$ disconnects the Julia set; hence, $x$ is the root of $U_1$ (as well as of $U_2$). It follows from \cite[Corollary 4.2]{NS} that $n=k$. But this contradicts the fact that every periodic Fatou component of $f_{c}$ has exactly one root, and there is exactly one cycle of periodic (bounded) Fatou components of $f_{c}$. This proves the first statement of the lemma. The statement about the dynamical rays landing at the characteristic Fatou component of $f_c$ now directly follows from \cite[Corollary 4.2]{NS}.
\end{proof} 

In what follows, we delve deep into the behavior of orbit portraits on parabolic arcs and use them to deduce many facts about the parameter rays of the multicorns.

Let $H$ be a hyperbolic component of odd period $k(\neq 1)$ with center $c_0$. The characteristic Fatou component $U_{c_0}$ of $f_{c_0}$ has exactly $d$ dynamical co-roots and exactly $1$ dynamical root on its boundary (in fact, this is true for every parameter $c \in H$). Let $S =\lbrace \theta_1, \theta_2, \cdots, \theta_d\rbrace$ (each angle of period $k$) be the set of angles of the dynamical rays that land at the $d$ dynamical co-roots on $\partial U_{c_0}$, and let $S' =\lbrace \alpha_1, \alpha_2\rbrace$ (each angle of period $2k$) be the set of angles of the dynamical rays that land at the unique dynamical root on $\partial U_{c_0}$. We will use these terminologies in the next two lemmas, the first one of which shows that the patterns of ray landing at the dynamical co-roots and root remain stable throughout the hyperbolic component.

\begin{lemma}\label{stable in hyp comp}
For any $c \in H$, the set of angles of the dynamical rays of $f_c$ that land at the $d$ dynamical co-roots on $\partial U_{c}$ (where $U_c$ is the characteristic Fatou component of $f_c$) is $S$, and the set of angles of the dynamical rays of $f_c$ that land at the unique dynamical root on $\partial U_{c}$ is $S'$.
\end{lemma}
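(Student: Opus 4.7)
The plan is to invoke Lemma \ref{l:preserving-portrait1} to transport the landing pattern from the center $c_0$ of $H$ throughout the hyperbolic component $H$, and then to match the continued landing points with the co-roots and root of $U_c$.

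First, I verify the setup at the center $c_0$. Since $k$ is odd, the first return map $f_{c_0}^{\circ k}$ of $U_{c_0}$ is antiholomorphic; the $d$ co-roots and the unique root of $U_{c_0}$ are fixed by it, lie on the Julia set, and are distinct from the superattracting cycle, hence are repelling periodic points. By the hypotheses of the setup, the angles $\theta_i \in S$ have period $k$ and the angles $\alpha_1, \alpha_2 \in S'$ have period $2k$ under $t \mapsto -dt \pmod 1$. By Lemma \ref{number_root_co_root}, each co-root is the landing point of exactly one of the rays $\mathcal{R}_{\theta_i}^{c_0}$; by Lemma \ref{primitive}, both rays $\mathcal{R}_{\alpha_1}^{c_0}, \mathcal{R}_{\alpha_2}^{c_0}$ land at the root.

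Next, I verify the hypotheses of Lemma \ref{l:preserving-portrait1}(2) with $U := H$. Every parameter in $H$ carries an attracting cycle and no parabolic cycle, so $H$ is disjoint from the exceptional set $F$ of parameters whose parabolic cycle captures a ray of period $k$ or $2k$; moreover $H \subset \mathcal{M}_d^*$ is disjoint from all parameter rays, which lie in the exterior. Combining Lemma \ref{l:preserving-portrait1}(1) at $c_0$ with Lemma \ref{l:preserving-portrait1}(2), I conclude that for each $\theta_i \in S$, the ray $\mathcal{R}_{\theta_i}^c$ lands at a repelling periodic point $\tilde{z}_i(c)$ of $f_c$ for every $c \in H$, depending real-analytically on $c$, with $\tilde{z}_i(c_0)$ equal to the corresponding co-root of $U_{c_0}$; and the rays $\mathcal{R}_{\alpha_1}^c, \mathcal{R}_{\alpha_2}^c$ land at a common repelling periodic point $\tilde{r}(c)$ of $f_c$ for every $c \in H$, depending real-analytically on $c$, with $\tilde{r}(c_0)$ equal to the root of $U_{c_0}$.

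Finally, I identify these continued landing points with the co-roots and root of $U_c$ for every $c \in H$. As $c$ varies in $H$, the characteristic Fatou component $U_c$ and its distinguished boundary points move continuously; since each co-root and the root of $U_{c_0}$ is a simple fixed point of $f_{c_0}^{\circ k}$, the implicit function theorem applied to $f_c^{\circ k}(z) - z = 0$ produces real-analytic continuations of these points on all of $H$, which remain co-roots resp.\ root of $U_c$ by continuity. Since $\tilde{z}_i(c)$ and the implicit-function continuation of the co-root $z_i(c_0)$ are both simple fixed points of $f_c^{\circ k}$ agreeing at $c_0$, local uniqueness forces them to coincide near $c_0$, and real-analytic identity on the connected set $H$ extends the identification to all of $H$. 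The same argument identifies $\tilde{r}(c)$ with the root of $U_c$. The main obstacle is precisely the globalization from a neighborhood of $c_0$ to all of $H$, which Lemma \ref{l:preserving-portrait1}(2) supplies once the avoidance of parabolic parameters and of parameter rays at the relevant angle orbits is verified.
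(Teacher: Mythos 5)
Your overall route is the paper's own primary argument: at the center $c_0$ the only non-repelling cycle is the superattracting one, so the rays at angles in $S\cup S'$ land at repelling points, and Lemma \ref{l:preserving-portrait1} (parts (1) and (2)), together with the observation that $H$ meets no parabolic parameter and no parameter ray, propagates the landing pattern over all of $H$; your verification of those hypotheses is fine (the use of part (1) for a single co-root ray, $v=1$, is the same mild extension the paper itself makes elsewhere).

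The gap is in your final paragraph, and it is not where you locate it. The globalization of the common-landing pattern over $H$ is indeed supplied by Lemma \ref{l:preserving-portrait1}(2); what is \emph{not} supplied by anything you invoke is the identification of the continued landing points with the co-roots and the root of $U_c$ for every $c\in H$. The implicit function theorem only guarantees that the boundary fixed points of $U_{c_0}$ persist as repelling fixed points of $f_c^{\circ k}$; it says nothing about whether these continuations stay on $\partial U_c$, nor about their root/co-root status. Your assertion that ``$U_c$ and its distinguished boundary points move continuously \dots\ which remain co-roots resp.\ root of $U_c$ by continuity'' is essentially the statement of the lemma, not an argument for it: boundaries of Fatou components need not vary continuously with the parameter a priori, and nothing you write rules out that for some $c\in H$ the rays at angles in $S\cup S'$ land at repelling points off $\partial U_c$ while a different collection of period-$k$ and period-$2k$ angles plays the co-root/root role for $U_c$. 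This is exactly the point the paper's alternative argument is designed to settle: embed the family in $P_{a,b}(z)=(z^d+a)^d+b$, use the B\"ottcher coordinates to define a holomorphic motion of $\overline{\mathcal{A}_\infty}$ over the hyperbolic component of $P_{a,b}$ containing $(\overline{c_0},c_0)$, extend it by the $\lambda$-lemma to a topological conjugacy on the closure of the basin of infinity sending $\mathcal{R}_t^{(\overline{c_0},c_0)}$ to $\mathcal{R}_t^{(a,b)}$ for all $t$; this conjugacy transports the co-root/root structure of the characteristic component, and combined with the count from Lemma \ref{number_root_co_root} and Lemma \ref{primitive} ($d$ co-roots and one root for every $c\in H$) it yields the claim. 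To complete your write-up you need either this motion/quasiconformal-conjugacy argument or an honest proof that co-root and root status persists under perturbation within $H$; ``by continuity'' is not available.
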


\begin{proof}
$f_{c_0}$ has a unique super-attracting orbit, and all other periodic orbits of $f_{c_0}$ are repelling. Therefore, the dynamical rays $\mathcal{R}_t^{c_0}$ with $t \in S\cup S'$ land at repelling periodic points. The result now follows from Lemma \ref{l:preserving-portrait1} since $H$ does not intersect any parabolic parameter or any parameter ray.

We outline an alternative proof of this lemma making use of holomorphic motions. We embed our family $\displaystyle \lbrace f_c\rbrace_{c \in \mathbb{C}}$ in the family of holomorphic polynomials $\lbrace P_{a,b}(z)=(z^d+a)^d+b,\ a,\ b \in \mathbb{C}\rbrace$ (recall that $f_c^{\circ 2}=P_{\bar{c},c}$). The family $P_{a,b}$ depends holomorphically on the complex parameters $a$ and $b$. Let $\tilde{H}$ be the unique hyperbolic component of the family $\lbrace P_{a,b}\rbrace$ that contains the parameter $(\overline{c_0},c_0)$. Clearly, $\tilde{H}$ contains an open neighborhood of $H$ in $\mathbb{C}^2$. For each $(a,b) \in \tilde{H}$, the Julia set $J(P_{a,b})$ is connected, and hence, the B{\"o}ttcher coordinate $\phi_{a,b}$ of $P_{a,b}$ yields a biholomorphism from the basin of infinity $\mathcal{A}_{\infty}(P_{a,b})$ onto $\hat{\mathbb{C}}\setminus \overline{\mathbb{D}}$ conjugating $P_{a,b}$ to $z^{d^2}$. Using this, one can define a holomorphic motion $\Pi: \tilde{H}\times\mathcal{A}_{\infty}(P_{\overline{c_0},c_0})\rightarrow \mathbb{C}$ by requiring $\Pi((a,b),\ z)= \phi_{a,b}\circ\phi_{\overline{c_0},c_0}^{-1}(z)$ (compare \cite[Theorem 6.4]{Zi} for a similar construction). Using the $\lambda$-lemma \cite{MSS} (note that this holds when the parameter space of the holomorphic motion is biholomorphic to the unit ball in $\mathbb{C}^2$, and this is enough for our purpose), one sees that this holomorphic motion extends to $\overline{\mathcal{A}_{\infty}(P_{\overline{c_0},c_0})}$ yielding a topological conjugacy between $P_{\overline{c_0},c_0}$ on $\overline{\mathcal{A}_{\infty}(P_{\overline{c_0},c_0})}$ and $P_{a,b}$ on $\overline{\mathcal{A}_{\infty}(P_{a,b})}$ such that the conjugacy sends $\mathcal{R}_t^{(\overline{c_0},c_0)}$ to $\mathcal{R}_t^{(a,b)}$, for all $t \in \mathbb{R}/\mathbb{Z}$ and for all $(a,b)\in \tilde{H}$. Clearly, the property of being a dynamical co-root or root on the boundary of the characteristic Fatou component is also preserved by this topological conjugacy. Since for all $c \in H$, the characteristic Fatou component $U_{c}$ of $f_{c}$ has exactly $d$ dynamical co-roots and exactly $1$ dynamical root on its boundary, we have thus recovered all the co-roots and roots, and the above argument shows that the set of angles of the dynamical rays of $f_c$ that land at the $d$ dynamical co-roots on $\partial U_{c}$ is $S$, and the set of angles of the dynamical rays of $f_c$ that land at the unique dynamical root on $\partial U_{c}$ is $S'$.
\end{proof} 

By Lemma \ref{LemOddIndiffDyn}, $\partial H$ consists of parabolic arcs and cusps of period $k$.

\begin{lemma}\label{transfer}
Let $H$ be a hyperbolic component of odd period $k$ with center $c_0$. Let $c \in \partial H$, and the dynamical ray $\mathcal{R}_{\theta}^c$ lands at the characteristic parabolic point of $f_c$. 
\begin{enumerate}
\item If $c$ lies on a co-root arc on $\partial H$, then $\theta \in S$.

\item If $c$ lies on a root arcs on $\partial H$, then $\theta \in S'$.

\item If $c$ is a cusp point on $\partial H$, then $\theta \in S \cup S'$.
\end{enumerate}
\end{lemma}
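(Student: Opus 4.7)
The plan is to approximate the parabolic parameter $c$ by a sequence $\{c_n\}\subset H$ and to transfer the ray--landing information from $c_n$, where it is pinned down by Lemma~\ref{stable in hyp comp}, to $c$ in the limit. Suppose $\mathcal{R}_\theta^c$ lands at the characteristic parabolic point $z^*$ of $f_c$. For each $c_n\in H$, Lemma~\ref{Lem:Landing of Dyn Rays} ensures that $\mathcal{R}_\theta^{c_n}$ lands at some repelling periodic point $z_n$ of $f_{c_n}$ (after a generic choice we may also assume $c_n$ avoids the countably many parameter rays at angles in $\bigcup_{j\ge 0}(-d)^j\{\theta\}$), and by Lemma~\ref{l:preserving-portrait1} the point $z_n$ depends real-analytically on $c_n$.

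The first step is to show $z_n\to z^*$. Since the B\"ottcher coordinates $\phi_{c_n}$ converge to $\phi_c$ near infinity, the curve $\mathcal{R}_\theta^{c_n}$ converges to $\mathcal{R}_\theta^c$, so any accumulation point of $(z_n)$ lies in $\overline{\mathcal{R}_\theta^c}\cap J(f_c)=\{z^*\}$. The second step exploits the local parabolic structure: by Lemma~\ref{LemOddIndiffDyn}, the holomorphic iterate $f_c^{\circ 2k}$ has the form $z\mapsto z+b(z-z^*)^{q+1}+\cdots$ with $q\in\{1,2\}$, so for $c_n$ close to $c$ the perturbed map $f_{c_n}^{\circ 2k}$ has exactly $q+1$ fixed points near $z^*$. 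Classical parabolic implosion identifies one of them as the attracting periodic point in $U_{c_n}$ and the other $q$ as period-$k$ repelling points on $\partial U_{c_n}$, fixed by the first-return map of $\overline{U_{c_n}}$. Hence $z_n$ is either the dynamical root or a dynamical co-root of $U_{c_n}$, which together with Lemma~\ref{stable in hyp comp} already gives $\theta\in S\cup S'$ and in particular proves case (3).

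To separate cases (1) and (2), assume $c$ is not a cusp, so $q=1$ and only one repelling periodic point $\zeta_n$ bifurcates from $z^*$. Running the above convergence argument for \emph{every} ray landing at $z^*$ shows that each such ray lands at $\zeta_n$ for $f_{c_n}$; conversely, applying Lemma~\ref{l:preserving-portrait1} to $\zeta_n$ shows that every ray landing at $\zeta_n$ lands at $z^*$ in the limit. Hence the number of rays landing at $\zeta_n$ equals the number landing at $z^*$. Now I would invoke the standard dichotomy for connected Julia sets of unicritical (anti)polynomials: a repelling or parabolic periodic point disconnects the Julia set if and only if at least two periodic dynamical rays land at it. On a co-root arc, $z^*$ does not disconnect $J(f_c)$, so exactly one ray lands at $z^*$, hence also at $\zeta_n$, forcing $\zeta_n$ to be a co-root by Lemma~\ref{number_root_co_root}, and thus $\theta\in S$. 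On a root arc, $z^*$ disconnects, so at least two rays land there; then $\zeta_n$ has $\ge 2$ rays landing and must be the root, giving $\theta\in S'$.

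The main obstacle I anticipate is the parabolic-implosion assertion used above, namely that the $q$ repelling perturbations of $z^*$ lie on $\partial U_{c_n}$ and have period exactly $k$, rather than sitting on the boundary of another Fatou component or having a different period. This is the step where the antiholomorphic, odd-period setting requires care; it should be handled either by direct Fatou-coordinate bookkeeping at $z^*$, or by passing to the holomorphic second iterate $f_c^{\circ 2}$ and invoking the classical parabolic implosion theorem to conclude that the bifurcated repelling fixed points of $f_{c_n}^{\circ 2k}$ near $z^*$ are exactly the period-$k$ repelling fixed points of $f_{c_n}^{\circ k}|_{\overline{U_{c_n}}}$, which are precisely the root and co-roots of $U_{c_n}$ enumerated in Lemma~\ref{number_root_co_root}.
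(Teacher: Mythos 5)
There is a genuine gap at the very first step, and it is the step on which everything else in your argument rests. You claim that since the B\"ottcher coordinates $\phi_{c_n}$ converge to $\phi_c$, the curves $\mathcal{R}_\theta^{c_n}$ converge to $\mathcal{R}_\theta^c$ and hence every accumulation point of the landing points $z_n$ lies in $\overline{\mathcal{R}_\theta^c}\cap J(f_c)=\{z^*\}$. But the convergence of rays obtained from B\"ottcher coordinates is only locally uniform at potentials bounded away from $0$; the Hausdorff limit of the closed rays $\overline{\mathcal{R}_\theta^{c_n}}$ may be strictly larger than $\overline{\mathcal{R}_\theta^c}$, and landing points of dynamical rays do \emph{not} depend continuously on the parameter. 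In fact this paper itself proves (Theorem \ref{discontinuity}) that the landing point of exactly such a ray $\mathcal{R}_\theta$ jumps discontinuously at \emph{every} point of the co-root arc when the arc is approached from the adjacent wakes: there the ray $\mathcal{R}_\theta^{c'}$ lands near a repelling point of $f_c$ far from $z^*$, even though $\mathcal{R}_\theta^{c}$ lands at $z^*$. Your continuity argument nowhere uses that $c_n\in H$ rather than in a wake, so it would equally ``prove'' a statement contradicting Theorem \ref{discontinuity}. The same unjustified limit is used again in your third paragraph (``each such ray lands at $\zeta_n$'' and ``every ray landing at $\zeta_n$ lands at $z^*$ in the limit''); note that Lemma \ref{l:preserving-portrait1} cannot supply the latter, since it gives stability only at \emph{repelling} landing points and its part (2) requires a neighborhood disjoint from the parabolic locus, which fails at $c$ itself.

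The paper closes exactly this gap by arguing in the opposite direction. Instead of following the unknown ray $\mathcal{R}_\theta$ from $c$ into $H$, it first uses the splitting of $z^*$ under perturbation into $H$ (attracting cycle plus a co-root, resp.\ root, of the characteristic Fatou component) together with Lemma \ref{stable in hyp comp} to produce a \emph{candidate} angle $\theta_i\in S$ (resp.\ $S'$) whose ray lands at the newly born repelling point for all $c'\in H$; it then shows that $\mathcal{R}_{\theta_i}^{c}$ must land at the characteristic parabolic point of $f_c$, by excluding a repelling landing point via part (1) of Lemma \ref{l:preserving-portrait1} (stability would put the landing point away from the point born out of $z^*$) and excluding a non-characteristic parabolic landing point via the Orbit Separation Lemma (Lemma \ref{LemOrbitSeparation}); finally $\theta=\theta_i$ because on a co-root arc only one ray lands at $z^*$ (and on a root arc only the known pair does). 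If you want to salvage your approach, you must replace the ``rays converge, hence landing points converge'' step by an argument of this type; the parabolic-implosion bookkeeping you flag (that the bifurcated repelling points are the root/co-roots of $U_{c_n}$) is a secondary issue, comparable to what the paper itself uses, but the continuity of landing points across the parabolic boundary is the decisive missing ingredient.
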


\begin{proof}
Let $z_c$ be the characteristic parabolic point of $f_c$. Since the dynamical ray $\mathcal{R}_{\theta}^c$ lands at the $k$-periodic point $z_c$, by \cite[Lemma 2.5]{Mu}, the period of $\theta$ (under multiplication by $-d$) is either $k$ or $2k$.

1) Since $c$ lies on a co-root arc, $z_c$ is a dynamical co-root of the characteristic Fatou component (of period $k$) of $f_c$. By \cite[Lemma 3.4, Remark 3.5]{NS}, the period of $\theta$ must be $k$, and $\mathcal{R}_{\theta}^c$ is the unique dynamical ray of $f_{c}$ that land at $z_{c}$. Let $c'\in H$ be sufficiently close to $c$. The characteristic parabolic point $z_c$ of $f_c$ splits into an attracting periodic point of period $k$ and a repelling periodic point (say, $z_{c'}$) of period $k$ of $f_{c'}$. Furthermore, $z_{c'}$ is a dynamical co-root of the characteristic Fatou component of $f_{c'}$. Therefore, the dynamical ray $\mathcal{R}_{\theta_i}^{c'}$ (for some $\theta_i \in S$) is the unique dynamical ray of $f_{c'}$ that land at $z_{c'}$. We will now show that the dynamical ray $\mathcal{R}_{\theta_i}^c$ of $f_c$ lands at $z_c$. This would prove that $\theta=\theta_i \in S$.

If the dynamical ray $\mathcal{R}_{\theta_i}^c$ landed at a repelling periodic point of $f_c$, it would continue to land at the continuation of this repelling periodic point for nearby parameters (which would necessarily be different from a dynamical co-root born out of the parabolic point $z_c$), which is a contradiction. Therefore, $\mathcal{R}_{\theta_i}^c$ must land at a parabolic periodic point of $f_c$. If $\mathcal{R}_{\theta_i}^c$ landed at a non-characteristic parabolic point of $f_{c}$, the orbit separation lemma (Lemma \ref{LemOrbitSeparation}) would supply a partition of the dynamical plane by a pair rational dynamical rays landing at a common point, stable under small perturbations, separating the critical value $c$ from the dynamical ray $\mathcal{R}_{\theta_i}^c$. But for arbitrarily close parameters in $H$, the dynamical ray at angle $\theta_i$ lands on the boundary of the characteristic Fatou component, and there does not exist any rational dynamical ray pair separating the $\theta_i$ ray from the critical value, which is again a contradiction. Thus, the dynamical rays $\mathcal{R}_{\theta_i}^{c}$ land at the characteristic parabolic point $z_c$.

2) This is completely analogous to case (1).
 
3) Perturbing a cusp point $c$ into $H$ breaks the parabolic cycle into two disjoint repelling cycles (and an attracting one), and if the dynamical ray $\mathcal{R}_{\theta}^c$ lands at the characteristic parabolic point of $z_c$, then for any nearby parameter $c' \in H$, the dynamical ray $\mathcal{R}_{\theta}^{c'}$ lands at a dynamical co-root or root on the boundary of the characteristic Fatou component of $f_{c'}$. This proves that $\theta \in S \cup S'$. The details are similar to case (1).
\end{proof}

The fact that all these possibilities are realized will be proven in Section \ref{SecOddBdy}.

Let us begin with an elementary fact about the accumulation set of a parameter ray at a periodic (under the map $t\mapsto -dt$ $($mod $1)$) angle. 

\begin{lemma}\label{parameter_ray_dynamical_ray}
Let $\theta$ be $n$-periodic under the map $t\mapsto -dt$ $($mod $1)$, $\mathcal{R}_{\theta}$ be a parameter ray of $\mathcal{M}_d^*$, and $c \in L_{\mathcal{M}_d^*}(\theta)$. Then, $f_c$ has a parabolic cycle of period $k$ dividing $n$ such that the corresponding dynamical ray $\mathcal{R}_{\theta}^c$ lands at the characteristic parabolic point of $f_c$.
\end{lemma}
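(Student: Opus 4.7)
My plan is to apply Lemma \ref{Lem:Landing of Dyn Rays} to see that $\mathcal{R}_\theta^c$ lands at a periodic point, then rule out the repelling case via a continuity argument exploiting the special role of the critical value on the parameter ray, and finally identify the resulting parabolic landing point with the characteristic parabolic point.

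First, since $c \in L_{\mathcal{M}_d^*}(\theta) \subset \mathcal{M}_d^*$, the Julia set $J(f_c)$ is connected and the ray $\mathcal{R}_\theta^c$ is well-defined. Because $\theta$ is $n$-periodic, Lemma \ref{Lem:Landing of Dyn Rays} gives that $\mathcal{R}_\theta^c$ lands at a periodic point $\hat z$ of period $k$ dividing $n$, which is either repelling or parabolic. I would argue by contradiction that $\hat z$ cannot be repelling. Assuming it is, Lemma \ref{l:preserving-portrait1}(1) produces a neighborhood $V$ of $c$ on which $\hat z$ continues real-analytically as a repelling periodic point $\hat z(c')$. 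Choose a sequence $c_n = \Phi^{-1}(r_n e^{2\pi i\theta}) \in \mathcal{R}_\theta \cap V$ with $c_n \to c$ and $r_n \searrow 1$. By definition of the parameter ray and Theorem \ref{RealAnalUniformization}, the critical value $c_n$ satisfies $c_n = \phi_{c_n}^{-1}(r_n e^{2\pi i\theta})$, so it lies on $\mathcal{R}_\theta^{c_n}$ at B\"ottcher modulus $r_n$. The key step is to show that $|c_n - \hat z(c_n)| \to 0$: using the local linearization of the holomorphic return map $f_c^{\circ 2k}$ at $\hat z$, together with its stable perturbation for $c'$ near $c$, the portion of the ray $\mathcal{R}_\theta^{c'}$ at sufficiently small potential enters a fixed linearization neighborhood of $\hat z(c')$ inside which all points spiral into $\hat z(c')$. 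As $r_n \searrow 1$ the endpoint $c_n$ of the truncated ray must therefore approach $\hat z(c_n)$, and combined with $\hat z(c_n) \to \hat z$ and $c_n \to c$ this yields $c = \hat z$.

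But then $c$ is periodic of period $k$: $f_c^{\circ k}(c) = c$. Since $f_c(w) = \bar w^d + c$ has $w = 0$ as its unique preimage of $c$, the relation $f_c^{\circ(k+1)}(0) = f_c(0) = c$ forces $f_c^{\circ k}(0) = 0$, so $0$ is periodic of period $k$ and the cycle is super-attracting; this contradicts the assumption that $\hat z$ is repelling. Hence $\hat z$ is parabolic of period $k$ dividing $n$, giving the parabolic cycle required by the statement. To identify $\hat z$ as the characteristic parabolic point, let $U$ be the characteristic Fatou component of $f_c$ (which contains $c$). Along $c_n \to c$ on $\mathcal{R}_\theta$, the critical values $c_n$ lie on $\mathcal{R}_\theta^{c_n}$ in the basin of infinity of $f_{c_n}$ and accumulate on $c \in U$. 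Passing to the limit, the landing point $\hat z$ of $\mathcal{R}_\theta^c$ separates $c \in U$ from infinity along the ray, forcing $\hat z \in \partial U$; since the characteristic parabolic point is by definition the unique parabolic point on $\partial U$, we conclude that $\hat z$ is the characteristic parabolic point.

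The hardest step will be rigorously justifying the convergence $c_n \to \hat z$ in the repelling case. This requires joint control of the B\"ottcher coordinates $(c', w) \mapsto \phi_{c'}^{-1}(w)$ as $w$ approaches the unit circle along the ray at angle $\theta$, together with uniform linearization estimates near the repelling periodic orbit, so that the truncated ray's endpoint $c_n$ lies arbitrarily close to the continuation $\hat z(c_n)$ when $r_n$ is close to $1$.
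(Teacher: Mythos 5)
The paper's own ``proof'' of this lemma is only a citation of the classical Mandelbrot-set arguments (\cite[Theorem C.7]{GM}, \cite[Propositions 3.1, 3.2]{Sch}), and your plan follows the same classical skeleton (land by Lemma \ref{Lem:Landing of Dyn Rays}, exclude the repelling case using parameters on $\mathcal{R}_{\theta}$, then identify the characteristic point). However, two of your steps have genuine gaps.

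First, the repelling-exclusion step. Your key claim $|c_n-\hat z(c_n)|\to 0$ is not justified, and the route you propose for it is structurally self-defeating. Points of a linearizing neighborhood of a \emph{repelling} point do not ``spiral into'' it; only backward orbits do, so the control of the low-potential part of $\mathcal{R}_{\theta}^{c_n}$ must come from pulling back a fundamental ray segment by the inverse branch of the return map fixing $\hat z(c_n)$. That pullback argument requires the linearizing neighborhood $V$ to avoid the critical values of the return map, i.e.\ $c_n, f_{c_n}(c_n),\dots$ --- but your intended conclusion is precisely that the critical value $c_n$ enters every such $V$. Organized correctly, this argument never produces ``$c=\hat z$'': if $c\neq\hat z$ one already has a contradiction ($c_n\in V$ while $c_n\to c\notin\overline{V}$), and the degenerate case where the forward critical orbit of $c$ meets $\hat z$ needs separate treatment; so the superattracting contradiction you build the argument around is never the operative one. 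You also overlook the much simpler standard contradiction: since $\theta$ is $n$-periodic and $c_n\in\mathcal{R}_{\theta}$, the parameter $c_n$ lies on the parameter ray at angle $(-d)^n\theta=\theta$, so by the second exceptional case of Lemma \ref{Lem:Landing of Dyn Rays} the dynamical ray $\mathcal{R}_{\theta}^{c_n}$ does not land at all --- it terminates at a precritical point at potential $G(c_n)/d^{n}$ --- which directly contradicts the stability of landing at a repelling periodic point (\cite[Lemma B.1]{GM}, \cite[Lemma 2.2]{Sch}; cf.\ Lemma \ref{l:preserving-portrait1}) with no estimates on the truncated ray needed.

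Second, the identification with the characteristic parabolic point is hand-waved. ``The landing point $\hat z$ separates $c\in U$ from infinity along the ray'' is not an argument: a single point separates nothing, $c$ does not lie on $\mathcal{R}_{\theta}^{c}$, and Hausdorff limits of the rays $\mathcal{R}_{\theta}^{c_n}$ can (and near parabolic parameters do) penetrate the filled-in Julia set, so $c_n\in\mathcal{R}_{\theta}^{c_n}$, $c_n\to c$ puts no constraint of this kind on the landing point of the limit ray. In the holomorphic model this is visible already for $z^2+c$ at angle $0$: along the parameter ray the critical value tends to $1/4$, while the dynamical $0$-ray of the limit map lands at the parabolic point $1/2$. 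The standard repair --- used repeatedly in this paper, e.g.\ in the proofs of Lemmas \ref{transfer} and \ref{co-roots meet} --- is via the Orbit Separation Lemma \ref{LemOrbitSeparation}: if $\mathcal{R}_{\theta}^{c}$ landed at a non-characteristic parabolic point, a ray pair landing at a common (pre)periodic repelling point would separate the critical value from $\mathcal{R}_{\theta}^{c}$, and this configuration persists under small perturbation, contradicting the fact that for $c_n\in\mathcal{R}_{\theta}$ the critical value lies on the dynamical $\theta$-ray. A minor further point: Lemma \ref{Lem:Landing of Dyn Rays} by itself does not give $k\mid n$ in the parabolic case; for that you need the standard fact that the orbit period divides the ray period (\cite[Lemma 2.5]{Mu}).
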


\begin{proof}
This is a classical result for the Mandelbrot set \cite[Theorem C.7]{GM} \cite[Proposition 3.1, Proposition 3.2]{Sch}, and can be proved analogously in the antiholomorphic setting.
\end{proof}

\begin{lemma}\label{co-root arcs}
For every co-root arc $\mathcal{C}$ of period $k$, there exists a \emph{unique} $\theta$ of period $k$ (under multiplication by $-d$) such that the dynamical ray $\mathcal{R}_{\theta}^c$ at angle $\theta$ lands at the characteristic parabolic point of $f_c$ for each $c \in \mathcal{C}$. In particular, the parabolic orbit portrait is trivial and constant on every co-root arc. 
\end{lemma}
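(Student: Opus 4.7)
The plan is to combine three ingredients: Lemma~\ref{transfer}(1), which restricts any angle landing at the characteristic parabolic point to the set $S$ attached to the center of the unique hyperbolic component $H$ with $\mathcal{C}\subset\partial H$ (Corollary~\ref{CorArcNeighbors}); a perturbation argument into $H$ to force \emph{uniqueness} of the landing angle at a single parameter; and the quasiconformal construction of the arc in Theorem~\ref{ThmParaArcs} to promote uniqueness to \emph{constancy} along $\mathcal{C}$.

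Fix $c\in\mathcal{C}$ and let $z_c$ denote the characteristic parabolic periodic point of $f_c$. Since $\mathcal{C}$ is a non-cusp co-root arc, $z_c$ has a single petal, lies on the boundary of the characteristic Fatou component $U_c$ (and of no other bounded Fatou component), and does not disconnect $K(f_c)$. By Lemma~\ref{Lem:Landing of Dyn Rays}, at least one rational dynamical ray lands at $z_c$; by Lemma~\ref{transfer}(1) its angle lies in $S$, hence has period $k$ (existence). For uniqueness at this $c$, perturb slightly to a parameter $c'\in H$. The parabolic point $z_c$ splits into an attracting periodic point inside $U_{c'}$ (where no external ray can land) and a repelling periodic point $z_{c'}^r$ that is a dynamical co-root of $U_{c'}$; by Lemma~\ref{number_root_co_root}(1), exactly one dynamical ray lands at $z_{c'}^r$. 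Any ray landing at $z_c$ must, under the perturbation, continue to land at the repelling bifurcation partner $z_{c'}^r$ (the argument from the proof of Lemma~\ref{transfer}(1) applies: a ray at a periodic angle cannot land on an attracting cycle, and by Lemma~\ref{stable in hyp comp} the only repelling landing candidates in a neighborhood inside $H$ correspond to angles in $S$). Hence at most one ray lands at $z_c$, yielding a unique $\theta_c\in S$ of period $k$ with $\mathcal{R}_{\theta_c}^c$ landing at $z_c$.

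To show that $\theta_c$ is independent of $c\in\mathcal{C}$, I invoke the construction of $\mathcal{C}$ in Theorem~\ref{ThmParaArcs}: the Beltrami form is supported inside the parabolic basin and extended by zero outside the whole basin, so each qc-homeomorphism $\phi_t$ that conjugates $f_{c_0}$ to $f_{c(t)}$ is conformal on the basin of infinity $\mathcal{A}_\infty$. After the standard normalization at $\infty$, $\phi_t$ equals $\phi_{c(t)}^{-1}\circ\phi_{c_0}$ in B\"ottcher coordinates, and therefore preserves external angles and sends $\mathcal{R}_\theta^{c_0}$ onto $\mathcal{R}_\theta^{c(t)}$ for every $\theta$. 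Since $\phi_t$ is a dynamical conjugation, it maps the critical value $c_0$ to $c(t)$, hence $U_{c_0}$ to $U_{c(t)}$, and (as $z_{c_i}$ is the unique parabolic point on $\partial U_{c_i}$) maps $z_{c_0}$ to $z_{c(t)}$. Consequently $\theta_{c_0}=\theta_{c(t)}$, proving constancy.

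Finally, since exactly one ray lands at $z_c$, iterating the angle $\theta_c$ under $t\mapsto -dt\pmod 1$ shows that every point of the parabolic $k$-cycle is the landing point of exactly one ray, so the orbit portrait consists of singletons and is trivial; its constancy along $\mathcal{C}$ is precisely the statement just proved. The main obstacle is the uniqueness step, which rests on the subtle claim that rays landing at the parabolic point follow the \emph{repelling} bifurcation partner in $H$ (rather than disappearing into the attracting one); once this is in place, Lemma~\ref{number_root_co_root}(1) closes the argument immediately.
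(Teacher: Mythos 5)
Your first step (existence of a landing ray and the restriction of its angle to the set $S$ via Lemma~\ref{transfer}(1) and Corollary~\ref{CorArcNeighbors}) is fine, but the uniqueness step, which you yourself single out as the main obstacle, has a genuine gap as written. The claim that \emph{every} ray landing at the characteristic parabolic point $z_c$ must, after perturbation into $H$, land at the repelling bifurcation partner $z_{c'}^r$ is exactly the delicate ``rays do not jump'' statement, and none of the lemmas you cite supplies it. Lemma~\ref{stable in hyp comp} only controls the rays at angles in $S\cup S'$ for parameters \emph{inside} $H$, and the argument in the proof of Lemma~\ref{transfer}(1) runs in the opposite direction: there one starts with the ray at an angle $\theta_i\in S$, already known to land at a co-root of $U_{c'}$ for $c'\in H$, and shows that $\mathcal{R}_{\theta_i}^{c}$ lands at $z_c$; moreover that proof already \emph{uses} the fact that exactly one ray lands at $z_c$ (quoting \cite[Lemma 3.4]{NS}), which is precisely what you are trying to establish. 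In general the landing point of a periodic ray can jump when a parabolic parameter is perturbed: the remark after Corollary~\ref{CorRayStability} warns that for odd-period parabolics the rays can be redistributed among two distinct repelling cycles, and Section~\ref{landingdiscont} exhibits exactly such jumps at parameters on co-root arcs (for perturbations into the wakes). So ``the ray stays near $z_c$, hence must land at $z_{c'}^r$'' is not available without a separate argument. The detour is also unnecessary: you already record that $z_c$ lies on $\partial U_c$, is fixed by the first return map of $U_c$, and does not disconnect $K(f_c)$, i.e.\ $z_c$ is a dynamical co-root of $U_c$ for the parabolic map $f_c$; Lemma~\ref{number_root_co_root}(1), which is stated for attracting \emph{and parabolic} parameters, then gives directly that exactly one ray lands at $z_c$ and that its period is $k$. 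This one-line argument is what the paper uses.

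Your constancy step is a genuinely different route from the paper's and is essentially sound. The paper shows that the set of $c\in\mathcal{C}$ at which $\mathcal{R}_{\theta}^c$ lands at the characteristic parabolic point is open in $\mathcal{C}$ (stability of the rays at the remaining angles of $S$, which land at repelling points, together with the fact that at least one ray must land at the parabolic point), and concludes by connectedness since only finitely many angles are possible. You instead transport the landing pattern along the arc using the quasiconformal deformations of Theorem~\ref{ThmParaArcs}, exploiting that the Beltrami form vanishes on the basin of infinity; this works, and in fact yields more (the entire rational lamination is constant along the arc). Two points should be made explicit, though. First, a conformal conjugacy between the basins of infinity of two maps $\bar z^d+c$ need not preserve external angles: composing with B\"ottcher coordinates gives a rotation of $\mathbb{C}\setminus\overline{\mathbb{D}}$ commuting with $w\mapsto\bar w^d$, i.e.\ a rotation by a $(d+1)$-st root of unity, reflecting the symmetry of the family; you must rule out a nontrivial rotation, e.g.\ by choosing $\phi_t$ continuously in $t$ so that the induced rotation is a continuous map into a finite set which is trivial at $t=0$. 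Second, you should invoke Lemma~\ref{LemArcsDisjoint} to know that the deformation family based at an arbitrary point of $\mathcal{C}$ sweeps out all of $\mathcal{C}$, so that the transport argument really covers the whole arc.
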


\begin{proof}
By Corollary \ref{CorArcNeighbors}, there exists a unique hyperbolic component $H$ of odd period $k$ such that $\mathcal{C} \subset \partial H$, and $\mathcal{C}$ does not intersect the boundary of any other hyperbolic component of period $k$. Let $S$ be the set of angles of the dynamical rays that land at the $d$ dynamical co-roots of the characteristic Fatou component of the center $c_0$ of $H$. Therefore, $\vert S \vert = d$. 

Let $\tilde{c} \in \mathcal{C}$. By the definition of co-root arcs, the parabolic periodic points of $f_{\tilde{c}}$ do not disconnect the Julia set of $f_{\tilde{c}}$; hence every parabolic periodic point of $f_{\tilde{c}}$ is the landing point of exactly one periodic dynamical ray. Let $\theta$ be the angle of the unique dynamical ray that lands at the characteristic parabolic point of $f_{\tilde{c}}$. By Lemma \ref{transfer}, $\theta \in S$.

In the dynamical plane of $\tilde{c}$, all the dynamical rays at angles in $\left( S \setminus \lbrace \theta \rbrace \right)$ land at repelling periodic points. Hence under small perturbation of $\tilde{c}$ along the co-root arc, the dynamical rays at these angles continue to land at repelling periodic points (by Lemma \ref{l:preserving-portrait1}). Since every parabolic periodic point must be the landing point of at least one (exactly one on the co-root arcs) periodic dynamical ray, the dynamical ray $\mathcal{R}_{\theta}^c$ must land at the characteristic parabolic point of $f_{c}$ for every parameter $c \in \mathcal{C}$ close to $\tilde{c}$. Therefore, the set $\mathcal{C}^{\prime} := \lbrace c \in \mathcal{C} : \mathcal{R}_{\theta}^c$ lands at the characteristic parabolic point of $f_c \rbrace$
is an open set. Since there are only finitely many choices for $\theta$; $\mathcal{C}$ can be written as the union of finitely many disjoint open sets. It follows from the connectedness of $\mathcal{C}$ that all but one of these open sets are empty. So there exists an angle $\theta$ such that $\mathcal{R}_{\theta}^c$ lands at the characteristic parabolic point of $f_c$ for every $c \in \mathcal{C}$. Now it trivially follows that the parabolic orbit portrait is constant on $\mathcal{C}$. 
\end{proof}

\begin{corollary}\label{singe ray on co-root}
At most one periodic parameter ray can accumulate on a co-root parabolic arc.
\end{corollary}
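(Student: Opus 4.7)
The plan is to reduce the claim to the uniqueness statement already contained in Lemma~\ref{co-root arcs}, combined with the landing behavior of dynamical rays associated with a periodic parameter ray accumulating on a parabolic parameter. Concretely, suppose that $\mathcal{R}_{\alpha}$ is any periodic parameter ray whose accumulation set $L_{\mathcal{M}_d^*}(\alpha)$ meets $\mathcal{C}$, and pick some $c\in L_{\mathcal{M}_d^*}(\alpha)\cap \mathcal{C}$. The goal is to show that the angle $\alpha$ is forced to coincide with the unique angle $\theta$ produced by Lemma~\ref{co-root arcs}; this would immediately yield the corollary, since $\theta$ depends only on the arc~$\mathcal{C}$ and not on the accumulation point~$c$.

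The first step would be to use Lemma~\ref{parameter_ray_dynamical_ray}: since $\alpha$ is periodic under $t\mapsto -dt$ and $c\in L_{\mathcal{M}_d^*}(\alpha)$, the map $f_c$ has a parabolic cycle whose period divides the period of $\alpha$, and the dynamical ray $\mathcal{R}_{\alpha}^c$ lands at the characteristic parabolic point of $f_c$. Next, because $c$ lies on the odd-period parabolic arc $\mathcal{C}$, Lemma~\ref{LemOddIndiffDyn} tells us that the parabolic cycle of $f_c$ has odd period $k$ and multiplier $+1$, and the co-root hypothesis means that this parabolic cycle does not disconnect $J(f_c)$.

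The key combinatorial input is then that on a co-root arc, each parabolic periodic point is the landing point of exactly one periodic dynamical ray. This was already recorded in the proof of Lemma~\ref{co-root arcs} using Lemma~\ref{number_root_co_root} together with the co-root characterization, so I would simply cite it. In particular, the characteristic parabolic point of $f_c$ receives the single periodic ray $\mathcal{R}_{\theta}^c$, where $\theta$ is the angle supplied by Lemma~\ref{co-root arcs}. Combining this with the previous step forces $\alpha=\theta$, which finishes the proof since $\theta$ is determined by $\mathcal{C}$ alone.

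The argument is essentially a bookkeeping exercise once the two earlier lemmas are in hand, so I do not expect genuine obstacles. The one place where care is needed is to make sure that the periodic parameter ray $\mathcal{R}_{\alpha}$ really produces a \emph{periodic} dynamical ray landing at the characteristic parabolic point (and not merely a pre-periodic one), so that the uniqueness statement on co-root arcs is applicable. This is exactly the content of Lemma~\ref{parameter_ray_dynamical_ray}, so no extra work is required.
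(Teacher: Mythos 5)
Your proposal is correct and follows the same route as the paper: the paper's proof consists precisely of citing Lemma~\ref{parameter_ray_dynamical_ray} (an accumulation point $c$ of a periodic parameter ray $\mathcal{R}_{\alpha}$ has $\mathcal{R}_{\alpha}^c$ landing at the characteristic parabolic point) together with Lemma~\ref{co-root arcs} (on a co-root arc that point receives exactly one ray, at a single angle $\theta$ determined by the arc), forcing $\alpha=\theta$. Your write-up merely spells out this bookkeeping explicitly, which is fine.
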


\begin{proof}
This follows from Lemma \ref{parameter_ray_dynamical_ray} and Lemma \ref{co-root arcs}.
\end{proof}

\begin{lemma}\label{root arcs}
For every root arc $\mathcal{C}$ of period $k$, there exists a \emph{unique} pair of angles $\alpha_1$ and $\alpha_2$ of period $2k$ (under multiplication by $-d$) such that the dynamical rays $\mathcal{R}_{\alpha_1}^c$ and $\mathcal{R}_{\alpha_2}^c$ (and none else) land at the characteristic parabolic point of $f_c$ for each $c \in \mathcal{C}$. In particular, the parabolic orbit portrait is non-trivial and constant on every root arc. 
\end{lemma}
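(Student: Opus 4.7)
The plan is to mirror the proof of Lemma \ref{co-root arcs}, with the pair $S'=\{\alpha_1,\alpha_2\}$ of period-$2k$ root angles playing the role that the co-root set $S$ played there. First, I would use Corollary \ref{CorArcNeighbors} to single out the unique hyperbolic component $H$ of period $k$ with $\mathcal{C}\subset\partial H$, let $c_0$ denote its center, and invoke Lemma \ref{primitive} to define $S'=\{\alpha_1,\alpha_2\}$ as the angles of the two dynamical rays (both of period $2k$) that land at the dynamical root of the characteristic Fatou component of $f_{c_0}$; by Lemma \ref{stable in hyp comp} this same pair lands at the dynamical root of the characteristic Fatou component for every $c\in H$.

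Next, fix some $\tilde{c}\in\mathcal{C}$ with characteristic parabolic point $z_{\tilde{c}}$. By the definition of a root arc, $z_{\tilde{c}}$ disconnects $K(f_{\tilde{c}})$, and this is a standard polynomial-Julia-set criterion forcing at least two distinct periodic dynamical rays to land at $z_{\tilde{c}}$. On the other hand, Lemma \ref{transfer}(2) restricts the angle of every such ray to lie in $S'$. Since $|S'|=2$, it follows that exactly the rays $\mathcal{R}_{\alpha_1}^{\tilde{c}}$ and $\mathcal{R}_{\alpha_2}^{\tilde{c}}$ land at $z_{\tilde{c}}$, and no other rays do.

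Finally, to promote this identification from $\tilde{c}$ to all of $\mathcal{C}$, I would run the same open-and-finite-partition argument used in Lemma \ref{co-root arcs}: the set $\mathcal{C}'\subset\mathcal{C}$ of parameters $c$ at which both $\mathcal{R}_{\alpha_1}^c$ and $\mathcal{R}_{\alpha_2}^c$ land at the characteristic parabolic point is open, because all other candidate periodic rays of period at most $2k$ land at repelling periodic points that continue real-analytically by Lemma \ref{l:preserving-portrait1}(1), leaving the two rays in $S'$ as the only rays available to land at the parabolic point. Since there are only finitely many candidate angle pairs, $\mathcal{C}$ decomposes into finitely many disjoint nonempty open-in-$\mathcal{C}$ pieces indexed by candidate pairs, and connectedness of $\mathcal{C}$ forces the pair to be $\{\alpha_1,\alpha_2\}$ throughout. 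Uniqueness is immediate from $|S'|=2$, and non-triviality of the portrait is automatic because two distinct rays land at one common point.

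The main obstacle, distinguishing this from Lemma \ref{co-root arcs}, is showing that \emph{both} members of $S'$ land at the parabolic point rather than only one. This is forced by combining the upper bound $|S'|=2$ coming from Lemma \ref{transfer}(2) with a matching lower bound of at least two rays supplied by the root condition; the equality then yields the pair. A subsidiary point requiring care is verifying that the candidate ray pair really is constant along the arc (as opposed to switching to, say, a pair of lower-period rays at isolated points), but the finite-partition-plus-connectedness step handles this uniformly.
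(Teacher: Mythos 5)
Your proposal is correct and follows essentially the same route as the paper: single out the unique component $H$ via Corollary \ref{CorArcNeighbors}, use Lemma \ref{primitive} to get $|S'|=2$, bound the landing angles by Lemma \ref{transfer}, and match this with the (at least) two rays forced at an odd-period root point. The only difference is cosmetic: since $S'$ depends only on $H$ and your pointwise argument applies to an arbitrary $\tilde c\in\mathcal{C}$, the constancy along the arc is already immediate, so the final open-plus-connectedness step is redundant (the paper simply notes that $\alpha_1,\alpha_2$ are independent of $\tilde c$).
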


\begin{proof}
By Corollary \ref{CorArcNeighbors}, there exists a unique hyperbolic component $H$ of odd period $k$ such that $\mathcal{C} \subset \partial H$, and $\mathcal{C}$ does not intersect the boundary of any other hyperbolic component of period $k$. Let $S'$ be the set of angles of the dynamical rays that land at the unique dynamical root of the characteristic Fatou component of the center $c_0$ of $H$. By Lemma \ref{primitive}, $\vert S' \vert =2$. 

Let $\tilde{c} \in \mathcal{C}$. By the definition of root arcs, the parabolic periodic points of $f_{\tilde{c}}$ disconnect the Julia set of $f_{\tilde{c}}$; hence every parabolic periodic point of $f_{\tilde{c}}$ is a dynamical root point. Since this root point has odd period $k$, it follows from \cite[Corollary 4.2]{NS} that precisely $2$ dynamical rays land at this root point. Let $\alpha_1$ and $\alpha_2$ be the angles of the dynamical rays that land at the characteristic parabolic point of $f_{\tilde{c}}$. By Lemma \ref{transfer}, $S' = \lbrace\alpha_1, \alpha_2\rbrace$, and hence, the angles $\alpha_1$ and $\alpha_2$ are independent of the choice of $\tilde{c} \in \mathcal{C}$. Therefore, the rays $\mathcal{R}_{\alpha_1}^c$ and $\mathcal{R}_{\alpha_2}^c$ (and none else) land at the characteristic parabolic point of $f_c$ for every $c \in \mathcal{C}$. The statement about orbit portraits now follows easily.
\end{proof}

\begin{corollary}\label{two rays on root}
At most two parameter rays can accumulate on a root parabolic arc.
\end{corollary}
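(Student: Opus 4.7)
The plan is to mirror the short deduction used for Corollary \ref{singe ray on co-root}, now drawing on Lemma \ref{root arcs} in place of Lemma \ref{co-root arcs}. Suppose a parameter ray $\mathcal{R}_\theta$ accumulates on the root parabolic arc $\mathcal{C}$, and pick any accumulation parameter $c \in L_{\mathcal{M}_d^*}(\theta) \cap \mathcal{C}$. Since $\mathcal{C}$ is a parabolic arc of period $k$, every $f_c$ with $c \in \mathcal{C}$ has a parabolic cycle of period exactly $k$, so the angle $\theta$ must be periodic under $t \mapsto -dt$; this is needed in order to invoke Lemma \ref{parameter_ray_dynamical_ray}.

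Applying Lemma \ref{parameter_ray_dynamical_ray} to $\theta$ and $c$, one obtains that the dynamical ray $\mathcal{R}_\theta^c$ lands at the characteristic parabolic point of $f_c$. But Lemma \ref{root arcs} asserts that for any $c \in \mathcal{C}$ the only dynamical rays landing at that characteristic parabolic point are $\mathcal{R}_{\alpha_1}^c$ and $\mathcal{R}_{\alpha_2}^c$, where the pair $\{\alpha_1,\alpha_2\}$ depends only on $\mathcal{C}$ (not on $c$). Hence $\theta \in \{\alpha_1, \alpha_2\}$, and at most the two parameter rays $\mathcal{R}_{\alpha_1}$ and $\mathcal{R}_{\alpha_2}$ can accumulate on $\mathcal{C}$.

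There is essentially no obstacle here: the proof is a one-line consequence of the two lemmas just cited, formally identical to the co-root case with $|S'|=2$ replacing $|S|=1$ as the cardinality bound. The only minor point worth stating explicitly is that periodicity of the external angle $\theta$ of any accumulating parameter ray is automatic from Lemma \ref{parameter_ray_dynamical_ray}, so no separate argument is required to rule out irrational or strictly pre-periodic angles. The genuine content of the bound ``at most two'' is entirely carried by the earlier rigidity work encapsulated in Lemma \ref{root arcs}; the corollary itself is purely formal. (Whether both of these two rays actually accumulate on $\mathcal{C}$, as opposed to at most one of them, is a finer question deferred to the later sections on $\partial H$.)
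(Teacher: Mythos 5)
Your proof is correct and is essentially identical to the paper's, which deduces the corollary in one line from Lemma \ref{parameter_ray_dynamical_ray} combined with Lemma \ref{root arcs}. The only minor imprecision is your claim that periodicity of $\theta$ is ``automatic'': as in the co-root case (Corollary \ref{singe ray on co-root}, which explicitly says \emph{periodic} parameter ray), the statement is implicitly about rays at periodic angles, since Lemma \ref{parameter_ray_dynamical_ray} assumes periodicity rather than deriving it.
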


\begin{proof}
This follows from Lemma \ref{parameter_ray_dynamical_ray} and Lemma \ref{root arcs}.
\end{proof}

\begin{lemma}[Where two co-root arcs meet]\label{co-roots meet}
Let $\mathcal{C}_1$ and $\mathcal{C}_2$ be two co-root arcs (of period $k$) such that the dynamical ray $\mathcal{R}_{\theta_1}^c$  at angle $\theta_1$ (respectively $\mathcal{R}_{\theta_2}^c$ at angle $\theta_2$) lands at the characteristic parabolic point of $f_c$ for any parameter $c$ on $\mathcal{C}_1$ (respectively on $\mathcal{C}_2$). Let $\tilde{c}$ be the cusp point where these two arcs meet. Then the two dynamical rays $\mathcal{R}_{\theta_1}^{\tilde{c}}$ and $\mathcal{R}_{\theta_2}^{\tilde{c}}$ land at the characteristic parabolic point of $f_{\tilde{c}}$, and no other ray lands there.
\end{lemma}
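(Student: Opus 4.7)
The plan is to split the statement into two parts: (i) the rays $\mathcal{R}_{\theta_1}^{\tilde{c}}$ and $\mathcal{R}_{\theta_2}^{\tilde{c}}$ do land at the characteristic parabolic point $\tilde{z}$ of $f_{\tilde{c}}$, and (ii) no other dynamical ray does. Both parts combine the ray-stability machinery of Lemma~\ref{l:preserving-portrait1} with the orbit separation principle (Lemma~\ref{LemOrbitSeparation}), using the behavior of rays on the two incident co-root arcs as boundary data.

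For (i), fix $i\in\{1,2\}$ and take $c_n\in\mathcal{C}_i$ with $c_n\to\tilde{c}$. By Lemma~\ref{co-root arcs}, $\mathcal{R}_{\theta_i}^{c_n}$ lands at the characteristic parabolic point of $f_{c_n}$; by Lemma~\ref{Lem:Landing of Dyn Rays}, $\mathcal{R}_{\theta_i}^{\tilde{c}}$ lands at some periodic point $z^\ast$ of $f_{\tilde{c}}$. If $z^\ast$ were repelling, part~(1) of Lemma~\ref{l:preserving-portrait1} would make the landing persist at its real-analytic continuation in a whole neighborhood of $\tilde{c}$, contradicting the behavior along $\mathcal{C}_i$. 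Hence $z^\ast$ is parabolic. If moreover $z^\ast\neq\tilde{z}$, Lemma~\ref{LemOrbitSeparation} supplies a pair of rational rays landing at a common repelling point that separates $\tilde{z}$ (and hence the critical value $\tilde{c}$, which lies in the characteristic Fatou component) from $z^\ast$ and therefore from $\mathcal{R}_{\theta_i}^{\tilde{c}}$; part~(1) of Lemma~\ref{l:preserving-portrait1} makes this separation stable for $c_n$ close to $\tilde{c}$, contradicting Lemma~\ref{co-root arcs}. So $z^\ast=\tilde{z}$.

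For (ii), suppose $\mathcal{R}_{\theta}^{\tilde{c}}$ lands at $\tilde{z}$ for some $\theta\notin\{\theta_1,\theta_2\}$. Lemma~\ref{transfer}(3) forces $\theta\in S\cup S'$. Now I would exploit the cusp structure: by Definition~\ref{DefCusp} and Lemma~\ref{LemOddIndiffDyn}, $\tilde{z}$ is a fixed point of $f_{\tilde{c}}^{\circ 2k}$ of multiplicity $3$, so for $c'\in H$ near $\tilde{c}$ exactly three simple fixed points of $f_{c'}^{\circ 2k}$ coalesce at $\tilde{z}$; one of them is the attracting periodic point in $U_{c'}$. I claim the other two are the $\theta_1$- and $\theta_2$-co-roots on $\partial U_{c'}$: for $c'\in H$ near $\mathcal{C}_1$ the attracting point and the $\theta_1$-co-root are close (they merge along $\mathcal{C}_1$), so letting such $c'\to\tilde{c}$ forces the $\theta_1$-co-root to be one of the three merging points; symmetrically the $\theta_2$-co-root must appear as well, and by rigidity of multiplicity-$3$ coalescence these are the only possibilities. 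Consequently, for any $\theta\in(S\setminus\{\theta_1,\theta_2\})\cup S'$, the corresponding periodic point on $\partial U_{c'}$ (Lemma~\ref{stable in hyp comp}) stays uniformly away from $\tilde{z}$ as $c'\to\tilde{c}$ in $H$ and converges to a repelling periodic point of $f_{\tilde{c}}$ distinct from $\tilde{z}$; part~(1) of Lemma~\ref{l:preserving-portrait1} then shows $\mathcal{R}_{\theta}^{\tilde{c}}$ lands at this distinct repelling point, contradicting the assumption.

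The main obstacle I anticipate is the coalescence identification in~(ii): verifying that precisely the attracting point together with the $\theta_1$- and $\theta_2$-co-roots are the three periodic orbits absorbed by $\tilde{z}$, to the exclusion of the dynamical root or any other co-root. The argument relies on the rigidity of multiplicity-$3$ coalescence together with the two incident co-root arcs pinning down which two repelling orbits participate.
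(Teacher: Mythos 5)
Your part (i) is essentially the paper's own argument and is fine: the ray at angle $\theta_i$ must land at a parabolic point (otherwise stability of landing at repelling points, Lemma~\ref{l:preserving-portrait1}(1), would contradict the behavior on $\mathcal{C}_i$), and landing at a non-characteristic parabolic point is excluded by the orbit separation lemma exactly as you argue. The divergence, and the problem, is in part (ii). The paper disposes of ``no other ray lands there'' in one line: since two rays of odd period $k$ land at the ($k$-periodic) characteristic parabolic point, the combinatorial classification of orbit portraits for unicritical antiholomorphic polynomials (\cite[Theorem 2.6]{Mu}) already forces these to be the only rays landing there. Your perturbative replacement does not close.

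Two concrete gaps. First, the coalescence identification you yourself flag is not established: from ``the attracting point and the $\theta_1$-co-root merge along $\mathcal{C}_1$'' you cannot conclude that the $\theta_1$-co-root tends to $\tilde{z}$ as $c'\to\tilde{c}$ \emph{inside} $H$ --- ``close to $\mathcal{C}_1$'' and ``close to $\tilde{c}$'' are different limits, and ``rigidity of multiplicity-$3$ coalescence'' only bounds the number of fixed points of $f_{c'}^{\circ 2k}$ near $\tilde{z}$, it does not identify \emph{which} boundary fixed points of $\partial U_{c'}$ participate (they could a priori form a $2k$-cycle, or involve the dynamical root). Your setup also tacitly assumes that $\mathcal{C}_1$ and $\mathcal{C}_2$ lie on the boundary of one and the same hyperbolic component $H$, so that both $\theta_1,\theta_2\in S$ for a single center; at this point of the paper that is not known (it is part of what Section~\ref{SecOddBdy} establishes, using this very lemma), so the argument risks circularity. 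Second, even granting that for $\theta\in(S\setminus\{\theta_1,\theta_2\})\cup S'$ the corresponding point on $\partial U_{c'}$ stays away from $\tilde{z}$ and converges to a repelling point $w\neq\tilde{z}$ of $f_{\tilde{c}}$, your final inference misuses Lemma~\ref{l:preserving-portrait1}(1): that lemma propagates a landing relation \emph{from} a parameter where the landing point is repelling \emph{to} nearby parameters, not the reverse. Knowing where $\mathcal{R}_{\theta}^{c'}$ lands for $c'\in H$ does not determine where $\mathcal{R}_{\theta}^{\tilde{c}}$ lands, because landing points can jump precisely at parabolic parameters --- this is exactly the phenomenon proved in Theorem~\ref{discontinuity}. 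So the hypothesis that $\mathcal{R}_{\theta}^{\tilde{c}}$ lands at $\tilde{z}$ is not contradicted by your limit argument. To fix part (ii) you should either invoke the orbit portrait result as the paper does, or supply a genuinely new argument excluding a third ray at $\tilde{z}$; the perturbation picture alone does not do it.
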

\begin{proof}
The dynamical rays $\mathcal{R}_{\theta_1}^{\tilde{c}}$ and $\mathcal{R}_{\theta_2}^{\tilde{c}}$ must land at parabolic points; otherwise by Lemma \ref{l:preserving-portrait1}, they would continue to land at repelling periodic points for nearby parameters contradicting our assumption. To finish the proof, we need to show that these two rays land at the characteristic parabolic point of $f_{\tilde{c}}$: since both these rays have odd period $k$, this would prove that these are the only rays landing there \cite[Theorem 2.6]{Mu}.

If either of the two rays landed at a non-characteristic parabolic point of $f_{\tilde{c}}$, the orbit separation lemma (Lemma \ref{LemOrbitSeparation}) would supply a partition by a pair of rational dynamical rays landing at a common point, stable under small perturbations, separating the critical value from the dynamical ray $\mathcal{R}_{\theta_1}^{\tilde{c}}$ (respectively $\mathcal{R}_{\theta_2}^{\tilde{c}}$). But for parameters on $\mathcal{C}_1$ (respectively $\mathcal{C}_2$) sufficiently close to $\tilde{c}$, the dynamical ray at angle $\theta_1$ (respectively $\theta_2$) lands at the characteristic parabolic point, and there does not exist any rational dynamical ray pair separating the $\theta_1$ (respectively $\theta_2$) ray from the critical value: a contradiction! Thus both $\mathcal{R}_{\theta_1}^{\tilde{c}}$ and $\mathcal{R}_{\theta_2}^{\tilde{c}}$ land at the characteristic parabolic point of $f_{\tilde{c}}$.
\end{proof}

\begin{lemma}[Where a co-root and a root arc meet]\label{root and co-root meet}
Let $\mathcal{C}_1$ and $\mathcal{C}_2$ be a co-root and a root arc (of period $k$) such that the dynamical ray(s) $\mathcal{R}_{\theta_1}^c$ (respectively $\mathcal{R}_{\alpha_1}^c$ and $\mathcal{R}_{\alpha_2}^c$) land(s) at the characteristic parabolic point of $f_c$ for each $c \in \mathcal{C}_1$ (respectively $\mathcal{C}_2$). Let $\tilde{c}$ be the cusp point where these two arcs meet. Then the three dynamical rays $\mathcal{R}_{\theta_1}^{\tilde{c}}$, $\mathcal{R}_{\alpha_1}^{\tilde{c}}$ and $\mathcal{R}_{\alpha_2}^{\tilde{c}}$ land at the characteristic parabolic point of $f_{\tilde{c}}$, and no other ray lands there.
\end{lemma}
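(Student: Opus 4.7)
The plan mirrors the proof of Lemma~\ref{co-roots meet} for the first two assertions, but requires a fresh argument to rule out extra rays, since the orbit-portrait citation \cite[Theorem 2.6]{Mu} used there applies only when all landing rays share the same period; here the rays have mixed periods ($k$ for $\theta_1$ and $2k$ for $\alpha_1, \alpha_2$).

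For the first step, each of $\mathcal{R}_{\theta_1}^{\tilde c}$, $\mathcal{R}_{\alpha_1}^{\tilde c}$, $\mathcal{R}_{\alpha_2}^{\tilde c}$ must land at a parabolic periodic point of $f_{\tilde c}$: if any of them landed at a repelling periodic point, then by Lemma~\ref{l:preserving-portrait1}(1) the landing would persist under small perturbation along $\mathcal{C}_1$ (respectively $\mathcal{C}_2$), contradicting the parabolic landing guaranteed there. For the second step, each of these rays lands at the characteristic parabolic point of $f_{\tilde c}$: otherwise the orbit separation lemma (Lemma~\ref{LemOrbitSeparation}) supplies a pair of rational rays landing at a common repelling point and separating the critical value from the offending ray; this separation is stable under perturbation (again Lemma~\ref{l:preserving-portrait1}) and contradicts the landing of that ray at the characteristic parabolic point along the adjacent arc.

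For the third and hardest step---that no additional ray lands at $z_{\tilde c}$---the plan is to perturb into the unique hyperbolic component $H$ of period $k$ with $\mathcal{C}_1, \mathcal{C}_2 \subset \partial H$ (given by Corollary~\ref{CorArcNeighbors}). Since $\tilde c$ is a cusp, we have $q = 2$ in Lemma~\ref{LemOddIndiffDyn}, so $z_{\tilde c}$ has multiplicity $3$ as a fixed point of $f_{\tilde c}^{\circ 2k}$. For $c \in H$ near $\tilde c$ this triple point unfolds into three simple fixed points of $f_c^{\circ 2k}$: one lies in the attracting period-$k$ cycle of $H$; one is a specific dynamical co-root of the characteristic Fatou component (the continuation from $\mathcal{C}_1$, at which $\mathcal{R}_{\theta_1}^c$ lands throughout $H$ by Lemma~\ref{stable in hyp comp}); and one is the dynamical root of the characteristic Fatou component (the continuation from $\mathcal{C}_2$, at which $\mathcal{R}_{\alpha_1}^c$ and $\mathcal{R}_{\alpha_2}^c$ land, again by Lemma~\ref{stable in hyp comp} combined with Lemma~\ref{primitive}).

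Suppose now $\mathcal{R}_\beta^{\tilde c}$ is a further periodic ray landing at $z_{\tilde c}$. By Lemma~\ref{l:preserving-portrait1}(2) the landing point $z_\beta(c)$ of $\mathcal{R}_\beta^c$ continues real-analytically through $H$, and by standard continuity of ray landing at parabolic boundary parameters, $z_\beta(c) \to z_{\tilde c}$ as $c \to \tilde c$ in $H$. Hence, near $\tilde c$, the orbit of $z_\beta(c)$ must coincide with one of the three unfolded orbits. The attracting option is excluded, so $\mathcal{R}_\beta^c$ lands either at the distinguished co-root or at the dynamical root for $c \in H$ close to $\tilde c$. By Lemma~\ref{number_root_co_root}, each dynamical co-root is landed by exactly one ray, forcing $\beta = \theta_1$; alternatively, by Lemma~\ref{primitive}, only $\mathcal{R}_{\alpha_1}^c$ and $\mathcal{R}_{\alpha_2}^c$ land at the dynamical root, forcing $\beta \in \{\alpha_1, \alpha_2\}$. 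Either way $\beta \in \{\theta_1, \alpha_1, \alpha_2\}$, as required. The main technical delicacy I anticipate is justifying the continuity $z_\beta(c) \to z_{\tilde c}$ rigorously across the parabolic boundary; this should follow from the stability of landing of periodic rays at repelling continuations (Lemma~\ref{l:preserving-portrait1}) together with upper semicontinuity of ray accumulation at parabolic parameters.
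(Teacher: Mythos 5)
Your first two steps coincide with the paper's (implicit) argument: the paper disposes of this lemma with ``the proof is similar to that of the previous lemma'', i.e.\ persistence of landing at repelling points (Lemma \ref{l:preserving-portrait1}) forces the three rays to land on the parabolic cycle, and the orbit separation lemma (Lemma \ref{LemOrbitSeparation}) pins the landing down to the characteristic parabolic point. The gap is in your third step. The assertion that ``by standard continuity of ray landing at parabolic boundary parameters, $z_\beta(c)\to z_{\tilde c}$ as $c\to\tilde c$ in $H$'' is precisely the kind of statement that fails for antiholomorphic families: Section \ref{landingdiscont} of this paper (Theorem \ref{discontinuity}) shows that a periodic ray landing at a parabolic point can, under arbitrarily small perturbation, land at a repelling point bounded away from the parabolic cycle --- this is exactly what happens to $\mathcal{R}_{\theta_1}^c$ when a parameter on a co-root arc is pushed into an adjacent wake. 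Lemma \ref{l:preserving-portrait1} only gives stability in the opposite direction (from a repelling landing point at one parameter to nearby parameters), and no ``upper semicontinuity of ray accumulation'' available in the paper forces the landing point of a hypothetical extra ray $\mathcal{R}_\beta^c$, $c\in H$, to converge to $z_{\tilde c}$: a priori it could land, for all $c\in H$ near $\tilde c$, on a repelling orbit unrelated to the unfolding of the cusp, and then your identification of $\beta$ with $\theta_1,\alpha_1,\alpha_2$ never gets off the ground. So, as written, the exclusion of additional rays is not proved.

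Moreover, the fresh argument is unnecessary. You are right that \cite[Theorem 2.6]{Mu}, as used in Lemma \ref{co-roots meet}, concerns rays of equal odd period and does not cover the mixed-period configuration here; but the paper's toolkit already contains the right counting statement, namely \cite[Lemma 2.10]{Mu}: at most three periodic dynamical rays can land at a periodic point of odd period (the paper invokes it in Lemma \ref{rays_on_cusps}, in the lemma stating that two root arcs have disjoint closures, and in Case 1 of Corollary \ref{CorSimpleClosed}). Since $z_{\tilde c}$ has odd period $k$, and every ray landing at a repelling or parabolic periodic point is periodic, once $\mathcal{R}_{\theta_1}^{\tilde c}$, $\mathcal{R}_{\alpha_1}^{\tilde c}$, $\mathcal{R}_{\alpha_2}^{\tilde c}$ are known to land at $z_{\tilde c}$, they exhaust the rays landing there. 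This one-line citation is the intended completion of ``similar to the previous lemma'', and replacing your perturbation argument by it closes the gap.
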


\begin{proof}
The proof is similar to that of the previous lemma.
\end{proof}

\section{Boundaries of Odd Period Components}\label{SecOddBdy}

The main goal of this section is to prove Theorem \ref{Exactly d+1}, which describes the structure of the boundaries of hyperbolic components of odd periods of $\mathcal{M}_d^*$.

The basic idea of the proof is to transfer the dynamical co-roots/roots to the parameter plane. Due to the lack of complex analytic parameter dependence, the multiplier map does not extend continuously to the boundary of such a hyperbolic component. Hence, the usual analytic approach is replaced by more combinatorial arguments. Lemma \ref{LemSingleRay}, which can be viewed as a combinatorial rigidity result, lies at the heart of most of our combinatorial arguments.  

Recall that a periodic Fatou component is called \emph{characteristic} if it contains the critical value. We will need the concept of \emph{supporting rays}, which will be defined following the ideas in \cite[Definition 1.3]{Po}. The definition of Poirier is more general, we adapt it to our setting.

\begin{definition}[Supporting Arguments and Rays]\label{Supporting} 
Given a periodic Fatou component $U$ of period $n$ (of $f_c$) and a point $p\in \partial U$ which is fixed by $f_c^{\circ n}$ (i.e. so that $p$ is a dynamical root/co-root of $U$), there are only a finite number of dynamical rays $\mathcal{R}_{\theta_1}^c,\ \mathcal{R}_{\theta_2}^c,\ \cdots,\ \mathcal{R}_{\theta_v}^c$ landing at $p$. These rays divide the plane into $v$ regions. We order the arguments of these rays in counterclockwise cyclic order $\lbrace \theta_1,\ \cdots,\ \theta_v\rbrace$, so that $U$ belongs to the region determined by $\mathcal{R}_{\theta_1}^c$ and $\mathcal{R}_{\theta_2}^c$ ($\theta_1 = \theta_2$ if there is a single ray landing at $p$). The argument $\theta_1$ (respectively the ray $\mathcal{R}_{\theta_1}^c$) is by definition a
(left) supporting argument (respectively the (left) supporting ray) of the Fatou component $U$.
\end{definition}

If $f_{c_0}$ has a periodic critical orbit of odd period $k$, then the characteristic Fatou component $U_{c_0}$ of $f_{c_0}$ has exactly $d$ dynamical co-roots and exactly $1$ dynamical root on its boundary. Let $S =\lbrace \theta_1, \theta_2, \cdots, \theta_d\rbrace$ (each angle of period $k$) be the set of angles of the dynamical rays that land at the $d$ dynamical co-roots on $\partial U_{c_0}$, and let $S' =\lbrace \alpha_1, \alpha_2\rbrace$ (each angle of period $2k$) be the set of angles of the dynamical rays that land at the unique dynamical root on $\partial U_{c_0}$. Furthermore, we have $(-d)^k \alpha_1 = \alpha_2$ and $(-d)^k \alpha_2 = \alpha_1$. Let $\mathcal{P} = \lbrace \mathcal{A}_1, \mathcal{A}_2, \cdots, \mathcal{A}_k \rbrace$ be the orbit portrait associated with the periodic orbit of the unique dynamical root on $\partial U_{c_0}$ such that $ \mathcal{A}_1 = \lbrace \alpha_1 , \alpha_2 \rbrace$. It follows that the dynamical rays $\mathcal{R}_{\alpha_1}^{c_0}$ and $\mathcal{R}_{\alpha_2}^{c_0}$ along with their common landing point separate the characteristic Fatou component from all other periodic (bounded) Fatou components, and $\lbrace \alpha_1 , \alpha_2 \rbrace$ are the characteristic angles of $\mathcal{P}$. Without loss of generality, we can assume that the characteristic arc of $\mathcal{P}$ is $\left(\alpha_1,\ \alpha_2\right)$. According to the definition of supporting rays, each of the dynamical rays $\mathcal{R}_{\theta_1}^{c_0},\ \cdots,\ \mathcal{R}_{\theta_d}^{c_0}$ and $\mathcal{R}_{\alpha_1}^{c_0}$ is a (left) supporting ray for the characteristic Fatou component $U_{c_0}$ of $f_{c_0}$.

To each post-critically finite holomorphic polynomial, one can assign a critical portrait, starting with the choice of preferred (left) supporting rays. A critical portrait is a finite collection of finite subsets of $\mathbb{Q}/\mathbb{Z}$, one subset corresponding to each critical point. This construction is due to Poirier, and we refer the readers to \cite[\S 2]{Po} for the details. For the center $c_0$ of a hyperbolic component of period $k$ of $\mathcal{M}_d^*$, $f_{c_0}^{\circ 2}$ is a post-critically finite polynomial. The choice of a preferred (left) supporting ray (in fact, its angle) on the boundary of the characteristic Fatou component $U_{c_0}$ of $f_{c_0}$ completely determines a critical portrait of $f_{c_0}^2$. Observe that a post-critically finite polynomial may have many critical portraits, depending on the choice of a preferred (left) supporting ray. We will denote a critical portrait associated with the polynomial $f_{c_0}^{\circ 2}$ as $\mathcal{F}_{c_0}$. The choices involved in the construction of a critical portrait adds great flexibility to the following theorem \cite[Theorem 2.4]{Po}, which is a combinatorial way of classifying post-critically finite polynomials. We state the theorem only in the context of the maps $f_{c}^{\circ 2}$ that arise in our setting.
 
\begin{theorem}\cite[Theorem 2.4]{Po}\label{motor}
Let $c_0$ and $c_0'$ be the centers of two hyperbolic components of $\mathcal{M}_d^*$. If for some choices of (left) supporting rays, they have the same critical portraits, i.e., if $\mathcal{F}_{c_0}=\mathcal{F}_{c_0'}$, then $c_0=c_0'$.
\end{theorem}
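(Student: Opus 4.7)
The plan is to reduce the statement to Poirier's rigidity theorem for post-critically finite holomorphic polynomials by passing to the holomorphic second iterate. I would first observe that $f_c^{\circ 2}(z) = (z^d + \overline{c})^d + c = P_{\overline{c},\, c}(z)$ is a monic, centered holomorphic polynomial of degree $d^2$ lying in the family $\lbrace P_{a,b} \rbrace$ already introduced in Lemma~\ref{LemIsolated}. When $c = c_0$ is the center of a hyperbolic component of $\mathcal{M}_d^*$, the two infinite critical orbits of $f_{c_0}^{\circ 2}$ are periodic, so $f_{c_0}^{\circ 2}$ is post-critically finite with critical set $\lbrace 0 \rbrace \cup \lbrace z : z^d = -\overline{c_0} \rbrace$, each critical point of local degree $d$. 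This is precisely the class of polynomials covered by \cite[Theorem 2.4]{Po}, and the critical portrait $\mathcal{F}_{c_0}$ built from the (left) supporting rays at the dynamical co-roots and the unique root on $\partial U_{c_0}$ is a critical portrait of $f_{c_0}^{\circ 2}$ in Poirier's sense.

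With this setup in place, the hypothesis $\mathcal{F}_{c_0} = \mathcal{F}_{c_0'}$ is exactly the hypothesis of \cite[Theorem 2.4]{Po}, whose conclusion is that $f_{c_0}^{\circ 2}$ and $f_{c_0'}^{\circ 2}$ coincide as polynomials. Poirier's theorem is stated in the category of monic centered polynomials; one uses that equality of the rational angles themselves, not merely of some combinatorial equivalence class, excludes the residual rotational ambiguity $z \mapsto \zeta z$ with $\zeta^{d^2 - 1} = 1$, because any such rotation would shift every angle appearing in the critical portrait by $-\arg(\zeta)/(2\pi)$ via the B\"ottcher coordinate at infinity. Once $P_{\overline{c_0},\, c_0} = P_{\overline{c_0'},\, c_0'}$ is established, comparing the coefficient of $z^{d(d-1)}$, which equals $d\,\overline{c}$, immediately yields $\overline{c_0} = \overline{c_0'}$ and hence $c_0 = c_0'$.

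The step I expect to be most delicate is the initial bookkeeping to verify that the construction described just before the statement really produces a critical portrait in the precise technical sense required by Poirier. His formalism assigns to each critical point a cyclically ordered finite set of rational angles that separates the appropriate local sectors, and one must check that the supporting angles at the dynamical co-roots and root of $U_{c_0}$, together with the ray data at the critical point $0$ transported back along the dynamics, satisfy Poirier's axioms and together carry enough information to encode $f_{c_0}^{\circ 2}$ up to affine conjugacy. Once this identification is carried out, the argument reduces to a direct invocation of \cite[Theorem 2.4]{Po}, and the antiholomorphic structure of the original dynamics plays no further role beyond the initial reformulation $f_c^{\circ 2} = P_{\overline{c},\, c}$.
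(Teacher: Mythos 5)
Your proposal is correct and follows essentially the same route as the paper: the statement is exactly Poirier's rigidity theorem \cite[Theorem 2.4]{Po} specialized to the post-critically finite holomorphic second iterates $f_{c_0}^{\circ 2}=P_{\overline{c_0},c_0}$, which is how the paper (which cites Poirier rather than giving a proof) sets it up via the critical portraits determined by the chosen supporting rays. Your additional bookkeeping — checking that $P_{\overline{c},c}$ is monic and centered, that equality of the actual angles rules out the residual rotational conjugacies, and that the coefficient $d\overline{c}$ of $z^{d(d-1)}$ recovers $c$ from $f_c^{\circ 2}$ — is accurate and only makes explicit what the paper leaves implicit.
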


\begin{lemma}[Supporting Rays Determine Dynamics]\label{LemSingleRay} 
1) Suppose that $f_{c_1}$ and $f_{c_2}$ have periodic critical orbits of \emph{odd} period. Further assume that the dynamical ray $\mathcal{R}_{\theta}^{c_i}$ lands at a dynamical co-root or root of the characteristic Fatou component of $f_{c_i}$, for $i=1,\ 2$. Then, $c_1 = c_2$.

2) Suppose that $f_{c_1}$ and $f_{c_2}$ have parabolic cycles of odd period, and neither of them is a cusp. Assume further that the critical values of $f_{c_1}$ and $f_{c_2}$ have the same Ecalle height, and there is a $\theta$ such that the dynamical rays at angle $\theta$ land at the characteristic parabolic points for both the antiholomorphic polynomials. Then $c_1 = c_2$.
\end{lemma}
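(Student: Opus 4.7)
The plan is to prove Part (1) by invoking Poirier's combinatorial classification of post-critically finite polynomials (Theorem~\ref{motor}), and then to reduce Part (2) to Part (1) by perturbing the parabolic parameter into the adjoining hyperbolic component and using the Ecalle-height parametrization of parabolic arcs from Theorem~\ref{ThmParaArcs}.

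\textbf{Part (1).} Since the critical orbit of each $f_{c_i}$ is periodic of odd period $k$, each $c_i$ is the center of a hyperbolic component of $\mathcal{M}_d^*$, and the second iterate $f_{c_i}^{\circ 2}$ is a post-critically finite holomorphic polynomial of degree $d^2$ to which Poirier's classification applies. As noted in the discussion preceding Theorem~\ref{motor}, choosing one preferred (left) supporting ray of the characteristic Fatou component completely determines the critical portrait of $f_{c_i}^{\circ 2}$. By hypothesis, the dynamical ray $\mathcal{R}_\theta^{c_i}$ is such a supporting ray for both $i=1,2$, so the critical portraits $\mathcal{F}_{c_1}$ and $\mathcal{F}_{c_2}$ can be chosen to coincide. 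Theorem~\ref{motor} then yields $c_1=c_2$.

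\textbf{Part (2), reduction step.} Each $c_i$ lies on a parabolic arc $\mathcal{C}_i$ of period $k$, which by Corollary~\ref{CorArcNeighbors} is contained in the boundary of a unique hyperbolic component $H_i$ of period $k$. Let $c_i^*$ denote its center. For every parameter $c \in H_i$, Corollary~\ref{CorRayStability} guarantees that $\mathcal{R}_\theta^{c}$ lands at a repelling periodic point, and letting $c$ tend to $c_i$ within $H_i$, this landing point converges to the characteristic parabolic point of $f_{c_i}$, which sits on the boundary of the characteristic Fatou component. Consequently, for every $c \in H_i$ (in particular $c=c_i^*$), the ray $\mathcal{R}_\theta^{c}$ lands at a dynamical co-root or root of the characteristic Fatou component of $f_c$ and is thus a supporting ray there. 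Part (1) now yields $c_1^* = c_2^*$, and hence $H_1 = H_2 =: H$.

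\textbf{Concluding Part (2) and the main obstacle.} Both $c_1$ and $c_2$ now lie on $\partial H$, share a common ray angle $\theta$ landing at the characteristic parabolic point, and have the same critical Ecalle height. If they lie on the same parabolic arc, Theorem~\ref{ThmParaArcs} immediately forces $c_1 = c_2$ because each arc is injectively parametrized by critical Ecalle height. Otherwise, one argues in the spirit of Lemma~\ref{LemArcsDisjoint}: matching Ecalle heights give a conformal identification of the Ecalle cylinders and hence of the parabolic basins; the B\"{o}ttcher coordinates conformally identify the basins of infinity; and the common landing ray at angle $\theta$ together with the common parabolic orbit portrait (Lemmas~\ref{co-root arcs} and \ref{root arcs}) serves as the combinatorial anchor allowing these identifications to be glued continuously across the Julia set into a global quasiconformal conjugacy between $f_{c_1}$ and $f_{c_2}$. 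Since the Julia sets have Lebesgue measure zero (all critical orbits lying in parabolic basins), the conjugacy is conformal, hence an affine self-map within the family, forcing $c_1=c_2$. The hard part of this plan is precisely the construction and gluing of this global quasiconformal conjugacy: one must propagate the conformal identifications through all iterated preimages of the Fatou components and carefully verify continuity across the Julia set, using the common angle $\theta$ as the combinatorial bridge between the dynamics of $f_{c_1}$ and $f_{c_2}$.
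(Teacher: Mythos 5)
Your Part (1) is correct and is essentially the paper's own argument: the common supporting angle $\theta$ determines a critical portrait for the second iterates, and Theorem~\ref{motor} finishes the proof. (One small point to add: the hypothesis does not assume $f_{c_1}$ and $f_{c_2}$ have the \emph{same} period; you should note that the period of $\theta$ under $t\mapsto -dt$ already determines the period of the critical cycle, so the two periods coincide.) The genuine gap is in your reduction step for Part (2). Corollary~\ref{CorRayStability} is stated and proved only for indifferent cycles of \emph{even} period, and the remark immediately following it warns that odd-period parabolic points require more care; so it cannot be invoked here. More seriously, the assertion that the landing point of $\mathcal{R}_{\theta}^{c}$ converges to the characteristic parabolic point of $f_{c_i}$ as $c\to c_i$ within $H_i$ is exactly the kind of continuity that is not available at odd-period parabolic parameters: landing points of periodic rays behave discontinuously along these arcs (this is the theme of Section~\ref{landingdiscont}, Theorem~\ref{discontinuity}), and no stability lemma applies at a parameter where the ray lands on a parabolic cycle. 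What your reduction needs is precisely Lemma~\ref{transfer}, which the paper establishes beforehand by combining stability of rays landing at repelling points (Lemma~\ref{l:preserving-portrait1}) with the Orbit Separation Lemma~\ref{LemOrbitSeparation}; quoting that lemma repairs the step and yields $H_1=H_2$ as you intend.

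For the concluding rigidity argument, your outline matches the paper's strategy (interior conjugacy from equal Ecalle heights, exterior conjugacy from normalized B\"{o}ttcher coordinates, matching along the locally connected Julia set via the common rational lamination), but the decisive mechanism is exactly the part you leave open, and it does not follow from ``measure zero'' alone: the glued map is merely a topological conjugacy that is conformal off $J(f_{c_1})$, and a homeomorphism conformal off a measure-zero set need not be conformal --- one must first produce a \emph{quasiconformal} conjugacy. The paper does this by modifying the glued map to a quasiconformal homeomorphism on a fundamental region bounded by an equipotential and curves inside the bounded periodic Fatou components, then pulling back to obtain a sequence of conjugating maps with uniformly bounded dilatation whose support shrinks to the Julia set, and passing to a limit (following \cite{HS}); only then does measure zero of $J$ give conformality almost everywhere, hence conformality. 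Finally, a conformal conjugacy between $f_{c_1}$ and $f_{c_2}$ by itself only shows $c_2=\omega c_1$ with $\omega^{d+1}=1$, because of the rotational symmetry of the family; the paper concludes $c_1=c_2$ by observing that the exterior conjugacy, and hence the limit map, is tangent to the identity at infinity. Your sketch omits both of these points, so as written Part (2) is a plausible plan rather than a proof.
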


\begin{proof}
1) Under the assumptions of the lemma, $\theta$ is a supporting argument for the characteristic Fatou component of $f_{c_i}$, for $i=1,\ 2$. The period of $\theta$ (under multiplication by $-d$) completely determines the period of the critical cycle of $f_{c_1}$ and of $f_{c_2}$. Therefore, the periods of the critical cycles of $f_{c_1}$ and $f_{c_2}$ must be equal, say $k$. This integer $k$ and the (preferred) supporting argument $\theta$ completely determine a critical portrait for $f_{c_i}^{\circ 2}$, for $i=1,\ 2$, and it follows that $f_{c_1}^{\circ 2}$ and $f_{c_2}^{\circ 2}$ have the same critical portraits. By Theorem \ref{motor}, $c_1 = c_2$.

2) If $f_{c_1}$ has a parabolic cycle of period $k$, then $c_1$ lies on the boundary of a hyperbolic component $H$ (with center $c_0$, say) of the same period $k$. By Lemma \ref{transfer}, the dynamical ray $\mathcal{R}_{\theta}^{c_0}$ lands at a dynamical co-root or root point of the characteristic Fatou component of $f_{c_0}$. By part (1), this determines $c_0$ and thus $H$ uniquely. Therefore, $c_1 , c_2 \in \partial H$. Since, none of them is a cusp, $f_{c_1}$ and $f_{c_2}$ have the same rational lamination. 

Since $c_1$ and $c_2$ have identical Ecalle height, their Fatou coordinates provide a conformal conjugacy $\psi^1 :f_{c_1} \sim f_{c_2}$ in the immediate parabolic basins, hence from the interior of the filled-in Julia set $K(f_{c_1})$ to the interior of the filled-in Julia set $K(f_{c_2})$. Since the Julia sets are locally connected, so are the closures of every bounded Fatou component. Therefore, $\psi^1$ extends homeomorphically to the boundary of every bounded Fatou component and conjugates $f_{c_1}$ to $f_{c_2}$.

On the other hand, by the normalized Riemann maps $\phi_{c_i}$ of the basins of infinity $\mathcal{A}_{\infty}\left( f_{c_i} \right)$ constructed in \cite{NS}, we define a conformal conjugacy $\psi^2 = \phi_{c_2}^{-1}\circ\phi_{c_1} : f_{c_1} \sim f_{c_2}$ from the basin of infinity $\mathcal{A}_{\infty}\left( f_{c_1} \right)$ of  $f_{c_1}$ to the basin of infinity$\mathcal{A}_{\infty}\left( f_{c_2} \right)$ of $f_{c_2}$. Since the parabolic Julia set $J(f_{c_i})$ is locally connected, the inverse of the Riemann maps $\phi_{c_i}^{-1}$ extend continuously to $\overline{\mathbb{D}}$. Since $f_{c_1}$ and $f_{c_2}$ have the same rational lamination, the conjugacy $\psi^2$ extends to a homeomorphism of $\overline{\mathcal{A}_{\infty}\left( f_{c_1} \right)}$ onto $\overline{\mathcal{A}_{\infty}\left( f_{c_2} \right)}$ such that it maps the parabolic periodic points (and their inverse orbits) of $f_{c_1}$ to those  of $f_{c_2}$ (in particular, the characteristic parabolic point of $f_{c_1}$ maps to that of $f_{c_2}$). 

It easily follows from our construction that $\psi^1$ and $\psi^2$ agree on a dense subset of their common domains of definition (the union of the boundaries of the bounded Fatou components); namely, on the parabolic cycle and their iterated pre-images. Thus, $\psi^1$ and $\psi^2$ coincide everywhere on their common domains of definition and define a homeomorphism $\psi :\bar{\mathbb{C}} \to \bar{\mathbb{C}}$, which is a topological conjugacy between $f_{c_1}$ and $f_{c_2}$ and conformal outside $J(f_{c_1})$. 

We will construct a conformal conjugacy by a similar argument as in \cite[Lemma 5.8]{HS}. Consider the equipotential $E$ of $f_{c_1}$ at some positive potential, and let $E_1, \cdots, E_k$ be piecewise analytic simple closed curves, one in each bounded periodic Fatou component of $f_{c_1}$, that surround the post-critical set in their Fatou components and that intersect the boundary of their Fatou components in one point, which is on the parabolic cycle. Let $V_0$ be the domain bounded on the outside by $E$ and on the inside by the $E_i$'s. Then there is a quasiconformal homeomorphism $ \psi_{1} : \mathbb{C} \rightarrow \mathbb{C}$ with $\psi_{1} = \psi$ on $\mathbb{C} \setminus V_0$ (i.e., the homeomorphism $\psi$ is modified on $V_0$ so as to become quasiconformal, possibly giving up on the condition that $\psi_{1}$ is a conjugation on $V_0$).

Now construct a sequence of quasiconformal homeomorphisms $\psi_n : \mathbb{C} \rightarrow \mathbb{C}$ as a sequence of pullbacks, satisfying $f_{c_2} \circ \psi_{n+1} = \psi_n  \circ f_{c_1}$ : since the initial conjugacy respects the critical orbits, this construction is possible, and all $\psi_n$ satisfy the same bounds on the quasiconformal dilatation as $\psi_{1}$. Moreover, the support of the quasiconformal dilatation shrinks to the Julia set, which has measure zero. By compactness of the space of quasiconformal maps with a given dilatation, the sequence $\lbrace \psi_n \rbrace$ converges to a quasiconformal conjugation $\psi_{\infty}$ between $f_{c_1}$ and $f_{c_2}$, that is conformal almost everywhere. Hence, $\psi_{\infty}$ is a conformal conjugacy between $f_{c_1}$ and $f_{c_2}$.

Now $f_{c_1}$ is conformally, hence affinely conjugate to $f_{c_2}$. However, since the Riemann maps $\phi_{c_i}$ are normalized, hence tangent to the identity near $\infty$, so is $\psi^2$, and hence the conformal conjugation $\psi_{\infty}$. This implies $f_{c_1} = f_{c_2}$. 
\end{proof}

Using this lemma, we can give an upper bound on the number of parabolic arcs on the boundary of a hyperbolic component of odd period.

\begin{lemma}[Upper Bound on the Number of Parabolic Arcs]\label{upper_bound} 
Every hyperbolic component of odd period, different from $1$, has at most $d$ co-root arcs and at most $1$ root arc on its boundary. The hyperbolic component of period $1$ has exactly $d+1$ co-root arcs on its boundary.
\end{lemma}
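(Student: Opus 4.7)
The plan is to transfer the combinatorial data attached to the dynamical co-roots and the unique dynamical root on the boundary of the characteristic Fatou component at the center of $H$ over to the parabolic arcs on $\partial H$. Let $H$ be a hyperbolic component of odd period $k$ with center $c_0$, and let $S$ (respectively $S'$) be the set of angles of the rays landing at the dynamical co-roots (respectively at the unique dynamical root) on the boundary of the characteristic Fatou component of $f_{c_0}$; by Lemma \ref{number_root_co_root} and Lemma \ref{stable in hyp comp}, $|S|=d$ and $|S'|=2$ if $k\neq 1$, while $|S|=d+1$ and there is no root if $k=1$.

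For the upper bound, the key step will be the injectivity of the map $\Theta\colon\{\text{co-root arcs on }\partial H\}\to S$ sending each co-root arc $\mathcal{C}$ to its unique associated angle, which is provided by Lemma \ref{co-root arcs} and belongs to $S$ by Lemma \ref{transfer}(1). I would argue by contradiction: if two distinct co-root arcs $\mathcal{C}_1, \mathcal{C}_2$ were assigned the same $\theta\in S$, I would use the fact (from Theorem \ref{ThmParaArcs} and the remark following) that the critical Ecalle height parametrizes each arc bijectively by $\mathbb{R}$ to pick non-cusp parameters $c_1\in\mathcal{C}_1$ and $c_2\in\mathcal{C}_2$ of equal critical Ecalle height at which $\mathcal{R}_\theta^{c_i}$ lands at the characteristic parabolic point. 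Lemma \ref{LemSingleRay}(2) then forces $c_1=c_2$, contradicting Lemma \ref{LemArcsDisjoint}. An entirely parallel argument, using Lemma \ref{root arcs} and Lemma \ref{transfer}(2) and the observation that the associated angle pair on any root arc is forced to equal all of $S'$, produces at most one root arc.

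For the period-$1$ lower bound I will exploit the $(d+1)$-fold rotational symmetry of $\mathcal{M}_d^{\ast}$. Since $\partial H_1$ is uncountable and, by Lemma \ref{LemOddIndiffDyn}, Lemma \ref{LemCuspFin}, and the upper bound just established, decomposes into finitely many parabolic arcs and cusps, at least one co-root arc $\mathcal{C}_0$ is present. The affine map $\phi(z)=\zeta z$ with $\zeta=e^{2\pi i/(d+1)}$ conjugates $f_c$ to $f_{\zeta c}$, hence realizes the rotational symmetry $c\mapsto\zeta c$ of the multicorn; this symmetry fixes $H_1$ (since it fixes its center $0$) and permutes the angles $\{j/(d+1):0\le j\le d\}=S$ transitively by $t\mapsto t+1/(d+1)$. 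Applying its iterates to $\mathcal{C}_0$ therefore produces $d+1$ co-root arcs realizing $d+1$ distinct angles of $S$; by the injectivity of $\Theta$ these arcs are pairwise distinct, and combined with the upper bound this gives exactly $d+1$ co-root arcs.

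The principal obstacle is the injectivity of $\Theta$, which hinges on the rigidity Lemma \ref{LemSingleRay}(2). In a holomorphic setting one could conclude directly from a multiplier argument, but here the deduction $c_1=c_2$ requires matching critical Ecalle heights on the two arcs in addition to matching the ray angle. The enabling fact, without which the scheme would collapse, is that Ecalle height sweeps out all of $\mathbb{R}$ along each parabolic arc, so matched pairs of parameters are always available.
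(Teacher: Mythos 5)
Your proposal is correct and follows essentially the same route as the paper: each co-root (resp.\ root) arc is tagged by its unique angle in $S$ (resp.\ the pair $S'$) via Lemmas \ref{co-root arcs}, \ref{root arcs} and \ref{transfer}, and two arcs carrying the same angle are excluded by choosing equal critical Ecalle heights and invoking Lemma \ref{LemSingleRay}(2) against Lemma \ref{LemArcsDisjoint}, while the period-$1$ count uses the $(d+1)$-fold rotational symmetry exactly as in the paper. Your extra justification that the rotated arcs are pairwise distinct (via the shift of the associated angle by $1/(d+1)$) is a welcome detail the paper only asserts.
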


\begin{proof}
Let, $H$ be a hyperbolic component of odd period $k \neq 1$. We first give an upper bound on the number of co-root arcs on $\partial H$. 

Let $S =\lbrace \theta_1, \theta_2, \cdots, \theta_d\rbrace$ be the set of angles of the dynamical rays that land at the $d$ dynamical co-roots of the characteristic Fatou component of the center of $H$. Recall that by Lemma \ref{co-root arcs} and its proof, for every co-root arc $\mathcal{C} \subset \partial H$, there exists a \emph{unique} $\theta$ of period $k$ (under multiplication by $-d$) such that $\mathcal{R}_{\theta}^c$ is the only dynamical ray landing at the characteristic parabolic point of $f_c$, for each $c \in \mathcal{C}$. Moreover, $\theta \in S$. Now suppose that there are more than $d$ co-root arcs on $\partial H$. Then there would exist two distinct co-root arcs $\mathcal{C}_1$ and $\mathcal{C}_2$ contained in $\partial H$ such that the dynamical ray $\mathcal{R}_{\theta}^c$ at angle $\theta$ (say) is the unique ray landing at the characteristic parabolic point of $f_c$ for all $c \in \mathcal{C}_1 \cup \mathcal{C}_2$. For $i=1, 2$, let $c_i$ be the parameter on $\mathcal{C}_i$ such that the critical Ecalle height of $c_i$ is $0$. By Lemma \ref{LemArcsDisjoint}, $c_1 \neq c_2$. This contradicts part (2) of Lemma \ref{LemSingleRay}.

Now we turn our attention to the root arcs. Let $S' =\lbrace \alpha_1, \alpha_2\rbrace$ (both angles of period $2k$) be the set of angles of the dynamical rays that land at the unique dynamical root of the characteristic Fatou component of the center of $H$ (see Lemma \ref{primitive}). By Lemma \ref{root arcs} and its proof, for every root arc $\mathcal{C} \subset \partial H$, the dynamical rays $\mathcal{R}_{\alpha_1}^c$ and $\mathcal{R}_{\alpha_2}^c$ (and none else) land at the characteristic parabolic point of $f_c$ for each $c \in \mathcal{C}$. Suppose that $\mathcal{C}_1$ and $\mathcal{C}_2$ are two distinct root arcs contained in $\partial H$. For $i=1, 2$, let $c_i$ be the parameter on $\mathcal{C}_i$ such that the critical Ecalle height of $c_i$ is $0$. Then, the dynamical ray $\mathcal{R}_{\alpha_1}^{c_i}$ would land at the characteristic parabolic point of $f_{c_i}$ for $i=1,2$. By Lemma \ref{LemSingleRay}, $c_1=c_2$. But this contradicts Lemma \ref{LemArcsDisjoint}, which states that two distinct parabolic arcs never intersect. 

Finally, we deal with the hyperbolic component of period $1$. The characteristic Fatou component (which is the unique bounded Fatou component) of the center of the period $1$ hyperbolic component has exactly $d+1$ co-roots and no root. Arguing as above, one easily sees that there are at most $d+1$ co-root arcs on the boundary of this hyperbolic component. By \cite[Corollary 5.10]{NS}, every hyperbolic component of odd period $k$ has a unique center $c_0$ such that the unique critical point $0$ of $f_{c_0}$ has a $k$-periodic orbit. For $k=1$, the only such unicritical antiholomorphic polynomial is $\bar{z}^d$. This proves that there is exactly one hyperbolic component $H_0$ of period $1$ in $\mathcal{M}_d^*$. Now recall that $\mathcal{M}_d^*$ has a $(d+1)-$fold rotational symmetry. More precisely, if $\omega$ is a primitive $(d+1)-$th root of unity, then multiplication by $\omega$ generates a cyclic group (of order $d+1$) of symmetries of $\mathcal{M}_d^*$. In particular, multiplication by $\omega$ yields a $(d+1)-$fold rotational symmetry of the unique hyperbolic component $H_0$ of period $1$ of $\mathcal{M}_d^*$. Therefore, if $\mathcal{C}$ is a parabolic arc on $\partial H_0$, then $\omega\mathcal{C}, \omega^2\mathcal{C}, \cdots, \omega^d\mathcal{C}$ are also distinct parabolic arcs on $\partial H_0$. This shows the existence of $d+1$ parabolic arcs on $\partial H_0$. Since there are at most $d+1$ of them, we conclude that there are exactly $d+1$ co-root arcs on the boundary of the period $1$ hyperbolic component.
\end{proof}

Here is another elementary application of Lemma \ref{LemSingleRay}.

\begin{lemma}
Any two root arcs have disjoint closures.
\end{lemma}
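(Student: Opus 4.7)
The plan is to proceed by contradiction. Suppose two distinct root arcs $\mathcal{C}_1,\mathcal{C}_2$ have intersecting closures. By Lemma \ref{LemArcsDisjoint} their interiors are disjoint, and by Lemma \ref{PropArcEnds} any intersection point must be a parabolic cusp; pick such a cusp $\tilde c\in\overline{\mathcal{C}_1}\cap\overline{\mathcal{C}_2}$. By Corollary \ref{CorArcNeighbors} each $\mathcal{C}_i$ lies in the boundary of a unique hyperbolic component $H_i$ of odd period, and this period equals the parabolic period of $\tilde c$ by Lemma \ref{LemOddIndiffDyn}; call it $k$. If $H_1=H_2$, then Lemma \ref{upper_bound} (at most one root arc per hyperbolic component) forces $\mathcal{C}_1=\mathcal{C}_2$, so we may assume $H_1\neq H_2$.

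The key step is to identify the angles whose dynamical rays land at the characteristic parabolic point of $f_{\tilde c}$. Let $\{\alpha_1,\alpha_2\}$ and $\{\beta_1,\beta_2\}$ be the pairs of $2k$-periodic characteristic angles associated with $\mathcal{C}_1$ and $\mathcal{C}_2$ by Lemma \ref{root arcs}. Imitating the orbit-separation argument of Lemmas \ref{co-roots meet} and \ref{root and co-root meet}, I would show that all four dynamical rays $\mathcal{R}_{\alpha_1}^{\tilde c},\mathcal{R}_{\alpha_2}^{\tilde c},\mathcal{R}_{\beta_1}^{\tilde c},\mathcal{R}_{\beta_2}^{\tilde c}$ land at the characteristic parabolic point of $f_{\tilde c}$: otherwise Lemma \ref{LemOrbitSeparation} would produce a rational ray pair separating the landing from the critical value, stable under small perturbation, contradicting the fact that on approach along $\mathcal{C}_i$ the same ray lands at the characteristic parabolic point. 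Now Lemma \ref{transfer}(3), applied at the cusp $\tilde c\in\partial H_1$, says that every such angle lies in $S_1\cup S_1'$, where $S_1$ consists of the period-$k$ angles at the $d$ dynamical co-roots of the characteristic Fatou component of the center of $H_1$ and $S_1'$ consists of the two period-$2k$ angles at its dynamical root. Since $\alpha_1,\alpha_2,\beta_1,\beta_2$ all have period $2k$, they are forced into $S_1'$, which has only two elements, so $\{\alpha_1,\alpha_2\}=S_1'=\{\beta_1,\beta_2\}$.

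To conclude, apply Lemma \ref{LemSingleRay}(1) to the centers $c_0^1,c_0^2$ of $H_1,H_2$ with the common angle $\alpha_1$: because $\alpha_1$ lies in $S_1'$ as well as (by the symmetric argument with $H_2$) in $S_2'$, the ray $\mathcal{R}_{\alpha_1}$ lands at the dynamical root of the characteristic Fatou component in both dynamical planes, so the lemma yields $c_0^1=c_0^2$ and hence $H_1=H_2$, the desired contradiction. The main obstacle is the middle step: the period of the characteristic Fatou component jumps from $k$ on each arc to $2k$ at the cusp, so continuity of the Fatou component alone is not available, and one must carefully invoke the orbit-separation/stability argument to pin all four rays to the characteristic parabolic point of $f_{\tilde c}$; once that is done, Lemma \ref{transfer}(3) together with the combinatorial rigidity Lemma \ref{LemSingleRay}(1) finishes the argument cleanly.
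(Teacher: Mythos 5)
Your argument is correct, and its skeleton matches the paper's up to the crucial middle step: at a common point $\tilde c$ of the two closures (necessarily a cusp, since arcs are disjoint by Lemma \ref{LemArcsDisjoint} and consist of non-cusp parameters), the stability-plus-orbit-separation argument of Lemmas \ref{co-roots meet} and \ref{root and co-root meet} pins all four rays $\mathcal{R}_{\alpha_1}^{\tilde c},\mathcal{R}_{\alpha_2}^{\tilde c},\mathcal{R}_{\beta_1}^{\tilde c},\mathcal{R}_{\beta_2}^{\tilde c}$ to the characteristic parabolic point of $f_{\tilde c}$; this is exactly what the paper does. Where you diverge is in how the contradiction is extracted. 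The paper first shows that the two characteristic angle pairs are \emph{disjoint} (by the Ecalle-height-zero rigidity argument of Lemma \ref{upper_bound}, i.e.\ Lemma \ref{LemSingleRay}(2)), so that four \emph{distinct} rays of period $2k$ land at a parabolic point of odd period, contradicting the orbit-portrait bound of \cite[Lemma 2.10]{Mu}; no case distinction on the ambient hyperbolic components is needed. You instead split into $H_1=H_2$ (killed by the ``at most one root arc'' count of Lemma \ref{upper_bound}) and $H_1\neq H_2$, and in the latter case use Lemma \ref{transfer}(3) plus the period-$2k$ pigeonhole to force the angle pairs to \emph{coincide} with $S_1'$, after which Lemma \ref{LemSingleRay}(1) (Poirier rigidity) identifies the centers and contradicts $H_1\neq H_2$. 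Your route buys independence from the external bound on the number of rays at an odd-period point, at the cost of a case split and an extra appeal to the rigidity machinery; the paper's route is shorter and needs only the disjointness of the angle pairs. Both are sound, and your handling of the delicate point (the jump of the ray-landing pattern at the cusp, where no continuity of the multiplier or of the Fatou component is available) is the same as the paper's.
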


\begin{proof}
Let, $\mathcal{C}_1$ and $\mathcal{C}_2$ be two distinct root arcs. By Lemma \ref{root arcs}, there exist two pairs of angles $\lbrace \alpha_1, \alpha_2\rbrace$ and $\lbrace \alpha_1^{\prime}, \alpha_2^{\prime}\rbrace$ such that the dynamical rays $\mathcal{R}_{\alpha_1}^c$ and $\mathcal{R}_{\alpha_2}^c$ (respectively $\mathcal{R}_{\alpha_1^{\prime}}^c$ and $\mathcal{R}_{\alpha_2^{\prime}}^c$) land at the characteristic parabolic point of $f_c$ for each $c \in \mathcal{C}_1$ (respectively for each $c \in \mathcal{C}_2$). Arguing as in Lemma \ref{upper_bound} (using Lemma \ref{LemSingleRay}), one sees that $\lbrace \alpha_1, \alpha_2\rbrace \cap \lbrace \alpha_1^{\prime}, \alpha_2^{\prime}\rbrace = \emptyset$.

By Lemma \ref{LemArcsDisjoint}, $\mathcal{C}_1 \cap \mathcal{C}_2=\emptyset$. Let, $\tilde{c} \in \overline{\mathcal{C}_1} \cap \overline{\mathcal{C}_2}$. Arguing as in Lemma \ref{co-roots meet}, we see that four distinct dynamical rays $\mathcal{R}_{\alpha_1}^{\tilde{c}},\ \mathcal{R}_{\alpha_2}^{\tilde{c}},\ \mathcal{R}_{\alpha_1^{\prime}}^{\tilde{c}},$ and $\mathcal{R}_{\alpha_2^{\prime}}^{\tilde{c}}$ (each of period $2k$) land at the characteristic parabolic point of $f_{\tilde{c}}$. This contradicts \cite[Lemma 2.10]{Mu}. Hence, $\overline{\mathcal{C}_1} \cap \overline{\mathcal{C}_2} =\emptyset$.
\end{proof}

The next application of Lemma \ref{LemSingleRay} tells us where a parameter ray of odd period must accumulate.

\begin{lemma}\label{LemArcOnBoundary} 
Let $\theta$ be the angle (of period $k$) of the dynamical ray that lands at a dynamical co-root of the characteristic Fatou component of the center $c_0$ of the hyperbolic component $H$ of odd period $k$. Then the parameter ray $\mathcal{R}_{\theta}$ either accumulates on the closure of a single co-root arc of $H$ or lands at a cusp on $\partial H$. 
\end{lemma}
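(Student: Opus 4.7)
The plan is to analyze the accumulation set $L := L_{\mathcal{M}_d^*}(\theta)$, which is a nonempty, compact, connected subset of $\mathcal{M}_d^*$ (nonemptiness follows from compactness of the multicorn; connectedness follows from the standard decreasing-intersection description of the accumulation set of a curve in a compact metric space). I aim to prove that either $L$ is a single cusp on $\partial H$, or $L$ is contained in the closure of a single co-root arc of $H$. The key tools will be Lemma \ref{parameter_ray_dynamical_ray}, Lemma \ref{transfer}, and the combinatorial rigidity statement Lemma \ref{LemSingleRay}.

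First, I show every $c \in L$ lies on $\partial H$ with a parabolic cycle of period exactly $k$. By Lemma \ref{parameter_ray_dynamical_ray}, such a $c$ has a parabolic cycle of some odd period $k'$ dividing $k$, and $\mathcal{R}_\theta^c$ lands at the characteristic parabolic point of $f_c$. By Lemma \ref{transfer}, every dynamical ray landing at the characteristic parabolic point of a non-cusp parabolic parameter of period $k'$ has period $k'$ or $2k'$, and at a cusp the landing rays still have period $k'$ or $2k'$; since our ray has odd period $k$ and $2k'$ is even, this forces $k' = k$. By Theorem \ref{ThmIndiffBdyHyp}, $c$ lies on $\partial H''$ for some hyperbolic component $H''$ of odd period $k$. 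Since $\theta$ has period $k$ (not $2k$), Lemma \ref{transfer} implies $\theta \in S_{H''}$, the set of angles of the $d$ dynamical rays landing at the co-roots of the characteristic Fatou component of the center $c_{H''}$ of $H''$. By Lemma \ref{LemSingleRay}(1), this single angle $\theta$ determines the center uniquely, so $c_{H''} = c_0$ and $H'' = H$.

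Next, I locate the arc. By Lemma \ref{co-root arcs}, every co-root arc $\mathcal{C}'$ of $H$ carries a unique associated angle $\theta_{\mathcal{C}'}$, so at most one co-root arc of $H$ can have associated angle $\theta$; call it $\mathcal{C}$ if it exists. If $c \in L$ is a non-cusp parabolic point, then by Corollary \ref{CorArcNeighbors} and the above, $c$ lies on some co-root arc of $H$ whose associated angle must equal $\theta$, so the arc is $\mathcal{C}$. If $c \in L$ is a cusp, then by Lemma \ref{co-roots meet} (or Lemma \ref{root and co-root meet}) the rays landing at its characteristic parabolic point correspond exactly to the arcs meeting at $c$; in particular, the arc carrying the angle $\theta$ must be one of them, so $c \in \overline{\mathcal{C}}$.

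Finally I conclude via connectedness. If $L$ contains any non-cusp point, then the arc $\mathcal{C}$ exists and $L \subset \overline{\mathcal{C}}$, which is the first alternative. Otherwise $L$ is a connected subset of the finite set of cusps on $\partial H$ (finiteness by Lemma \ref{LemCuspFin}), hence a single point, which is the second alternative. The main technical hurdle is the second step, where one must identify the correct hyperbolic component purely from the angle $\theta$ in the absence of holomorphic parameter dependence; this is precisely where the combinatorial rigidity of Lemma \ref{LemSingleRay} does the heavy lifting, and where the parity distinction between odd-period and even-period rays must be carefully exploited to place us in the co-root rather than the root case.
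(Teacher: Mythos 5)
Your overall route is the paper's: Lemma \ref{parameter_ray_dynamical_ray} plus the parity of the ray period pins the parabolic period at $k$, then Theorem \ref{ThmIndiffBdyHyp}, Lemma \ref{transfer} and Lemma \ref{LemSingleRay}(1) identify the component as $H$, and connectedness of the accumulation set gives the final dichotomy. Two of your justifications, however, do not support the claims they are attached to. First, the assertion that at most one co-root arc of $H$ has associated angle $\theta$ does not follow from Lemma \ref{co-root arcs}: that lemma says each arc determines a unique angle, not that each angle belongs to at most one arc. The injectivity you need is exactly what the proof of Lemma \ref{upper_bound} establishes, by taking the critical-Ecalle-height-zero parameters on two hypothetical arcs carrying the same angle and applying Lemma \ref{LemSingleRay}(2) together with Lemma \ref{LemArcsDisjoint}; the paper's proof cites that argument explicitly, and without it your conclusion ``so the arc is $\mathcal{C}$'' is unsupported.

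Second, your treatment of cusps in $L$ overreaches. Lemmas \ref{co-roots meet} and \ref{root and co-root meet} take as hypothesis that two specified arcs meet at the cusp and then conclude which rays land; at this stage of the paper it is not yet known which, or how many, arcs of $\partial H$ terminate at a given cusp (that structure only arrives with Corollary \ref{CorSimpleClosed} and Theorem \ref{Exactly d+1}), so the claim that ``the rays landing at its characteristic parabolic point correspond exactly to the arcs meeting at $c$'' is not available. The paper avoids this entirely by the connectedness argument you already use in the all-cusp case: since $L\subset \mathcal{C}\cup\{\text{finitely many cusps}\}$, a cusp of $L$ not lying in $\overline{\mathcal{C}}$ would be an isolated point of $L$, forcing $L$ to be that single cusp; hence either $L$ is one cusp or $L\subset\overline{\mathcal{C}}$. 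With these two repairs, both available from material preceding the lemma, your argument coincides with the paper's proof.
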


\begin{proof}
Every accumulation point $c$ of $\mathcal{R}_{\theta}$ must have a parabolic cycle of period $k$, and the dynamical ray $\mathcal{R}_{\theta}^c$ must land at the characteristic parabolic point of $f_c$ (by Lemma \ref{parameter_ray_dynamical_ray} and \cite[Lemma 2.5]{Mu}). By Theorem \ref{ThmIndiffBdyHyp}, $c$ lies on the boundary of a hyperbolic component $H'$ of period $k$. By Lemma \ref{transfer}, the center $c'_0$ of $H'$ has the property that the dynamical ray $\mathcal{R}_{\theta}^{c'_0}$ lands at a dynamical co-root of the characteristic Fatou component of $f_{c'_0}$. Now Lemma \ref{LemSingleRay} implies that $c_0 = c_0'$; i.e. $H=H'$. Therefore, the accumulation set of $\mathcal{R}_{\theta}$ is contained in $\partial H$.

Since $\theta$ has odd period $k$, any accumulation point of $\mathcal{R}_{\theta}$ must lie on a co-root arc or a cusp on $\partial H$ (compare Lemma \ref{root arcs}). By the proof of Lemma \ref{upper_bound}, there is at most one co-root arc $\mathcal{C}$ on the boundary of $\partial H$ such that the dynamical ray $\mathcal{R}_{\theta}^c$ lands at the characteristic parabolic point of $f_c$ for each $c \in \mathcal{C}$. Since $L_{\mathcal{M}_d^*}(\theta)$ is connected, this shows that $\mathcal{R}_{\theta}$ either lands at some cusp point on $\partial H$ or accumulates on the closure of a single co-root arc on $\partial H$.
\end{proof}

It follows from Lemma \ref{primitive} that if $c_0$ is the center of a hyperbolic component $H$ of odd period $k$, then exactly two dynamical rays $\mathcal{R}_{\alpha_1}^{c_0}$ and $\mathcal{R}_{\alpha_2}^{c_0}$ (say) land at the dynamical root $z_{c_0}$ of the characteristic Fatou component $U_{c_0}$ of $f_{c_0}$, and both these rays have period $2k$ (under multiplication by $-d$). Further, we have $(-d)^k \alpha_1 = \alpha_2$ and $(-d)^k \alpha_2 = \alpha_1$. Let $\mathcal{P} = \lbrace \mathcal{A}_1, \mathcal{A}_2, \cdots, \mathcal{A}_k \rbrace$ be the orbit portrait associated with the periodic orbit of $z_{c_0}$ such that $ \mathcal{A}_1 = \lbrace \alpha_1 , \alpha_2 \rbrace$. It follows that the dynamical rays $\mathcal{R}_{\alpha_1}^{c_0}$ and $\mathcal{R}_{\alpha_2}^{c_0}$ along with their common landing point separate the characteristic Fatou component from all other periodic (bounded) Fatou components, and $\lbrace \alpha_1 , \alpha_2 \rbrace$ are the characteristic angles of $\mathcal{P}$. Without loss of generality, we can assume that the characteristic arc of $\mathcal{P}$ is $\left(\alpha_1,\ \alpha_2\right)$. This fact as well as the terminologies of this paragraph will be used in the following lemma.

\begin{lemma}\label{rays at dynamical root co-accumulate}
Let $\alpha_1$ and $\alpha_2$ be the angles of the (only) dynamical rays that land at the dynamical root of the characteristic Fatou component of the center of a hyperbolic component $H$ of odd period. Then the parameter rays $\mathcal{R}_{\alpha_1}$ and $\mathcal{R}_{\alpha_2}$ either accumulate on the closure of a common root arc of $H$ or land at a common cusp point on $\partial H$. 
\end{lemma}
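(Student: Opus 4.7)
The plan is to adapt the template of Lemma~\ref{LemArcOnBoundary} to the period-$2k$ ray pair $\{\alpha_1,\alpha_2\}$, exploiting the dynamical relation $(-d)^k\alpha_1 = \alpha_2$, which means that in every dynamical plane, $f_c^{\circ k}$ sends $\mathcal{R}_{\alpha_1}^c$ to $\mathcal{R}_{\alpha_2}^c$. I will show that the accumulation sets of both parameter rays lie in the closure of the unique root arc of $H$ (or, in the degenerate case, collapse to a common cusp endpoint of this arc).

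First, I would pick any $c^\ast \in L_{\mathcal{M}_d^\ast}(\alpha_i)$ and apply Lemma~\ref{parameter_ray_dynamical_ray} to produce a parabolic cycle of $f_{c^\ast}$ of some period $k'\mid 2k$, with the dynamical ray $\mathcal{R}_{\alpha_i}^{c^\ast}$ landing at its characteristic parabolic point $z^\ast$. The central task is to show $k' = k$. The $(-d)^{k'}$-orbit of $\alpha_i$ has size $2k/\gcd(2k,k')$, and all of these angles lie in the same set of the parabolic orbit portrait, so this is the number of rays landing at $z^\ast$. If $k'$ is odd, Lemma~\ref{LemOddIndiffDyn} limits this number to at most $q+1 \le 3$ (with $q \in \{1,2\}$), forcing $k' = k$. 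For even $k'$ (in particular $k' = 2k$), Lemma~\ref{transfer} does not apply directly; I would instead argue that any such $c^\ast$ already lies on $\partial H$, since the period-$2k$ hyperbolic component that $1/2$-bifurcates from $H$ is attached along (part of) the root arc of $H$. Lemma~\ref{LemIsolated} also shows that even-period parabolic parameters with a given multiplier are isolated, so the connected continuum $L_{\mathcal{M}_d^\ast}(\alpha_i)$ cannot be supported purely on such points.

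Once $k' = k$ is established, Theorem~\ref{ThmIndiffBdyHyp} puts $c^\ast$ on the boundary of some odd-period hyperbolic component $H''$ with center $c_0''$. Lemma~\ref{transfer} forces $\mathcal{R}_{\alpha_i}^{c_0''}$ to land at a co-root or the root of the characteristic Fatou component; since co-root rays have period $k$ and $\alpha_i$ has period $2k$ (Lemma~\ref{number_root_co_root}), it must land at the root, and Lemma~\ref{LemSingleRay}(1) identifies $c_0'' = c_0$, hence $H'' = H$ and $c^\ast \in \partial H$. The identity $f_{c^\ast}^{\circ k}(z^\ast) = z^\ast$ (from $z^\ast$ having period $k$) then propagates to $\mathcal{R}_{\alpha_2}^{c^\ast}$, so both dynamical rays $\mathcal{R}_{\alpha_1}^{c^\ast}$ and $\mathcal{R}_{\alpha_2}^{c^\ast}$ share the common landing point $z^\ast$ at every accumulation parameter.

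To close the argument, the ray period $2k$ excludes co-root arcs (where only period-$k$ rays can land at the characteristic parabolic point, by Lemma~\ref{co-root arcs}) and cusps between two co-root arcs (by Lemma~\ref{co-roots meet}), so $c^\ast$ lies on a root arc or at a cusp between a root and a co-root arc. By Lemma~\ref{upper_bound}, $\partial H$ has at most one root arc $\mathcal{C}_r$, and Lemmas~\ref{root arcs} and~\ref{root and co-root meet} confirm that both $\mathcal{R}_{\alpha_1}^c$ and $\mathcal{R}_{\alpha_2}^c$ land at the characteristic parabolic point everywhere on $\overline{\mathcal{C}_r}$. Connectedness of each accumulation set then yields the dichotomy. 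The hardest step will be the even-$k'$ case in Step~2, which uses a combination of the orbit-portrait ray count, Lemma~\ref{LemIsolated}, and the $1/2$-bifurcation gluing picture to rule out accumulation on an even-period parabolic locus disjoint from $\partial H$.
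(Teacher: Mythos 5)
Your per-ray strategy (identify the parabolic period of each accumulation point, then use Lemma~\ref{transfer} and Lemma~\ref{LemSingleRay} to pin the component down to $H$) works fine for accumulation points with an \emph{odd}-period parabolic cycle: the count $2k/k'\leq 3$ with $2k/k'$ even does force $k'=k$, and the endgame matches the paper's final paragraph. But the even-$k'$ case is a genuine gap, and your proposed fix does not close it. The set of parabolic parameters of even parabolic period and ray period $2k$ is finite (Lemma~\ref{LemIsolated}), but a single point is a perfectly good connected set, so ``the connected continuum $L_{\mathcal{M}_d^*}(\alpha_i)$ cannot be supported purely on such points'' is false as a general principle: a priori $\mathcal{R}_{\alpha_1}$ could simply \emph{land} at one isolated even-period parabolic parameter (this is exactly what happens for angles of period divisible by $4$, cf.\ Lemma~\ref{evenrays}(2)). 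Likewise, the claim that any such parameter ``already lies on $\partial H$'' because the period-$2k$ component bifurcating from $H$ is attached along the root arc is unsubstantiated: an even-period parabolic parameter of ray period $2k$ could be the root or a co-root of a period-$2k$ component anywhere in the parameter plane, with no visible relation to $H$, and nothing in your argument prevents $\mathcal{R}_{\alpha_1}^c$ from landing at its characteristic parabolic point.

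The paper excludes this possibility by an argument your one-ray-at-a-time framework cannot reproduce. It first runs a wake/separation argument in the style of \cite[Theorem 3.1]{Mi2}: using the full orbit portrait angles $\mathcal{A}_{\mathcal{P}}$, the finite exceptional set $F$, and the stability statement Lemma~\ref{l:preserving-portrait1}(2), it shows that $\mathcal{R}_{\alpha_1}$ and $\mathcal{R}_{\alpha_2}$ must either land at the \emph{same} even-period parabolic parameter or accumulate on the closure of the \emph{same} root arc/cusp (otherwise the region between the two rays would contain escaping parameters with external angle outside $(\alpha_1,\alpha_2)$, contradicting the persistence of the co-landing pattern there). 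Only with this co-landing in hand does the relation $(-d)^k\alpha_1=\alpha_2$ give the contradiction: both dynamical rays then land at the characteristic parabolic point $z_c$, while $f_c^{\circ k}$ maps $\mathcal{R}_{\alpha_1}^c$ to $\mathcal{R}_{\alpha_2}^c$, forcing $f_c^{\circ k}(z_c)=z_c$ and hence odd period. Note that with only one ray landing at $c$ this argument breaks down, since $\mathcal{R}_{\alpha_2}^c$ could land at a different point of the parabolic cycle. So either incorporate a separation argument of this type (or some other mechanism that couples the two rays), or find an independent reason why $\alpha_1$ cannot be a root/co-root angle of an even-period parabolic parameter; as written, your Step~2 does not rule it out. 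A minor additional point: the statement asserts a \emph{common} cusp in the cusp case, which the paper's joint argument delivers automatically, whereas your separate treatment of the two rays would at best place each of them somewhere on the closure of the unique root arc.
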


\begin{proof}
Let the period of $H$ be $k$. The common period of $\alpha_1$ and $\alpha_2$ under multiplication by $-d$ is $2k$. Any accumulation point of these two parameter rays is either a parabolic parameter of odd period $k$ or a parabolic parameter of even period $r$ with $r \vert 2k$ such that the corresponding dynamical ray of period $2k$ lands at the characteristic parabolic point in the dynamical plane of that parameter (by Lemma \ref{parameter_ray_dynamical_ray} and \cite[Lemma 2.5]{Mu}). Define the set $F$ to be the union of the closure of the finitely many root arcs and cusp points of period $k$, and the finitely many parabolic parameters of even parabolic period and of ray period $2k$ (this is precisely the set of all parameters $c$ for which $f_c$ has a parabolic cycle such that a dynamical ray $\mathcal{R}_{t}^c$ of period $2k$ may land at the parabolic cycle of $f_c$). It follows that the set of accumulation points of $\mathcal{R}_{\alpha_1}$ and $\mathcal{R}_{\alpha_2}$ is contained in $F$. We define $\mathcal{A}_{\mathcal{P}} := \mathcal{A}_1 \cup \cdots \cup \mathcal{A}_k$. 

 Consider the connected components $U_i$ of 
\[
\mathbb{C} \setminus \left(\bigcup_{t \in \mathcal{A}_{\mathcal{P}}} \mathcal{R}_t \cup F \right) 
\;.
\]
There are only finitely many components $U_i$, and they are open and unbounded. Observe that if we define $A:= \lbrace \alpha_1, \alpha_2\rbrace$, then the set $\displaystyle \tilde A:=\bigcup_{j\geq 0}(-d)^{j}A$ coincides with the set $\mathcal{A}_{\mathcal{P}}$.
 
The rest of the proof is similar to that of \cite[Theorem 3.1]{Mi2}. It follows from the discussion before this lemma that $\left(\alpha_1,\alpha_2\right)$ is the characteristic arc of $\mathcal{P}$. Let, $U_1$ be the component which contains all parameters $c$ outside $\mathcal{M}_d^{\ast}$ with external angle $t(c) \in \left( \alpha_1 , \alpha_2 \right)$ (there is such a component as $(\alpha_1 , \alpha_2)$ does not contain any other angle of $\mathcal{A}_{\mathcal{P}}$). $U_1$ must have the two parameter rays $\mathcal{R}_{\alpha_1}$ and $\mathcal{R}_{\alpha_2}$ on its boundary. Pick $c' \in U_1\setminus \mathcal{M}_d^*$ such that $t(c') \in \left( \alpha_1 , \alpha_2 \right)$. By \cite[Lemma 3.4]{Mu}, the dynamical rays $\mathcal{R}_{\alpha_1}^{c'}$ and $\mathcal{R}_{\alpha_2}^{c'}$ land at a common repelling periodic point of $f_{c'}$. Since $U_1 \subset \mathbb{C} \setminus \left(\bigcup_{t \in \mathcal{A}_{\mathcal{P}}} \mathcal{R}_t \cup F \right)$, it follows from Lemma \ref{l:preserving-portrait1} that $\mathcal{R}_{\alpha_1}^{c}$ and $\mathcal{R}_{\alpha_2}^{c}$ land at a common repelling periodic point of $f_{c}$ for all $c$ in $U_1$.

If the two parameter rays $\mathcal{R}_{\alpha_1}$ and $\mathcal{R}_{\alpha_2}$ do not land at the same point or do not accumulate on the closure of the same root arc (recall that every hyperbolic component of odd period has at most one root arc, and any two root arcs have disjoint closures), then $U_1$ would contain parameters $c$ outside $\mathcal{M}_d^{\ast}$ with $t(c) \notin \left( \alpha_1 , \alpha_2 \right)$. But it follows from the remark at the end of the proof of \cite[Theorem 3.1]{Mu} that for such a parameter $c$, the dynamical rays $\mathcal{R}_{\alpha_1}^{c}$ and $\mathcal{R}_{\alpha_2}^{c}$ can not land at a common point, which is a contradiction to the stability of the ray landing pattern throughout $U_i$. Hence, the parameter rays $\mathcal{R}_{\alpha_1}$ and $\mathcal{R}_{\alpha_2}$ must either land at the same parabolic parameter of even (parabolic) period and of ray period $2k$, or accumulate on the closure of the same root arc/cusp of $\mathcal{M}_d^{\ast}$.

Let us assume that $\mathcal{R}_{\alpha_1}$ and $\mathcal{R}_{\alpha_2}$ land at a common parabolic parameter $c$ of even (parabolic) period, and we will arrive at a contradiction. By Lemma \ref{parameter_ray_dynamical_ray}, the dynamical rays $\mathcal{R}_{\alpha_1}^c$ and $\mathcal{R}_{\alpha_2}^c$ of $f_c$ land at the characteristic parabolic point $z_c$ of even period of $f_c$. But, $f_c^{\circ k} (\mathcal{R}_{\alpha_1}^c) = \mathcal{R}_{(-d)^k \alpha_1}^c = \mathcal{R}_{\alpha_2}^c$. As $\mathcal{R}_{\alpha_1}^c$ lands at $z_c$, it follows by continuity that $\mathcal{R}_{\alpha_2}^c$ lands at $f_c^{\circ k}(z_c)$. Thus, $f_c^{\circ k}(z_c)=z_c$. This implies that the period of the parabolic point $z_c$ (under $f_c$) divides the odd integer $k$, a contradiction. 

Therefore, the parameter rays $\mathcal{R}_{\alpha_1}$ and $\mathcal{R}_{\alpha_2}$ either accumulate on the closure of a common root arc of $H'$ or land at a common cusp point on $\partial H'$, for some hyperbolic component $H'$ of odd period $k$. Then, by Lemma \ref{transfer}, the center $c_0'$ (say) of $H'$ has the property that the dynamical rays $\mathcal{R}_{\alpha_1}^{c_0'}$ and $\mathcal{R}_{\alpha_2}^{c_0'}$ land at the dynamical root $z_{c_0'}$ of the characteristic Fatou component $U_{c_0'}$ of $f_{c_0'}$. By Lemma \ref{LemSingleRay}, $c_0 = c_0'$; i.e. $H=H'$. This proves the lemma. 
\end{proof}

\begin{lemma}\label{rays_on_cusps} 
A cusp point can be contained in the accumulation set of at most three parameter rays at periodic (under the map $t\mapsto -dt$ $($mod $1)$) angles.
\end{lemma}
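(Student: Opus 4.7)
The approach is to reduce the count of parameter rays to a count of periodic dynamical rays landing at the characteristic parabolic point of the cusp, and then exploit the fact that at a cusp this parabolic point has only two petals, limiting how many rays can land.

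Let $\tilde c$ be a cusp of odd period $k$, and suppose $\mathcal{R}_{t_1},\dots,\mathcal{R}_{t_r}$ are distinct parameter rays at angles periodic under $t\mapsto -dt$ which all accumulate at $\tilde c$. By Lemma \ref{parameter_ray_dynamical_ray}, each corresponding dynamical ray $\mathcal{R}_{t_i}^{\tilde c}$ lands at the characteristic parabolic point $z_{\tilde c}$ of $f_{\tilde c}$; since the $t_i$ are pairwise distinct, this produces $r$ distinct periodic dynamical rays landing at $z_{\tilde c}$. It therefore suffices to show that at most three periodic dynamical rays can land at $z_{\tilde c}$.

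By Theorem \ref{ThmIndiffBdyHyp} the cusp $\tilde c$ lies on the boundary of a hyperbolic component $H$ of odd period $k$. Since $H$ is homeomorphic to $\mathbb{D}$ by \cite[Theorem 5.9]{NS}, its boundary $\partial H$ is a Jordan curve; by Lemma \ref{LemOddIndiffDyn}, Theorem \ref{ThmParaArcs}, and Lemma \ref{PropArcEnds}, this curve is a finite concatenation of parabolic arcs with cusps as endpoints. Consequently $\tilde c$ is the common endpoint of exactly two parabolic arcs $\mathcal{C}_1,\mathcal{C}_2\subset\partial H$, where Corollary \ref{CorArcNeighbors} ensures that each arc indeed belongs to the boundary of a single hyperbolic component. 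By Lemma \ref{upper_bound}, $H$ carries at most one root arc on its boundary, so $\{\mathcal{C}_1,\mathcal{C}_2\}$ consists either of two co-root arcs, or of one co-root arc and one root arc (the case of two root arcs is excluded by the preceding lemma on the disjoint closures of root arcs).

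In the first case, Lemma \ref{co-roots meet} establishes that exactly two periodic dynamical rays land at $z_{\tilde c}$ (and no others); in the second, Lemma \ref{root and co-root meet} gives exactly three. Either way the count is at most three, which combined with the first paragraph yields $r\le 3$. The main obstacle is confirming that $\partial H$ near $\tilde c$ consists of precisely two parabolic arcs meeting at the cusp -- this is the content of the Jordan-curve structure of $\partial H$ together with Lemma \ref{PropArcEnds} and Corollary \ref{CorArcNeighbors}. Once that structural fact is in hand, the bound of three rays transfers from the ``no other ray lands there'' clauses of Lemmas \ref{co-roots meet} and \ref{root and co-root meet} to parameter rays via Lemma \ref{parameter_ray_dynamical_ray}, which is what makes the entire argument work.
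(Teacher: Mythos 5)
Your first reduction is exactly the paper's: by Lemma \ref{parameter_ray_dynamical_ray}, each periodic parameter ray accumulating at the cusp $\tilde c$ forces the corresponding dynamical ray to land at the characteristic parabolic point of $f_{\tilde c}$, so it suffices to bound the number of periodic dynamical rays landing there. But the way you obtain that bound has a genuine gap. You deduce that $\partial H$ is a Jordan curve from the fact that $H$ is homeomorphic to $\mathbb{D}$; this implication is false in general (a domain homeomorphic to the disc can have a boundary that is not a simple closed curve, e.g.\ a closed curve passing through the same point twice). In this paper the statement that $\partial H$ is a simple closed curve, and hence that exactly two parabolic arcs meet at each cusp, is Corollary \ref{CorSimpleClosed}, proved \emph{after} the present lemma; moreover its proof relies on Lemma \ref{LemNo.ofArcs}, which in turn cites precisely the lemma you are trying to prove. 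So your argument is circular within the paper's logical order: the ``exactly two arcs meet at $\tilde c$'' step, which you yourself identify as the main obstacle, cannot be taken from \cite[Theorem~5.9]{NS}, Lemma \ref{PropArcEnds}, or Corollary \ref{CorArcNeighbors} — those only tell you that arcs limit at cusps and belong to the boundary of a unique component of the same period, not that a cusp is the endpoint of exactly two arcs.

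The paper's proof avoids all of this with a single purely dynamical citation: by \cite[Lemma 2.10]{Mu}, at most three periodic dynamical rays can land at a periodic point of \emph{odd} period, and the characteristic parabolic point of a cusp has odd period $k$. Combined with your (correct) first paragraph, this immediately gives the bound of three parameter rays. Note that the ``no other ray lands there'' clauses of Lemmas \ref{co-roots meet} and \ref{root and co-root meet}, which you invoke, themselves rest on the same combinatorial results of \cite{Mu}; so the direct citation is both shorter and free of the structural hypothesis on $\partial H$ that is not yet available.
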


\begin{proof}
Let, $c$ be a cusp point of odd period $k$. If $c$ is contained in the accumulation set of some periodic parameter ray $\mathcal{R}_{\theta}$, then by Lemma \ref{parameter_ray_dynamical_ray}, the dynamical ray $\mathcal{R}_{\theta}^c$ lands at the characteristic parabolic point of $f_c$. The characteristic parabolic point of $f_c$ has odd period $k$. By \cite[Lemma 2.10]{Mu}, at most three periodic dynamical rays can land at a periodic point of odd period. This shows that $c$ can be contained in the accumulation set of at most three parameter rays.
\end{proof}

\begin{lemma}[Lower Bound on the Number of Parabolic Arcs]\label{LemNo.ofArcs} 
Every hyperbolic component $H$ of odd period has at least $3$ parabolic arcs on its boundary. 
\end{lemma}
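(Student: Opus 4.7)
Plan. The case $k=1$ is immediate from Lemma~\ref{upper_bound}, which gives exactly $d+1\ge 3$ co-root arcs on $\partial H$, so assume $k$ is odd with $k\ge 3$. Let $c_0$ denote the center of $H$. By Lemma~\ref{primitive} together with \cite[Corollary~4.2]{NS}, the characteristic Fatou component of $f_{c_0}$ carries exactly $d$ dynamical co-roots, whose associated angles (each of period $k$) form a set $S=\{\theta_1,\ldots,\theta_d\}$, and exactly one dynamical root, whose two associated angles (each of period $2k$) form a set $S'=\{\alpha_1,\alpha_2\}$. By Lemma~\ref{upper_bound}, $\partial H$ carries at most $d$ co-root arcs and at most one root arc. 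I will establish the matching lower bounds, producing one co-root arc on $\partial H$ for each $\theta_i\in S$ and one root arc on $\partial H$ carrying the pair $S'$; this yields the desired $d+1\ge 3$ parabolic arcs.

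For the co-root bound, fix $\theta_i\in S$. By Lemma~\ref{LemArcOnBoundary}, the parameter ray $\mathcal{R}_{\theta_i}$ either accumulates on the closure of a co-root arc of $H$---and then by Lemma~\ref{co-root arcs} that arc lies on $\partial H$ and carries associated angle $\theta_i$---or lands at a cusp $\tilde c\in\partial H$. In the latter situation, Lemma~\ref{parameter_ray_dynamical_ray} places the dynamical ray $\mathcal{R}_{\theta_i}^{\tilde c}$ at the characteristic parabolic point of $f_{\tilde c}$. Using that every cusp in $\partial H$ is the common endpoint of parabolic arcs (Lemma~\ref{PropArcEnds}) and that the angles of the dynamical rays landing at the characteristic parabolic point of a cusp are exactly the angles carried by the parabolic arcs ending at that cusp (Lemmas~\ref{co-roots meet} and \ref{root and co-root meet}), it follows that some co-root arc ends at $\tilde c$ with associated angle $\theta_i$. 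Lemma~\ref{LemSingleRay}(1) applied via Lemma~\ref{transfer}(1) identifies the hyperbolic component on whose boundary this arc lies as $H$ itself, so the arc lies on $\partial H$. Since distinct angles in $S$ force distinct co-root arcs (Lemma~\ref{upper_bound}), this produces $d$ co-root arcs on $\partial H$.

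For the root bound, the argument is parallel. By Lemma~\ref{rays at dynamical root co-accumulate}, the pair $(\mathcal{R}_{\alpha_1},\mathcal{R}_{\alpha_2})$ either co-accumulates on the closure of a common root arc of $H$---done at once, by Lemma~\ref{root arcs}---or both parameter rays land at a common cusp $\tilde c\in\partial H$. In this second case, the angles $\alpha_1,\alpha_2$ have period $2k$ and so cannot be carried by a co-root arc (whose associated angle has period $k$ by Lemma~\ref{co-root arcs}); applying Lemmas~\ref{root arcs} and \ref{root and co-root meet}, a root arc carrying the pair $\{\alpha_1,\alpha_2\}$ must end at $\tilde c$, and Lemma~\ref{LemSingleRay}(1) together with Lemma~\ref{transfer}(2) identifies its associated hyperbolic component as $H$. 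Combining the $d$ co-root arcs with the one root arc yields at least $d+1\ge 3$ parabolic arcs on $\partial H$.

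The main obstacle is the converse direction of Lemmas~\ref{co-roots meet} and \ref{root and co-root meet}: one must verify that a cusp $\tilde c\in\partial H$ at which a parameter ray $\mathcal{R}_{\theta_i}$ (or the pair $(\mathcal{R}_{\alpha_1},\mathcal{R}_{\alpha_2})$) lands is indeed the endpoint of parabolic arcs lying on $\partial H$ that carry the expected angles, rather than merely the limit of arcs on the boundaries of other hyperbolic components abutting $\tilde c$. This is the structurally delicate step, but it is precisely where the rigidity Lemma~\ref{LemSingleRay} does its work: the arcs ending at $\tilde c$ are uniquely determined by their associated angles together with the critical Ecalle height, and Lemma~\ref{LemSingleRay}(1) via Lemma~\ref{transfer} unambiguously identifies the component on whose boundary each such arc lies as $H$. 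Once this is settled, the proof reduces to the elementary counting described above.
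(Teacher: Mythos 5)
There is a genuine gap, and it is exactly at the step you yourself flag as delicate: the cusp-landing case. If the parameter ray $\mathcal{R}_{\theta_i}$ lands at a cusp $\tilde c\in\partial H$, all you can conclude (via Lemma~\ref{parameter_ray_dynamical_ray}) is that the dynamical ray $\mathcal{R}_{\theta_i}^{\tilde c}$ lands at the characteristic parabolic point of $f_{\tilde c}$. Lemmas~\ref{co-roots meet} and \ref{root and co-root meet} only go in one direction: \emph{given} two arcs known to meet at $\tilde c$, their associated angles land at the characteristic parabolic point. They do not say that every angle landing there is realized by a parabolic arc of $\partial H$ ending at $\tilde c$; at this stage of the paper one does not even know how many arcs of $\partial H$ terminate at a given cusp (that information comes from Corollary~\ref{CorSimpleClosed}, which is proved \emph{after} and \emph{from} Lemma~\ref{LemNo.ofArcs}). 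So a priori $\tilde c$ could be the endpoint of a single arc carrying some other angle, while $\mathcal{R}_{\theta_i}$ still lands at $\tilde c$, and then no co-root arc with angle $\theta_i$ is produced. Lemma~\ref{LemSingleRay} cannot repair this: it is a rigidity/uniqueness statement (two parameters with the same supporting angle, or the same angle and Ecalle height, coincide), so it can identify the component carrying an arc \emph{once the arc with angle $\theta_i$ is known to exist}, but it cannot produce such an arc. In the paper, the cusp-landing alternative is eliminated only by Lemma~\ref{PropCuspsInterior} (cusps lie in the interior of $\mathcal{M}_d^*$, so no parameter ray can land there), and that lemma itself depends on Lemma~\ref{LemNo.ofArcs}, Corollary~\ref{CorSimpleClosed} and Theorem~\ref{ThmBifArc}; importing it here would be circular. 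In effect your proposal tries to prove Theorem~\ref{Exactly d+1} directly at this point, which is precisely what the paper's logical order is designed to avoid.

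The paper's own proof of the lemma is deliberately weaker and purely a counting argument: the $d+2\ge 4$ parameter rays in $\Omega$ (the $d$ co-root angles of period $k$ plus the two root angles of period $2k$) must all accumulate on $\partial H$ (Lemmas~\ref{LemArcOnBoundary}, \ref{rays at dynamical root co-accumulate}), while at most $2$ rays can accumulate on any single arc (Corollaries~\ref{singe ray on co-root}, \ref{two rays on root}) and at most $3$ at any cusp (Lemma~\ref{rays_on_cusps}), with the rays on an arc already counted among those admissible at its end cusps. One then checks that a boundary with only one or two arcs could absorb at most $3$ members of $\Omega$, contradicting $|\Omega|\ge 4$. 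If you want to salvage your write-up, replace the attempted exact count by this coarser bookkeeping; the exact count of $d$ co-root arcs and one root arc must wait until $\partial H$ is known to be a simple closed curve and cusps are known to be interior points of $\mathcal{M}_d^*$.
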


\begin{proof}
For the hyperbolic component of period $1$, this follows from Lemma \ref{upper_bound}. So we only need to work with the case when the period of $H$ is different from $1$. If $c_0$ is the center of $H$, then $f_{c_0}^{\circ k}$ has exactly $d+1$ fixed points on the boundary of the characteristic Fatou component of $f_{c_0}$. Exactly $d$ of them are the landing points of a single dynamical ray, and the remaining one is the landing point of precisely two rays of period $2k$ (by Lemma \ref{number_root_co_root} and Lemma \ref{primitive}); this yields a total of $d+2$ $(\geq 4)$ rays. Call the set of all parameter rays at these $d+2$ angles $\Omega$. Then, $\vert \Omega\vert \geq 4$. The members of $\Omega$ must accumulate on $\partial H$, by Lemma \ref{LemArcOnBoundary} and \ref{rays at dynamical root co-accumulate}. 

Let $\mathcal{C}$ be a parabolic arc on $\partial H$ with the cusp point $\tilde{c}$ at one of its ends. Assume that the dynamical ray $\mathcal{R}_t^c$ lands at the characteristic parabolic point of $f_c$ for all $c \in \mathcal{C}$. Then by Lemma \ref{co-roots meet} and Lemma \ref{root and co-root meet}, the dynamical ray $\mathcal{R}_t^{\tilde{c}}$ lands at the characteristic parabolic point of $f_{\tilde{c}}$. Therefore, by Lemma \ref{parameter_ray_dynamical_ray}, the set of element(s) of $\Omega$ that can possibly accumulate on a parabolic arc $\mathcal{C}$ on $\partial H$ is a subset of the set of element(s) of $\Omega$ that can possibly accumulate on the cusp points on the ends of the parabolic arc $\mathcal{C}$. At most $3$ members of $\Omega$ can accumulate at any cusp (by Lemma \ref{rays_on_cusps}), at most $2$ on any parabolic arc (Corollaries \ref{singe ray on co-root}, \ref{two rays on root}).

If $\partial H$ contained only a single parabolic arc, then it would contain only a single cusp, and hence, at most $3$ members of $\Omega$ can accumulate on $\partial H$. If $\partial H$ had exactly two parabolic arcs on its boundary, then these would either be both co-root arcs, or a root and a co-root arc (by Lemma \ref{upper_bound}). In both cases, it is again easy to see that at most $3$ members of $\Omega$ can accumulate on $\partial H$. Since $\vert \Omega\vert \geq 4$, it follows that $\partial H$ must contain at least $3$ parabolic arcs.
\end{proof}

\begin{corollary}\label{CorSimpleClosed} 
For every hyperbolic component $H$ of odd period $k$, the boundary $\partial H$ is a simple closed curve.
\end{corollary}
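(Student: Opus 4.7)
The plan is to show that $\partial H$ is a finite, compact, connected $1$-complex in which every cusp is an endpoint of exactly two parabolic arcs; this graph-theoretic picture forces $\partial H$ to be a single simple closed curve.

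First I would assemble the structural input already established. By Lemma \ref{LemOddIndiffDyn}, every point of $\partial H$ is either a cusp or an interior point of a parabolic arc of period $k$; by Lemmas \ref{LemFinitelyHypComps}, \ref{LemCuspFin}, \ref{PropArcEnds}, and Corollary \ref{CorArcNeighbors}, only finitely many such arcs and cusps occur on $\partial H$, every arc is a simple real-analytic arc whose two ends limit to cusps, and by Lemma \ref{LemArcsDisjoint} two distinct arcs are disjoint in their interiors. Since $H$ is homeomorphic to $\mathbb{D}$ by \cite[Lemma 5.4, Theorem 5.9]{NS}, it is a bounded, open, simply connected subset of $\mathbb{C}$; standard planar topology then gives that $\partial H$ is compact and connected, and local connectedness of $\partial H$ is automatic from its presentation as a finite union of simple arcs meeting at finitely many points.

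The heart of the argument is to show that at each cusp $\tilde{c}\in\partial H$ exactly two parabolic arcs of $\partial H$ meet. Let $n_c$ and $n_r$ be the numbers of co-root and root arcs of $\partial H$ ending at $\tilde{c}$. Repeating the argument of Lemmas \ref{co-roots meet} and \ref{root and co-root meet} for every arc at $\tilde{c}$, the characteristic parabolic point $z_{\tilde{c}}$ of $f_{\tilde{c}}$ is the common landing point of $n_c+2n_r$ pairwise distinct periodic dynamical rays (each co-root arc contributes one ray of period $k$ and each root arc contributes two rays of period $2k$). By \cite[Lemma 2.10]{Mu} at most three periodic rays can land at a periodic point of odd period, giving $n_c+2n_r\le 3$. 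Two root arcs have disjoint closures (established just before Lemma \ref{LemArcOnBoundary}), so $n_r\le 1$, and a local topological argument at $\tilde{c}\in\partial H$ (a single arc emanating from $\tilde{c}$ fails to separate a small disk into $H$ and $\mathbb{C}\setminus\overline{H}$, contradicting $\tilde{c}\in\partial H$) yields $n_c+n_r\ge 2$. The remaining possibilities are $(n_c,n_r)\in\{(2,0),(1,1),(3,0)\}$.

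The main obstacle is ruling out $(3,0)$. To do this I would use the finer information in \cite[Lemma 2.10]{Mu}: the antiholomorphic first return map $f_{\tilde{c}}^{\circ k}$ acts on the cyclic set of rays landing at $z_{\tilde{c}}$ as an orientation-reversing involution, so if three rays land, exactly one is fixed (period $k$) and the other two form a $2$-cycle (period $2k$). Three co-root arcs would produce three landing rays all of period $k$, which is impossible. Hence $n_c+n_r=2$, so every cusp has degree exactly $2$.

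Since, by Lemma \ref{LemNo.ofArcs}, $\partial H$ carries at least three arcs, no arc can be a loop from a cusp to itself (such a configuration would already exhaust a cusp of degree $2$ and, together with the absence of arc self-intersections and the degree-$2$ constraint at every other cusp, yield a single-arc component of $\partial H$, contradicting the $\geq 3$ count). Thus $\partial H$ is a finite $1$-complex in which every edge has two distinct endpoints and every vertex has degree $2$; such a complex is a disjoint union of simple closed curves, and connectedness of $\partial H$ forces this union to be a single simple closed curve.
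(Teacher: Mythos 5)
Your overall architecture differs from the paper's: the paper uses $H\cong\mathbb{D}$ together with the finite arc--cusp structure of $\partial H$ to parametrize $\partial H$ as a continuous closed curve, and then only has to rule out a double point of that parametrization; such a point must be a cusp where at least three parabolic arcs meet, which is excluded exactly by the ray--landing combinatorics you use (at most three periodic rays at an odd--period point by \cite[Lemma 2.10]{Mu}, and no three rays of odd period, for which the paper cites \cite[Theorem 2.6]{Mu}; your ``orientation-reversing involution'' formulation is the same fact). So your upper bound $n_c+2n_r\le 3$, $n_r\le 1$, and the exclusion of $(3,0)$ reproduce the paper's Cases 1 and 2 — note, though, that the pairwise distinctness of the angles attached to distinct co-root arcs at a cusp is not automatic: it needs the rigidity statement Lemma \ref{LemSingleRay} as in the proof of Lemma \ref{upper_bound} (the paper says this explicitly), whereas you only assert it.

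The genuine gap is your lower bound $n_c+n_r\ge 2$, which the paper's route never needs but your graph argument cannot do without. The justification ``a single arc emanating from $\tilde{c}$ fails to separate a small disk into $H$ and $\mathbb{C}\setminus\overline{H}$, contradicting $\tilde{c}\in\partial H$'' is not valid as pure planar topology: the boundary of a bounded simply connected domain can locally be a slit (consider $\mathbb{D}\setminus[0,1)$ at the tip $0$), where exactly one boundary arc ends at the point, a small disk minus that slit is connected and entirely contained in the domain, and the point is nonetheless a boundary point. Thus $\tilde c\in\partial H$ alone does not provide points of $\mathbb{C}\setminus\overline{H}$ near $\tilde c$, and you cannot appeal to $\partial\mathcal{M}_d^*$ either, since cusps lie in the interior of $\mathcal{M}_d^*$ (Lemma \ref{PropCuspsInterior}, proved \emph{after} this corollary). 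To repair the step you must inject dynamics: by the second statement of Theorem \ref{ThmIndiffBdyHyp}, every neighborhood of the cusp contains parameters for which all period-$k$ orbits are repelling, and by Lemma \ref{LemOddIndiffDyn} every parameter of $\overline{H}$ carries a non-repelling cycle of period exactly $k$, so such parameters lie outside $\overline{H}$; then a connected subset of a small disk avoiding the single slit $\partial H\cap B\subset\overline{\mathcal{C}}$ but meeting both $H$ and $\mathbb{C}\setminus\overline{H}$ yields the contradiction. Even then you should argue (e.g.\ via the fact that a Jordan arc end does not locally disconnect, which itself deserves a word, say through prime ends or a Schoenflies-type straightening) that points of both kinds near $\tilde c$ lie in one complementary component of the slit. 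With these repairs your degree-two graph argument does give a single simple closed curve, but as written the key lower bound is unsupported, and this is precisely the extra cost of avoiding the paper's Carath\'eodory-style parametrization.
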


\begin{proof}
By Lemma \ref{LemOddIndiffDyn}, $\partial H$ consists of parabolic arcs and cusps of period $k$. By Corollary \ref{CorArcNeighbors}, if a parabolic arc $\mathcal{C}$ intersects $\partial H$, then $\mathcal{C} \subset \partial H$. Each arc limits on both ends at cusps, by Lemma~\ref{PropArcEnds}. The number of parabolic arcs and cusps on $\partial H$ is finite, by Lemma \ref{upper_bound} and Lemma \ref{LemCuspFin}.

Since $H$ is homeomorphic to $\mathbb{D}$ (by \cite[\S 5]{NS}, the above results imply that $\partial H$ must be a closed curve traversing finitely many parabolic arcs and cusps. Therefore, we can parametrize $\partial H$ by a continuous surjection $\gamma: \left[0,1\right]\rightarrow \partial H$, such that $\gamma(0)=\gamma(1)$. We claim that $\partial H$ is a simple closed curve; if this was not the case, then we would have $t_1 \in \left[0,1\right]$ and $t_2 \in \left(0,1\right)$ with $\gamma(t_1)=\gamma(t_2)$. The rest of the proof will work towards finding a contradiction.

Recall that every parabolic arc is a simple arc (by Theorem \ref{ThmParaArcs}), and two distinct parabolic arcs are disjoint (by Lemma \ref{LemArcsDisjoint}). Therefore, the parameter $\gamma(t_1)$ must be a cusp point. This implies that there are at least three different parabolic arcs (Lemma \ref{LemNo.ofArcs} asserts that $\partial H$ must contain at least $3$ parabolic arcs) meeting at the cusp point $\gamma(t_1)$. Let these three parabolic arcs be $\mathcal{C}_1, \mathcal{C}_2$, and $\mathcal{C}_3$. By Lemma \ref{upper_bound}, at most one of them can be a root arc. We consider two cases.

Case 1 ($\mathcal{C}_1$ is a root arc, and $\mathcal{C}_2$, $\mathcal{C}_3$ are co-root arcs).
Arguing as in Lemma \ref{root arcs} and Lemma \ref{co-root arcs}, we see that there are distinct angles $\alpha_1$, $\alpha_2$, $\theta_1$ and $\theta_2$ such that the dynamical rays $\mathcal{R}_{\alpha_1}^c$ and $\mathcal{R}_{\alpha_2}^c$ land at the characteristic parabolic point of $f_c$ for every $c \in \mathcal{C}_1$, and the dynamical ray $\mathcal{R}_{\theta_i}^c$ lands at the characteristic parabolic point of $f_c$ for every $c \in \mathcal{C}_i$, for $i=2,3$. Furthermore, $\alpha_1$ and $\alpha_2$ have period $2k$, and $\theta_1$ and $\theta_2$ have period $k$. The fact that $\theta_1$ and $\theta_2$ are different, follows from the proof of Lemma \ref{upper_bound}. Now, arguing as in Lemma \ref{co-roots meet} and Lemma \ref{root and co-root meet}, we deduce that four dynamical rays $\mathcal{R}_{\alpha_1}^{\gamma(t_1)},\ \mathcal{R}_{\alpha_2}^{\gamma(t_1)},\ \mathcal{R}_{\theta_1}^{\gamma(t_1)}$ and $\mathcal{R}_{\theta_2}^{\gamma(t_1)}$ land at the characteristic parabolic point (of odd period $k$) of $f_{\gamma(t_1)}$. But this contradicts \cite[Lemma 2.10]{Mu}, which states that at most three periodic dynamical rays can land at a periodic point of odd period.

Case 2 (Each $\mathcal{C}_i$, $i=1, 2, 3$, is a co-root arc).
Arguing as in Lemma \ref{co-root arcs}, we see that there are distinct angles $\theta_1$, $\theta_2$ and $\theta_3$ such that the dynamical ray $\mathcal{R}_{\theta_i}^c$ lands at the characteristic parabolic point of $f_c$ for every $c \in \mathcal{C}_i$, for $i=1, 2, 3$. Furthermore, each $\theta_i$ has period $k$. The fact that all the $\theta_i$ are different, follows from the proof of Lemma \ref{upper_bound}. Now, arguing as in Lemma \ref{co-roots meet}, we deduce that three dynamical rays $\mathcal{R}_{\theta_1}^{\gamma(t_1)},\ \mathcal{R}_{\theta_2}^{\gamma(t_1)}$ and $\mathcal{R}_{\theta_3}^{\gamma(t_1)}$ of odd period $k$ land at the characteristic parabolic point (of odd period $k$) of $f_{\gamma(t_1)}$. But this contradicts \cite[Theorem 2.6]{Mu}.

Hence, $\partial H$ is a simple closed curve. 
\end{proof}

Having proved that the boundary of every hyperbolic component of odd period is a simple closed curve, we have already taken a fundamental step towards the proof of Theorem \ref{Exactly d+1}. In the next few lemmas, we draw some important conclusions about the parabolic arcs and cusp points, which will allow us to give the exact count of parabolic arcs and cusps on the boundary of a hyperbolic component of odd period.

\begin{corollary}[Closure of Arcs Meet Boundary]\label{CorArcClosureBoundary} 
The closure of every parabolic arc $\mathcal{C}$ intersects $\partial \mathcal{M}_d^{\ast}$.
\end{corollary}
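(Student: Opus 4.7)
The plan is, for a given parabolic arc $\mathcal{C}$, to produce a periodic parameter ray whose accumulation set on $\mathcal{M}_d^*$ is non-empty and contained in $\overline{\mathcal{C}}$. Since parameter rays lie in $\mathbb{C}\setminus\mathcal{M}_d^*$, any such accumulation set is automatically contained in $\partial\mathcal{M}_d^*$, which gives the conclusion.

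Let $H$ be the unique hyperbolic component of odd period $k$ with $\mathcal{C}\subset\partial H$ supplied by Corollary \ref{CorArcNeighbors}. First suppose $\mathcal{C}$ is a co-root arc, and let $\theta$ be the unique period-$k$ angle associated to $\mathcal{C}$ by Lemma \ref{co-root arcs}. I would study the parameter ray $\mathcal{R}_\theta$. By Lemma \ref{LemArcOnBoundary}, either $\mathcal{R}_\theta$ accumulates on the closure of a single co-root arc of $H$, or it lands at a cusp on $\partial H$. In the first case, a point in the interior of that co-root arc belongs to $L_{\mathcal{M}_d^*}(\theta)$; Lemma \ref{parameter_ray_dynamical_ray}, combined with the fact that distinct co-root arcs of $H$ carry distinct associated angles (argued in the proof of Lemma \ref{upper_bound} via Lemma \ref{LemSingleRay}), identifies this arc with $\mathcal{C}$. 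In the second case, $\mathcal{R}_\theta^{\tilde{c}}$ lands at the characteristic parabolic point of $f_{\tilde{c}}$, and Corollary \ref{CorSimpleClosed} says the cusp $\tilde{c}$ is the meeting point of two parabolic arcs on $\partial H$; Lemmas \ref{co-roots meet} and \ref{root and co-root meet} describe the rays landing at this parabolic point in terms of the angles associated to those two arcs, and the uniqueness of $\theta$ among period-$k$ associated angles forces one of these arcs to be $\mathcal{C}$. Either way, $L_{\mathcal{M}_d^*}(\theta) \subset \overline{\mathcal{C}}$.

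If $\mathcal{C}$ is instead a root arc, I would run the analogous argument using the pair $(\alpha_1,\alpha_2)$ of period-$2k$ angles from Lemma \ref{root arcs} and the co-accumulation statement of Lemma \ref{rays at dynamical root co-accumulate}. The two alternatives---accumulation on a common root arc, or landing at a common cusp---are handled in parallel; in the cusp case, two period-$2k$ rays landing at the characteristic parabolic point rule out a co-root/co-root configuration (Lemma \ref{co-roots meet} would only allow period-$k$ rays there), so Lemma \ref{root and co-root meet} together with uniqueness of the pair $(\alpha_1,\alpha_2)$ identifies the root arc meeting at the cusp with $\mathcal{C}$.

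The principal obstacle is the uniqueness step: I need that distinct parabolic arcs on $\partial H$ have distinct associated angle(s), so that the parameter ray $\mathcal{R}_\theta$ (or $\mathcal{R}_{\alpha_i}$) cannot escape $\overline{\mathcal{C}}$ by accumulating on some other piece of $\partial H$. This is a combinatorial rigidity statement, ultimately supplied by Lemma \ref{LemSingleRay} and extracted exactly as in the proof of Lemma \ref{upper_bound}; once it is in hand, the remainder of the argument is bookkeeping through the two ``meet'' lemmas and the ray-landing lemmas for arcs.
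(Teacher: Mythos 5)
Your proposal is correct and follows essentially the same route as the paper: take the parameter ray(s) at the angle(s) attached to $\mathcal{C}$ by Lemmas \ref{co-root arcs}/\ref{root arcs}, confine their accumulation set to $\partial H$ via Lemmas \ref{transfer}, \ref{LemArcOnBoundary} and \ref{rays at dynamical root co-accumulate}, and pin it to $\overline{\mathcal{C}}$ using the combinatorial rigidity of Lemma \ref{LemSingleRay} together with the meet Lemmas \ref{co-roots meet} and \ref{root and co-root meet} in the cusp case. The paper's own proof does exactly this (stating the root-arc case as "similar"), so no further comparison is needed.
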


\begin{proof}
We give a proof for the co-root arcs, the case of the root arcs is similar. Note that any co-root arc $\mathcal{C}$ lies on the boundary of some hyperbolic component $H$ of odd period $k$ (by Corollary \ref{CorArcNeighbors}). Let $\theta$ be the external angle (of period $k$) of the unique dynamical ray landing at the characteristic parabolic point of $f_c$ for any $c \in \mathcal{C}$ (see Lemma \ref{co-root arcs}). By Lemma \ref{transfer}, the dynamical ray $\mathcal{R}_{\theta}^{c_0}$ lands at a dynamical co-root of the characteristic Fatou component of the center $c_0$ of the hyperbolic component $H$. Hence, by Lemma \ref{LemArcOnBoundary}, the accumulation set $L_{\mathcal{M}_d^*}(\theta)$ of the parameter ray $\mathcal{R}_{\theta}$ must be contained in $\partial H$.

If $\tilde{c} \in L_{\mathcal{M}_d^*}(\theta)$ is not a cusp, then $\tilde{c}$ is on the given parabolic arc $\mathcal{C}$ (by Lemma \ref{parameter_ray_dynamical_ray} and Lemma~\ref{LemSingleRay}). If $\tilde{c} \in L_{\mathcal{M}_d^*}(\theta)$ is a cusp, then there must be two parabolic arcs terminating at $\tilde{c}$ (since $\partial H$ is a simple closed curve consisting of finitely many parabolic arcs and cusp points). Let these two parabolic arcs be $\mathcal{C}_1$ and $\mathcal{C}_2$. By Lemma \ref{co-roots meet} and Lemma \ref{root and co-root meet}, one of these two arcs, say $\mathcal{C}_1$, has the property that the dynamical ray $\mathcal{R}_{\theta}^c$ lands at the characteristic parabolic point of $f_c$ for each $c \in \mathcal{C}_1$. Now it follows from Lemma~\ref{LemSingleRay} that $\mathcal{C}=\mathcal{C}_1$. In either case, $\overline{\mathcal{C}} \bigcap L_{\mathcal{M}_d^*}(\theta) \neq \emptyset$; i.e. the closure of the parabolic arc $\mathcal{C}$ intersects $\partial \mathcal{M}_d^{\ast}$.
\end{proof}

\begin{corollary}[Bifurcations From Odd Periods] \label{PropOddBif}
Every cusp point having a parabolic cycle of odd period $k$ sits on the boundary of a hyperbolic component of period $2k$. 
\end{corollary}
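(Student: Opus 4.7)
The plan is to combine the structural results already established about cusp points with Theorem~\ref{ThmBifArc} (Bifurcations Along Arcs). A cusp point $c$ of odd period $k$ has a parabolic cycle of period $k$ and multiplier $+1$, so by Theorem~\ref{ThmIndiffBdyHyp} it lies on the boundary of some hyperbolic component $H$ of period $k$. By Corollary~\ref{CorSimpleClosed}, $\partial H$ is a simple closed curve consisting of parabolic arcs and cusp points, and each arc limits at cusps on both ends (Lemma~\ref{PropArcEnds}). Hence $c$ is realized as an endpoint of some parabolic arc $\mathcal{C}\subset\partial H$ of period $k$.

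Now I would invoke Theorem~\ref{ThmBifArc} directly: along $\mathcal{C}$, the real-analytic fixed point index tends to $+\infty$ at the endpoint $c$, and consequently there is an interval of positive length on $\mathcal{C}$, accumulating at $c$, at which the index exceeds $1$, so that bifurcation to a hyperbolic component of period $2k$ occurs. This immediately produces a sequence $(c_n)\subset\mathcal{C}$ with $c_n\to c$ such that each $c_n$ lies on the boundary of a hyperbolic component of period $2k$.

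To conclude that $c$ itself lies on the boundary of a \emph{single} hyperbolic component of period $2k$, I would apply Lemma~\ref{LemFinitelyHypComps}: there are only finitely many hyperbolic components of period $2k$ in $\mathcal{M}_d^*$. Therefore, by passing to a subsequence, one may assume all $c_n$ lie on the boundary of a common hyperbolic component $W$ of period $2k$. Since $\partial W$ is closed, we obtain $c\in\partial W$, which gives the claim.

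I do not expect any serious obstacle here: the main work has already been invested in establishing Theorem~\ref{ThmBifArc} (via the fixed-point index blowing up at cusps) and in the combinatorial control over $\partial H$ (Corollary~\ref{CorSimpleClosed}, Lemma~\ref{PropArcEnds}). The only minor point to be careful about is that Theorem~\ref{ThmBifArc} is stated for \emph{parabolic arcs}, so one must make sure that $c$ is indeed an endpoint of such an arc, which is precisely what Corollary~\ref{CorSimpleClosed} together with the definition of a cusp guarantee.
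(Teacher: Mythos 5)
Your proposal is correct and follows essentially the same route as the paper: place the cusp on the boundary of a period-$k$ component via Theorem~\ref{ThmIndiffBdyHyp}, use the structure of $\partial H$ (simple closed curve of arcs and finitely many cusps) to realize the cusp as a limit of a parabolic arc, and then apply Theorem~\ref{ThmBifArc} at that end of the arc. The only cosmetic difference is your final step, where you invoke finiteness of period-$2k$ components (Lemma~\ref{LemFinitelyHypComps}) and pass to a subsequence, whereas the paper directly takes the period-$2k$ component bifurcating from that end of the arc and uses closedness of its boundary; both are valid.
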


\begin{proof}
Let $\tilde{c}$ be a cusp point of period $k$. By Theorem \ref{ThmIndiffBdyHyp}, there exists a hyperbolic component $H$ of period $k$ such that $\tilde{c} \in \partial H$. Recall that $\partial H$ is a simple closed curve (Corollary \ref{CorSimpleClosed}). Moreover, by Lemma \ref{LemOddIndiffDyn}, $\partial H$ consists only of parabolic arcs and cusp points of period $k$ (more precisely, for each $c \in \partial H$, $f_c$ has a parabolic cycle of period $k$ of multiplicity $1$ or $2$). But since there are only finitely many cusp points of period $k$ (Lemma \ref{LemCuspFin}), there exists a parabolic arc $\mathcal{C}$ of period $k$ (on $\partial H$) limiting at $\tilde{c}$. Now, by Theorem \ref{ThmBifArc}, each end of $\mathcal{C}$ intersects the boundary of a hyperbolic component of period $2k$. Let $H'$ be the hyperbolic component of period $2k$ bifurcating from that end of $\mathcal{C}$ which limits at $\tilde{c}$. Since $\tilde{c}$ is a limit point of $\mathcal{C}$, it follows that $\tilde{c} \in \partial H'$.
\end{proof}

\begin{lemma}[Cusps in Interior]\label{PropCuspsInterior}
Every parabolic cusp is in the interior of $\mathcal{M}_d^{\ast}$.
\end{lemma}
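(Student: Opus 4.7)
The plan is to pass to the holomorphic two-parameter family $P_{a,b}(z) = (z^d+a)^d + b$, into which our family embeds as the real-analytic slice $\{a = \bar{b}\}$ via $f_c^{\circ 2} = P_{\bar{c}, c}$. It then suffices to show that $(a_0, b_0) := (\bar{\tilde{c}}, \tilde{c})$ lies in the interior of the connectedness locus $\mathcal{C}(P) \subset \mathbb{C}^2$ of the family $P_{a,b}$, because the slice $\{a = \bar{b}\}$ intersects any open neighborhood of $(a_0, b_0)$ in a $2$-real-dimensional open set that projects diffeomorphically onto a neighborhood of $\tilde{c}$, and $c \in \mathcal{M}_d^{\ast}$ is equivalent to $(\bar{c}, c) \in \mathcal{C}(P)$.

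To set up the local dynamical picture, recall that $\tilde{c}$ is a cusp of odd period $k$, so the parabolic $k$-cycle of $f_{\tilde{c}}$ has $q = 2$ and in local coordinates at the characteristic parabolic point $P_{a_0,b_0}^{\circ k}$ has the form $z \mapsto z + b_3 z^3 + O(z^4)$ with $b_3 \neq 0$. By the proof of Lemma \ref{LemOddIndiffDyn} both infinite critical orbits of $P_{a_0, b_0}$ are absorbed by the two cycles of attracting petals around this parabolic cycle, so each critical orbit is contained in an open, bounded component of the parabolic basin of $P_{a_0, b_0}$, and in particular lies in the interior of the filled-in Julia set $K(P_{a_0, b_0})$.

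The crucial step is to propagate this interior trapping to nearby parameters. My plan is to choose a large topological disk $V$ containing $K(P_{a_0, b_0})$ together with a neighborhood of every point near the parabolic cycle through which the critical orbits pass, set $U := P_{a_0, b_0}^{-1}(V) \Subset V$, and verify that $P_{a_0, b_0} : U \to V$ is polynomial-like of degree $d^2$ with both critical points in $U$; this polynomial-like structure persists for $(a, b)$ near $(a_0, b_0)$. From a perturbation analysis of the codimension-$2$ degenerate parabolic $z \mapsto z + b_3 z^3 + \cdots$, the three local fixed points of $P_{a,b}^{\circ k}$ bifurcating from the parabolic point stay close together and come equipped with local sectoral traps (attracting petals, repelling sectors, or genuine attracting basins) that together continue to absorb both critical orbits, so that $(a,b) \in \mathcal{C}(P)$. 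This picture should be compatible with, and complemented by, the already established facts from Lemma \ref{LemOddIndiffDyn}, Theorem \ref{ThmIndiffBdyHyp} and Theorem \ref{ThmBifArc} that $\tilde{c}$ lies on the boundary of a period-$k$ hyperbolic component $H$ and of two period-$2k$ hyperbolic components $H'_1, H'_2$ (one bifurcating from each parabolic arc meeting at $\tilde{c}$), which together with the two arcs should decompose a neighborhood of $\tilde{c}$.

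The main obstacle is precisely the trapping argument in the third step: filled-in Julia sets of polynomials with parabolic cycles vary only upper-semicontinuously in parameters, and parabolic implosion can route orbits that previously converged to the parabolic cycle through the ghostly petal region, a priori allowing escape to infinity. Ruling this out seems to require either the Douady--Lavaurs theory of parabolic implosion applied to the $2$-parameter unfolding of the degenerate parabolic --- where the codimension-$2$ nature of the cusp should provide the robustness that codimension-$1$ parabolic parameters (which typically lie on $\partial \mathcal{M}_d^{\ast}$) lack --- or a direct sectoral analysis showing that the critical orbits of the perturbed $P_{a,b}$ are captured by the union of the local ``flowers'' of the three fixed points of $P_{a,b}^{\circ k}$ clustering near the former parabolic point.
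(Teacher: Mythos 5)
There is a genuine gap, and it sits exactly where you placed your ``main obstacle'': the claim that for \emph{every} parameter near the cusp both critical orbits of $P_{a,b}$ stay bounded is precisely the content of the lemma, and nothing in your outline establishes it. The polynomial-like restriction $P_{a,b}\colon U\to V$ with $V$ bounded by an equipotential buys you nothing here: such a restriction persists trivially under perturbation, but its filled Julia set is connected if and only if both critical orbits stay in $U$, and an orbit that leaves $U$ leaves $V$ at the next step and escapes to infinity. So the entire content is the trapping statement, and the ``local sectoral traps'' heuristic is exactly what parabolic implosion defeats: after perturbation the orbits that formerly converged to the parabolic cycle can pass through the gate between the perturbed fixed points, and whether they are recaptured or escape is global information that no local analysis of the germ $z\mapsto z+b_3z^3+\cdots$ can decide. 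Moreover, the hoped-for robustness is not a consequence of ``codimension $2$'' alone: your reduction aims at the much stronger statement that $(\bar{\tilde c},\tilde c)$ is interior to the connectedness locus of the full complex family $P_{a,b}$ in $\mathbb{C}^2$, which is unproven and quite possibly false — whatever rigidity exists comes from the antiholomorphic slice itself (under perturbation inside the slice the odd-period cycle keeps a real non-negative multiplier for the second iterate and the emerging $2k$-cycle has complex-conjugate multipliers, constraints that general complex perturbations do not respect). If you want to salvage a local approach inside the slice, you would need a quantitative argument (e.g.\ via holomorphic fixed point indices of the three merging fixed points of $f_c^{\circ 2k}$) showing that a non-repelling cycle survives every nearby antiholomorphic perturbation; as written, no such argument is given.

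For comparison, the paper's proof is global and combinatorial rather than local-analytic: the cusp $\tilde c$ lies on the boundary of exactly one component $W$ of odd period $k$ (Theorem \ref{ThmIndiffBdyHyp}) and exactly one component $W'$ of period $2k$ (Corollary \ref{PropOddBif}), uniqueness coming from the rigidity results (Lemma \ref{transfer}, Lemma \ref{LemSingleRay}); since $\partial W$ is a Jordan curve (Corollary \ref{CorSimpleClosed}) and bifurcation occurs along both arcs ending at $\tilde c$ (Theorem \ref{ThmBifArc}), one can join points $c_1,c_2$ on these two arcs by an arc inside $W'$ and another inside $W$, obtaining a simple closed curve $\gamma\subset\mathcal{M}_d^*$; fullness of $\mathcal{M}_d^*$ puts the bounded complementary component inside $\mathrm{int}(\mathcal{M}_d^*)$, and if $\gamma$ failed to wind around $\tilde c$ it would instead enclose the closure of a third parabolic arc of $\partial W$, contradicting Corollary \ref{CorArcClosureBoundary}. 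None of these ingredients appears in your proposal, and they are what substitutes for the trapping step you leave open.
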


\begin{proof}
If $\tilde{c}$ is a parabolic cusp, then it has a parabolic cycle of odd period
$k$, so it is simultaneously on the boundary of a period $k$ component $W$
by Theorem~\ref{ThmIndiffBdyHyp}, and of a period $2k$ component $W'$ by
Corollary~\ref{PropOddBif}. The dynamical rays landing together at the dynamical
root of the characteristic Fatou component of the centers of  $W$ and  $W'$ are uniquely determined by the
dynamical rays landing at the characteristic parabolic point of $f_{\tilde{c}}$, by Lemma \ref{transfer}.
This determines the parameters at the centers of $W$ and $W'$ uniquely, so
$W$ and $W'$ are the only components with $\tilde{c}$ on its boundary. 

The boundary of $W$ is a simple closed curve by
Corollary~\ref{CorSimpleClosed}. Let $c_1$ and $c_2$ be two points on the two parabolic
arcs $\mathcal{C}_1$ and $\mathcal{C}_2$ respectively that terminate at $\tilde{c}$, and sufficiently close to $\tilde{c}$ so that $c_1,\ c_2 \in \partial W\cap\partial W'$. Connect $c_1$ to $c_2$ by a simple arc $\gamma'$
within $W'$, and by another simple arc $\gamma''$ within $W$ (this is possible since hyperbolic components are homeomorphic to $\mathbb{D}$, by \cite[\S 5]{NS}). Then the curve $\gamma:=\gamma' \cup \gamma''$ is a simple closed curve. We now consider two cases.

\begin{figure}[h]
\centering{\includegraphics[scale=0.40]{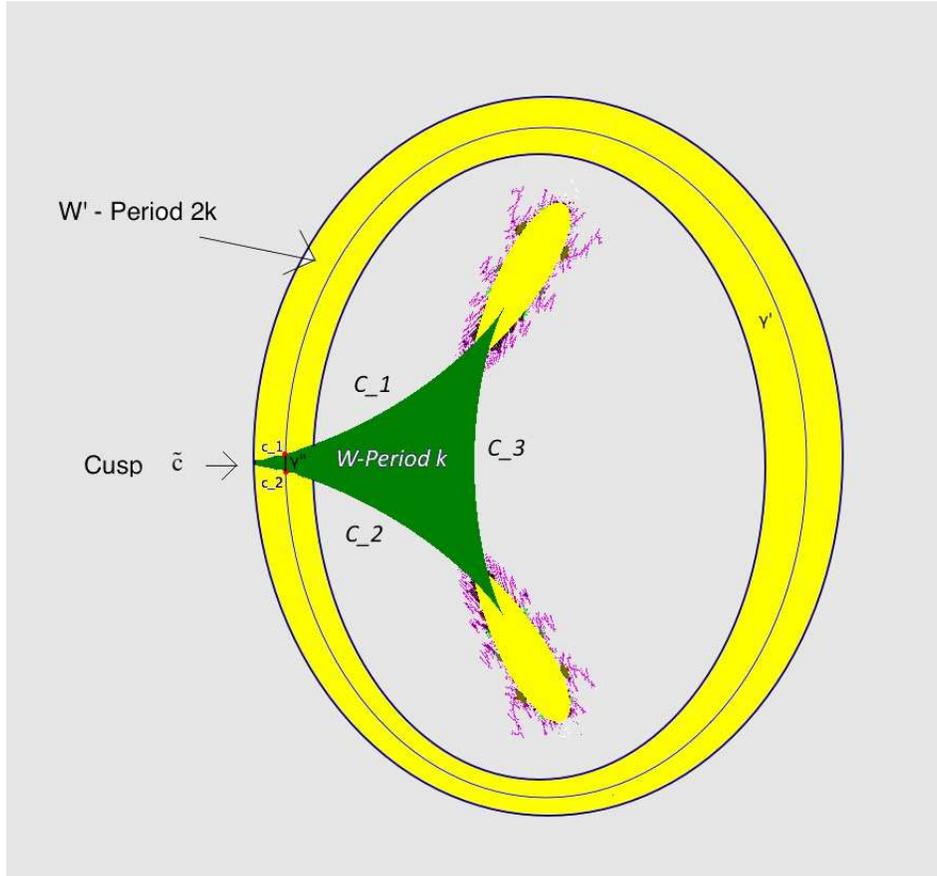}}
\caption{If the simple closed curve $\gamma=\gamma' \cup \gamma''$ does not wind around $\tilde{c}$, then the closure of the parabolic arc $\mathcal{C}_3$ would be contained in the unique bounded component $U$ of $\mathbb{C}\setminus \gamma$, proving that the closure of $\mathcal{C}_3$ would lie in the interior of $\mathcal{M}_d^*$.}
\label{possibility}
\end{figure}

Case 1 (The curve $\gamma$ winds around $\tilde{c}$). 
Note that $\gamma \subset \mathcal{M}_d^*$. Since $\mathcal{M}_d^*$ is a full continuum \cite{Na1}, the unique bounded component $U$ of $\mathbb{C}\setminus \gamma$ is contained in $\mathrm{int}(\mathcal{M}_d^*)$. By our assumption, $\tilde{c} \in U$ . This shows that $\tilde{c} \in \mathrm{int}(\mathcal{M}_d^*)$.

Case 2 (The curve $\gamma$ does not wind around $\tilde{c}$).
If $\gamma$ does not wind around $\tilde{c}$, then $\partial W \cap \partial \mathcal{M}_d^{\ast}=\lbrace \tilde{c}\rbrace$. Since $\partial W$ contains at least $3$ parabolic arcs (by Lemma~\ref{LemNo.ofArcs}), this implies that there exists a third parabolic arc $\mathcal{C}_3 \subset \partial W$ such that $\gamma$ winds around every point $c \in \overline{\mathcal{C}_3}$ (compare Figure \ref{possibility}). Since $\mathcal{M}_d^*$ is a full continuum \cite{Na1}, the unique bounded component $U$ of $\mathbb{C}\setminus \gamma$ is contained in $\mathrm{int}(\mathcal{M}_d^*)$. Therefore, $\overline{\mathcal{C}_3} \subset U \subset \mathrm{int}(\mathcal{M}_d^*)$. But $\overline{\mathcal{C}_3}$ must intersect $\partial \mathcal{M}_d^{\ast}$ (by Corollary \ref{CorArcClosureBoundary}), which is a contradiction to the assumption of case (2).

This shows that every cusp point $\tilde{c}$ is contained in the interior of $\mathcal{M}_d^{\ast}$.
\end{proof}

\begin{corollary}[Arcs Meet Boundary]\label{CorArcsBoundary} 
Every parabolic arc intersects $\partial \mathcal{M}_d^{\ast}$.
\end{corollary}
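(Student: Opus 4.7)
The plan is to combine the three earlier results about the closure of a parabolic arc: Corollary \ref{CorArcClosureBoundary} (the closure meets $\partial \mathcal{M}_d^{\ast}$), Lemma \ref{PropArcEnds} (the two limit endpoints of the arc are parabolic cusps), and Lemma \ref{PropCuspsInterior} (every cusp lies in the interior of $\mathcal{M}_d^{\ast}$). These three facts together leave no room for the intersection with $\partial \mathcal{M}_d^{\ast}$ to occur anywhere except on the arc itself.

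More concretely, let $\mathcal{C}$ be a parabolic arc and write $\overline{\mathcal{C}} = \mathcal{C} \cup \{c^+, c^-\}$, where by Lemma \ref{PropArcEnds} the two limit points $c^\pm$ are parabolic cusps of the same period. By Corollary \ref{CorArcClosureBoundary}, the set $\overline{\mathcal{C}} \cap \partial \mathcal{M}_d^{\ast}$ is non-empty. However, by Lemma \ref{PropCuspsInterior}, both $c^+$ and $c^-$ lie in the interior of $\mathcal{M}_d^{\ast}$, so neither of them can belong to $\partial \mathcal{M}_d^{\ast}$. Hence any point of $\overline{\mathcal{C}} \cap \partial \mathcal{M}_d^{\ast}$ must in fact lie in $\mathcal{C}$, which gives $\mathcal{C} \cap \partial \mathcal{M}_d^{\ast} \neq \emptyset$.

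The argument is essentially a one-line bookkeeping consequence of the earlier lemmas, so there is no substantial obstacle. The only thing worth double-checking is that the two endpoints $c^\pm$ of the arc $\mathcal{C}$ are genuine cusps (so that Lemma \ref{PropCuspsInterior} applies to them), but this is exactly the content of Lemma \ref{PropArcEnds}. Thus the corollary follows immediately.
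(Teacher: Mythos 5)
Your argument is correct and is exactly the paper's proof: the paper deduces the corollary from Corollary \ref{CorArcClosureBoundary} together with Lemma \ref{PropCuspsInterior}, with Lemma \ref{PropArcEnds} implicitly supplying that $\overline{\mathcal{C}}\setminus\mathcal{C}$ consists only of cusps. Your write-up just makes that bookkeeping explicit.
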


\begin{proof}
This is clear from Corollary \ref{CorArcClosureBoundary} and Lemma \ref{PropCuspsInterior}.
\end{proof}

\begin{corollary}\label{Odd per par rays} 
Every parameter ray at an angle $\theta$ of odd period $k$ lands/accumulates on a sub-arc of a single co-root parabolic arc of period $k$.
\end{corollary}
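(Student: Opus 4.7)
The plan is to assemble the corollary from Lemma \ref{LemArcOnBoundary} together with the observation that every odd-period angle $\theta$ arises as a co-root angle of one uniquely determined hyperbolic center. First I would pick any $c$ in the accumulation set $L_{\mathcal{M}_d^*}(\theta)$, which is non-empty by compactness of $\mathcal{M}_d^*$. By Lemma \ref{parameter_ray_dynamical_ray}, $f_c$ has a parabolic cycle of some period $k'$ dividing $k$, and the dynamical ray $\mathcal{R}_\theta^c$ lands at its characteristic parabolic point. The standard fact (\cite[Lemma 2.5]{Mu}) that a periodic ray of period $n$ landing at a periodic point of period $k'$ must satisfy $n\in\{k',2k'\}$ forces $k'=k$, because $k$ is odd. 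Theorem~\ref{ThmIndiffBdyHyp} then puts $c$ on the boundary of some hyperbolic component of period $k$.

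Next I want to pin down this component once and for all. Because $\theta$ has odd period $k$ (and not $2k$), Lemma~\ref{transfer} implies that $c$ lies either on a co-root arc or at a cusp of the boundary of some period-$k$ component $H$, and that $\theta$ belongs to the set $S$ of co-root angles of the characteristic Fatou component of the center $c_0$ of $H$. The crucial input now is the combinatorial rigidity result Lemma~\ref{LemSingleRay}(1): a single supporting co-root angle at the characteristic Fatou component of $f_{c_0}$ already determines $c_0$, hence $H$. Consequently $H$ is the same for every choice of $c \in L_{\mathcal{M}_d^*}(\theta)$, and in particular $L_{\mathcal{M}_d^*}(\theta) \subset \partial H$.

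Finally, Lemma~\ref{LemArcOnBoundary} applied to this unique $H$ states that $\mathcal{R}_\theta$ either lands at a cusp on $\partial H$, or accumulates on the closure of a single co-root arc of $\partial H$. In the cusp case, Lemmas~\ref{co-roots meet} and~\ref{root and co-root meet} identify the cusp as an endpoint of a co-root arc whose defining dynamical ray has angle $\theta$, so the accumulation set is a degenerate sub-arc of that co-root arc. Combined with the connectedness of $L_{\mathcal{M}_d^*}(\theta)$, this yields in every case that the accumulation set is a connected subset, i.e.\ a sub-arc, of the closure of a unique co-root parabolic arc of period $k$ on $\partial H$. I do not foresee a genuine obstacle: all the hard combinatorial work has already been absorbed into Lemma~\ref{LemSingleRay} and the earlier structural results on parabolic arcs, so the corollary is essentially a bookkeeping assembly of those ingredients.
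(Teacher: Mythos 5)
Your reduction to Lemma \ref{LemArcOnBoundary} is fine and essentially retraces the paper's own chain (Lemma \ref{parameter_ray_dynamical_ray} plus the odd-period argument, Theorem \ref{ThmIndiffBdyHyp}, Lemma \ref{transfer}, and the rigidity Lemma \ref{LemSingleRay}(1) to pin down a unique component $H$ with $L_{\mathcal{M}_d^*}(\theta)\subset\partial H$); in fact that is precisely the content and proof of Lemma \ref{LemArcOnBoundary} itself. The genuine gap is in your treatment of the cusp alternative. A parabolic cusp is \emph{not} a point of any parabolic arc: by Theorem \ref{ThmParaArcs} the arcs consist of non-cusp parabolic parameters, and by Lemma \ref{PropArcEnds} the cusps are only the limits of the arcs at their ends. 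So if $\mathcal{R}_\theta$ landed at a cusp, its accumulation set would be a single point lying on no co-root arc, and calling it ``a degenerate sub-arc of that co-root arc'' does not make the corollary true; Lemmas \ref{co-roots meet} and \ref{root and co-root meet} only describe which dynamical rays land at the characteristic parabolic point of the cusp parameter, they do not place the cusp on an arc.

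What is actually needed — and what the paper uses — is Lemma \ref{PropCuspsInterior}: every parabolic cusp lies in the \emph{interior} of $\mathcal{M}_d^*$. Since $\mathcal{R}_\theta\subset\mathbb{C}\setminus\mathcal{M}_d^*$, its accumulation set $L_{\mathcal{M}_d^*}(\theta)=\overline{\mathcal{R}_\theta}\cap\mathcal{M}_d^*$ is contained in $\partial\mathcal{M}_d^*$, so it cannot contain any cusp; this eliminates the cusp case of Lemma \ref{LemArcOnBoundary} outright (this is why the corollary is placed after Lemma \ref{PropCuspsInterior} and Corollary \ref{CorArcsBoundary}, and the same point is used again in the proof of Theorem \ref{Exactly d+1}). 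With the cusps excluded, the accumulation set is a compact connected subset of the closure of a single co-root arc avoiding its two cusp endpoints, hence a (possibly degenerate) sub-arc of that co-root arc. Without invoking Lemma \ref{PropCuspsInterior} (or some substitute ruling out accumulation at cusps), your argument does not prove the stated corollary.
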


\begin{remark}
It is conceivable that the parameter rays accumulating on the parabolic arcs may not land at a single point, they may accumulate on a sub-arc of positive length. This property has been extensively studied in the recent paper \cite{IM}, where a complete description of the accumulation properties of the parameter rays (at rational angles) of the multicorns have been given. One of the principal results of \cite{IM} shows that the accumulation set of every parameter ray accumulating on the boundary of a hyperbolic component of odd period (except period one) of $\mathcal{M}_d^*$ contains an arc of positive length.
\end{remark}

\begin{proof}[Proof of Theorem \ref{Exactly d+1}]
For the hyperbolic component of period $1$, this clearly follows from Lemma \ref{upper_bound} and Corollary \ref{CorSimpleClosed}.

Let $c_0$ be the center of the hyperbolic component $H$ of odd period $k\neq 1$. There are $d+1$ fixed points (under $f_{c_0}^{\circ k}$) on the boundary of the characteristic Fatou component of $f_{c_0}$, $d$ of which are the landing points of one dynamical ray (of period $k$) each, while one is the landing point of exactly two dynamical
rays (each of period $2k$). By Lemmas \ref{LemArcOnBoundary}, \ref{rays at dynamical root co-accumulate} and \ref{PropCuspsInterior}, all the corresponding parameter rays must accumulate on $\partial H$, but not on cusps. Further, two different parameter rays cannot accumulate on the same co-root parabolic arc (Corollary \ref{singe ray on co-root}). Therefore, the $d$ parameter rays at angles of period $k$ accumulate on $d$ distinct co-root arcs of $H$. The two parameter rays at angles of period $2k$ must accumulate on the remaining root arc on $\partial H$ (by Lemma \ref{upper_bound}, there are at most $d+1$ arcs). Thus, there are exactly $d$ co-root arcs and one root arc.
\end{proof}

\begin{corollary}[Bijection Between Dynamical and Parameter Rays]\label{CorBijectionDynPara}
Let $H$ be a hyperbolic component of odd period $k(\neq 1)$ with center $c_0$. The characteristic Fatou component $U_{c_0}$ of $f_{c_0}$ has exactly $d$ dynamical co-roots and exactly $1$ dynamical root on its boundary. Let $S =\lbrace \theta_1, \theta_2, \cdots, \theta_d\rbrace$ (each angle of period $k$) be the set of angles of the dynamical rays that land at the $d$ dynamical co-roots on $\partial U_{c_0}$, and let $S' =\lbrace \alpha_1, \alpha_2\rbrace$ (each angle of period $2k$) be the set of angles of the dynamical rays that land at the unique dynamical root on $\partial U_{c_0}$.

Then, $\partial H$ has exactly $d$ co-root arcs $\mathcal{C}_1, \mathcal{C}_2, \cdots, \mathcal{C}_d$ and exactly $1$ root arc $\mathcal{C}_{d+1}$, such that $L_{\mathcal{M}_d^*}(\theta_i) \subset \mathcal{C}_i$ (for $i \in \lbrace 1, 2, \cdots, d\rbrace$), and $L_{\mathcal{M}_d^*}(\alpha_1) \cup L_{\mathcal{M}_d^*}(\alpha_2) \subset \mathcal{C}_{d+1}$.
\end{corollary}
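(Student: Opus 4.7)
The plan is to assemble the corollary from the structural results of Section \ref{SecOddBdy} that have already been established. By Theorem \ref{Exactly d+1}, $\partial H$ is a simple closed curve consisting of exactly $d+1$ parabolic arcs (separated by $d+1$ cusps); moreover, as pinpointed in the proof of that theorem, precisely $d$ of these arcs are co-root arcs and exactly one is a root arc. So the counting is already settled, and the only task is to match the two distinguished families of external angles coming from the dynamical plane of $c_0$ with the parabolic arcs on $\partial H$.

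First I would handle the $d$ co-root angles. For each $\theta_i \in S$, Lemma \ref{LemArcOnBoundary} states that $\mathcal{R}_{\theta_i}$ either accumulates on the closure of a single co-root arc of $H$ or lands at a cusp on $\partial H$. The second possibility is incompatible with Lemma \ref{PropCuspsInterior}, since every cusp lies in $\mathrm{int}(\mathcal{M}_d^*)$ whereas $L_{\mathcal{M}_d^*}(\theta_i) \subset \partial \mathcal{M}_d^*$. Hence $L_{\mathcal{M}_d^*}(\theta_i)$ meets some co-root arc $\mathcal{C}_i$, and cannot meet any cusp, so $L_{\mathcal{M}_d^*}(\theta_i) \subset \mathcal{C}_i$. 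By Corollary \ref{singe ray on co-root}, at most one periodic parameter ray can accumulate on any given co-root arc, so the map $\theta_i \mapsto \mathcal{C}_i$ is injective. Since both sides have cardinality $d$, this injection is a bijection, and yields the desired labeling $\mathcal{C}_1, \dots, \mathcal{C}_d$.

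Second, for the pair $\{\alpha_1, \alpha_2\}$ of period $2k$ rays landing together at the dynamical root of $U_{c_0}$, Lemma \ref{rays at dynamical root co-accumulate} forces $\mathcal{R}_{\alpha_1}$ and $\mathcal{R}_{\alpha_2}$ to either co-land at a common cusp on $\partial H$ or accumulate together on the closure of a common root arc of $H$. The cusp option is again excluded by Lemma \ref{PropCuspsInterior}, so both rays accumulate on the unique root arc, which we name $\mathcal{C}_{d+1}$; this gives $L_{\mathcal{M}_d^*}(\alpha_1) \cup L_{\mathcal{M}_d^*}(\alpha_2) \subset \mathcal{C}_{d+1}$.

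There is no real obstacle here: the corollary is essentially a repackaging of Theorem \ref{Exactly d+1} together with Lemmas \ref{LemArcOnBoundary}, \ref{rays at dynamical root co-accumulate}, \ref{PropCuspsInterior}, and Corollary \ref{singe ray on co-root}. The one subtlety worth flagging is that Lemma \ref{LemArcOnBoundary} and Lemma \ref{rays at dynamical root co-accumulate} a priori allow accumulation on the \emph{closure} of an arc, which would include cusp endpoints; the interior property of cusps (Lemma \ref{PropCuspsInterior}) is precisely what upgrades the containment to the open arc as demanded by the corollary's statement.
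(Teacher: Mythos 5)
Your proposal is correct and follows essentially the same route as the paper: the corollary is exactly the content of the proof of Theorem \ref{Exactly d+1}, which combines Lemmas \ref{LemArcOnBoundary}, \ref{rays at dynamical root co-accumulate} and \ref{PropCuspsInterior} (cusps lie in $\mathrm{int}(\mathcal{M}_d^*)$, so a ray's accumulation set, being contained in $\partial\mathcal{M}_d^*$, avoids them) with Corollary \ref{singe ray on co-root} and the arc count from Lemma \ref{upper_bound}. Your flagged subtlety about upgrading accumulation on the closure of an arc to the open arc is handled in the paper in precisely the same way.
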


\begin{corollary}\label{2k determines k}
Let $H$ be a hyperbolic component of odd period $k$. Let $\mathcal{R}_{\alpha_1}$, $\mathcal{R}_{\alpha_2}$ be the two parameter rays at $2k$-periodic angles (under the map $t\mapsto -dt$ $($mod $1)$) that accumulate on the unique root arc, and $\mathcal{R}_{\theta_1}$, $\mathcal{R}_{\theta_2}$ be the two parameter rays at $k$-periodic angles (under the map $t\mapsto -dt$ $($mod $1)$) that accumulate on the two co-root arcs adjacent to the root arc of $H$ such that $\alpha_1 < \theta_1 < \theta_2 < \alpha_2$. Then, $\left( 1+d^k \right) \cdot \left( \theta_1 - \alpha_1 \right) = \left( \alpha_2 - \alpha_1 \right) = \left( 1+d^k \right) \cdot \left( \alpha_2 - \theta_2 \right)$. 
\end{corollary}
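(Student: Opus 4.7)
I would reduce the identities to a combinatorial statement about the structure of fixed angles of $\sigma(t):=-d^k t\pmod 1$, and then carry out an explicit arithmetic identification using the dynamical plane of the center $c_0$ of $H$. Since $k$ is odd, $(-d)^k=-d^k$, so $\sigma$ is the $k$-th iterate of $t\mapsto-dt\pmod 1$. The relations $\sigma(\alpha_1)=\alpha_2$, $\sigma(\alpha_2)=\alpha_1$ (from the discussion preceding the corollary) and $\sigma(\theta_i)=\theta_i$ (from $k$-periodicity) imply that $M_1:=\alpha_2+d^k\alpha_1$ and $M_2:=\alpha_1+d^k\alpha_2$ are integers, and that each $\theta_i$ has the form $N_i/(1+d^k)$ for some integer $N_i$. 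A direct computation reduces the stated identities to $N_1=M_1$ and $N_2=M_2$. Moreover, since $\alpha_2\in(0,1)$, we have $(1+d^k)\alpha_1=M_1-\alpha_2\in(M_1-1,M_1)$, so $M_1/(1+d^k)$ is the smallest fraction of the form $N/(1+d^k)$ strictly greater than $\alpha_1$; symmetrically, $M_2/(1+d^k)$ is the largest such fraction strictly less than $\alpha_2$.

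For the geometric identification, I would appeal to Corollary \ref{CorBijectionDynPara} together with Corollary \ref{CorSimpleClosed}: the two co-root arcs of $H$ adjacent to the root arc on the simple closed curve $\partial H$ correspond, by preservation of cyclic order, to the two dynamical co-roots of the characteristic Fatou component $U_{c_0}$ adjacent to the dynamical root $z_0$ on $\partial U_{c_0}$. In the internal coordinate on $U_{c_0}$ conjugating $f_{c_0}^{\circ k}|_{\overline{U_{c_0}}}$ to $\overline{\zeta}\mapsto\overline{\zeta}^d$, with $z_0$ at internal angle $0$, these two adjacent co-roots sit at internal angles $1/(d+1)$ and $d/(d+1)$. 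A direct check in internal coordinates shows that $f_{c_0}^{\circ k}$ sends the sub-arc from $z_0$ to the first adjacent co-root (internal angles $[0,1/(d+1)]$) homeomorphically onto its complementary sub-arc on $\partial U_{c_0}$, fixing both endpoints. The external-angle spans of these two sub-arcs are $[\alpha_1,\theta_1]$ and $[\theta_1,\alpha_2]$ respectively, and $\sigma$ matches endpoints via $\sigma(\alpha_1)=\alpha_2$ and $\sigma(\theta_1)=\theta_1$. Using the continuous affine lift $\tilde\sigma(t):=-d^k t+M_1$, chosen so that $\tilde\sigma(\alpha_1)=\alpha_2$ holds as a genuine real equation, the equality $\tilde\sigma(\theta_1)=\theta_1$ yields $(1+d^k)\theta_1=M_1$, i.e., $N_1=M_1$; the case $N_2=M_2$ is symmetric, using the sub-arc between the other adjacent co-root and $z_0$.

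The main obstacle is justifying $\tilde\sigma(\theta_1)=\theta_1$ as a real equation rather than merely modulo $1$; equivalently, that the affine lift does not wrap as $t$ varies over $[\alpha_1,\theta_1]$, or $\theta_1-\alpha_1<1/d^k$. Without this control, one could only conclude $\theta_1=(M_1+j)/(1+d^k)$ for some integer $j\geq 0$. Here the adjacency hypothesis is essential: any such interloper $(M_1+i)/(1+d^k)$ with $0\leq i<j$ would be a $\sigma$-fixed angle strictly between $\alpha_1$ and $\theta_1$, and the combinatorial rigidity of Lemma \ref{LemSingleRay} combined with the cyclic-order correspondence of Corollary \ref{CorBijectionDynPara} forces any such fixed angle landing on $\partial U_{c_0}$ to correspond to a co-root of $U_{c_0}$ sitting between $z_0$ and the supposed adjacent co-root, contradicting the adjacency. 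This forces $j=0$ and yields $N_1=M_1$, completing the proof.
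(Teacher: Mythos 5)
Your arithmetic reduction is correct and genuinely clarifying: since $k$ is odd, the asserted identities are equivalent to saying that $\theta_1$ is the smallest fixed angle of $\sigma(t)=-d^kt \pmod 1$ strictly greater than $\alpha_1$, and $\theta_2$ the largest one strictly smaller than $\alpha_2$. But the step you yourself flag as the main obstacle is not closed by your interloper argument. A fixed angle of $\sigma$ lying strictly between $\alpha_1$ and $\theta_1$ is an angle whose period under $t\mapsto -dt$ divides $k$; in the dynamical plane of the center $c_0$ its ray lands at a point fixed by $f_{c_0}^{\circ k}$, but nothing in your argument forces that landing point to lie on $\partial U_{c_0}$. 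It could land at a co-root of a non-characteristic Fatou component of the cycle, or at some other repelling point of period dividing $k$, sitting inside the region cut off by $\mathcal{R}_{\alpha_1}^{c_0}$, $\mathcal{R}_{\theta_1}^{c_0}$ and the sub-arc of $\partial U_{c_0}$ joining their landing points (all rays at angles in $(\alpha_1,\theta_1)$ are confined to that region, which contains Julia set away from $\partial U_{c_0}$). Your exclusion only treats fixed angles ``landing on $\partial U_{c_0}$,'' so the case $j>0$ is not ruled out --- and ruling it out is precisely the content of the statement. The paper closes exactly this point with orbit portraits: at the cusp where the root arc meets the co-root arc carrying $\theta_1$, the three rays at angles $\theta_1,\alpha_1,\alpha_2$ co-land at the characteristic parabolic point (Lemma \ref{root and co-root meet}), so $(\alpha_1,\theta_1)$ is the characteristic arc of a parabolic orbit portrait, and the arc-length relation of \cite[Lemma 3.5]{Mu} yields $(1+d^k)(\theta_1-\alpha_1)=\alpha_2-\alpha_1$ directly, and symmetrically at the other cusp. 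Any self-contained substitute would have to prove an analogue of that arc-length lemma (for instance, that $\sigma$ maps $(\alpha_1,\theta_1)$ bijectively onto $(\theta_1,\alpha_2)$); your sketch does not supply this.

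A second, lesser issue: the ``preservation of cyclic order'' you invoke to match the co-root arcs adjacent to the root arc on $\partial H$ with the dynamical co-roots adjacent to $z_0$ on $\partial U_{c_0}$ is not contained in Corollary \ref{CorBijectionDynPara} or Corollary \ref{CorSimpleClosed}, and would require its own argument. The paper avoids it altogether: adjacency in the parameter plane is encoded by the shared cusp, where Lemma \ref{root and co-root meet} converts it into co-landing of the relevant dynamical rays, which is all that the orbit-portrait lemma needs.
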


\begin{proof}
Let $c_1$ and $c_2$ be the two cusps at the ends of the unique root arc of $H$ and $\mathcal{P}_1$, $\mathcal{P}_2$ be the parabolic orbit portraits of $f_{c_1}$, $f_{c_2}$ respectively. The lengths of the characteristic arcs of $\mathcal{P}_1$, $\mathcal{P}_2$ are $\theta_1 - \alpha_1$ and $\alpha_2 - \theta_2$ respectively (possibly after renumbering $c_1$ and $c_2$). By \cite[Lemma 3.5]{Mu}, $\left( 1+d^k \right) \cdot \left( \theta_1 - \alpha_1 \right) = \left( \alpha_2 - \alpha_1 \right) = \left( 1+d^k \right) \cdot \left( \alpha_2 - \theta_2 \right)$.
\end{proof}

\begin{remark}
We have shown that exactly $2$ rays at $k$-periodic angles and $2$ rays at $2k$-periodic angles (under the map $t\mapsto -dt$ $($mod $1)$) accumulate on the boundary of a hyperbolic component $H$ of odd period $k>1$ of the tricorn. It is easy to see that starting from one of the rays at a $k$-periodic angle accumulating on $\partial H$, one can find the other ray by an algorithm similar to that of finding conjugate rays of the Mandelbrot set (see \cite{BS}). Hence, given any parameter ray accumulating on $\partial H$, this algorithm together with Corollary \ref{2k determines k} allows us to determine all the other rays accumulating there.
\end{remark}

\section{Discontinuity of Landing Points}\label{landingdiscont}
Throughout this section, we assume the following:

\begin{itemize}
\item $H$ is a hyperbolic component of odd period $k$ of $\mathcal{M}_d^*$.

\item $\mathcal{C}_0$ is a root arc on the boundary of $H$.

\item $\mathcal{C}_1$ and $\mathcal{C}_2$ are two co-root arcs on the boundary of $H$ (such that $\mathcal{C}_0$ and $\mathcal{C}_2$ are adjacent to $\mathcal{C}_1$).

\item Let $\alpha_1$ and $\alpha_2$ be the angles of the two dynamical rays which land at the characteristic parabolic point of $f_{c}$ for every parameter $c \in \mathcal{C}_0$. Hence, the two parameter rays at angles $\alpha_1$ and $\alpha_2$ accumulate on $\mathcal{C}_0$.

\item Let $\theta_1$ (respectively $\theta_2$) be the angle of the unique dynamical ray that lands at the characteristic parabolic point of $f_{c}$ for every parameter $c \in \mathcal{C}_1$ (respectively $\mathcal{C}_2$). Hence, the parameter ray at angle $\theta_1$ (respectively $\theta_2$) accumulates on $\mathcal{C}_1$ (respectively $\mathcal{C}_2$).

\item $\mathcal{P}_1$ is the parabolic orbit portrait of the cusp point $f_{c_1}$ where $\mathcal{C}_0$ and $\mathcal{C}_1$ meet such that the characteristic angles are $\lbrace \theta_1 , \alpha_1 \rbrace$.

\item $\mathcal{P}_2$ is the parabolic orbit portrait of the cusp point $f_{c_2}$ where $\mathcal{C}_1$ and $\mathcal{C}_2$ meet such that the characteristic angles are $\lbrace \theta_1 , \theta_2 \rbrace$.

\item $H^{\prime}$ (respectively $H^{\prime \prime}$) is the hyperbolic component of period $2k$ that bifurcates from the arcs $\mathcal{C}_0$ and $\mathcal{C}_1$ (respectively $\mathcal{C}_1$ and $\mathcal{C}_2$).

\item The $\mathcal{P}_1$-wake (respectively $\mathcal{P}_2$-wake) is defined as: $W_{\mathcal{P}_1}$ := The connected component of $\mathbb{C} \setminus \lbrace \mathcal{R}_{\theta_1} \cup \mathcal{R}_{\alpha_1} \cup \mathcal{C}_0 \cup \mathcal{C}_1 \cup \lbrace c_1 \rbrace \rbrace$ not containing $0$. (respectively $W_{\mathcal{P}_2}$ := The connected component of $\mathbb{C} \setminus \lbrace \mathcal{R}_{\theta_1} \cup \mathcal{R}_{\theta_2} \cup \mathcal{C}_1 \cup \mathcal{C}_2 \cup \lbrace c_2 \rbrace \rbrace$ not containing $0$.)

\item For any $\theta \in \mathbb{Q}/\mathbb{Z}$, define the set $S(\theta)$ = $\lbrace c \in \mathbb{C} :  \mathcal{R}_{\theta}^c$ lands $\rbrace$ and the map $L_{\theta} : S(\theta) \mapsto \mathbb{C}$ as $L_{\theta}(c)$ = The landing point of the dynamical ray $\mathcal{R}_{\theta}^c$.
\end{itemize}

\begin{lemma}\label{odd to even combinatorics}
The set of angles of dynamical rays landing at the dynamical root of the characteristic Fatou component of the center of $H^{\prime}$ (respectively $H^{\prime \prime}$) is given by $\lbrace \theta_1 , \alpha_1 , \alpha_2 \rbrace$ (respectively $\lbrace \theta_1 , \theta_2 \rbrace$).
\end{lemma}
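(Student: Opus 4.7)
The strategy is to transfer the parabolic orbit portraits at the cusps $c_1$ and $c_2$ into the bifurcated period-$2k$ components $H^{\prime}$ and $H^{\prime\prime}$ via the ray-stability results of Section~\ref{SecIndiffDyn}. By Corollary~\ref{PropOddBif}, $c_1 \in \partial H^{\prime}$ and $c_2 \in \partial H^{\prime\prime}$; and by Lemmas~\ref{co-roots meet} and~\ref{root and co-root meet}, the dynamical rays landing at the characteristic parabolic point $z_1$ of $f_{c_1}$ (respectively $z_2$ of $f_{c_2}$) are exactly $\mathcal{R}_{\theta_1}^{c_1}, \mathcal{R}_{\alpha_1}^{c_1}, \mathcal{R}_{\alpha_2}^{c_1}$ (respectively $\mathcal{R}_{\theta_1}^{c_2}, \mathcal{R}_{\theta_2}^{c_2}$), and no others. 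The plan is to show that precisely these angles give the rays landing at the dynamical root of the characteristic Fatou component at the hyperbolic center of $H^{\prime}$ (respectively $H^{\prime\prime}$).

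For $H^{\prime}$: as $c$ moves from the cusp $c_1$ (with cusp type $q=2$) into $H^{\prime}$, standard local analysis of the parabolic bifurcation shows that the parabolic $k$-cycle of $f_{c_1}$ perturbs into the period-$2k$ attracting cycle of $H^{\prime}$ together with a new repelling cycle of period $k$; let $\tilde{z}(c)$ denote the point of the latter that converges to $z_1$ as $c \to c_1$. Applying Corollary~\ref{CorRayStability}, the three rays $\mathcal{R}_{\theta_1}^c, \mathcal{R}_{\alpha_1}^c, \mathcal{R}_{\alpha_2}^c$ land on a common repelling cycle $Z(c)$ for every $c \in H^{\prime}$. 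Since $(-d)^k\alpha_1 = \alpha_2$, the iterate $f_c^{\circ k}$ permutes $\mathcal{R}_{\alpha_1}^c$ and $\mathcal{R}_{\alpha_2}^c$ and fixes $\mathcal{R}_{\theta_1}^c$; a short combinatorial argument then forces the common landing cycle to have odd period dividing $k$. Combining this with the fact that the Hausdorff limit of $Z(c)$ as $c \to c_1$ contains $z_1$, which has period exactly $k$ under $f_{c_1}$, yields that $Z(c)$ has period $k$ and coincides with the $\tilde{z}(c)$-cycle. Corollary~\ref{CorRayStability} then upgrades to: the three rays land at the common point $\tilde{z}(c)$. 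Since three distinct rays land there, $\tilde{z}(c)$ disconnects the Julia set and so must be the unique dynamical root of the characteristic Fatou component of $f_c$ by Lemma~\ref{number_root_co_root}.

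To rule out any further rays at $\tilde{z}(c)$, I would use stability in reverse: if a periodic ray $\mathcal{R}_{\eta}^c$ landed at $\tilde{z}(c)$ for $c \in H^{\prime}$ sufficiently close to $c_1$, then by continuity of landing of periodic dynamical rays under parameter variation, $\mathcal{R}_{\eta}^{c_1}$ would land at $\lim_{c \to c_1}\tilde{z}(c) = z_1$, forcing $\eta \in \{\theta_1,\alpha_1,\alpha_2\}$ by Lemma~\ref{root and co-root meet}. Part~(2) of Lemma~\ref{l:preserving-portrait1} (applicable since $H^{\prime}$ is disjoint from parabolic parameters and from parameter rays) then propagates this conclusion throughout $H^{\prime}$, yielding the claim at the center $c_0^{\prime}$. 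The argument for $H^{\prime\prime}$ is entirely analogous, with $\mathcal{R}_{\theta_1}^{c_2}, \mathcal{R}_{\theta_2}^{c_2}$ in place of the three rays at $c_1$ and Lemma~\ref{co-roots meet} playing the role of Lemma~\ref{root and co-root meet}. The main technical obstacle is the identification of the period of the common landing cycle $Z(c)$ as exactly $k$, rather than a proper divisor of $k$: this rests on the continuity argument at the parabolic boundary described above, together with the simple parity constraint forcing the period to be odd.
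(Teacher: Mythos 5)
Your proposal runs in the opposite direction from the paper's proof, and the step on which it pivots is not available. You perturb \emph{from the cusp $c_1$ into $H'$} and invoke Corollary~\ref{CorRayStability} to conclude that $\mathcal{R}_{\theta_1}^c$, $\mathcal{R}_{\alpha_1}^c$, $\mathcal{R}_{\alpha_2}^c$ land on a common repelling cycle for all $c\in H'$. But Corollary~\ref{CorRayStability} is stated only for parameters $c_0$ with an indifferent cycle of \emph{even} period, whereas $c_1$ and $c_2$ are cusps with parabolic cycles of odd period $k$; the Remark immediately following that corollary warns that in the odd-period case the conclusion can genuinely fail, since the rays landing at an odd-periodic parabolic point may be shared by two \emph{distinct} repelling cycles after perturbation, and Section~\ref{landingdiscont} (Theorem~\ref{discontinuity}) shows that landing points of these very rays do jump discontinuously at such parameters. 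So the central claim of your first paragraph for $H'$ is exactly the delicate point, and it is asserted rather than proved; the subsequent identification of the period of $Z(c)$ and the Hausdorff-limit argument all rest on it. A second, smaller gap: to exclude extra rays you appeal to ``continuity of landing of periodic dynamical rays under parameter variation'' as $c\to c_1$, which is not a valid general principle at parabolic parameters (again, Section~\ref{landingdiscont}); the legitimate version is the contrapositive through stability at \emph{repelling} points, combined with the Orbit Separation Lemma to pin the landing to the characteristic parabolic point.

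The paper avoids all of this by arguing from the interior outward: it takes the set $\mathcal{A}$ of angles landing at the dynamical root $\tilde z$ of the characteristic Fatou component of the \emph{center} of $H'$ (a repelling point, so Lemma~\ref{l:preserving-portrait1} makes $\mathcal{A}$ stable throughout $H'$, and \cite[Corollary 4.2]{NS} already constrains $\mathcal{A}$ to consist of two period-$k$ angles or one period-$k$ and two period-$2k$ angles). It then shows that the rays at angles in $\mathcal{A}$ cannot land at repelling points of $f_{c_1}$ (stability would force them to land, for nearby $c\in H'$, at continuations distinct from the root), hence they land on the parabolic cycle, and by Lemma~\ref{LemOrbitSeparation} at the characteristic parabolic point; Lemma~\ref{root and co-root meet} then gives $\mathcal{A}\subseteq\{\theta_1,\alpha_1,\alpha_2\}$, and the structural count forces equality. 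If you want to keep your boundary-to-interior direction, you would first have to prove a perturbation statement for odd-period cusps along the bifurcation into $H'$, which is precisely what the paper's machinery does not provide and what its combinatorial detour is designed to replace.
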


\begin{proof}
We prove the result for $H^{\prime}$. The other case can be proved analogously.

The dynamical root $\tilde{z}$ of the characteristic Fatou component of the center $\tilde{c}$ of $H^{\prime}$ has period $k$, and it lies on the boundary of exactly two Fatou components of period $2k$ each. By \cite[Corollary 4.2]{NS}, this dynamical root point $\tilde{z}$ is the landing point either of exactly two rays of period $k$ (which are fixed by the first return map $f_{\tilde{c}}^{\circ k}$ of $\tilde{z}$), or of exactly one ray of period $k$ and exactly two rays of period $2k$ (the first one is fixed; the latter two are
interchanged by the first return map $f_{\tilde{c}}^{\circ k}$ of $\tilde{z}$). Let $\mathcal{A}$ be the set of angles of the dynamical rays of $f_{\tilde{c}}$ landing at $\tilde{z}$. Observe that $\tilde{z}$ is a repelling periodic point, and $H^{\prime}$ does not intersect any parameter ray or any parabolic parameter. Hence, Lemma \ref{l:preserving-portrait1} implies that there exists a real-analytic function $z: H' \rightarrow \mathbb{C}$ such that $z(\tilde{c})=\tilde{z}$, $z(c)$ is the dynamical root of the characteristic Fatou component of $f_c$, and the dynamical rays $\lbrace \mathcal{R}_{t}^{c} : t \in \mathcal{A} \rbrace$ land at $z(c)$, for all $c \in H'$. 

The characteristic parabolic point of $f_{c_1}$ is a triple fixed point of $f_{c_1}^{\circ 2k}$ which splits into two attracting points (both of period $2k$ under $f_{c_1}$) and a repelling point (of period $k$ under $f_{c_1}$) when perturbed into $H^{\prime}$. In fact, this repelling point is the dynamical root point of the characteristic Fatou component of the perturbed antiholomorphic polynomial. If the dynamical rays $\lbrace \mathcal{R}_{\beta}^{c_1} : \beta \in \mathcal{A} \rbrace$ landed at some repelling periodic point(s) of $f_{c_1}$, then for nearby parameters, the dynamical rays at the same angles would continue to land at the real-analytic continuations of those repelling points (by Lemma \ref{l:preserving-portrait1}); which would necessarily be different from the dynamical root point of the characteristic Fatou component: a contradiction. Therefore, the set of rays $\lbrace \mathcal{R}_{\beta}^{c_1} : \beta \in \mathcal{A} \rbrace$ land at the parabolic cycle of $f_{c_1}$. It follows from the orbit separation lemma (Lemma \ref{LemOrbitSeparation}) that these rays must land at the characteristic parabolic point of $f_{c_1}$.

We know by Lemma \ref{root and co-root meet} that the three dynamical rays $\mathcal{R}_{\theta_1}^{c_1}$, $\mathcal{R}_{\alpha_1}^{c_1}$ and $\mathcal{R}_{\alpha_2}^{c_1}$ land at the characteristic parabolic point of $f_{c_1}$, and no other ray lands there. Therefore, $\mathcal{A} \subseteq \lbrace \theta_1$, $\alpha_1$ , $\alpha_2 \rbrace$. It now follows that $\mathcal{A} = \lbrace \theta_1$, $\alpha_1$, $\alpha_2 \rbrace$.
\end{proof}

The external angle $t(c)$ of every parameter $c \in W_{\mathcal{P}_1} \cap \left( \mathbb{C} \setminus \mathcal{M}_d^* \right)$ (respectively $c \in W_{\mathcal{P}_2} \cap \left( \mathbb{C} \setminus \mathcal{M}_d^* \right)$) lies in the characteristic arc $\left( \alpha_1 , \theta_1 \right)$ (respectively $\left( \theta_1 , \theta_2 \right)$) of $\mathcal{P}_1$ (respectively $\mathcal{P}_2$). It follows from the proof of \cite[Theorem 3.1]{Mu} that $f_c$ has a repelling periodic orbit with associated orbit portrait $\mathcal{P}_1$ (respectively $\mathcal{P}_2$) for every $c \in W_{\mathcal{P}_1} \cap \left( \mathbb{C} \setminus \mathcal{M}_d^* \right)$ (respectively $c \in W_{\mathcal{P}_2} \cap \left( \mathbb{C} \setminus \mathcal{M}_d^* \right)$).

\begin{figure}[ht!]
\begin{minipage}{0.48\linewidth}
\centering{\includegraphics[scale=0.08]{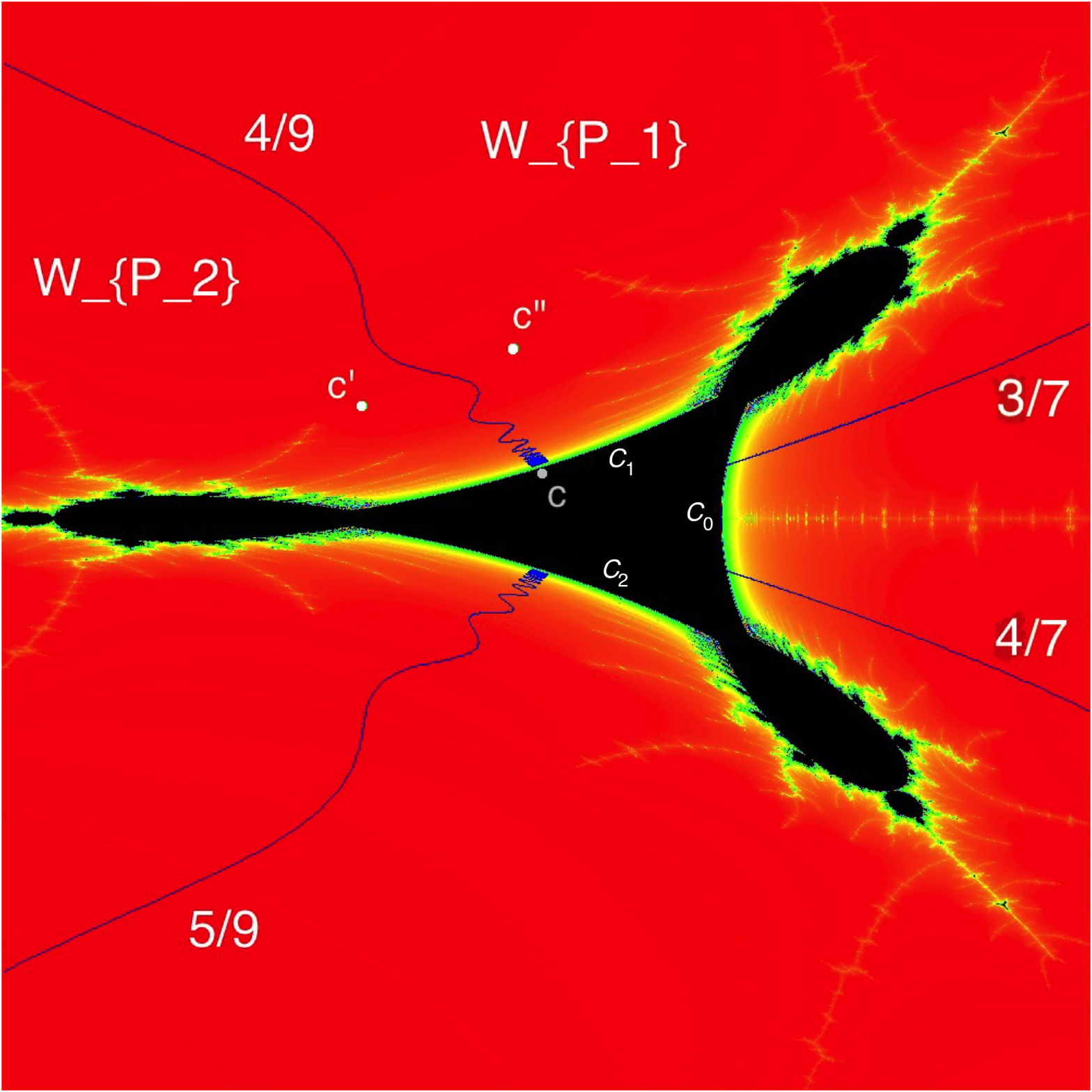}}
\end{minipage}
\begin{minipage}{0.48\linewidth}
\centering{\includegraphics[scale=0.165]{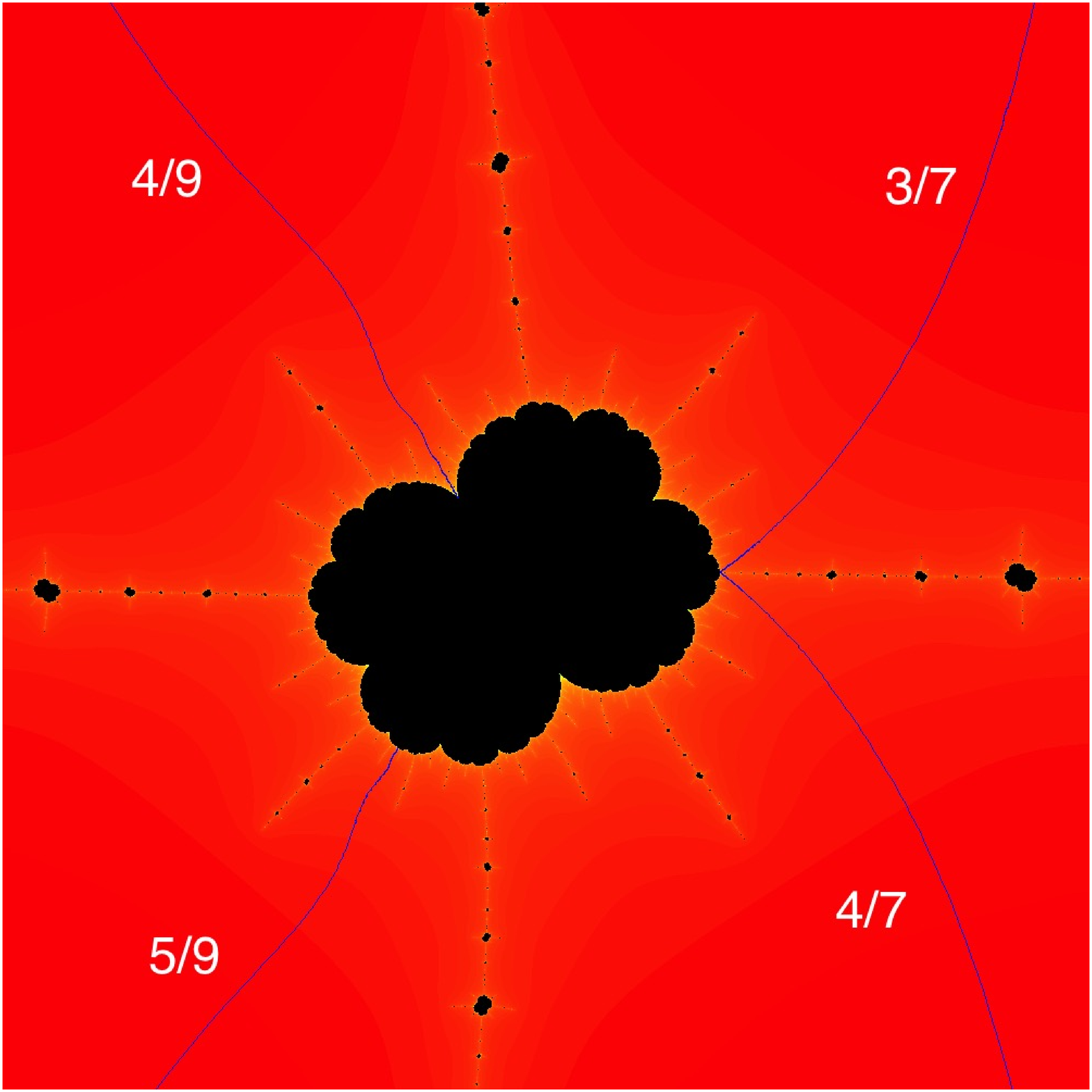}}
\end{minipage}

\begin{minipage}{0.48\linewidth}
\centering{\includegraphics[scale=0.165]{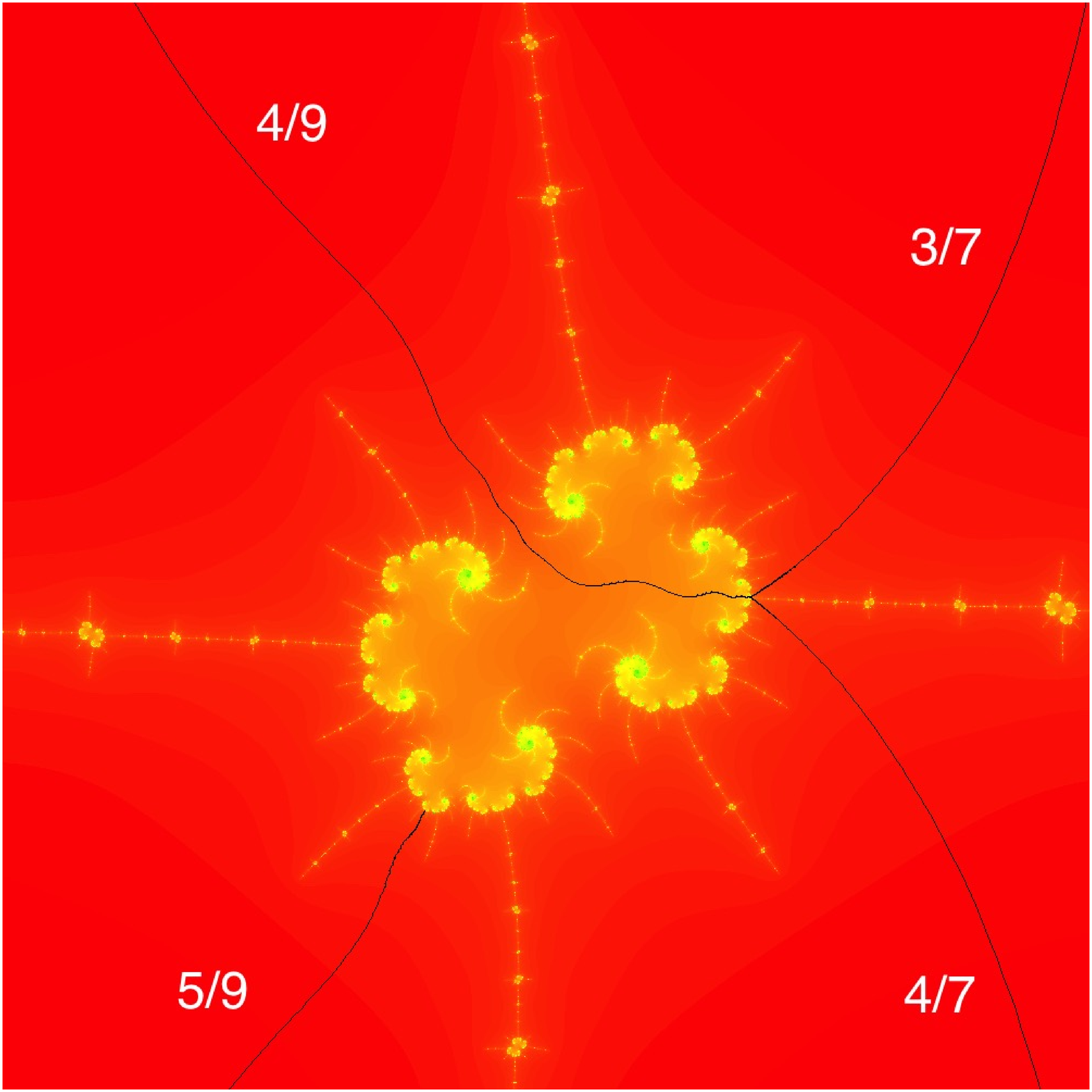}}
\end{minipage}
\begin{minipage}{0.48\linewidth}
\centering{\includegraphics[scale=0.165]{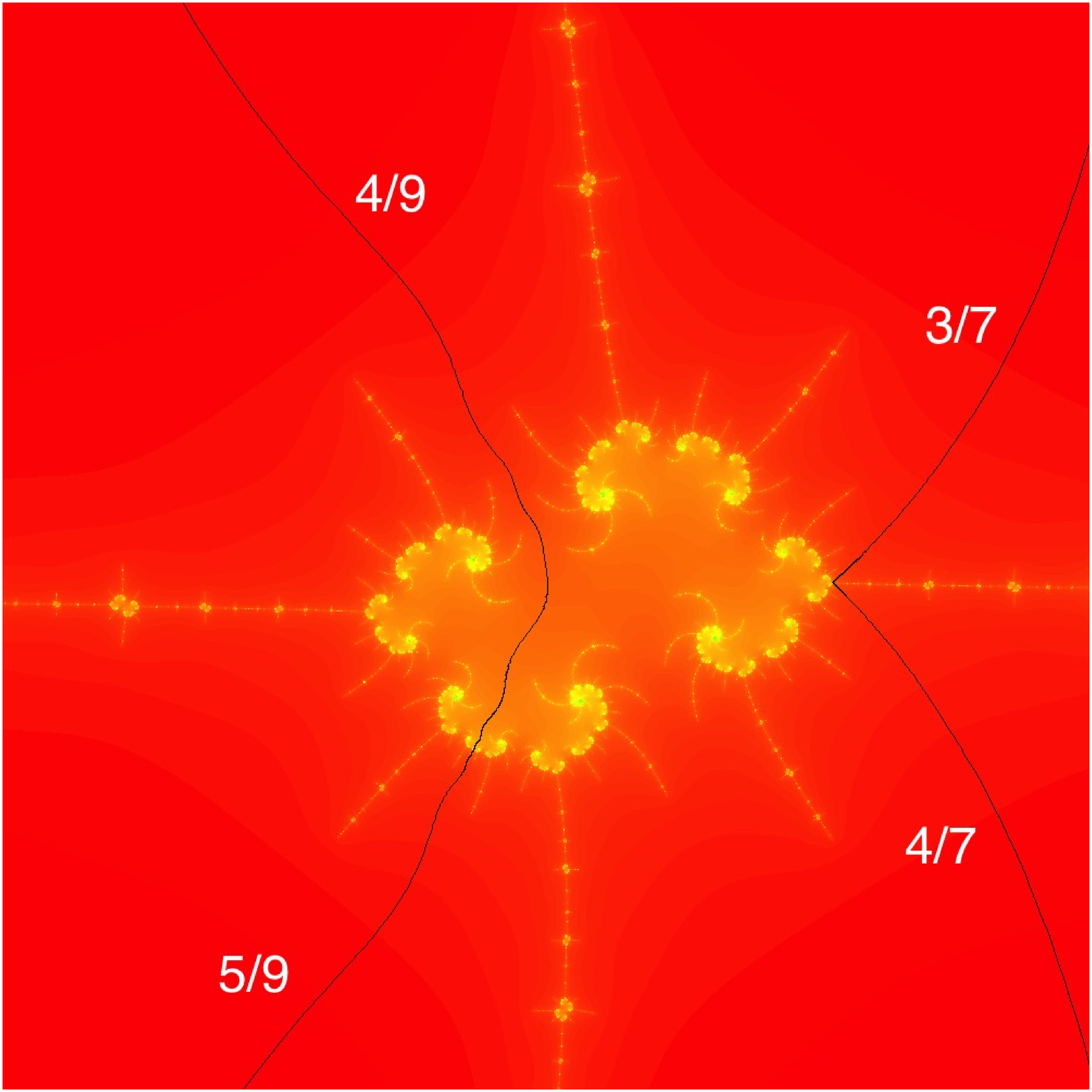}}
\end{minipage}
\caption{Clock-wise from top left: The points $c$, $c'$ and $c''$ lie on the parabolic arc $\mathcal{C}_1$ and in the two wakes $W_{\mathcal{P}_2} $ and $W_{\mathcal{P}_1}$ respectively. The dynamical planes of $f_c$, $f_{c'}$ and $f_{c''}$ exhibit the jump discontinuity of the landing point of the dynamical $4/9$-ray under perturbations into two different wakes.}
\label{landingdiscontinuity}
\end{figure}

The next theorem shows that the landing point of the dynamical ray at angle $\theta_1$ undergoes a jump discontinuity as one approaches the parabolic arc $\mathcal{C}_1$ from the wakes $W_{\mathcal{P}_1}$ and $W_{\mathcal{P}_2}$. The idea is simple: there exist parameters arbitrarily close to $\mathcal{C}_1$ for which the dynamical $\theta_1$-ray and one (or two) other fixed dynamical ray(s) land at a common point, depending on whether the parameter is in the wake $W_{\mathcal{P}_2}$ or $W_{\mathcal{P}_1}$. But on the parabolic arc $\mathcal{C}_1$, which is on the boundary on these wakes, the dynamical $\theta_1$-ray lands alone.

\begin{theorem}[Landing Point Depends Discontinuously on Parameters]\label{discontinuity}
The map $L_{\theta_1} : S(\theta_1) \mapsto \mathbb{C}$ is discontinuous at every point of $\mathcal{C}_1$.
\end{theorem}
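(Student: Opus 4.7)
The plan is to approach $c^*\in\mathcal{C}_1$ from within the wake $W_{\mathcal{P}_2}$ and to show that for nearby parameters the landing point of the dynamical $\theta_1$-ray coincides with that of the $\theta_2$-ray; the latter in the limit stays bounded away from the characteristic parabolic point of $f_{c^*}$. First I would pick a sequence $c_n \in W_{\mathcal{P}_2}$ with $c_n \to c^*$ (such a sequence exists because $\mathcal{C}_1\subset\partial W_{\mathcal{P}_2}$), avoiding the countably many parameter rays at angles in $\bigcup_{j\ge 0}(-d)^j\{\theta_1,\theta_2\}$ so that $c_n\in S(\theta_1)\cap S(\theta_2)$ for every $n$. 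By the persistence of the orbit portrait $\mathcal{P}_2$ throughout $W_{\mathcal{P}_2}$ (essentially the mechanism of Lemma \ref{rays at dynamical root co-accumulate} combined with Lemma \ref{l:preserving-portrait1}(2), using that the center of $H''$ lies in $W_{\mathcal{P}_2}$ and realizes this common landing by Lemma \ref{odd to even combinatorics}), the rays $\mathcal{R}_{\theta_1}^{c_n}$ and $\mathcal{R}_{\theta_2}^{c_n}$ land at a common repelling periodic point, so $L_{\theta_1}(c_n)=L_{\theta_2}(c_n)$ for every $n$.

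Next I would show that $\mathcal{R}_{\theta_2}^{c^*}$ lands at a repelling periodic point of $f_{c^*}$, not at a parabolic one. Since $c^*\in\mathcal{C}_1$, Lemma \ref{co-root arcs} implies that exactly one dynamical ray of period $k$ lands at each point of the parabolic cycle; since $\mathcal{R}_{\theta_1}^{c^*}$ lands at the characteristic parabolic point, by equivariance under $f_{c^*}$ the $k$ rays $\mathcal{R}_{(-d)^i\theta_1}^{c^*}$ for $i=0,\ldots,k-1$ exhaust all period-$k$ rays landing on the parabolic cycle. On the other hand, distinct dynamical co-roots on the boundary of the same periodic Fatou component $U_{c_0}$ must lie on distinct $f_{c_0}$-cycles (otherwise some $0<j<k$ would satisfy $f_{c_0}^{\circ j}(U_{c_0})=U_{c_0}$, contradicting the minimality of the period $k$). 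Therefore the angles $\theta_1,\theta_2\in S$ lie in different orbits of $t\mapsto -dt$, so $\theta_2\notin\{(-d)^i\theta_1\}$; thus $\mathcal{R}_{\theta_2}^{c^*}$ lands off the parabolic cycle, and since the only non-repelling cycle of $f_{c^*}$ is the parabolic one, its landing point must be repelling.

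Setting $w^*:=L_{\theta_2}(c^*)$, Lemma \ref{l:preserving-portrait1}(1) yields a real-analytic continuation $w(c)$ of $w^*$ so that $\mathcal{R}_{\theta_2}^c$ lands at $w(c)$ throughout a neighborhood of $c^*$; in particular $L_{\theta_2}(c_n)\to w^*$. Combining with the first step, $L_{\theta_1}(c_n)=L_{\theta_2}(c_n)\to w^*$, whereas $L_{\theta_1}(c^*)$ equals the characteristic parabolic point of $f_{c^*}$; these are distinct, because the co-root arc hypothesis forces the characteristic parabolic point to be the landing point of $\mathcal{R}_{\theta_1}^{c^*}$ alone, so the repelling point $w^*$ cannot coincide with it. Hence $L_{\theta_1}$ is discontinuous at $c^*$.

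The principal obstacle is the combinatorial step in the second paragraph, namely ruling out that $\mathcal{R}_{\theta_2}^{c^*}$ lands at a point of the parabolic cycle; this rests on the fact that the $d$ co-roots on $\partial U_{c_0}$ form $d$ different periodic cycles of $f_{c_0}$. All other steps are standard applications of the stability of ray landing at repelling periodic points and the persistence of orbit portraits inside wakes.
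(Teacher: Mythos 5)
Your argument reproduces, essentially correctly, one half of the paper's proof: for parameters in $W_{\mathcal{P}_2}$ the dynamical rays at angles $\theta_1$ and $\theta_2$ co-land at a repelling point (via Lemma \ref{odd to even combinatorics} and the stability results), while for $c^*\in\mathcal{C}_1$ the ray $\mathcal{R}_{\theta_2}^{c^*}$ lands at a repelling point and $\mathcal{R}_{\theta_1}^{c^*}$ lands alone at the characteristic parabolic point (Lemma \ref{co-root arcs}); your observation that $\theta_2$ is not on the $-d$-orbit of $\theta_1$, via disjointness of closures of Fatou components (Lemma \ref{primitive}), is fine. The contradiction with continuity then works exactly as in the paper.

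The genuine gap is your very first step: the claim $\mathcal{C}_1\subset\partial W_{\mathcal{P}_2}$ is false, so the approach from $W_{\mathcal{P}_2}$ is not available at every point of $\mathcal{C}_1$. Write $c:\mathbb{R}\to\mathcal{C}_1$ for the Ecalle height parametrization with $c(t)\to c_2$ as $t\to-\infty$, $c(t)\to c_1$ as $t\to+\infty$, and $L_{\mathcal{M}_d^*}(\theta_1)=c[a,b]$. The wake $W_{\mathcal{P}_2}$ abuts $\mathcal{C}_1$ only along $c(-\infty,b]$: the set $\overline{\mathcal{R}_{\theta_1}}\cup\overline{H}\cup\overline{\mathcal{R}_{\theta_2}}$ (note $\overline{\mathcal{R}_{\theta_1}}=\mathcal{R}_{\theta_1}\cup c[a,b]$) separates the remaining sub-arc $c((b,\infty))$ of $\mathcal{C}_1$ --- the part between the accumulation set of $\mathcal{R}_{\theta_1}$ and the cusp $c_1$, along which the period-$2k$ component $H'$ bifurcates --- from $W_{\mathcal{P}_2}$. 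Parameters just outside the arc there lie in $W_{\mathcal{P}_1}$ (for instance in $H'$), where by Lemma \ref{odd to even combinatorics} the $\theta_1$-ray co-lands with the $\alpha_1$- and $\alpha_2$-rays rather than with the $\theta_2$-ray. Hence for such $c^*$ your sequence $c_n\in W_{\mathcal{P}_2}$ with $c_n\to c^*$ does not exist and the identity $L_{\theta_1}(c_n)=L_{\theta_2}(c_n)$ is unavailable. This is precisely why the paper's proof has two cases: for $t\le b$ one perturbs into $W_{\mathcal{P}_2}$ and uses $L_{\theta_1}=L_{\theta_2}$ there, while for $t\ge a$ one perturbs into $W_{\mathcal{P}_1}$ and uses $L_{\theta_1}=L_{\alpha_1}$ together with the continuity of $L_{\alpha_1}$ near $\mathcal{C}_1$; the two ranges cover the whole arc because $a\le b$. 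Your proof needs this second case added (with $\alpha_1$ playing the role of $\theta_2$); as written it only establishes discontinuity on the sub-arc $c(-\infty,b]$.
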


\begin{proof}
We first note that the rays $\mathcal{R}_{\theta_2}^c$, $\mathcal{R}_{\alpha_1}^c$ and $\mathcal{R}_{\alpha_2}^c$ land on repelling periodic points for each $c \in \mathcal{C}_1$. Therefore, each of the maps $L_{\theta_2}$, $L_{\alpha_1}$ and $L_{\alpha_2}$ is continuous in a neighborhood of $\mathcal{C}_1$ (this neighborhood does not contain the cusps).

By Theorem \ref{ThmParaArcs} and Lemma \ref{PropArcEnds}, the Ecalle height of the critical value parametrizes the parabolic arc $\mathcal{C}_1$; i.e. there is a real-analytic bijection $c : \mathbb{R} \rightarrow \mathcal{C}_1$ such that $\displaystyle \lim_{t \rightarrow -\infty} c(t) = c_2$ and $\displaystyle \lim_{t \rightarrow \infty} c(t) = c_1$. We assume that $L_{\mathcal{M}_d^*}(\theta_1)=c \left[ a , b \right]$ (with $b \geq a$).

For any $t \in \left( - \infty , b \right]$, every neighborhood $c(t)$ contains parameters $c^{\prime} \in W_{\mathcal{P}_2}$ such that $\mathcal{R}_{\theta_1}^{c^{\prime}}$ and $\mathcal{R}_{\theta_2}^{c^{\prime}}$ land together in the dynamical plane of $f_{c^{\prime}}$; i.e. $L_{\theta_1} \left( c^{\prime} \right) = L_{\theta_2} \left( c^{\prime} \right)$ (see Figure \ref{landingdiscontinuity}), and $L_{\theta_2}$ is continuous in this neighborhood. If $L_{\theta_1}$ was continuous at $c(t)$, this would imply that $L_{\theta_1} \left( c(t) \right) = L_{\theta_2} \left( c(t) \right)$. This contradicts the fact that only one dynamical ray lands at the characteristic parabolic point of $c(t)$ (since $c(t)$ lies on a co-root arc). Therefore, $L_{\theta_1}$ is discontinuous on $c\left( - \infty , b \right]$.

On the other hand, if $t \in \left[ a , \infty \right)$, every neighborhood $c(t)$ would contain parameters $c^{\prime \prime} \in W_{\mathcal{P}_1}$ such that $\mathcal{R}_{\theta_1}^{c^{\prime \prime}}$ and $\mathcal{R}_{\alpha_1}^{c^{\prime \prime}}$ land together in the dynamical plane of $f_{c^{\prime \prime}}$; i.e. $L_{\theta_1} \left( c^{\prime \prime} \right) = L_{\alpha_1} \left( c^{\prime \prime} \right)$ (see Figure \ref{landingdiscontinuity}), and $L_{\alpha_1}$ is continuous in this neighborhood. If $L_{\theta_1}$ was continuous at $c(t)$, we would have $L_{\theta_1} \left( c(t) \right) = L_{\alpha_1} \left( c(t) \right)$. This again contradicts the fact that only one dynamical ray lands at the characteristic parabolic point of $c(t)$ (since $c(t)$ lies on a co-root arc). Therefore, $L_{\theta_1}$ is discontinuous on $c\left[ a , \infty \right)$.
\end{proof}

\begin{remark}
It follows from the proof of Theorem \ref{discontinuity} that the function $L_{\theta_1}$ exhibits a `double' discontinuity on the accumulation set $L_{\mathcal{M}_d^*}(\theta_1)$ of the parameter ray $\mathcal{R}_{\theta_1}$: for any $c \in L_{\mathcal{M}_d^*}(\theta_1) \subset \mathcal{C}_1$, any neighborhood of $c$ contains parameters $c^{\prime}$ and $c^{\prime \prime}$ such that the landing points of the dynamical rays $\mathcal{R}_{\theta_1}^{c'}$ and $\mathcal{R}_{\theta_1}^{c''}$ `jump' in two different directions.
\end{remark}

\begin{corollary}
The set of accumulation points of the parameter ray $\mathcal{R}_{\theta_1}$ is $c\left[ a , b \right] = \lbrace c \in \mathcal{C}_1$ : every neighborhood of $c$ contains $c^{\prime}$ and $c^{\prime \prime}$ such that $L_{\theta_1} \left( c^{\prime} \right) = L_{\theta_2} \left( c^{\prime} \right)$ and $L_{\theta_1} \left( c^{\prime \prime} \right) = L_{\alpha_1} \left( c^{\prime \prime} \right) \rbrace$.
\end{corollary}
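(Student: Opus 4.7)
The plan is to prove the asserted equality $A=B$, where $A:=L_{\mathcal{M}_d^*}(\theta_1)=c[a,b]$ and $B$ denotes the set on the right-hand side characterized by the joint neighborhood property, by establishing both inclusions.

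The inclusion $A\subseteq B$ is essentially the content of the Remark following Theorem~\ref{discontinuity}: for any $c\in c[a,b]$, the parameter ray $\mathcal{R}_{\theta_1}$ accumulates at $c$, and this accumulating ray is the common topological frontier separating the two wakes $W_{\mathcal{P}_1}$ and $W_{\mathcal{P}_2}$; every neighborhood of $c$ therefore meets both wakes, producing $c'\in W_{\mathcal{P}_2}$ with $L_{\theta_1}(c')=L_{\theta_2}(c')$ and $c''\in W_{\mathcal{P}_1}$ with $L_{\theta_1}(c'')=L_{\alpha_1}(c'')$ via the standard wake-theoretic realization of the orbit portraits $\mathcal{P}_2$, $\mathcal{P}_1$.

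For the reverse inclusion, take $c=c(t^*)\in\mathcal{C}_1$ with $t^*\notin[a,b]$; without loss of generality $t^*>b$, the case $t^*<a$ being symmetric under the interchange $W_{\mathcal{P}_1}\leftrightarrow W_{\mathcal{P}_2}$. I will show $c(t^*)\notin\overline{W_{\mathcal{P}_2}}$. By the wake characterization of orbit portraits adapted to the antiholomorphic setting (see \cite[Lemma 3.4]{Mu} together with the argument in the proof of Lemma~\ref{rays at dynamical root co-accumulate}), if $c'\notin\overline{W_{\mathcal{P}_2}}$ then the characteristic ray pair $\mathcal{R}_{\theta_1}^{c'},\mathcal{R}_{\theta_2}^{c'}$ does not realize $\mathcal{P}_2$, so in particular $L_{\theta_1}(c')\neq L_{\theta_2}(c')$; consequently, once $c(t^*)\notin\overline{W_{\mathcal{P}_2}}$ is known, a small disk around $c(t^*)$ is disjoint from $\overline{W_{\mathcal{P}_2}}$ and contains no $c'$ with the required co-landing, giving $c(t^*)\notin B$.

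To verify $c(t^*)\notin\overline{W_{\mathcal{P}_2}}$, note first that $c(t^*)\notin c[a,b]=\overline{\mathcal{R}_{\theta_1}}\cap\mathcal{C}_1$, so $c(t^*)\notin\overline{\mathcal{R}_{\theta_1}}$, and clearly $c(t^*)$ lies outside $\overline{\mathcal{R}_{\theta_2}}\cup\mathcal{C}_2\cup\{c_2\}$. Choose a disk $D$ around $c(t^*)$ disjoint from all of these closed sets; then the only portion of the set defining $W_{\mathcal{P}_2}$ that meets $D$ is a short sub-arc of $\mathcal{C}_1$ through $c(t^*)$, contained in $c(b,+\infty)$. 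This sub-arc splits $D$ into an interior side (locally contained in $H$) and an exterior side that, followed along $\mathcal{C}_1$ upward toward the cusp $c_1$ without crossing any removed curve, lies in the connected component $W_{\mathcal{P}_1}$. Hence $D\cap W_{\mathcal{P}_2}=\emptyset$, as required. The main technical obstacle is precisely this last connectedness claim — that the exterior side of $\mathcal{C}_1$ immediately adjacent to $c(t^*)$ belongs to $W_{\mathcal{P}_1}$ rather than $W_{\mathcal{P}_2}$ — which rests on the structural fact that the parameter ray $\mathcal{R}_{\theta_1}$ together with its accumulation set $c[a,b]$ is exactly the common frontier of $W_{\mathcal{P}_1}$ and $W_{\mathcal{P}_2}$ in a neighborhood of $\mathcal{C}_1$, a picture complicated by the possibility that $c[a,b]$ is a non-degenerate sub-arc and $\overline{\mathcal{R}_{\theta_1}}$ therefore fails to be a simple curve.
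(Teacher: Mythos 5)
Your inclusion $c[a,b]\subseteq B$ is fine: it is literally what the proof of Theorem \ref{discontinuity} establishes (for $t\le b$ every neighborhood of $c(t)$ meets parameters of $W_{\mathcal{P}_2}$ where $L_{\theta_1}=L_{\theta_2}$, and for $t\ge a$ parameters of $W_{\mathcal{P}_1}$ where $L_{\theta_1}=L_{\alpha_1}$), so a citation suffices there. The genuine problem is the reverse inclusion, and it has two concrete gaps. First, a logical slip: from ``the exterior side of $\mathcal{C}_1$ at $c(t^*)$ lies in $W_{\mathcal{P}_1}$'' you conclude $D\cap W_{\mathcal{P}_2}=\emptyset$, but $W_{\mathcal{P}_1}$ and $W_{\mathcal{P}_2}$ are components of complements of two \emph{different} removed sets, so membership in one does not exclude membership in the other unless you first prove the wakes are disjoint --- which is part of the very separation statement at issue. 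Second, and decisively, the separation statement itself --- that $\overline{\mathcal{R}_{\theta_1}}=\mathcal{R}_{\theta_1}\cup c[a,b]$ is locally the common frontier of the two wakes along $\mathcal{C}_1$, so that for $t^*>b$ the outer side of $\mathcal{C}_1$ is inaccessible from $W_{\mathcal{P}_2}$ --- is the entire substance of the reverse inclusion, and you explicitly leave it as an unresolved ``technical obstacle.'' Since no co-landing of $\mathcal{R}_{\theta_1}^c$ and $\mathcal{R}_{\theta_2}^c$ occurs on $\mathcal{C}_1$ itself (Lemma \ref{co-root arcs}) or in $H$ (Lemma \ref{stable in hyp comp}), everything hinges on what happens on that outer side, which is exactly what is left open. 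In addition, your opening reduction ``$c'\notin\overline{W_{\mathcal{P}_2}}\Rightarrow L_{\theta_1}(c')\neq L_{\theta_2}(c')$'' is only documented in the cited sources for $c'\notin\mathcal{M}_d^*$; for parameters inside $\mathcal{M}_d^*$ it needs the propagation argument of Lemma \ref{l:preserving-portrait1}(2), including control of the exceptional set $F$ (which here contains the period-$k$ arcs themselves), and this is only gestured at.

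A way to organize the missing step with the paper's own tools: choose the disk $D$ around $c(t^*)$ so small that it meets $F\cup\bigcup_{t\in\tilde A}\overline{\mathcal{R}_t}$ (with $A=\{\theta_1,\theta_2\}$) only in $\mathcal{C}_1$; then each of the two components of $D\setminus\mathcal{C}_1$ satisfies the hypotheses of Lemma \ref{l:preserving-portrait1}(2), so co-landing of the $\theta_1$- and $\theta_2$-rays is an all-or-nothing matter on each component. It fails on the component contained in $H$ by Lemma \ref{stable in hyp comp}, so it remains to exhibit a single parameter on the outer component where it fails: a point of the bifurcating period-$2k$ component $H'$ works when $c(t^*)$ lies on the bifurcation locus (by Lemma \ref{odd to even combinatorics}, the root of the characteristic Fatou component there receives the rays $\theta_1,\alpha_1,\alpha_2$, so $\theta_2$ lands elsewhere), and an exterior parameter with external angle outside $(\theta_1,\theta_2)$ works otherwise. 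But deciding that such a parameter actually sits on the outer side for every $t^*>b$ is once again the frontier/disjointness statement you postponed; it requires an honest plane-topology argument (using that $\mathcal{M}_d^*$ is full, that $\partial H$ is a simple closed curve, and that the accumulation sets of $\mathcal{R}_{\theta_1}$, $\mathcal{R}_{\theta_2}$, $\mathcal{R}_{\alpha_1}$ are confined to the respective arcs), not an appeal to the picture. As it stands, the proposal outlines the right strategy but does not prove the statement.
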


\section{Number of Hyperbolic Components}\label{No.ofHypComps}

In this final section, we will prove Theorem \ref{Numberhypcomp}, which gives a formula for the number of hyperbolic components of period $k$ of $\mathcal{M}_d^*$. This is done by counting the number of parameter rays at $k$-periodic angles (under the map $t\mapsto -dt$ $($mod $1)$) that land/accumulate on the boundary of a hyperbolic component of the same period. In the rest of the section, an angle will be called periodic if it is periodic under the map $t\mapsto -dt$ $($mod $1)$.

For any $k \in \mathbb{N} , k > 2$, let $\phi(d,k)$ denote the number of angles in $\mathbb{Q} / \mathbb{Z}$ of period $k$ under multiplication by $-d$. It is easy to check that this is equal to the number of angles in $\mathbb{Q} / \mathbb{Z}$ of period $k$ under multiplication by $d$. It is well-known that the number of hyperbolic components of period $k$ in $\mathcal{M}_d$ is given by $s_{d,k} = \phi(d,k)/d$ (see \cite{EMS}).

\begin{lemma}\label{oddcase}
For any odd $k (\neq 1)$, there are exactly $\phi(d,k) / d$ hyperbolic components of period $k$ in $\mathcal{M}_d^{\ast}$ . Thus, $s^{\ast}_{d,k} = s_{d,k}$.
\end{lemma}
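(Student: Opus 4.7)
The plan is to set up a $d$-to-$1$ correspondence between the set of parameter rays at angles of period exactly $k$ (under $t \mapsto -dt \pmod 1$) and the set of hyperbolic components of odd period $k$ in $\mathcal{M}_d^*$. The ``easy direction'' is immediate from Corollary \ref{CorBijectionDynPara} together with Theorem \ref{Exactly d+1}: the boundary of every hyperbolic component of odd period $k \neq 1$ contains exactly $d$ co-root arcs, and on each such arc there is exactly one parameter ray at a $k$-periodic angle accumulating (with the angle given by the unique dynamical ray landing at the corresponding dynamical co-root of the center). Consequently each hyperbolic component of odd period $k$ is accumulated by exactly $d$ distinct $k$-periodic parameter rays, and different components pick up disjoint sets of rays.

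The heart of the argument is the converse: every parameter ray at an angle of period exactly $k$ must accumulate on the boundary of some hyperbolic component of period $k$. So fix $\theta$ of exact period $k$ and let $c \in L_{\mathcal{M}_d^*}(\theta)$. By Lemma \ref{parameter_ray_dynamical_ray}, $f_c$ has a parabolic cycle of some period $k' \mid k$, which is therefore odd, and the dynamical ray $\mathcal{R}_\theta^c$ lands at the characteristic parabolic point $z_c$. Theorem \ref{ThmIndiffBdyHyp} then places $c$ on the boundary of a hyperbolic component $H$ of period $k'$.

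Suppose, for contradiction, that $k' < k$. Depending on whether $c$ lies in the interior of a co-root arc of $\partial H$, in the interior of the root arc, or at a cusp, one of Lemmas \ref{co-root arcs}, \ref{root arcs}, \ref{co-roots meet}, \ref{root and co-root meet} applies, and in each case the periods of the dynamical rays landing at $z_c$ are restricted to $k'$ or $2k'$. But $\theta$ has period exactly $k$ with $k > k'$ and $k$ odd, so neither $k = k'$ nor $k = 2k'$ is possible. This contradiction forces $k' = k$, which completes the matching.

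Putting the two halves together yields a surjection from the set of $\phi(d,k)$ many $k$-periodic angles onto the set of hyperbolic components of odd period $k$, with every fiber of size exactly $d$. Dividing gives $s_{d,k}^* = \phi(d,k)/d = s_{d,k}$, as claimed. The decisive step is the parity argument in the third paragraph: it is the fact that the ray periods on parabolic arcs of odd period $k'$ are either $k'$ itself or the \emph{even} number $2k'$ that rules out a $k$-periodic ray (with $k$ odd, $k > k'$) ever accumulating on a smaller-period component. Once this combinatorial obstruction is in place, everything else is bookkeeping.
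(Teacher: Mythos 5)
Your proposal is correct and follows essentially the same route as the paper: the paper likewise counts the $\phi(d,k)$ parameter rays at $k$-periodic angles, using that every such ray accumulates on (a co-root arc of) some hyperbolic component of period exactly $k$ (its Corollary \ref{Odd per par rays}), that each odd-period-$k$ component absorbs exactly $d$ of these rays, and that connectedness of the accumulation set prevents a ray from being shared by two components. Your third paragraph, via Lemma \ref{parameter_ray_dynamical_ray}, Theorem \ref{ThmIndiffBdyHyp} and the parity restriction of ray periods to $k'$ or $2k'$, simply re-derives that corollary instead of citing it.
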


\begin{proof}
We have seen in Section \ref{SecOddBdy} that every parameter ray at a $k$-periodic angle ($k$ odd, $k \neq 1$) lands/accumulates on a sub-arc of a parabolic arc of some hyperbolic component of period $k$. Further, every hyperbolic component of odd period $k$ absorbs exactly $d$ parameter rays at $k$-periodic angles. A single ray can not accumulate on the boundary of two distinct hyperbolic components of period $k$ (the accumulation set of a ray is connected). Therefore, every hyperbolic components of period $k$ has $d$ rays of its own, and this accounts for all the parameter rays at $k$-periodic angles. Hence, $s^{\ast}_{d,k} = \phi(d,k) / d$. The second statement follows from the proof of \cite[Corollary 5.4]{EMS}.
\end{proof}

Now we turn our attention to the hyperbolic components of even periods. The first step is to discuss the number of rays landing on the boundary of an even-periodic hyperbolic component that bifurcates from an odd periodic one. We stick to the terminologies of Section \ref{landingdiscont}.

\begin{lemma}\label{evenrays}
1) Let $k$ be an odd integer. Every rational parameter ray at a $2k$-periodic angle $\theta$ either lands at a parabolic parameter on the boundary of a hyperbolic component of period $2k$ or lands/accumulates on a sub-arc of a parabolic root arc of a hyperbolic component of period $k$.

2) Let $\theta$ be periodic with period $4k$, for some $k \in \mathbb{N}$. Then the parameter ray $\mathcal{R}_{\theta}$ lands at a parabolic parameter of even period. In particular, the landing point lies on the boundary of a hyperbolic component of period $4k$.
\end{lemma}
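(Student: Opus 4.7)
The plan is to apply Lemma \ref{parameter_ray_dynamical_ray} to identify every accumulation point $c$ of $\mathcal{R}_\theta$ as a parabolic parameter, whose parabolic period $k'$ divides the ray period of $\theta$ and at whose characteristic parabolic point the dynamical ray $\mathcal{R}_\theta^c$ lands. I would then rule out values of $k'$ that do not fit the ray period, using the combinatorial description of parabolic arcs of odd period developed in Section \ref{SecOrbitPortraitsArcs}, and identify the bifurcating hyperbolic component via Theorem \ref{ThmEvenBif}. Finally, I would use the isolation provided by Lemma \ref{LemIsolated} to upgrade mere accumulation to landing in the even-parabolic case.

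For part (1), with $\theta$ of ray period $2k$ and $k$ odd, I would case-split on the parity of $k'$. If $k'$ is odd, then $k' \mid k$, so $c$ lies on a parabolic arc of odd period $k'$; by Lemmas \ref{co-root arcs} and \ref{root arcs}, the dynamical rays landing at the characteristic parabolic point have period $k'$ (on a co-root arc) or $2k'$ (on a root arc). Matching this with $2k$ and using $k' \leq k$ forces $k' = k$ and puts $c$ on a root arc of period $k$. If instead $k'$ is even, the holomorphic first return map $f_c^{\circ k'}$ has a parabolic fixed point with some multiplier $e^{2\pi i p/q}$, and its repelling petals (through which dynamical rays approach the parabolic point) are cyclically permuted with rotation number $p/q$. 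A short count shows that the orbit of $\theta$ under $t \mapsto (-d)^{k'} t$ has length $2k/k'$, which must equal $q$; hence $k' q = 2k$, and by Theorem \ref{ThmEvenBif}, $c$ lies on the boundary of a hyperbolic component of period $k' q = 2k$.

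Part (2) proceeds by the same case split with $4k$ in place of $2k$. The odd case is now immediately excluded: if $k'$ were odd then $k' \mid k$, and the ray landing on the corresponding parabolic arc would have period at most $2k' \leq 2k < 4k$, a contradiction. Hence $k'$ must be even, and the petal computation yields $k' q = 4k$, placing $c$ on the boundary of a period-$4k$ component by Theorem \ref{ThmEvenBif}. For the landing assertions of part (2) and of the even-parabolic branch of part (1), I would invoke Lemma \ref{LemIsolated}: for every fixed even period and multiplier, the set of parabolic parameters with that data is finite, hence discrete. Since the accumulation set of $\mathcal{R}_\theta$ is compact and connected (as a nested intersection of compact connected truncations of the closed ray), it must then reduce to a single point.

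The main technical obstacle is the petal computation justifying the relation $n = k' q$ in the even-$k'$ case: one must verify that the action of $f_c^{\circ k'}$ on the rays landing at the characteristic parabolic point really permutes them in cycles of length $q$. The key ingredients are the classical identification of this cyclic permutation with the rotation of repelling petals by $p/q$, applied to the holomorphic iterate $f_c^{\circ k'}$ since $k'$ is even, together with the counting argument that the $n$ angles in the full $(-d)$-orbit of $\theta$ are distributed evenly over the $k'$ points of the parabolic cycle. Once these are in hand, the rest is bookkeeping of ray periods against Theorem \ref{ThmEvenBif}.
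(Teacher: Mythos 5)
Your proposal is correct and follows essentially the same route as the paper: accumulation points are parabolic with parabolic period dividing the ray period (Lemma \ref{parameter_ray_dynamical_ray}), the odd-period case is forced onto a root arc of period $k$, the even-period case is a finite set by Lemma \ref{LemIsolated} so connectedness of the accumulation set gives landing, and the period-$2k$ (resp.\ $4k$) component is identified via Theorem \ref{ThmEvenBif} once the multiplier is known to be a primitive $2k/k'$-th (resp.\ $4k/k'$-th) root of unity. Your explicit rotation-number computation for the rays at the even-period parabolic point is precisely the step the paper dismisses as ``not hard to show,'' so it is a welcome filling-in of detail rather than a different argument.
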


\begin{proof}
1) Let $\theta$ be of period $2k$ under multiplication by $-d$. If $c$ is an accumulation point of the parameter ray $\mathcal{R}_{\theta}$, then $\mathcal{R}_{\theta}^c$ lands on a parabolic point of period $r$ in the dynamical plane of $f_c$ such that $r \vert 2k$ (by Lemma \ref{parameter_ray_dynamical_ray}). By Lemma \ref{LemIsolated}, there are only finitely many such $c$ with even values of $r$. If $r$ is odd, \cite[Lemma 2.5]{Mu} says that $r = k$. Therefore, the accumulation set of $\mathcal{R}_{\theta}$ is contained in the union of the (finitely many) parabolic arcs of period $k$ and a finite number of parabolic parameters of even period and ray period $2k$. Since the set of accumulation points of a ray is connected, we conclude that $\mathcal{R}_{\theta}$ either lands/accumulates on a sub-arc of a single parabolic (root) arc of a hyperbolic component of period $k$ or lands at a parabolic parameter of even period $r$ with ray period $2k$. In the latter case, if $r = 2k$, it follows from Theorem \ref{ThmIndiffBdyHyp} that the landing point lies on the boundary of some hyperbolic component of period $2k$. On the other hand if $r$ is a proper divisor of $2k$, then it is not hard to show that the multiplier of this $r$-periodic parabolic cycle is a ${2k}/{r}$-th root of unity. It then follows from Theorem \ref{ThmEvenBif} that the landing point lies on the boundary of a hyperbolic component of period $2k$.

2) Every accumulation point of $\mathcal{R}_{\theta}$ is a parabolic parameter of period $r$ with ray period $4k$ such that $r \vert 4k$. By Lemma \cite[Lemma 2.5]{Mu}, $r$ must be even. Since there are only finitely such parabolic parameters (by Lemma \ref{LemIsolated}), the ray must land at a parabolic parameter of ray period $4k$. The fact that the landing point lies on the boundary of a hyperbolic component of period $4k$ is proved as in the previous case.
\end{proof}

\begin{lemma}\label{evenbifurcatesfromodd}
Let $H^{\prime}$ be a hyperbolic component of period $2k$ bifurcating from a hyperbolic component $H$ of odd period $k$ of $\mathcal{M}_d^*$. Then exactly $d-2$ parameter rays at $2k$-periodic angles land on the boundary of $H^{\prime}$.
\end{lemma}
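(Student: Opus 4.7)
The plan is to identify the $2k$-periodic parameter rays landing on $\partial H'$ with the dynamical co-roots of the characteristic Fatou component of the center $\tilde c$ of $H'$, for which Lemma~\ref{number_root_co_root} immediately gives the count $d-2$.

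Let $U_{\tilde c}$ denote the characteristic Fatou component of $f_{\tilde c}$; it has even period $2k > 1$. By Lemma~\ref{number_root_co_root}, $\partial U_{\tilde c}$ carries exactly one dynamical root $w_0$ and exactly $d-2$ dynamical co-roots $w_1,\ldots,w_{d-2}$, and each $w_i$ (for $i\geq 1$) is the landing point of a unique dynamical ray of period exactly $2k$; write $\beta_i$ for the corresponding angle, and note that the $\beta_i$ are distinct. By Lemma~\ref{odd to even combinatorics}, the angles of the dynamical rays landing at $w_0$ lie in $\{\theta_1,\alpha_1,\alpha_2\}$ or $\{\theta_1,\theta_2\}$, and by Corollary~\ref{CorBijectionDynPara} the corresponding parameter rays accumulate on the parabolic arcs of $\partial H$ that are shared with $\partial H'$, rather than at proper period-$2k$ parabolic parameters on $\partial H'$; hence they must not be counted here.

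For each $i=1,\ldots,d-2$, the co-root $w_i$ is repelling, and since $H'$ meets no parabolic parameter or parameter ray, Lemma~\ref{l:preserving-portrait1} guarantees that $\mathcal{R}_{\beta_i}^c$ lands at the real-analytic continuation $w_i(c)$ of $w_i$ for every $c \in H'$. A wake-theoretic argument in the spirit of Lemma~\ref{rays at dynamical root co-accumulate}, using the partition of the parameter plane by the orbit of the ray $\mathcal{R}_{\beta_i}$ and the loci where $w_i$ can become parabolic, then shows that $\mathcal{R}_{\beta_i}$ lands at a period-$2k$ parabolic parameter $c_i^*\in\partial H'$ where $w_i(c)$ merges with an attracting point of the $2k$-cycle as $c\to c_i^*$ from within $H'$. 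Conversely, any $2k$-periodic parameter ray landing at a period-$2k$ parabolic parameter $c^*\in\partial H'$ gives, via Lemma~\ref{parameter_ray_dynamical_ray}, a dynamical ray landing at the characteristic parabolic point of $f_{c^*}$; perturbing $c^*$ into $H'$ and following the resulting repelling $2k$-periodic point along a real-analytic path back to $\tilde c$ identifies it as one of the $w_i$ (not $w_0$, whose associated rays have already been shown to accumulate on $\partial H$), so $\beta = \beta_i$ for some $i$.

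The principal hurdle is the converse direction: tracking an arbitrary $2k$-periodic parameter ray landing at a parabolic parameter on $\partial H'$ back to a specific dynamical co-root at $\tilde c$ requires combining the real-analytic persistence of repelling periodic points (Lemma~\ref{l:preserving-portrait1}) with the even-period bifurcation structure (Theorem~\ref{ThmEvenBif}) to rule out landings at non-co-root targets and to guarantee that the continuation through $H'$ stays on a single repelling cycle all the way to $\tilde c$.
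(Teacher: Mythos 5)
Your outline gets the easy half of the argument right (the count comes from the $d-2$ dynamical co-roots of the characteristic Fatou component of the center of $H'$, and the root angles are excluded because those parameter rays accumulate on $\partial H$, via Lemma~\ref{odd to even combinatorics} and Corollary~\ref{CorBijectionDynPara}), but the core of the lemma is precisely the step you leave as a gesture: showing that each parameter ray $\mathcal{R}_{\beta_i}$ at a co-root angle actually lands on $\partial H'$ and nowhere else. A priori, by Lemma~\ref{evenrays}(1) such a ray has two possible fates, since $2k$ is twice an odd number: it can land at an even-period parabolic parameter on the boundary of \emph{some} period-$2k$ component, or it can accumulate on the \emph{root arc of some odd-period-$k$ component}. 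Your proposed ``wake-theoretic argument in the spirit of Lemma~\ref{rays at dynamical root co-accumulate}'' cannot close this: that argument cuts the parameter plane along a \emph{pair} of parameter rays whose angles form a nontrivial orbit portrait (two dynamical rays landing at a common point), whereas a dynamical co-root is the landing point of a \emph{single} ray, so there is no ray pair, no wake, and no partition to exploit. The paper closes both loopholes with a combinatorial rigidity input that is entirely absent from your proposal, namely Poirier's critical portrait rigidity (Theorem~\ref{motor}): if $\mathcal{R}_{\beta_i}$ accumulated on the root arc of an odd-period component $H_1$, then at the center $c_1$ of the period-$2k$ component bifurcating there the dynamical $\beta_i$-ray would be a supporting ray at the dynamical \emph{root}, forcing $f_{c}^{\circ 2}$ and $f_{c_1}^{\circ 2}$ to have the same critical portrait, hence $c=c_1$ --- contradicting that one ray cannot land simultaneously at a co-root and a root of the same Fatou component; the same rigidity argument shows the ray cannot land on the boundary of any \emph{other} period-$2k$ component, so it must land on $\partial H'$.

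The converse direction, which you flag as ``the principal hurdle'' and do not complete, is in fact the short part once stated correctly: a $2k$-periodic parameter ray landing on $\partial H'$ lands at a root or co-root parabolic parameter of $H'$, so its angle belongs to the set of angles of dynamical rays landing at the dynamical root or co-roots on the boundary of the characteristic Fatou component of the center of $H'$; the root angles are already excluded, leaving only the $\beta_i$. There is no need to perturb into $H'$ and track a repelling continuation back to the center, and doing so would anyway require you to rule out period-doubling/merging phenomena that Theorem~\ref{ThmEvenBif} alone does not control. As written, then, the proposal has a genuine gap: without an appeal to Theorem~\ref{motor} (or an equivalent rigidity statement in the style of Lemma~\ref{LemSingleRay}), neither the location of the landing set of the co-root rays nor the uniqueness of the receiving component is established.
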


\begin{proof}
If a parameter ray $\mathcal{R}_{\theta}$ at a $2k$-periodic angle lands on the boundary of a hyperbolic component $H^{\prime}$ of period $2k$, then the landing point must be a co-root or root point of $H^{\prime}$ (compare \cite{EMS}). It follows that $\theta$ is contained in the set of angles of the dynamical rays that land at various dynamical root and co-root points on the boundary of the characteristic Fatou component of the center $c$ of $H^{\prime}$. 

The characteristic Fatou component of $f_c$ has exactly $d-2$ dynamical co-root points and exactly one dynamical root point. This dynamical root point is the landing point of exactly two dynamical ray at $2k$-periodic angles and a ray at $k$-periodic angle, by Lemma \ref{odd to even combinatorics}. But the parameter rays at these angles land on the boundary of the odd periodic hyperbolic component $H$. On the other hand, each of the dynamical co-root points is the landing point of exactly one dynamical ray at a $2k$-periodic angle. Therefore, we can assume that $\mathcal{R}_{\theta}^c$ lands at a dynamical co-root of $f_c$. By the previous lemma, the $d-2$ parameter rays at these angles either land at parabolic parameters on the boundary of a hyperbolic components of period $2k$ or land/accumulate on parabolic root arcs of hyperbolic components of period $k$. 

We claim that they land on hyperbolic components of period $2k$. If not, then such a ray at $2k$-periodic angle $\theta$ would accumulate on the root arc of a hyperbolic component $H_1$ of period $k$. Then there is hyperbolic component $H_1^{\prime}$ of period $2k$ (with center $c_1$) bifurcating from $H_1$ such that $\mathcal{R}_{\theta}^{c_1}$ lands at the dynamical root point of the characteristic Fatou component of $f_{c_1}$, and is a (left) supporting ray thereof. This implies that the post-critically finite holomorphic polynomials  $f_c^2$ and $f_{c_1}^2$ have the same critical portrait. It follows from \ref{motor} that $c = c_1$ and hence, $H^{\prime} = H_1^{\prime}$. This is a contradiction since a dynamical ray cannot simultaneously land at a dynamical co-root and a dynamical root point of a Fatou component!

Therefore, all the $d-2$ parameter rays under consideration land on the boundary of hyperbolic components of period $2k$. But it is easy to check that if one of these rays land on the boundary of some other hyperbolic component of period $2k$, then the center of that component would have the same critical portrait as $c$. By \ref{motor}, this is impossible. Hence, all these $d-2$ parameter land on the boundary of $H^{\prime}$ and these are all.
\end{proof} 

\begin{lemma}\label{evennotfromodd}
Let $H$ be a hyperbolic component of even period $k$ that does not bifurcate from an odd period hyperbolic component. Then exactly $d$ parameter rays at $k$-periodic angles land on the boundary of $H$.
\end{lemma}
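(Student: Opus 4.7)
The plan is to parallel the proof of Lemma \ref{evenbifurcatesfromodd}: first locate $d$ distinguished angles of period $k$ (under $t\mapsto -dt$) in the dynamical plane of the center $c_0$ of $H$, and then transfer the count to the parameter plane via Theorem \ref{motor}.

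By Lemma \ref{number_root_co_root}, the characteristic Fatou component $U_{c_0}$ of $f_{c_0}$ has exactly $d-2$ dynamical co-roots, each the landing point of a single dynamical ray of period $k$, and exactly one dynamical root $z_0$. This provides $d-2$ of the sought angles. For the remaining two, I analyze $z_0$: let $r\in\partial H$ be the parabolic parameter toward which $z_0$ coalesces as $c_0$ is deformed to $\partial H$ along a curve in $H$, and let $k'$ be the period of the parabolic cycle of $f_r$. Corollary \ref{CorRestrictedOddBifurcations} combined with Theorem \ref{ThmIndiffBdyHyp} excludes odd $k'$: if $k'$ were odd, then since $k$ is even we would be forced to have $k=2k'$, and $r$ would lie on the boundary of a period-$k'$ hyperbolic component of odd period, making $H$ a bifurcation from an odd-period component, contrary to hypothesis. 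Hence $k'$ is even, the parabolic multiplier of $f_r$ is a primitive $q$-th root of unity with $q=k/k'$, and classical orbit portrait theory applied to the holomorphic iterate $f_r^{\circ 2}$ (cf.\ \cite{Mi2}) supplies the characteristic pair $\{\alpha_1,\alpha_2\}$ of period-$k$ angles in the orbit portrait at $z_0$, and the parameter rays $\mathcal{R}_{\alpha_1}$ and $\mathcal{R}_{\alpha_2}$ both land at $r\in\partial H$.

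For each of the $d-2$ co-root angles $\theta$, the parameter ray $\mathcal{R}_\theta$ must land at a parabolic parameter on the boundary of some hyperbolic component of period $k$: if that component were not $H$, then Theorem \ref{motor} applied to the two critical portraits sharing $\theta$ as a supporting angle would force the two centers to agree, exactly as in the last paragraph of Lemma \ref{evenbifurcatesfromodd}. Together with the pair at $r$, this produces at least $d$ parameter rays of period $k$ accumulating on $\partial H$.

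For the matching upper bound, suppose $\mathcal{R}_\theta$ is a period-$k$ parameter ray landing on $\partial H$. By Lemma \ref{parameter_ray_dynamical_ray} its landing point $c$ is parabolic and the dynamical ray at angle $\theta$ lands at the characteristic parabolic point of $f_c$; applying stability of ray landing (Lemma \ref{l:preserving-portrait1}) within the hyperbolic set $H$, the dynamical ray $\mathcal{R}_\theta^{c_0}$ must land at a dynamical root or co-root of $U_{c_0}$, so $\theta$ is one of the $d-2$ co-root angles or one of the angles in the orbit portrait at $z_0$. The hard part, and the only place requiring genuinely new input, is to exclude the non-characteristic angles of that orbit portrait (which can exist when $q\geq 3$): for such a non-characteristic $\theta$, Milnor's wake theory for $f_c^{\circ 2}$ (cf.\ \cite{Mi2}) shows that $\mathcal{R}_\theta$ lands on the boundary of the parent hyperbolic component of $H$ (of even period $k'<k$), and thus not on $\partial H$. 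This completes the enumeration at exactly $d$ parameter rays of period $k$ landing on $\partial H$.
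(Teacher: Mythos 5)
Your skeleton matches the paper's: reduce the candidate angles to the $d-2$ co-root angles plus the angles landing at the dynamical root $z_0$ of the characteristic Fatou component of the center, obtain the $d-2$ co-root rays on $\partial H$ by the critical-portrait argument of Lemma \ref{evenbifurcatesfromodd}, and add two rays associated with the root. However, the two places where you lean on ``classical orbit portrait theory / Milnor's wake theory for $f^{\circ 2}$ (cf.\ \cite{Mi2})'' are exactly where that reference gives you nothing: passing to the second iterate makes the \emph{dynamics} holomorphic, but the parameter dependence $c\mapsto f_c^{\circ 2}=P_{\bar c,c}$ is still only real-analytic, so parameter-plane landing statements from the Mandelbrot theory cannot be imported -- this is the central difficulty the whole paper is built around. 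The landing of $\mathcal{R}_{\alpha_1}$ and $\mathcal{R}_{\alpha_2}$ at a common parabolic parameter on the boundary of a period-$k$ component has to be proved by redoing the wake/stability argument of Lemma \ref{rays at dynamical root co-accumulate} (which rests on Lemmas \ref{parameter_ray_dynamical_ray}, \ref{l:preserving-portrait1} and \ref{LemIsolated}), after which Theorem \ref{motor} identifies that component as $H$; this part of your argument is a hand-wave, but repairable with the paper's tools.

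The genuine error is in your exclusion of the non-characteristic angles at $z_0$ (the satellite case $q\ge 3$). The claim that for such an angle $\theta$ the parameter ray $\mathcal{R}_\theta$ lands on the boundary of the \emph{parent} component of $H$ is false even in the holomorphic model you cite: the portrait at the $\alpha$-fixed point of the rabbit (the center of the $1/3$-satellite of the main cardioid) is $\{1/7,2/7,4/7\}$ with characteristic angles $1/7,2/7$, and the parameter ray at the non-characteristic angle $4/7$ lands at the root of the \emph{primitive} period-$3$ component near $-1.75$, which is nowhere near the main cardioid's boundary. Moreover, even if the landing point were on the parent's boundary, that alone would not exclude $\partial H$, since the root of $H$ lies on both boundaries; so as stated the step is also logically incomplete. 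What is actually needed is the statement the paper invokes at this point, namely that \emph{only} the two characteristic rays land at the root of $H$, proved as in the holomorphic case (\cite{Sch}, \cite{EMS}) by a wake/separation argument rather than by relocating the remaining rays to the parent. Without that input your upper bound of $d$ is not established.
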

\begin{proof}
As in the previous lemma, if a parameter ray at an $k$-periodic angle $\theta$ lands on the boundary of  $H$, then $\theta$ is contained in the set of angles of the dynamical rays that land at various dynamical root and co-root points on the boundary of the characteristic Fatou component of the center $c$ of $H$. The characteristic Fatou component of the center of  $H$ has exactly $d-2$ dynamical co-root points and exactly one dynamical root point. Each of these co-roots is the landing point of a unique periodic dynamical ray of period $k$. One can argue as in Lemma \ref{evenbifurcatesfromodd} that the $d-2$ parameter rays at these angles land at parabolic parameters on the boundary of $H$.

The dynamical root point of the characteristic Fatou component of $c$ has a non-trivial orbit portrait. Let its characteristic angles be $\{t^-,t^+\}$. Arguing as in Lemma \ref{rays at dynamical root co-accumulate}, we see that the two parameter rays $\mathcal{R}_{t^-}$ and $\mathcal{R}_{t^+}$ land at a common parabolic parameter on the boundary of a hyperbolic component of period $k$. Another straight-forward application of \ref{motor} shows that this component must be $H$. The landing point of $\mathcal{R}_{t^-}$ and $\mathcal{R}_{t^+}$ is called the root point of $H$, which is the unique parabolic parameter on $\partial H$ with ray period $k$ with the property that the parabolic cycle disconnects the Julia set. The proof of the fact that $\mathcal{R}_{t^-}$ and $\mathcal{R}_{t^+}$ are the only parameter rays landing at this root point is analogous to the holomorphic case (see \cite{EMS}, \cite{Sch}). This completes the proof that exactly $d$ parameter rays at $k$-periodic angles land on the boundary of $H$.
\end{proof}

\begin{proof}[Proof of Theorem \ref{Numberhypcomp}]
For any odd integer $k$, the result follows from Lemma \ref{oddcase}. 

If $k$ is a multiple of $4$, a hyperbolic component of period $k$ can never bifurcate from a hyperbolic component of odd period. In this case, the result follows from Lemma \ref{evennotfromodd}.

In the remaining case when $k$ is twice an odd integer, we have to work a little more. There are exactly  $s^{\ast}_{d,k/2}$ hyperbolic components of period $k/2$ and each of them absorbs two parameter rays at $k$-periodic angles. Each of these hyperbolic components of period $k/2$ have $d+1$ hyperbolic components of period $k$ bifurcating from it. Each of these hyperbolic components of period $k$ absorb exactly $d-2$ parameter rays at $k$-periodic angles. This accounts for $(d+1)s^{\ast}_{d,k/2}$ hyperbolic components of period $k$ and $\lbrace \left( d-2 \right) \left( d+1 \right) + 2 \rbrace  s^{\ast}_{d,k/2} = \left( d^2 - d \right)  s^{\ast}_{d,k/2}$ parameter rays at $k$-periodic angles. We are still left with $ \phi(d,k) - \left( d^2 - d \right)  s^{\ast}_{d,k/2}$ rays at $k$-periodic angles and these must land in groups of $d$ rays on the boundaries of hyperbolic components of period $k$ that do not bifurcate from odd period hyperbolic components (by Lemma \ref{evennotfromodd}). This accounts for $\{\phi(d,k)-(d^2-d)s^{\ast}_{d,k/2}\}/d$ other components of period $k$ and these are all. Combining these, we deduce that
\begin{align*}
s^{\ast}_{d,k} 
&= (d+1)s^{\ast}_{d,k/2} + \{\phi(d,k)-(d^2-d)s^{\ast}_{d,k/2}\}/d 
\\
&= (d+1)s_{d,k/2} + s_{d,k} - (d-1) s_{d,k/2}
\\
&= s_{d,k} + 2 s_{d,k/2} \;.
\end{align*}
\end{proof}

\end{document}